\newcommand{\bld}[1]{\boldsymbol{#1}}
\newcommand{\und}[1]{\underline{\bld #1}}
\newcommand{\dO}{{\partial \Omega}}
\newcommand{\Oh}{{\mathcal{T}_h}}
\newcommand{\dOh}{{\partial \Oh}}
\newcommand{\dK}{{\partial K}}
\newcommand{\Eh}{{\mathcal{F}_h}}
\newcommand{\divs}{\mathop{\nabla\cdot}}
\newcommand{\divv}{\mathop{\bld{\nabla\cdot}}}
\newcommand{\gradv}{\mathop{\bld{\nabla}}}
\newcommand{\grads}{\mathop{\nabla}}
\newcommand{\pol}[2]{{P}_{#1}(#2)} 
\newcommand{\bpol}[2]{\bld P_{#1}(#2)}
\newcommand{\upol}[2]{{{{P}}}_{#1}(#2;\mathbb{S})}
\newcommand{\es}{\bld{\varepsilon}_{{\ensuremath{\scriptscriptstyle{\und{\sigma}}}}}}
\newcommand{\eos}{\bld{e}_{{\ensuremath{\scriptscriptstyle{\und{\sigma}}}}}}
\newcommand{\ep}{\bld{\varepsilon}_{{\ensuremath{\scriptscriptstyle{p}}}}}
\newcommand{\esd}{\bld{\varepsilon}^D_{{\ensuremath{\scriptscriptstyle{\und{\sigma}}}}}}
\newcommand{\eshat}{\widehat{\bld{\varepsilon}}_{{\ensuremath{\scriptscriptstyle{\und{\sigma}}}}}}
\newcommand{\eu}{\bld \varepsilon_{{\ensuremath{\scriptscriptstyle{\bld u}}}}}
\newcommand{\euhat}{\widehat{\bld \varepsilon}_{{\ensuremath{\scriptscriptstyle{{{\bld u}}}}}}}
\newcommand{\ds}{\bld{\delta}_{{\ensuremath{\scriptscriptstyle{\und{\sigma}}}}}}
\newcommand{\du}{\bld \delta_{{\ensuremath{\scriptscriptstyle{\bld u}}}}}
\newcommand{\Vh}{\boldsymbol{V}_h}
\newcommand{\Sh}{\und{\Sigma}_h}
\newcommand{\Mh}{\bld{M}_h}
\newcommand{\uhat}{\widehat{\bld u}_h}
\newcommand{\shatn}{\widehat{\und \sigma}_h\n}
\newcommand{\n}{\boldsymbol{n}}
\newcommand{\Pis}{\und{\Pi}_\Sigma}
\newcommand{\Piv}{\boldsymbol{\varPi}_V}
\newcommand{\Pim}{\bld{P}_M}
\newcommand{\bint}[2]{\langle #1\,,\,#2 \rangle_{\partial{\Oh}}}
\newcommand{\bintK}[2]{\langle #1\,,\,#2 \rangle_{\partial{K}}}
\newcommand{\inp}[2]{(#1\,, \, #2)_{\Oh}}
\definecolor{black}{rgb}{0,0,0}
\definecolor{red}{rgb}{0,0,0}
\definecolor{redd}{rgb}{0,0,0}
\newcommand\red[1]{\textcolor{redd}{#1}}
\definecolor{blue}{rgb}{0,0,0}
\newcommand{\mm}{$\bld M$}
\newcommand{\eg}{\bld{\mathrm{e}}}
\newcommand{\vt}{\mathrm{v}}
\newcommand{\ppol}{\bld P}
\newcommand{\pcol}{{{P}}}
\newcommand{\bppol}{\underline{\bld P}^s}
\newcommand{\qqol}{\bld Q}
\newcommand{\bqqol}{\underline{\bld Q}^s}
\newcommand{\tr}{\mathrm{tr}}
\newcommand{\vv}{\und{\tau}}
\newcommand{\W}{{\bld V}}
\newcommand{\WWW}{{\bld V_{\!g}}}
\newcommand{\w}{\boldsymbol{v}}
\newcommand{\nn}{\boldsymbol n}
\newcommand{\VV}{{\und{\Sigma}}}
\newcommand{\VVV}{{\und{{{\Sigma}}}_g}}
\newcommand{\Vtilde}{\widetilde\VV}
\newcommand{\Wtilde}{\widetilde{\boldsymbol{V}}}
\newcommand{\Vperp}{\widetilde\VV{}^\perp}
\newcommand{\Wperp}{\widetilde{\boldsymbol{V}}^\perp}
\newcommand{\vvtilde}{\widetilde{\und{\sigma}}}
\newcommand{\wtilde}{\widetilde{\boldsymbol{v}}}
\newcommand{\dvm}{\delta\und{{\Sigma}}}
\definecolor{gfu}{rgb}{0,0,0}
\newcommand\gfu[1]{\textcolor{gfu}{#1}}
\definecolor{ggfu}{rgb}{1,0,0.1}
\newcommand\ggfu[1]{\textcolor{ggfu}{#1}}
\begin{document}

% Short title for running heads:
\title{Devising superconvergent HDG methods  with symmetric approximate stresses for linear elasticity by M-decompositions.}

% title
\shorttitle{Linear elasticity with strongly symmetric stresses}

\author{%
{\sc
Bernardo Cockburn} \\[2pt]
School of Mathematics, University of Minnesota,
                Minneapolis, MN 55455, USA
\\
{\tt cockburn@math.umn.edu}\\
\vspace{0.2cm}
{\sc and}\\
\vspace{0.2cm}
{\sc
Guosheng Fu$^*$} \\[2pt]
Division of Applied Mathematics, Brown University, Providence, %
  RI 02912, USA\\
$^*$Corresponding author: {\tt guosheng\_fu@brown.edu}
}
% Short list of authors for running heads:
\shortauthorlist{B. Cockburn and G. Fu}

\maketitle

\begin{center}
\ggfu{This is the extended version of a paper submitted to IMA Journal of Numerical Analysis on March 16, 2016; accepted on April 14, 2017.}
\end{center}

\begin{abstract}
% Body of abstract:
{
We propose a new tool, which we call \mm-decompositions, for devising superconvergent
hybridizable discontinuous Galerkin (HDG) methods and hybridized-mixed methods for linear elasticity with strongly symmetric approximate stresses on unstructured polygonal/polyhedral meshes. 
% The concept of an $M$-decomposition was initially introduced in the context of HDG methods for diffusion problems in [Cockburn et. al. Superconvergence by M-decompositions. Part I: General theory for HDG methosd for diffusion. {\it Math. Comp.}, to appear].

We show that for an HDG method, when its local approximation space
admits an \mm-decomposition, optimal convergence of the approximate
stress and superconvergence of an element-by-element postprocessing of the
displacement field are obtained.  The resulting methods are
locking-free. 

Moreover, we explicitly construct approximation spaces that admit \mm-decompositions on general
polygonal elements. We display numerical results on triangular meshes validating our theoretical findings.
% apply the concept of an \mm-decomposition introduced in Part I
% to systematically construct local spaces defining superconvergent hybridizable discontinuous Galerkin methods  with strongly symmetric approximate stresses for linear elasticity on unstructured meshes in two-dimensions.
% Finally, we prove that their extension to the stress-velocity-pressure formulation of the Stokes flow share similar convergence properties.
}

% \keywords
{
hybridizable; discontinuous Galerkin; superconvergence; linear elasticity; strong symmetry.
}
% \subjclass{65N30. 65Y20. 65D17. 68U07}
\end{abstract}

\section{Introduction}
\label{sec:introduction}
We present a technique to systematically construct superconvergent hybridizable discontinuous Galerkin (HDG) and  
mixed methods with {\em strongly} symmetric approximate stresses on unstructured polygonal/polyhedral meshes for linear  
elasticity. By a superconvergent method,  we mean, roughly speaking, a method that provides an approximate 
displacement converging to certain projection of the exact displacement faster than it converges to the exact 
displacement itself. It is  then possible to obtain, by means of an elementwise and parallelizable computation,  a new 
approximate displacement converging faster than the original one. 
This property
was uncovered  back in 1985 in \cite{ArnoldBrezzi85} in the framework of mixed methods for diffusion problems and has 
been extended to various mixed methods for several elliptic problems; see \cite{BoffiBrezziFortin13}.

This paper is part of a series in which we devise superconvergent HDG and mixed methods for steady-state problems.
Indeed, superconvergent HDG (and mixed) methods for second-order diffusion were considered in
\cite{CockburnQiuShi12,CockburnQiuShiCurved12}, superconvergent HDG methods based on the velocity
gradient-velocity-pressure formulation of the Stokes equations of 
incompressible flow in \cite{CockburnShiHDGStokes13}, 
and superconvergent HDG methods with {\em weakly} symmetric approximate stresses for the equations of 
linear elasticity  in \cite{CockburnShiHDGElas13}. 

In \cite{CockburnFuSayas16}, we  refined the
 work on second-order diffusion carried out in  \cite{CockburnQiuShi12,CockburnQiuShiCurved12}
 and showed that, by using the concept of an $M$-decomposition (for the divergence and gradient operators),
 it is possible to systematically  find HDG and mixed methods \red{which} superconverge on unstructured meshes made of
 polygonal/polyhedral elements of arbitrary shapes. The actual construction of such 
 $M$-decompositions for arbitrary polygonal elements was carried out in \cite{CockburnFu17a}, 
 and for tetrahedra, prisms, pyramids, and hexahedra in three-space dimensions in \cite{CockburnFu17b}.
 The extension of these results to the heat equation \cite{ChabaudCockburn12} 
 and wave equation \cite{CockburnQuennevilleHDGWave14} are straightforward. The extension to the 
 \red{velocity gradient-velocity-pressure} formulation of HDG and mixed methods for the Stokes equations is more delicate and was carried out
 in \cite{CockburnFuQiu16}.
 
  Here, we continue  this effort and consider the more challenging task of devising superconvergent 
 HDG and mixed methods with strongly symmetric approximate stresses by developing 
 a general theory of \mm-decompositions for the vector divergence and symmetric gradient operators. We also provide a 
practical construction for polygonal elements. We do this in the framework of the following model problem:
\begin{subequations}
\begin{alignat}{2} 
\label{original equation-1}
 \mathcal{A}\und \sigma - \und \epsilon(\bld u )&= \und 0 \qquad && \text{in $\Omega$,}\\
\label{original equation-2}
-\divv \und \sigma &= \bld f && \text{in $\Omega$,}\\
\label{boundary condition}
\bld u &= \bld g && \text{on $\partial \Omega$,}
\end{alignat}
\end{subequations}
where $\Omega \subset \mathbb{R}^n$ ($n=2,3$) is a bounded polyhedral domain, 
$\dO$ is the Dirichlet boundary. Here $\bld u=\{u_i\}_{i=1}^n$ and $\und\sigma = \{\sigma_{ij}\}_{i,j=1}^n$ 
represent the displacement vector and the Cauchy stress tensor,
respectively. The functions $\bld f=\{f_i\}_{i=1}^n$ and $\bld g=\{g_i\}_{i=1}^n$
represent the body force vector and the prescribed displacement on $\dO$,
respectively. As usual, 
$\und\epsilon (\cdot) := \frac{1}{2}\left(\gradv(\cdot)+\gradv^t(\cdot)\right)$ is the 
symmetric gradient (or strain) operator,
and $\mathcal{A} = \{\mathcal{A}_{ijkl}(x)\}_{i,j,k,l=1}^{n}$is the so-called
compliance tensor, which is bounded and positive definite. In the homogeneous,
isotropic case, it is given by 
\begin{equation}
\label{isotropic}
  \mathcal{A}{\und\sigma} = \frac{1}{2\mu}(\und\sigma-\frac{\lambda}{2\mu+n\lambda}\mathrm{tr}(\und\sigma)\und I),
\end{equation}
where $\und I$ is the second-order identity tensor and $\lambda,\mu$ are the  Lam\'e constants.

To better describe our results, let us begin by introducing the general form
of the methods we are going to consider. 
We denote by  $\Oh$ a conforming triangulation of $\Omega$ made of polygonal/polyhedral
elements $K$. We denote by $\Eh$ the set of faces $F$ of all the elements $K$ in
the triangulation $\Oh$, and by $\dOh$ the set of boundaries $\partial K$ of all
elements $K$ in $\Oh$. 
We set $h_K := \mathrm{diam}({K})$ and $h : =\min_{K\subset\Oh} h_K$.
To each element $K\in\Oh$, 
we associate (finite dimensional) spaces of symmetric-matrix-valued functions $\VV(K)$, and vector-valued functions 
$\W(K)$. We also consider a (finite dimensional) 
space of vector-valued functions $\bld M(F)$ associated to
each $F\in\Eh$. We assume that elements of the above spaces are regular enough
so that all traces belong to $\bld L^2(\dK)$.
\gfu{
To simplify \red{the} notation, we denote the normal trace of $\VV(K)$ on $\partial K$ \red{by}
\begin{subequations}
 \label{traces}
\begin{align}
\label{trace-1}
 \gamma\left( \VV(K)\right) := \{\bld\tau\bld n|_{\partial K}:\quad \bld\tau \in \VV(K)\},
\end{align}
and the trace of $\W(K)$ on $\partial K$ \red{by}
\begin{align}
\label{trace-2}
 \gamma\left(\W(K)\right) := \{\bld v|_{\partial K}:\quad \bld v \in \W(K)\}.
\end{align}
\end{subequations}
}

The methods we are interested in seek an approximation to $(\und \sigma, \bld u, \bld u|_{\Eh})$,
$(\und \sigma_h, \bld u_h, \uhat)$, in the finite element space $\Sh \times \Vh
 \times \Mh$, where
\begin{subequations}
\label{spaces}
\begin{alignat}{3}
\Sh:=&\;\{\und{\tau}\in {L}^2(\Oh; \mathbb{S}):&&\;\und{\tau}|_K\in\und\Sigma (K),&&\; \forall K\in\Oh\},
\\
\Vh:=&\;\{\w\in \bld{L}^2(\Oh): &&\;\w|_K\in \bld V(K),&&\; \forall K\in\Oh\},
\\
\Mh:=&\;\{\bld \mu\in \bld {L}^2(\Eh):&&\;\bld \mu|_F\in \bld M(F),&&\;\forall F\in\Eh\},
\end{alignat}
\end{subequations}
and determine it as the only solution of the following weak formulation:
\begin{subequations}
\label{weak formulation}
 \begin{alignat}{3}
  \label{weak formulation-1}
(\mathcal{A}\und\sigma_h,{\und\tau})_\Oh + (\bld u_h, \divv\und\tau)_\Oh-\bint{\uhat}{\und\tau\n} &\;=0, \\
\label{weak formulation-2}
(\und\sigma_h,\gradv \w)_\Oh - \bint{\shatn}{\w} &\;=(\bld f,\w)_\Oh, \\
\label{weak formulation-3}
\langle{\shatn},{\bld \mu}\rangle_{\partial\Oh\setminus\partial\Omega} &\;= 0,\\
\label{weak formulation-4}
 \langle \uhat,{\bld \mu}\rangle_{\partial\Omega} &\;=  \langle {\bld g},{\bld \mu}\rangle_{\partial\Omega},
 \end{alignat}
for all $(\und\tau, \boldsymbol{v}, \bld\mu) \in \Sh \times \Vh \times \Mh$.
Here we write 
$\inp{\eta}{\zeta} := \sum_{K \in \Oh} (\eta, \zeta)_K,$ 
where $(\eta,\zeta)_D$ denotes the integral of $\eta\zeta$ over the domain $D \subset \mathbb{R}^n$. We also write
$\bint{\eta}{\zeta}:= \sum_{K \in \Oh} \langle \eta \,,\,\zeta \rangle_{\partial K},$
where $\langle \eta \,,\,\zeta \rangle_{D}$ denotes the integral of $\eta \zeta$ over the domain $D \subset \mathbb{R}^{n-1}$ 
and $\partial \Oh := \{ \partial K: K \subset \Oh \}$. The definition of the method
is completed with the definition of the normal component of the numerical trace:
\begin{equation}
\label{trace-q}
 \shatn= \und\sigma_h \n -{\alpha(\bld u_h - \uhat)} \quad\text{ on }\quad \partial \Oh,
\end{equation}
\end{subequations}
where $\alpha: \bld L^2(\dK)\rightarrow \bld L^2(\dK)$ is a  suitably chosen {\it linear local stabilization} operator. 
By taking particular choices of the local spaces $\und\Sigma(K)$, $\bld V(K)$ and 
\[
\bld M(\partial K):=\{\bld\mu\in \bld L^2(\partial K): \;\bld\mu|_F\in \bld M(F)\mbox{ for all }F\in \mathcal{F}(K)\},
\]
and of the { linear local stabilization} operator ${\alpha}$, \red{all the} different HDG methods
are obtained; when we can set $\alpha= 0$, we obtain the hybridized version of a mixed
method. 

Our contributions are two.
The first is that we show that, if for every element $K\in \Oh$, 
the local spaces $\VV(K)$,
$\W(K)$ and $\bld M(\dK)$
satisfy certain inclusion conditions, it is
possible to define, in an element-by-element fashion,
an auxiliary projection 
\[\Pi_h = (\Pis,\Piv): H^1(K,\mathbb{S})\times \bld H^1(K)\rightarrow \und\Sigma(K)\times\bld V(K)\]
such that
\begin{alignat*}{1}
\|\und{\sigma} - \und{\sigma}_h\|_{\mathcal{A},\Oh} \leq&\; 2\,\|\und{\sigma} - \Pis\, \und{\sigma}\|_{\mathcal{A},\Oh},
\\
\|\Piv \bld u - \bld u_h\|_{\Oh} \leq&\; C\, h \|\und{\sigma} - \Pis \,\und{\sigma}\|_{\Oh},\\
\|\Pim \bld u - \widehat{\bld u}_h\|_{\dOh} \leq&\; C\, h^{1/2} \|\und{\sigma} - \Pis \,\und{\sigma}\|_{\Oh},
\end{alignat*}
where 
$\|\cdot\|_{\mathcal{A},\Oh}$ denotes the $\mathcal{A}$-weighted $L^2(\Oh)$-norm, i.e.,
$\|\und\tau\|_{\mathcal{A},\Oh}^2 = (\mathcal{A}\und\tau,\und\tau)_\Oh$, 
$\|\cdot\|_{D}$ denotes the $L^2(D)$-norm on a domain $D$, and $\bld P_M$ is the $L^2$-projection onto $\bld M_h$. 
Moreover, if the error
$\Piv \bld u-\bld u_h$ converges to zero {\em faster} than the error $\bld u-\bld u_h$, this 
{\em superconvergence} property can be advantageously exploited to locally 
obtain a more accurate approximation of the displacement $\bld u$;
see
\cite{ArnoldBrezzi85,BrezziDouglasMarini85,Stenberg88,GastaldiNochetto89,Stenberg91}
for applications to mixed methods 
and
\cite{CockburnQiuShi12,CockburnQiuShiCurved12,CockburnShiHDGStokes13,CockburnShiHDGElas13}
for applications to HDG methods
and the references therein. 

Our second contribution is a refinement of our previous work
\cite{CockburnQiuShi12,CockburnQiuShiCurved12,CockburnShiHDGStokes13,CockburnShiHDGElas13}
along the lines provided in \cite{CockburnFuSayas16}: 
We propose an algorithm that tells us how to systematically 
obtain the local spaces rendering any given HDG method superconvergent. % using the \mm-decomposition theory. 
Indeed, given any local spaces $\VVV(K)$, $\WWW(K)$ and $\bld M(\partial K)$, 
satisfying very simple inclusion properties, we can give a characterization of the space 
$\delta\und{\Sigma}(K)$ of the {\em smallest} dimension for which the space
\red{$(\VVV(K)\oplus\delta\und{\Sigma}(K))\,\times\WWW(K)$ admits an $\bld M(\partial K)$-decomposition and hence,
defines} a superconvergent HDG method. We also show how to obtain (the hybridized version of) two {\em sandwiching}
 mixed methods from those spaces. In particular, we show 
that the HDG method with 
local spaces $\VVV(K)\oplus\delta\und{\Sigma}(K)$,
$\divv\VVV(K)$ and $\bld M(\partial K)$ is actually a 
well-defined hybridizable mixed method. 
Applying this approach, as done in \cite{CockburnFu17a,CockburnFu17b}, 
we find new superconvergent HDG  and mixed methods on 
general polygonal elements in two-space dimensions.
These new spaces are closely related to those of the mixed methods proposed in  
\cite{ArnoldDouglasGupta84} for triangles and quadrilaterals, and in \cite{GuzmanNeilan14} for 
triangles. 

Let us make a couple of comments on the relevance of these findings. 
The first concerns the first HDG method for linear elasticity proposed in
\cite{SoonThesis08}; see also \cite{SoonCockburnStolarski09}. 
When using polynomial approximations of degree $k$ on triangular meshes, this method was experimentally 
shown to provide approximations of order $k+1$ for the stress and displacement for $k\ge0$, 
and superconvergence of order $k+2$ for the displacement. 
Recently, in \cite{FuCockburnStolarski14}, it was proven that 
the stress convergences with order $k+1/2$ and displacement with $k+1$ without
superconvergence; numerical experiments showed that the orders were \red{actually} sharp.
% new numerical experiments \cite{FuCockburnStolarski14} showed that these orders of 
% convergence are not sharp at all. 
% Indeed, for some meshes and exact solutions, the orders of convergence for 
% $k=1$ where experimentally found to be $k+1/2=1.5$ for the stress, 
% $k+1=2$ for the displacement; no superconvergence was found for the
% postprocessed displacement. 
% Moreover, for $k=2$, the displacement also did not superconverge. 
It turns out the spaces for that method do not admit \mm-decompositions.
By using the machinery  propose here, we show that, on triangular meshes, 
it is enough to add a small space $\delta\und{\Sigma}(K)$ (of dimension $2$ if $k=1$,
and of dimension $3$ if $k\ge 2$) to the space of approximate stresses $\und{\Sigma}(K)$ 
to obtain convergence of order $k+1$ for the stress and displacement, 
as well as superconvergence of order $k+2$ for the displacement for $k\ge1$.

The second comment concerns mixed methods (with symmetric approximate stresses) for linear elasticity.
A symmetric and conforming mixed method 
use finite element spaces (for the stress and displacement)
$\VV_h\times \W_h\subset H(\mathrm{div},\Omega; \mathbb{S})\times \bld L^2(\Omega)$.
Here $H(\mathrm{div},\Omega; \mathbb{S})$ denotes the space of $H(\mathrm{div})$-conforming, symmetric-tensorial fields.
It turns out to be quite difficult to design finite elements in $H(\mathrm{div},\Omega; \mathbb{S})$ 
which preserve both the symmetry and the $H(\mathrm{div})$-conformity.
The first successful discretization use the so-called composite elements, see 
the low-order composite elements in \cite{JohnsonMercier78} and the
high-order composite elements in \cite{ArnoldDouglasGupta84}, both defined on triangular and quadrilateral meshes; 
see also a low-order composite element on tetrahedral meshes in \cite{AinsworthRankin11}.
While these methods can be efficiently implemented via hybridization, leading to an symmetric-positive-definite
linear system, \red{their} main drawback is that their basis functions are usually
quite hard to construct, especially for the high-order methods in \cite{ArnoldDouglasGupta84}.

In the pioneering \red{work} \cite{ArnoldWinther02}, \red{the} first
discretization of $H(\mathrm{div})$-conforming symmetric-tensorial fields \red{was presented, for triangular meshes, which only uses
piecewise polynomial functions}. 
\red{Other piecewise-polynomial discretizations of $H(\mathrm{div};\mathrm{S})$ on
tetrahedral meshes were later obtained in \cite{AdamsCockburn05,ArnoldAwanouWinther08,HuZhang15}
and on rectangular meshes in \cite{ArnoldAwanou05,ChenYang10}}.
Since all of these polynomial discretizations use vertex degrees of freedom to define the basis functions, 
\red{the resulting methods cannot} be efficiently implemented via hybridization, 
and a saddle-point linear system needs to be solved. 
\red{Moreover, as argued in the last paragraph of Section 3 of
\cite{ArnoldWinther02}, vertex degrees of freedom cannot be avoided in the construction of  piecewise-polynomial 
 $H(\mathrm{div})$-conforming symmetric-tensorial fields.}

\red{For this reason, nonconforming} mixed methods 
\cite{ArnoldWinther03, Yi05, Yi06, HuShi07, ManHuShi09, 
Awanou09,GopalakrishnanGuzman11, ArnoldAwanouWinther14} 
that violate $H(\mathrm{div})$-conformity (but preserve symmetry) 
of the stress field offer an attractive alternative to the conforming methods.
These methods also use polynomial basis functions but 
do not use vertex degrees of freedom. As a consequence,
they can be efficiently implemented via hybridization. 
See also an interesting nonconforming mixed method 
\cite{PechsteinSchoberl11} that uses tangential-continuous displacement field, and normal-normal continuous 
symmetric stress field.

% However, most of the nonconforming methods are low order, except those in 
% \cite{GopalakrishnanGuzman11} and \cite{PechsteinSchoberl11}. 
% Moreover, these methods do not provide optimal convergence rate for the stress variable, in contrast to the 
% conforming methods.
Recently, another high-order conforming discretization of $H(\mathrm{div},\Omega;\mathbb{S})$ 
without using vertex degrees of freedom
was introduced in \cite{GuzmanNeilan14} for triangular meshes.
The space enriches the symmetric-tensorial polynomial fields of degree $k\ge 2$  
with {\em three} rational basis functions on each triangle. The resulting mixed methods can be efficiently 
implemented via hybridization.

In this paper, we obtain 
high-order conforming discretizations of $H(\mathrm{div},\Omega;\mathbb{S})$ 
without vertex degrees of freedom on general polygonal meshes. 
On a general polygon, we enrich the 
symmetric-tensorial polynomial fields with certain number of composite/rational 
functions given by explicit formulas. Our spaces 
on triangular meshes is similar as those in \cite{GuzmanNeilan14}, 
while our spaces on rectangular meshes enrich the symmetric-tensorial polynomial fields 
with {\em four} rational functions and {\em two} exponential functions
along with a minimal number of polynomial functions.

To end, let us point out that there are other HDG methods that superconverge on meshes of arbitrary polyhedral elements have been recently introduced. A modification of the method \cite{SoonThesis08} which can achieve optimal convergence 
was introduced in \cite{QiuShenShi16}. The spaces, which do {\em not } admit \mm-decompositions,  are
$
 \VV(K)\times \W(K)\times \bld M(\dK) :=
 \pcol_{k}(K;\mathbb{S})\times \ppol_{k+1}(K)
 \times \ppol_k(\dK)
$
and the stabilization function is $\alpha(\bld u_h-\widehat{\bld u}_h):= \frac{1}{h}\left(
\bld P_M\,\bld u_h-\widehat{\bld u}_h\right)$\red{, where $\bld P_M$ denotes the $L^2$-projection into the space of traces $\bld{M}_h$}.
These methods were proven to achieve optimal convergence order of $k+1$ 
for stress and $k+2$ for displacement on general polygonal/polyhedral meshes
for $k\ge 1$.  Another method that can achieve this is the hybrid high-order (HHO) method introduced (in primal form) in
\cite{DiPietroErn15b}. Typically, our spaces $\VV(K)\times \W(K)$ are bigger, but 
the globally-coupled system for these and our methods has the same size and sparsity structure, as it only depends on the
space of traces $\bld M(\dK)$. 
On the other hand, our mixed methods provide $H(\mathrm{div},\Omega;\mathbb{S})$-conforming approximate 
stresses with optimally convergent divergence. 
% This allows us to define an elementwise postprocessing to obtain a new approximation to the stress, with the same properties, which can be applied to the above-mentioned methods, as well as the corresponding sandwiched HDG methods.

The rest of the paper is organized as follows. 
In Section \ref{sec:error}, we present new a priori
error estimates for HDG methods with spaces admitting \mm-decompositions. 
In Section \ref{sec:m1}, 
we present a characterization of \mm-decompositions and present three ways to construct them.
In Section \ref{sec:main}, we give 
{\em ready-for-implementation} spaces admitting \mm-decompositions on general polygonal elements.
In Section \ref{sec:proof}, we prove the main result of Section \ref{sec:main}, 
Theorem \ref{thm:polygon}. In Section \ref{sec:n},
we present numerical results on triangular meshes to validate our theoretical results.
Finally, in Section \ref{sec:c} we end with some concluding remarks.

\section{The error estimates}
\label{sec:error}
In this section, we introduce the notion of an \mm-decomposition and then present 
our main results on the 
a priori error estimates of 
the HDG methods defined with spaces admitting \mm-decompositions. 
The proofs of the error estimates are very similar to those 
for the diffusion case \cite{CockburnFuSayas16}.

\subsection{Definition of an \mm-decomposition}
To simplify the notation, when there is no possible confusion,
we do not indicate the domain on which the functions of a given space are
defined. For example, instead of  $\VV(K)$, we simply write $\VV$.

The notion of an \mm-decomposition relates the trace of the normal component of the space of approximate 
stress $\VV\subset \{\und\tau\in H(\mathrm{div},K;\mathbb{S}):\;\und\tau\,\bld n|_\dK\in\bld L^2(\dK)\}$ and 
the trace of the space of approximate displacement
$\W\subset \bld H^1(K)$ with the space of approximate traces  $\bld M\subset \bld L^2(\partial K)$.  To define it, 
we need to consider the combined trace operator
\begin{alignat*}{6}
\tr : & \VV\times \W &\quad\longrightarrow\quad 
& \bld L^2(\partial K)\\
& (\vv,\w) &\quad\longmapsto\quad & 
(\vv\nn +\w)|_{\partial K}
\end{alignat*}
where $\nn:\partial K\to \mathbb R^n$ is the unit outward pointing normal field on $\partial K$.

\begin{definition} [The \mm-decomposition] 
\label{def:m}
We say that $\VV\times \W$ admits an \mm-decomposition when
\begin{itemize}
\item[{\rm(a)}] $\tr (\VV\times \W)\subset \bld M$,
\end{itemize}
and there exist subspaces $\Vtilde\subset\VV$ and $\Wtilde\subset \W$ satisfying
\begin{itemize}
\item[{\rm(b)}] $\und\epsilon (\W)\subset \Vtilde$,  $\divv\VV\subset \Wtilde,$
\item[{\rm(c)}] $\tr: \Vtilde^\perp\times \Wtilde^\perp\rightarrow \bld M$ is an isomorphism.
\end{itemize}
Here $\Vtilde^\perp$ and $\widetilde{\W}^\perp$ are the $L^2$-orthogonal complements of 
$\Vtilde$ in $\VV$, and of $\widetilde{\W}$ in $\W$, respectively.
\end{definition}
%As will be clear in Section \ref{sec:m1}, we can prove that the space $\Wtilde$ is 
%nothing but  $\divv \VV$. However,  the space $\Vtilde$ is not uniquely defined.

\subsection{The HDG projection} 
Next, we show an immediate consequence of the fact that
the space $\VV\times \W$ admits an \mm-decomposition, namely,
the  existence of an auxiliary HDG-projection 
which is the key to our \red{error} analysis.

To state the result, we need to introduce the quantities related to the stabilization operator $\alpha$:
\[
a_{\Wtilde^\perp}:=\begin{cases}
\inf_{\bld\mu\in \gamma\Wtilde^\perp\setminus{\{\bld 0\}}} \langle \alpha(\bld \mu),\bld \mu\rangle_{\partial K}/
\|\bld \mu\|^2_{\partial K}&\quad\mbox{ if } \Wtilde^\perp\neq\{\bld 0\},\\
\infty&\quad\mbox{ if } \Wtilde^\perp=\{\bld 0\},
\end{cases}
\]
and
\[
\|\alpha\|:=\sup_{\bld\lambda, \bld\mu\in \bld M\setminus\{\bld 0\}}
\langle\alpha(\bld\lambda), \bld\mu\rangle_{\partial K}/(\|\bld\lambda\|_{\partial K}\|\bld\mu\|_{\partial K}).
\]

Throughout this section, we assume that, on each element $K$, 
the space $\VV\times \W$ admits an \mm-decomposition, and that
the stabilization operator $\alpha: \bld L^2(\dK)\rightarrow
\bld L^2(\dK)$ 
satisfies the following three properties:
\begin{subequations}
\begin{itemize}
\item[{\rm(S1) }]
$\langle \alpha(\bld \lambda),\bld \mu\rangle_{\partial K}=\langle \alpha(\bld\mu),\bld\lambda\rangle_{\partial K}$
for all $\bld\lambda, \bld\mu\in \bld L^2(\dK)$.
\item [{\rm(S2) }]
$\langle \alpha(\bld\mu),\bld\mu\rangle_{\partial K}\ge 0$ for all $\bld\mu\in \bld L^2(\dK)$. 
\item [{\rm(S3) }]
$a_{\Wtilde^\perp}>0$.
\end{itemize}
\end{subequations}
% We denote these properties as Properties (S) for further reference.
Properties (S1) and (S2) mean that $\alpha$ is self-adjoint and non-negative, 
while  property (S3) means that $\alpha$ is positive definite on the trace space $\gamma \Wtilde^\perp$.
{When $\Wtilde^\perp=\{\bld 0\}$, we take $\alpha = 0$ so that the resulting 
method is a (hybridized) mixed method. In this case, 
$a_{\Wtilde^\perp} = \infty$ and $\|\alpha\| = 0$. 
On the other hand, when 
$\Wtilde^\perp\not=\{\bld 0\}$, we can take 
$\alpha =Id$ to be the identity operator. In this case, we have 
$a_{\Wtilde^\perp} = 1$ and $\|\alpha\| = 1$. 
See in \cite[Section 2.1]{CockburnGopalakrishnanNguyenPeraireSayasHDGStokes11}
for a presentation of different choices of the stabilization operator 
$\alpha$.
}

The HDG-projection ${\Pi}_h(\und{\sigma}, \bld u )=
(\Pis\, \und{\sigma},\Piv \bld u)\in\VV\times \W$
is defined as follows:
\begin{subequations}
\label{proj}
\begin{alignat}{2}
\label{proj-1}
(\Pis\, \und{\sigma}, \und{\tau})_K = &\;
 (\und{\sigma}, \und{\tau})_K  &&\;\quad
\forall \und{\tau} \in \widetilde{\und\Sigma},\\
 \label{proj-2}
 (\Piv \bld u, \w)_K  =&\; (\bld u, \w)_K&&
\;\quad \forall \w \in \widetilde{\bld V},\\
\label{proj-3}
\langle \Pis\, \und{\sigma} \n - {\alpha(\Piv\bld u)} 
, \bld \mu \rangle_{\dK}  =&\; 
\langle \und{\sigma}  \n -\alpha(\bld P_{M} \bld u), \bld \mu 
\rangle_{\dK}&&\; \quad\forall \bld \mu
\in \bld M.
\end{alignat}
\end{subequations}

We have the following result on the approximation properties of the projection.
\gfu{The proof, given in Appendix B for completeness,
is 
very similar to the diffusion case presented in \cite[Appendix]{CockburnFuSayas16}.}
 
\begin{theorem}[Approximation properties of the HDG-projection]
 \label{actual-proj}
% Let $\VV\times \W$ admit an \mm-decompositon and let 
% the stabilization function $\alpha$ satisfy Properties $\mathrm{(S)}$.
% Then, 
The auxiliary HDG-projection in \eqref{proj} is well-defined.
Moreover, 
we have
\begin{alignat*}{1}
    \| \und\sigma-\Pis {\und{\sigma}} \, \|_K\le&\;
    \| (Id-\und P_{\Sigma})\ {\und{\sigma}}\, \|_K
+\mathsf{C}_1\,h^{1/2}_K\,\|\left((Id-\und P_{{\Sigma}}){\und{\sigma}}\right) \n\|_{ \partial K}
\\&+  \mathsf{C}_2\,h_K\,
\|(Id-\bld {P}_{\widetilde{V}})\divv {\und{\sigma}}\|_{K}
+
\mathsf{C}_3\,h^{1/2}_K\,\|(Id-\bld P_{V})\bld u\|_{\partial K},
    \\
    \|\bld u-\Piv  \bld u \|_K \le&\;
    \| (Id-\bld P_V) \bld u\|_K +\mathsf{C}_4\,h^{1/2}_K\,
\|(Id- \bld P_V) \bld u\|_{\partial K}
  \\                         &+
\mathsf{C}_5\,h_K\,
\|(Id-\bld {P}_{\widetilde{V}})\divv {\und{\sigma}}\|_{K},
  \end{alignat*}
  where $\mathsf{C}_1:=C_{\Vperp}$ and
\begin{alignat*}{6}
\mathsf{C}_2&:=\frac{C_{\Wtilde^\perp}}{a_{\Wtilde^\perp}}\,C_{\Vperp}\,\|\alpha\|,
&&
\;\;
\mathsf{C}_3&&:={\left(1+\,\frac{\|\alpha\|}{a_{\Wtilde^\perp}}\right)\,C_{\Vperp}\,\|\alpha\|},
% \\
\;\mathsf{C}_4&:={\frac{C_{\Wtilde^\perp}}{a_{\Wtilde^\perp}}\,\|\alpha\|},
&&
\;\;\mathsf{C}_5&&:=\frac{C_{\Wtilde^\perp}^2}{a_{\Wtilde^\perp}}.
\end{alignat*}
Here
\begin{alignat*}{2}
C_{\Vperp}
&:=\sup_{\vv\in\Vperp\setminus\{\boldsymbol{0}\}} h^{-1/2}_K\,\|\vv\|_K/\|\vv\,\boldsymbol{n}\|_{\partial K},
\;\;\;\;
C_{\Wtilde^\perp}\!
&:=\sup_{\w\in\Wtilde^\perp\setminus\{{0}\}} h^{-1/2}_K\,\|\w\|_K/\|\w\|_{\partial K}.
\end{alignat*}
\end{theorem}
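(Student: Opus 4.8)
The plan is to establish the three assertions --- well-posedness and the two bounds --- separately, on a fixed element $K$, following the template of the diffusion analysis in \cite{CockburnFuSayas16}. For well-posedness, observe first that property~(c) makes $\tr:\Vperp\times\Wperp\to\bld M$ an isomorphism, so $\dim\bld M=\dim\Vperp+\dim\Wperp$ and the defining system \eqref{proj} is square; it therefore suffices to prove injectivity. Since the projection of zero data should be zero, I set $\und\sigma=\bld 0$, $\bld u=\bld 0$: equations \eqref{proj-1}--\eqref{proj-2} force $\Pis\bld 0\in\Vperp$ and $\Piv\bld 0\in\Wperp$, and integrating by parts while using $\und\epsilon(\W)\subset\Vtilde$ and $\divv\VV\subset\Wtilde$ from~(b) gives the orthogonality $\langle(\Pis\bld 0)\,\n,\Piv\bld 0\rangle_{\partial K}=0$. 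Testing \eqref{proj-3} against $\bld\mu=\Piv\bld 0$ --- legitimate since $(\Piv\bld 0)|_{\partial K}\in\bld M$ by~(a) --- then yields $\langle\alpha(\Piv\bld 0),\Piv\bld 0\rangle_{\partial K}=0$, and the positivity $a_{\Wtilde^\perp}>0$ of~(S3) gives $(\Piv\bld 0)|_{\partial K}=\bld 0$. Feeding this back into \eqref{proj-3} gives $(\Pis\bld 0)\,\n=\bld 0$, so $\tr(\Pis\bld 0,\Piv\bld 0)=\bld 0$ and the isomorphism~(c) forces both components to vanish.

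For the bounds I would compare the projection with the $L^2$-projections $\und P_\Sigma$ and $\bld P_V$ onto $\VV$ and $\W$. Writing $\ds:=\Pis\und\sigma-\und P_\Sigma\und\sigma$ and $\du:=\Piv\bld u-\bld P_V\bld u$, conditions \eqref{proj-1}--\eqref{proj-2} together with the defining property of the $L^2$-projections give $\ds\in\Vperp$ and $\du\in\Wperp$, while \eqref{proj-3} yields the single error identity
\[
\langle \ds\,\n-\alpha(\du),\bld\mu\rangle_{\partial K}
=\langle ((Id-\und P_\Sigma)\und\sigma)\,\n,\bld\mu\rangle_{\partial K}
+\langle \alpha(\bld P_V\bld u-\bld P_M\bld u),\bld\mu\rangle_{\partial K}
\]
for all $\bld\mu\in\bld M$. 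Since $\und\sigma-\Pis\und\sigma=(Id-\und P_\Sigma)\und\sigma-\ds$, and similarly for $\bld u$, the triangle inequality reduces the two target estimates to bounding $\|\ds\|_K$ and $\|\du\|_K$; the scaling constants $C_{\Vperp}$ and $C_{\Wtilde^\perp}$ then reduce these in turn to the boundary quantities $\|\ds\,\n\|_{\partial K}$ and $\|\du\|_{\partial K}$.

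The heart of the argument is the bound on $\|\du\|_{\partial K}$. Since $\du\in\Wperp$, property~(S3) gives $a_{\Wtilde^\perp}\|\du\|_{\partial K}^2\le\langle\alpha(\du),\du\rangle_{\partial K}$, so I would take $\bld\mu=\du$ in the error identity and estimate the right-hand side. Two structural cancellations drive the computation. First, integration by parts with $\und\epsilon(\W)\subset\Vtilde$ and $\divv\VV\subset\Wtilde$ gives $\langle\ds\,\n,\du\rangle_{\partial K}=0$. Second --- and this is the delicate point --- the consistency term $\langle((Id-\und P_\Sigma)\und\sigma)\,\n,\du\rangle_{\partial K}$ must \emph{not} be estimated on the boundary but integrated by parts: because $(Id-\und P_\Sigma)\und\sigma$ is $L^2$-orthogonal to $\VV\supset\und\epsilon(\du)$, the symmetric-gradient term vanishes; because $\divv\und P_\Sigma\und\sigma\in\Wtilde$ is orthogonal to $\du\in\Wperp$, the volume term collapses to $(\divv\und\sigma,\du)_K$; and since $\du$ is orthogonal to $\Wtilde\ni\bld P_{\widetilde{V}}\divv\und\sigma$, I may insert the projection to obtain
\[
\langle ((Id-\und P_\Sigma)\und\sigma)\,\n,\du\rangle_{\partial K}
=((Id-\bld P_{\widetilde{V}})\divv\und\sigma,\du)_K .
\]
This identity is where the divergence contribution in the estimates is born. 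Bounding the last term of the error identity by $\|\alpha\|\,\|\bld P_V\bld u-\bld P_M\bld u\|_{\partial K}\le\|\alpha\|\,\|(Id-\bld P_V)\bld u\|_{\partial K}$ --- the final inequality because $\bld P_V\bld u-\bld P_M\bld u=-\bld P_M((Id-\bld P_V)\bld u)$ and $\bld P_M$ is an $L^2(\partial K)$-contraction --- and applying $\|\du\|_K\le C_{\Wtilde^\perp}h_K^{1/2}\|\du\|_{\partial K}$ to the divergence term produces the claimed bound for $\|\du\|_{\partial K}$, hence the estimate for $\|\bld u-\Piv\bld u\|_K$ with the stated $\mathsf{C}_4$ and $\mathsf{C}_5$.

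To close, I would test the error identity against $\bld\mu=\ds\,\n\in\bld M$, bound the three resulting terms by Cauchy--Schwarz and the operator bound $\|\alpha\|$, and substitute the bound already obtained for $\|\du\|_{\partial K}$; multiplying by $C_{\Vperp}h_K^{1/2}$ and collecting coefficients then reproduces $\mathsf{C}_1,\mathsf{C}_2,\mathsf{C}_3$. The step I expect to require the most care is the integration by parts on the consistency term above: recognizing that it should be rewritten as a volume divergence term rather than estimated on $\partial K$ is exactly what generates the $\|(Id-\bld P_{\widetilde{V}})\divv\und\sigma\|_K$ contributions, and it is the one place where all three defining properties of the $\bld M$-decomposition, together with the symmetry of the tensors, are invoked at once.
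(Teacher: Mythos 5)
Your proposal is correct and follows essentially the same route as the paper's proof in Appendix B: comparing with the $L^2$-projections to get $\ds\in\Vtilde^\perp$, $\du\in\Wtilde^\perp$, testing the boundary equation with $\bld\mu=\du$ and then $\bld\mu=\ds\,\n$, and using exactly the key integration by parts that converts $\langle((Id-\und P_\Sigma)\und\sigma)\,\n,\du\rangle_{\partial K}$ into $((Id-\bld P_{\widetilde V})\divv\und\sigma,\du)_K$ via the inclusions $\und\epsilon(\W)\subset\Vtilde$ and $\divv\VV\subset\Wtilde$. The one genuine addition is your explicit well-posedness argument (square system by property (c), injectivity via (S3) and the trace isomorphism), which is sound and which the paper leaves implicit.
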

Note that, if $\W=\Wtilde=\divv \VV$, then
$\mathsf{C}_i=0$ for $i=2,3,4,5$ since in this case we have
$a_{\Wtilde^\perp}=\infty$ and $\|\alpha\|= 0$.
{Note also} that the above error estimates depend on the choice of the space 
$\Vtilde$ only through the stability 
constant $C_{\Vperp}$. 
{The constants $C_{\Vperp}$ and $C_{\Wtilde^\perp}$ are  
optimal bounds for inverse inequalities bounding the $\bld L^2(K)$-norm of $\vv\in \Vperp$ and 
$\w\in \Wtilde^\perp$ by the $\bld L^2(\partial K)$-norm of their respective traces.
They are independent of the mesh size $h_K$.
}

Now, if $\upol{k}{K}\times \bpol{k}{K}\subset \VV\times \W$, where 
$\upol{k}{K}$ is the symmetric-matrix-valued polynomial space of degree no greater than $k$
and $\bpol{k}{K}$ is the vector-valued polynomial space of degree no greater than $k$,
with the choice of stabilization
operator $\alpha = Id$, 
we get the following estimates from Theorem \ref{actual-proj}:
\begin{align*}
  \| \und\sigma-\Pis\und\sigma\|_K \le C\,h_K^{k+1} \left(
 \|\und\sigma\|_{k+1,K}+\|\bld u\|_{k+1,K}
 \right),\\
  \| \bld u-\Piv\bld u\|_K \le C\,h_K^{k+1} \left(
   \|\und\sigma\|_{k+1,K}+\|\bld u\|_{k+1,K}
 \right),
\end{align*}
where $\|\cdot\|_{k+1,K}$ denotes the $H^{k+1}(K)$-norm.
Hence, the HDG-projection gives quasi-optimal converge in both variables.

\subsection{The error estimates}
Now, we are ready to present our main results on the a priori error estimates. 
We display their proofs in Appendix A.
% Throughout this section, when not specified, we denote 
%  $C$ to be a generic constant that only depends on the finite element spaces $\VV\times \W$ 
%  and shape regularity of the  element $K$ through trace and inverse inequalities.

\subsubsection{Estimate of the stress approximation}
We start with the estimation on the projection error $\Pis\und{\sigma}-\und\sigma_h$.
\begin{theorem}\label{L2error-q}
% Suppose that for every $K\in \Oh$, the space $\VV(K)\times \W(K)$
% admits an $\bld M(\dK)$-decomposition and that the stabilization function $\alpha$ satisfies 
% Properties $\mathrm{(S)}$.
% Then, f
For the solution of of the HDG method given by \eqref{weak formulation},
we have
\begin{subequations}
 \label{sigma-e}
\begin{alignat}{1}
 \label{sigma-e1}
 \|\Pis\, \und{\sigma} - \und{\sigma}_h\|_{\mathcal{A},\Oh} 
% + \|\left(\Piv\, \bld{u} - \bld u_h\right) 
% - \left(\Pim \, \bld u - \widehat{\bld u}_h\right)\|_{\alpha,\dOh} 
 &\leq \|\und\sigma-\Pis\, \und{\sigma}\|_{\mathcal{A},\Oh}. 
 \end{alignat} 
 Moreover, if we have 
 a homogeneous and isotropic material with the compliance tensor  
 given by \eqref{isotropic},
%  \[
%    \mathcal{A}\und\sigma = 
%    \frac{1}{2\mu}\left(\und\sigma- \frac{\lambda}{2\mu+n\lambda}\mathrm{tr}(\und\sigma)\und I\right),
%  \]
then 
\begin{alignat}{1}
\label{sigma-e2}
 \|\Pis\, \und{\sigma} - \und{\sigma}_h\|_{\Oh} 
 &\leq C\,\left(1+h^{1/2}\|\alpha\|\right)\|\und\sigma-\Pis\, \und{\sigma}\|_{\Oh}. 
 \end{alignat}
 \end{subequations}
 Here the constant $C$ is independent of the mesh size $h$, the exact solution, and the compliance 
 tensor $\mathcal{A}$.
 \end{theorem}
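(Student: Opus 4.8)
The plan is to work throughout with the \emph{projected} errors
\[
\es := \Pis\,\und\sigma - \und\sigma_h,\qquad \eu := \Piv\,\bld u - \bld u_h,\qquad \euhat := \Pim\,\bld u - \uhat,
\]
together with the stress projection error $\ds := \und\sigma - \Pis\,\und\sigma$. First I would record the four error equations obtained by subtracting the HDG weak form \eqref{weak formulation} from the same form evaluated at the exact solution; consistency holds because the exact $(\und\sigma,\bld u)$ satisfies the analogues of \eqref{weak formulation-1}--\eqref{weak formulation-2} after one integration by parts (using the symmetry of $\und\sigma$), is conservative across interior faces, and equals $\bld g$ on $\dO$. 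Writing $e_u:=\bld u-\bld u_h$, $\hat e_u:=\bld u-\uhat$ and substituting $\shatn=\und\sigma_h\n-\alpha(\bld u_h-\uhat)$ recasts these as equations for $\es,\eu,\euhat$.

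The core is an energy identity. Testing the first error equation with $\und\tau=\es\in\Sh$ makes the genuine-error terms drop out: $\divv\es\in\divv\VV\subset\Wtilde$ so the contribution of $\bld u-\Piv\bld u$ vanishes by \eqref{proj-2}, and $\es\n\in\gamma(\VV)\subset\bld M$ so the contribution of $\bld u-\Pim\bld u$ vanishes since $\Pim$ is the $L^2$-projection onto $\bld M$. Testing the second error equation with $\w=\eu$ and using $\und\epsilon(\eu)\in\und\epsilon(\W)\subset\Vtilde$ kills the $(\ds,\und\epsilon(\eu))$ term through \eqref{proj-1}. Testing the conservativity and Dirichlet equations with $\bld\mu=\euhat$ forces $\euhat=0$ on $\dO$, and the residual boundary terms are rearranged using the defining relation \eqref{proj-3} together with the algebraic identity $(e_u-\hat e_u)+(\Piv\bld u-\Pim\bld u)=\eu-\euhat$. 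After integrating by parts once to turn $(\eu,\divv\es)$ into $(\und\epsilon(\eu),\es)$, everything collapses to
\begin{equation*}
(\mathcal{A}\es,\es)_\Oh + \bint{\alpha(\eu-\euhat)}{\eu-\euhat} = -(\mathcal{A}\ds,\es)_\Oh.
\end{equation*}
Estimate \eqref{sigma-e1} is then immediate: the boundary term is nonnegative by (S2), and Cauchy--Schwarz in the $\mathcal{A}$-inner product on the right gives $\|\es\|_{\mathcal{A},\Oh}^2\le\|\ds\|_{\mathcal{A},\Oh}\|\es\|_{\mathcal{A},\Oh}$.

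For the unweighted bound \eqref{sigma-e2} I would split each symmetric field into its deviator and trace, $\es=\es^D+\tfrac1n(\tr\es)\und I$, so that $\|\es\|_\Oh^2=\|\es^D\|_\Oh^2+\tfrac1n\|\tr\es\|_\Oh^2$. From the isotropic form \eqref{isotropic} one computes $(\mathcal{A}\und\tau,\und\tau)=\tfrac1{2\mu}\|\und\tau^D\|^2+\tfrac1{n(2\mu+n\lambda)}\|\tr\und\tau\|^2$, so the deviatoric part is controlled \emph{uniformly}: $\|\es^D\|_\Oh^2\le 2\mu\|\es\|_{\mathcal{A},\Oh}^2\le 2\mu\|\ds\|_{\mathcal{A},\Oh}^2\le\|\ds\|_\Oh^2$ by \eqref{sigma-e1}, with all constants independent of $\lambda,\mu$. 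The remaining trace (pressure-like) part $\|\tr\es\|_\Oh$ is only seen by the $\mathcal{A}$-norm through the vanishing factor $1/(2\mu+n\lambda)$, so I would control it by invoking surjectivity of the divergence on $\bld H^1_0(\Omega)$: choose $\bld v\in\bld H^1_0(\Omega)$ with $\divs\bld v=\tr\es$ and $\|\bld v\|_{1,\Omega}\le C\|\tr\es\|_\Oh$, giving $\tfrac1n\|\tr\es\|_\Oh^2=(\es,\gradv\bld v)_\Oh-(\es^D,\gradv\bld v)_\Oh$. The second term is absorbed by the already-controlled deviator, while $(\es,\und\epsilon(\bld v))_\Oh$ is estimated from the error equations after inserting a projection of $\bld v$.

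The main obstacle is precisely this trace estimate, and the difficulty is twofold. Because $\bld v\notin\Vh$, testing the error equations against it produces inter-element jumps $\jmp{\es\n}$ that are not free: conservativity of $\shatn$ forces $\jmp{\und\sigma_h\n}=\jmp{\alpha(\bld u_h-\uhat)}$, and \eqref{proj-3} likewise ties $\jmp{\Pis\und\sigma\n}$ to the stabilization. Bounding these jump contributions against $\|\bld v\|_{1,\Omega}$ via scaled trace and inverse inequalities, and bounding the stabilization energy $\bint{\alpha(\eu-\euhat)}{\eu-\euhat}$ by $\|\ds\|_\Oh$ through the identity above, is what generates the extra $(1+h^{1/2}\|\alpha\|)$ factor. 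The overriding subtlety is to keep every constant independent of $\lambda$ (the locking-free requirement), which is exactly why one cannot simply pass between the $\mathcal{A}$-norm and the $L^2$-norm and must route the trace through the inf-sup argument.
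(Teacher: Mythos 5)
Your proposal follows essentially the same route as the paper's own proof (Appendix A, Steps 1--2): the same error equations for the projected errors, the same energy identity
$(\mathcal{A}\es,\es)_\Oh+\langle\alpha(\eu-\euhat),\eu-\euhat\rangle_{\dOh}=-(\mathcal{A}(\und\sigma-\Pis\und\sigma),\es)_\Oh$
followed by Cauchy--Schwarz for \eqref{sigma-e1}, and, for \eqref{sigma-e2}, the same deviator/trace splitting with the deviatoric part absorbed uniformly through the $2\mu$-weighting and the trace part handled by a divergence lifting in $\bld H^1_0(\Omega)$, insertion of $\bld P_V$, the error equation \eqref{error-equation-2}, single-valuedness of $\eshat\n$, and scaled trace/inverse inequalities producing the factor $\left(1+h^{1/2}\|\alpha\|\right)$.

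There is, however, one step that fails as literally written: $\divs:\bld H^1_0(\Omega)\to L^2(\Omega)$ is \emph{not} surjective --- its range is the mean-zero subspace, since $\int_\Omega\divs\bld v=\langle\bld v\cdot\n,1\rangle_{\dO}=0$ --- so your field $\bld v\in\bld H^1_0(\Omega)$ with $\divs\bld v=\mathrm{tr}(\es)$ exists only after one verifies the compatibility condition $(\mathrm{tr}(\es),1)_\Omega=0$, which your sketch never checks. The paper establishes it by taking $\und\tau:=\und I$ in the error equation \eqref{error-equation-1}: since $\divv\und I=\bld 0$, the $\eu$-term drops; the boundary term $\bint{\euhat}{\und I\,\n}$ vanishes because $\euhat$ is single-valued, the normals from adjacent elements are opposite on interior faces, and $\euhat=\bld 0$ on $\dO$ by \eqref{error-equation-4}; and the right-hand side vanishes because $\und I=\und\epsilon(\bld x)\in\und\epsilon(\W)\subset\Vtilde$, so $(\und\sigma-\Pis\und\sigma,\und I)_\Oh=0$ by \eqref{proj-1}. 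This last point also requires $\bpol{1}{K}\subset\W(K)$ so that $\und I$ lies in $\und\epsilon(\W)$ --- an assumption your argument should make explicit. Once this mean-zero property is supplied, the rest of your outline (jump terms tied to the stabilization, stabilization energy bounded via the energy identity, $\lambda$-uniformity of all constants) matches the paper's $T_1/T_2$ computation and closes the proof.
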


Note that, since the estimate \eqref{sigma-e2} implies
$
\| \und{\sigma}- \und{\sigma}_h\|_{\Oh} \le C\, \| \und{\sigma}- \Pis\, \und{\sigma}\|_{\Oh},
$
the error $\| \und{\sigma}- \und{\sigma}_h\|_{\Oh}$ 
only depends on the approximation properties of the first component of the projection
$\Pi_h$.
Note also that  the estimate \eqref{sigma-e2} implies that the method is {\em free from volumetric locking}
in the sense that the error $\|\und\sigma-\und\sigma_h\|_\Oh$ does not grow as 
the Lam\'e constant $\lambda\rightarrow \infty$ in the incompressible limit.

\subsubsection{Local estimates of the piecewise derivatives and the jump term}
Now, we present local 
stability and error estimates on the piecewise divergence of $\und\sigma_h$,
the piecewise symmetric gradient of $\bld u_h$, 
and the jump term $\bld u_h-\widehat{\bld u}_h$, similar to the results in  
\cite[Section 4]{CockburnFuSayas16}.
\begin{theorem} 
\label{thm:stability}
% Given an element  $K\in \Oh$, if the space $\VV(K)\times \W(K)$
% admits an $\bld M(\dK)$-decomposition and  the stabilization function $\alpha$ satisfies  
% Properties $(\mathrm{S})$.
% Then, 
For the solution of \eqref{weak formulation},
we have the following local stability  and error estimates:
\begin{alignat*}{1}
\|\divv \und \sigma_h\|_K
&\le C_1\,\|\mathcal{A}\|_{L^\infty(K)}^{1/2}\,\|\und{\sigma}_h\|_{\mathcal{A},K}+ {C_2\, \|P_{\W} \bld f\|_K},\\
\|\und\epsilon( \bld u_h)\|_K
&\le C_3\,\|\mathcal{A}\|_{L^\infty(K)}^{1/2}\,\|\und{\sigma}_h\|_{\mathcal{A},K}+ {C_4\, \|P_{\Wtilde^\perp}\bld f\|_K},
\\
\|\bld u_h-\widehat{\bld u}_h\|_{\partial K}
&\le C_5\,h_K^{1/2}\|\mathcal{A}\|_{L^\infty(K)}^{1/2}\,\|\und{\sigma}_h\|_{\mathcal{A},K}+ {C_6\,h_K^{1/2} \|P_{\Wtilde^\perp}\bld f\|_K},
\end{alignat*}
\begin{alignat*}{1}
\|\divv (\Pis\und\sigma-\und \sigma_h)\|_K
&\le C_1\,\|\mathcal{A}\|_{L^\infty(K)}^{1/2}\,\|\und\sigma-\und \sigma_h\|_{\mathcal{A},K}\\
\|\und\epsilon( \Piv\bld u-\bld u_h)\|_K
&\le C_3\,\|\mathcal{A}\|_{L^\infty(K)}^{1/2}\,\|\und\sigma-\und \sigma_h\|_{\mathcal{A},K},
\\
\|\Piv \bld u-\bld u_h-(\Pim\bld u-\widehat{\bld u}_h)\|_{\partial K}
&\le C_5\,h_K^{1/2}\|\mathcal{A}\|_{L^\infty(K)}^{1/2}\,\|\und\sigma-\und \sigma_h\|_{\mathcal{A},K},
\end{alignat*}
where
\begin{alignat*}{5}
C_1&:=C_{\Vtilde^\perp}\,C_{\divv \VV}\,\|\alpha\|\,\left(1+\frac{\|\alpha\|}{a_{\Wtilde^\perp}}\right),
&&\;\; 
C_2 &&:=1+ C_{\divv \VV}\,\|\alpha\|\,
\frac{C_{\Wtilde^\perp}}{a_{\Wtilde^\perp}},
 \\
C_3 &:=1+
C_{\Vtilde^\perp}\,C_{\und\epsilon( \W)}\left(1+\frac{\|\alpha\|}{a_{\Wtilde^\perp}}\right),
&&\;\; 
C_4 &&:=C_{\und\epsilon (\W)}\,
\frac{C_{\Wtilde^\perp}}{a_{\Wtilde^\perp}},\\
C_5&:=C_{\Vtilde^\perp}\left(1+\frac{\|\alpha\|}{a_{\Wtilde^\perp}}\right),
&&\;\;
C_6 &&:= \frac{C_{\Wtilde^\perp}}{a_{\Wtilde^\perp}}.
\end{alignat*}
Here
\begin{alignat*}{2}
C_{\und\epsilon (\W)}&:=
%\sup_{w\in W: \und\epsilon(\w)\neq\boldsymbol{0}} h^{1/2}_K\,\|\und\epsilon(\w)\cdot \boldsymbol{n}\|_{\partial K}/\|\und\epsilon(\w)\|_K,
\sup_{\und{0}\not = \und{\tau}\in \und\epsilon( \W)} 
h^{1/2}_K\,\|\und{\tau}\boldsymbol{n}\|_{\partial K}/\|\und{\tau}\|_K,
% \\
&\;\;\;\;C_{\divv\VV}
&:=\sup_{\bld 0\not= \boldsymbol{v}\in \divv\VV}
h_K^{1/2}\,\|\w \|_{\partial K}/\|\boldsymbol{v}\|_{K}.
\end{alignat*}
\end{theorem}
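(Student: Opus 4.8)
The plan is to derive, on each element $K$, two local identities from \eqref{weak formulation-1}--\eqref{weak formulation-2} and then reduce all six bounds to a single \emph{jump estimate} (the third inequality). Testing \eqref{weak formulation-2} with $\w\in\W$, integrating by parts (using that $\und\sigma_h$ is symmetric, so $(\und\sigma_h,\gradv\w)_K=(\und\sigma_h,\und\epsilon(\w))_K$), and inserting \eqref{trace-q} gives, with $\bld\zeta:=(\bld u_h-\uhat)|_{\partial K}$,
\[
\langle\alpha(\bld\zeta),\w\rangle_{\partial K}=(\divv\und\sigma_h,\w)_K+(\bld f,\w)_K\qquad\forall\,\w\in\W;
\]
testing \eqref{weak formulation-1} with $\und\tau\in\VV$ and integrating by parts gives
\[
\langle\bld\zeta,\und\tau\,\n\rangle_{\partial K}=(\und\epsilon(\bld u_h),\und\tau)_K-(\mathcal{A}\und\sigma_h,\und\tau)_K\qquad\forall\,\und\tau\in\VV.
\]

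The structural fact that drives the whole proof is the boundary orthogonality
\[
\langle\und\tau\,\n,\w\rangle_{\partial K}=(\divv\und\tau,\w)_K+(\und\tau,\und\epsilon(\w))_K=0\qquad\forall\,\und\tau\in\Vtilde^\perp,\ \w\in\Wtilde^\perp,
\]
which holds because $\divv\und\tau\in\divv\VV\subset\Wtilde$ is $L^2(K)$-orthogonal to $\w$ and $\und\epsilon(\w)\in\und\epsilon(\W)\subset\Vtilde$ is $L^2(K)$-orthogonal to $\und\tau$, by part (b) of Definition \ref{def:m}. Since $\bld\zeta\in\bld M$ and $\tr$ is an isomorphism by part (c), I write $\bld\zeta=(\vv^*\n+\w^*)|_{\partial K}$ with $\vv^*\in\Vtilde^\perp$, $\w^*\in\Wtilde^\perp$, and the orthogonality makes this splitting $L^2(\partial K)$-orthogonal. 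I then bound the two pieces separately: testing the second identity with $\und\tau=\vv^*$ (where $\und\epsilon(\bld u_h)\in\Vtilde$ kills the first term) yields $\|\vv^*\n\|_{\partial K}^2=-(\mathcal{A}\und\sigma_h,\vv^*)_K$, hence $\|\vv^*\n\|_{\partial K}\le C_{\Vtilde^\perp}h_K^{1/2}\|\mathcal{A}\|_{L^\infty(K)}^{1/2}\|\und\sigma_h\|_{\mathcal{A},K}$ after the inverse inequality defining $C_{\Vtilde^\perp}$; testing the first identity with $\w=\w^*$ (where $\divv\und\sigma_h\in\Wtilde$ kills the divergence term), using coercivity (S3) as $a_{\Wtilde^\perp}\|\w^*\|_{\partial K}^2\le\langle\alpha(\w^*),\w^*\rangle=(P_{\Wtilde^\perp}\bld f,\w^*)_K-\langle\alpha(\vv^*\n),\w^*\rangle_{\partial K}$, and inserting the previous bound, produces $C_5$ and $C_6$. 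I expect this to be the main obstacle: the orthogonality is essential, for without it the cross term $\langle\w^*,\vv^*\n\rangle_{\partial K}$ survives and the two component bounds become mutually circular (for $\alpha=Id$ one would be dividing by $1-\|\alpha\|/a_{\Wtilde^\perp}=0$).

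With the jump estimate in hand, the two remaining stability bounds are short: test the first identity with $\w=\divv\und\sigma_h\in\Wtilde\subset\W$ to get $\|\divv\und\sigma_h\|_K$, and the second with $\und\tau=\und\epsilon(\bld u_h)\in\und\epsilon(\W)\subset\Vtilde\subset\VV$ to get $\|\und\epsilon(\bld u_h)\|_K$, in each case bounding the boundary pairing by the relevant inverse inequality ($C_{\divv\VV}$, resp.\ $C_{\und\epsilon(\W)}$) followed by the jump estimate, and using $\|P_{\Wtilde^\perp}\bld f\|_K\le\|P_\W\bld f\|_K$ to report the stated constants.

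Finally, the three error estimates follow by running the \emph{same} three arguments on the projected errors. First I check consistency: the exact solution satisfies \eqref{weak formulation} with $\shatn$ replaced by the single-valued $\und\sigma\,\n$. Subtracting the discrete equations and inserting the HDG-projection \eqref{proj}, the defining relations \eqref{proj-1}--\eqref{proj-2}, together with $\divv\VV\subset\Wtilde$ and $\und\epsilon(\W)\subset\Vtilde$, make the projection-consistency terms vanish and reproduce the two identities above, now with $\bld\zeta$ replaced by $\bld\zeta_e:=(\Piv\bld u-\bld u_h)-(\Pim\bld u-\uhat)$ and $\mathcal{A}\und\sigma_h$ replaced by $\mathcal{A}(\und\sigma-\und\sigma_h)$ (using the telescoping $(\mathcal{A}(\Pis\und\sigma-\und\sigma_h),\cdot)-(\mathcal{A}(\Pis\und\sigma-\und\sigma),\cdot)=(\mathcal{A}(\und\sigma-\und\sigma_h),\cdot)$). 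The delicate point here is that the source term in the displacement error equation cancels: combining \eqref{proj-1}, \eqref{proj-3}, one integration by parts, and $\divv(\Pis\und\sigma-\und\sigma_h)\perp\Wtilde^\perp$ gives $\langle\alpha(\bld\zeta_e),\w\rangle_{\partial K}=0$ for all $\w\in\Wtilde^\perp$. Repeating the three testing arguments verbatim, with $\bld f\equiv0$ and $\|\und\sigma_h\|_{\mathcal{A},K}$ replaced by $\|\und\sigma-\und\sigma_h\|_{\mathcal{A},K}$, yields the last three inequalities with the same $C_1$, $C_3$, $C_5$.
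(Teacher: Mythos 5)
Your proof is correct, delivers exactly the constants $C_1$--$C_6$ stated in Theorem~\ref{thm:stability}, and takes a genuinely different route from the paper's. The paper (Appendix~A, Step~3) introduces the auxiliary \emph{adjoint} HDG-projection of Definition~\ref{adjoint-HDG-proj}, tests the local form \eqref{local-1} of the method with it to obtain the single master identity \eqref{eq:newTheorem4.3}, and then derives the three stability bounds \emph{in parallel} from the three data choices $d=(\und 0,\divv\und\sigma_h,\bld 0)$, $d=(\und\epsilon(\bld u_h),\bld 0,\bld 0)$ and $d=-(\und 0,\bld 0,\bld u_h-\uhat)$, the constants being supplied by the stability of the adjoint projection (Lemma~\ref{lemma:adj-proj}). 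You instead prove the jump estimate \emph{first}, by splitting $\bld u_h-\uhat\in\bld M$ through the trace isomorphism of Definition~\ref{def:m}(c) as $(\vv^*\n+\w^*)|_{\partial K}$ with $(\vv^*,\w^*)\in\Vtilde^\perp\times\Wtilde^\perp$, and then cascade to the divergence and symmetric-gradient bounds via the inverse constants $C_{\divv\VV}$ and $C_{\und\epsilon(\W)}$. Both arguments run on the same engine---the orthogonality $\langle\und\tau\,\n,\w\rangle_{\partial K}=0$ for $(\und\tau,\w)\in\Vtilde^\perp\times\Wtilde^\perp$, which is precisely what makes the paper's adjoint projection well posed and is used to prove Lemma~\ref{lemma:adj-proj} in Appendix~B---so your jump argument can be seen as unpacking that lemma for the single datum $d=(\und 0,\bld 0,\bld u_h-\uhat)$; your aside about the cross term $\langle\alpha(\vv^*\n),\w^*\rangle_{\partial K}$ is accurate, since without the orthogonal splitting the two component bounds couple and, for $\alpha=Id$ where $\|\alpha\|=a_{\Wtilde^\perp}=1$, the resulting fixed-point system degenerates. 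As for what each approach buys: yours is shorter and self-contained (no adjoint projection, no separate stability lemma) and makes transparent why the jump controls everything else; the paper's proves the three estimates independently of one another and produces a reusable tool that mirrors the diffusion analysis of \cite{CockburnFuSayas16} and is tied back to the HDG projection of Theorem~\ref{actual-proj}. Your treatment of the error estimates---rederiving the two local identities for $(\es,\eu,\euhat)$ with $\bld f\equiv 0$ and $\mathcal{A}\und\sigma_h$ replaced by $\mathcal{A}(\und\sigma-\und\sigma_h)$ via telescoping through $\Pis\und\sigma$, the source term cancelling because $\divv\es\in\Wtilde$---coincides with the paper's Lemma~\ref{error-equations} and the final step of its proof, so that part is essentially the paper's argument.
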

Note that, summing over all the elements $K\in \Oh$, we easily get that 
\[\|\divv(\Pis\und\sigma-\und\sigma_h)\|_\Oh,\quad 
\|\und\epsilon(\Piv\bld u-\bld u_h)\|_\Oh,\quad
\|\Piv \bld u-\bld u_h-(\Pim\bld u-\widehat{\bld u}_h)\|_{\dOh}
\]
only depend on $\|\und\sigma-\und\sigma_h\|_{\mathcal{A},\Oh}$, which, in turn, 
depends on the first component of the projection $\Pi_h$.
\subsubsection{Estimates of the approximation of the displacement}

Our next result shows that $\Piv\bld u -\bld  u_h$
 can {\em also} be controlled  
solely in terms of the approximation error of the auxiliary projection 
$ \und{\sigma}- \Pis\, \und{\sigma}$.
In addition, an improvement can be achieved under a typical elliptic regularity
property we state next. We assume that, for any given $\bld \theta \in \bld L^2(\Omega)$, we have 
\begin{equation}\label{regularity}
\|\bld \phi\|_{2, \Omega} + \|\und{\psi}\|_{1, \Omega} \le C \|\bld \theta\|_{\Omega},
\end{equation}
where $C$ only depends on the domain $\Omega$, 
and $(\und{\psi},\bld \phi)$ is the solution of the \emph{dual} problem:
\begin{subequations}\label{adjoint}
\begin{alignat}{2}
\label{adjoint-1}
\mathcal{A}{\und \psi}-\und\epsilon(\bld \phi)  & = 0 \quad &&\text{in $\Omega$,}\\
\label{adjoint-2}
\divv \und{\psi} & = \bld \theta &&\text{in $\Omega$,}\\
\label{adjoint-3}
\bld\phi &= \bld 0 && \text{on $\partial \Omega$.}
\end{alignat}
\end{subequations}

We are now ready to state our \red{result}.

\begin{theorem}\label{L2error-u}
% Suppose that for every $K\in \Oh$, the space $\VV(K)\times \W(K)$
% admits an $\bld M(\dK)$-decomposition, and that the stabilization function $\alpha$ satisfies 
% Properties (S).
If $\bpol{1}{K}\subset \W(K)$ for every element $K\in \mathcal{T}_h$, and the 
elliptic regularity property \eqref{regularity} holds, then,
for the solution of \eqref{weak formulation}, we have
\begin{align*}
 \|\Piv\bld u - \bld u_h\|_{\Oh} \le &\;C\, h \,\| \und{\sigma} - \Pis\, \und{\sigma}\|_{\Oh}. %\\
 %\|\Pim\bld u - \widehat{\bld u}_h\|_{\dOh} \le &\;C\, h^{1/2} \,\| \und{\sigma} - \Pis\, \und{\sigma}\|_{\Oh},
\end{align*}
The constant $C$ depends on $\mathcal{A}$ but is independent of $h$ and the exact solution. 

\end{theorem}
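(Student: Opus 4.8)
The plan is to run an Aubin--Nitsche duality argument, following the template of the diffusion case in \cite{CockburnFuSayas16}. Throughout, write $\es=\Pis\und\sigma-\und\sigma_h$, $\eu=\Piv\bld u-\bld u_h$, $\euhat=\Pim\bld u-\uhat$, and let $e_\sigma=\und\sigma-\und\sigma_h$ denote the full stress error. I would take the dual problem \eqref{adjoint} with data $\bld\theta=\eu$, whose solution $(\und\psi,\bld\phi)$ then satisfies $\divv\und\psi=\eu$ and $\mathcal A\und\psi=\und\epsilon(\bld\phi)$, and, by the elliptic regularity \eqref{regularity}, $\|\und\psi\|_{1,\Omega}+\|\bld\phi\|_{2,\Omega}\le C\|\eu\|_\Oh$. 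Testing the second dual equation against $\eu$ gives the starting point $\|\eu\|_\Oh^2=(\eu,\divv\und\psi)_\Oh$.

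The heart of the proof is to integrate by parts element-by-element, insert the numerical traces, and use the two error equations obtained by subtracting \eqref{weak formulation} from the exact relations, so as to rewrite this right-hand side entirely in terms of $e_\sigma$, the jump $\eu-\euhat$, and the approximation errors of the dual variables. Two facts are the source of the $h$-gain and of the cancellations. First, because $\und\psi$ is $H(\mathrm{div})$-conforming with single-valued normal trace while $\euhat$ is single-valued on $\Eh$ and vanishes on $\partial\Omega$ (there $\uhat=\bld P_M\bld g$ by \eqref{weak formulation-4}, so $\euhat=\bld P_M\bld u-\bld P_M\bld g=\bld 0$), the face contributions $\langle\euhat,\und\psi\n\rangle_{\partial\Oh}$ cancel. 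Second, since $\und\epsilon(\W)\subset\Vtilde$, the defining relation \eqref{proj-1} yields $(\und\epsilon(\eu),\und\psi-\Pis\und\psi)_\Oh=0$, which lets me trade $\und\psi$ for $\Pis\und\psi$ wherever $\und\epsilon(\eu)$ appears and feed the result into the error equations. Using also the stabilization properties (S1)--(S3), I expect to arrive at an identity of the schematic form
\begin{equation*}
\|\eu\|_\Oh^2=(\mathcal A\,e_\sigma,\und\psi-\Pis\und\psi)_\Oh+\langle\eu-\euhat,(\und\psi-\Pis\und\psi)\n\rangle_{\partial\Oh}+(\text{terms in }\alpha),
\end{equation*}
in which every summand pairs a primal error quantity against a dual approximation error.

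I would then estimate each term. The volume term is bounded by $\|e_\sigma\|_{\mathcal A,\Oh}\,\|\und\psi-\Pis\und\psi\|_{\mathcal A,\Oh}$, and Theorem \ref{actual-proj} applied to $(\und\psi,\bld\phi)$, using $\divv\und\psi=\eu$, bounds $\|\und\psi-\Pis\und\psi\|_\Oh$ by $C h(\|\und\psi\|_{1,\Omega}+\|\bld\phi\|_{2,\Omega})$. The skeleton term is handled with a discrete trace inequality together with the bound $\|\eu-\euhat\|_{\partial K}\le C h_K^{1/2}\|e_\sigma\|_{\mathcal A,K}$ from Theorem \ref{thm:stability}, again producing a net factor of $h$. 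It is precisely here, in controlling the dual approximation errors, that the hypothesis $\bpol{1}{K}\subset\W(K)$ enters: reproduction of linear polynomials by the displacement space makes the approximation of the $H^2$-regular dual displacement $\bld\phi$ one order of $h$ better, which is what upgrades the final estimate from $O(1)$ to $O(h)$. Combining everything with $\|e_\sigma\|_{\mathcal A,\Oh}\le C\|\und\sigma-\Pis\und\sigma\|_{\mathcal A,\Oh}$ (from Theorem \ref{L2error-q} and the triangle inequality) and then with the regularity estimate $\|\und\psi\|_{1,\Omega}+\|\bld\phi\|_{2,\Omega}\le C\|\eu\|_\Oh$ gives $\|\eu\|_\Oh^2\le C\,h\,\|\und\sigma-\Pis\und\sigma\|_\Oh\,\|\eu\|_\Oh$; dividing by $\|\eu\|_\Oh$ finishes the proof.

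The main obstacle is the middle step: manipulating the boundary and numerical-trace contributions so that the displacement error collapses cleanly onto $e_\sigma$ and the controllable jump $\eu-\euhat$, with no residual term of size $O(1)\|\eu\|_\Oh$ surviving. This demands the three projection identities \eqref{proj-1}--\eqref{proj-3} and the self-adjointness and non-negativity of $\alpha$ to be used in exactly the right combination; once the identity is in the displayed form, the remaining bounds are routine applications of Theorems \ref{actual-proj}, \ref{thm:stability} and \ref{L2error-q} together with the elliptic regularity \eqref{regularity}.
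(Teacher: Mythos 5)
Your proposal is correct and is, at bottom, the same Aubin--Nitsche duality argument the paper uses: dual problem \eqref{adjoint} with data $\bld\theta=\eu$, elliptic regularity \eqref{regularity}, the orthogonality $(\und\epsilon(\eu),\und\psi-\Pis\und\psi)_\Oh=0$ from \eqref{proj-1}, and the cancellation $\bint{\euhat}{\und\psi\,\n}=0$ from single-valuedness and $\euhat=\bld 0$ on $\partial\Omega$ — all of which you identify correctly. The one place you diverge is the intermediate identity. The paper (Lemma \ref{duality}, with the derivation delegated to \cite[Lemma 3]{FuCockburnStolarski14}) pushes the cancellations all the way through and lands on a jump-free identity,
\[
(\eu,\bld\theta)_\Oh=(\mathcal{A}\,\eos,\und\psi-\Pis\und\psi)_\Oh+(\und\sigma-\Pis\und\sigma,\und\epsilon(\bld\phi-\bld\phi_h))_\Oh\qquad\forall\,\bld\phi_h\in\Vh,
\]
so it never needs Theorem \ref{thm:stability}; your version stops earlier, keeps the skeleton term $\bint{\eu-\euhat}{(\und\psi-\Pis\und\psi)\n}$, and absorbs it with the jump bound $\|\eu-\euhat\|_{\partial K}\le C h_K^{1/2}\|\eos\|_{\mathcal{A},K}$ plus a trace-and-approximation bound on $(\und\psi-\Pis\und\psi)\n$. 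That trade is legitimate and also yields the factor $h$, at the cost of invoking heavier machinery. One caveat worth fixing when you actually derive your identity: as displayed, no term in it requires $\bpol{1}{K}\subset\W(K)$. The hypothesis does \emph{not} enter through the skeleton term (for which $\bpol{0}{K}$ already suffices), but through the dual best-approximation term $\inf_{\bld\phi_h\in\Vh}\|\und\epsilon(\bld\phi-\bld\phi_h)\|_\Oh\le C h\|\bld\phi\|_{2,\Omega}$ visible in the paper's identity — a term your schematic relegates to ``terms in $\alpha$''. Your prose attributes the hypothesis to dual approximation, which is right in spirit, but the completed derivation must produce exactly such an $\und\epsilon(\bld\phi-\bld\phi_h)$-type term (it arises when the error equation \eqref{error-equation-2} is used to eliminate $(\eos,\und\epsilon(\bld\phi))_\Oh$, with $(\und\sigma-\Pis\und\sigma,\und\epsilon(\bld\phi_h))_\Oh=0$ again by \eqref{proj-1}); otherwise an $O(1)$ remainder survives. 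With that term accounted for, the rest of your chain — Theorems \ref{actual-proj}, \ref{thm:stability}, \ref{L2error-q}, regularity, and division by $\|\eu\|_\Oh$ — is exactly the paper's closing argument.
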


Combining this result with the last estimate in Theorem \ref{thm:stability} and applying simple triangle, trace and inverse 
inequalities, we immediately get
\begin{align*}
 \|\Pim\bld u - \widehat{\bld u}_h\|_{\dOh}  \le &\; 
 C\, h^{1/2}\, \| \und{\sigma} - \Pis\, \und{\sigma}\|_{\Oh},
\end{align*}
and we see that the quality of the approximation $\bld u_h$ and $\widehat{\bld u}_h$ only depends on the approximation error
of the auxiliary projection, as claimed.

\subsubsection{Estimates of a postprocessing of the displacement}
Note that if $h\, \| \und{\sigma} - \Pis\, \und{\sigma}\|_{\Oh}$ converges faster 
than $\|\bld u-\Piv\bld u\|_\Oh$, the convergence of $\bld u_h$ to $\Piv\bld u$ is {\em faster} than
that of $\bld u_h$ to $\bld u$. As mentioned before, we can take advantage of this 
{\em superconvergence} result to
show the existence of a displacement postprocessing $\bld u^*_h$ 
% 
% that the postprocessings $\bld u^{1,*}_h$ and $\bld u^{*}_h$ defined by \eqref{u1*}
% and \eqref{u2*}, respectively, 
converge to $\bld u$
as fast as $\bld u_h$ superconverges to $\Piv\bld u$.  
To this end, we associate to each element $K$ a vector space $\W^*(K)$ that contains $\W(K)$.
Then, the function $\bld u_h^{*}$ is the 
element of ${\bld V}^*(K)$ such that
\begin{subequations}
\label{u2*}
\begin{alignat}{2}
\label{u2*-1}
\left(\gradv \bld u_h^{*},\gradv \bld  w\right)_K=& 
-\left(\bld u_h,\triangle \,\bld  w \right)_K
+\bintK{\widehat{\bld u}_h}{\gradv\bld  w\,\n}
&&\quad\forall\;\bld  w %\in{\bld V}^*(K),
\in\widetilde{{\bld V}}^{*}(K)^\perp,
\\
\label{u2*-2}
(\bld u_h^{*},\bld r)_K=&\;(\bld u_h,\bld r)_K && \quad \forall \; 
\bld r\in %\bpol{0}{K}.
\widetilde{{\bld V}}^{*}(K),
\end{alignat}
\end{subequations}
where $\widetilde{{\bld V}}^{*}(K)^\perp\subset {{\bld V}}^{*}(K)$ is the 
$L^2$-orthogonal complement of $\widetilde{{\bld V}}^{*}(K)$,  and 
$\widetilde{{\bld V}}^{*}(K)$ is any non-trivial subspace of $\divv \und\Sigma(K)$ containing 
the  constant vectors
$
 \bpol{0}{K}.
$

We have the following estimate. 
\begin{theorem}\label{L2error-u*}
Suppose that
\[
 \bpol{0}{K}\subset \divv\VV(K),
 \quad \triangle \W^*(K)\subset \divv\VV(K),\quad
 (\gradv\W^*)\n\left|_{\dK}\right.\subset \bld M(\dK).
\]
Let $\bld u_h^*$ be the solution to \eqref{u2*} with $(\bld u_h, \widehat {\bld u}_h)$
being the solution to \eqref{weak formulation},
then %\guf{mod}
\[
\|\bld u-\bld u^{*}_h\|_{K} \leq C \big(\|\Piv\bld u -\bld u_h\|_{K}+h_K^{1/2}\|\Pim\bld u -\widehat{\bld u}_h\|_{\dK}
+\gfu{h_K\|\gradv(\bld u-\bld P_{V^*}\bld u)\|_K}\big).
% +h\!\!\!\!\inf_{\bld w\in{\bld V}^*(K)} 
% \|\gradv(\bld u-\bld  w)\|_{K}\big).
\]
Here the constant $C$ depends on $\mathcal{A}$ but is independent of $h$ and the exact solution, \gfu{and 
$\bld P_{V^*}$ is the $L^2$-projection onto ${\bld V}^*(K)$}. 
\end{theorem}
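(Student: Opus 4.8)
The plan is to compare $\bld u_h^*$ with the elementwise $L^2$-projection $\bld P_{V^*}\bld u$ and split the error as $\bld u - \bld u_h^* = (\bld u - \bld P_{V^*}\bld u) + \bld e$, where $\bld e := \bld P_{V^*}\bld u - \bld u_h^* \in \bld V^*(K)$. Since $\bpol{0}{K}\subset \widetilde{{\bld V}}^{*}(K)\subset\bld V^*(K)$, the projection error $\bld u - \bld P_{V^*}\bld u$ is $L^2$-orthogonal to the constants, so a Poincar\'e inequality gives $\|\bld u - \bld P_{V^*}\bld u\|_K \le C\,h_K\|\gradv(\bld u - \bld P_{V^*}\bld u)\|_K$, which already produces the last term of the claimed bound. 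It then remains to estimate $\|\bld e\|_K$, and for this I would use the orthogonal splitting $\bld e = \bld e_0 + \bld e_1$ with $\bld e_0 \in \widetilde{{\bld V}}^{*}(K)$ and $\bld e_1 \in \widetilde{{\bld V}}^{*}(K)^\perp$, treating $\|\bld e\|_K^2 = \|\bld e_0\|_K^2 + \|\bld e_1\|_K^2$.

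For the component $\bld e_0$, I would test \eqref{u2*-2} against $\bld r\in\widetilde{{\bld V}}^{*}(K)$ to get $(\bld e,\bld r)_K = (\bld u - \bld u_h,\bld r)_K$. Because $\widetilde{{\bld V}}^{*}(K)\subset\divv\VV(K)\subset\Wtilde$ by property (b) of the \mm-decomposition, the defining relation \eqref{proj-2} of $\Piv$ yields $(\bld u - \Piv\bld u,\bld r)_K=0$, so $(\bld e_0,\bld r)_K=(\Piv\bld u - \bld u_h,\bld r)_K$; taking $\bld r=\bld e_0$ gives $\|\bld e_0\|_K\le\|\Piv\bld u - \bld u_h\|_K$. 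For $\bld e_1$, which is $L^2$-orthogonal to the constants, Poincar\'e again gives $\|\bld e_1\|_K\le C\,h_K\|\gradv\bld e_1\|_K$, so it suffices to bound the seminorm. Subtracting the postprocessing equation \eqref{u2*-1} (used with test function $\bld e_1$) from the elementwise Green identity $(\gradv\bld u,\gradv\bld e_1)_K = -(\bld u,\triangle\bld e_1)_K + \bintK{\bld u}{\gradv\bld e_1\,\n}$ for the exact solution, and inserting $\gradv(\bld u-\bld u_h^*)=\gradv(\bld u-\bld P_{V^*}\bld u)+\gradv\bld e$, I obtain $(\gradv\bld e,\gradv\bld e_1)_K$ in terms of $(\bld u_h-\bld u,\triangle\bld e_1)_K$, $\bintK{\bld u-\widehat{\bld u}_h}{\gradv\bld e_1\,\n}$, and $(\gradv(\bld u-\bld P_{V^*}\bld u),\gradv\bld e_1)_K$. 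Here the two remaining structural hypotheses do the work: $\triangle\W^*(K)\subset\divv\VV(K)\subset\Wtilde$ lets me replace $\bld u_h-\bld u$ by $\bld u_h-\Piv\bld u$ in the first term via \eqref{proj-2}, and $(\gradv\W^*)\n|_{\dK}\subset\bld M(\dK)$ lets me insert $\Pim$ in the boundary term, turning $\bld u-\widehat{\bld u}_h$ into $\Pim\bld u-\widehat{\bld u}_h$.

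After Cauchy--Schwarz together with the scaled inverse inequalities $\|\triangle\bld e_1\|_K\le Ch_K^{-1}\|\gradv\bld e_1\|_K$ and $\|\gradv\bld e_1\,\n\|_{\dK}\le Ch_K^{-1/2}\|\gradv\bld e_1\|_K$, and using $(\gradv\bld e,\gradv\bld e_1)_K=\|\gradv\bld e_1\|_K^2+(\gradv\bld e_0,\gradv\bld e_1)_K$, I expect to arrive at
\begin{align*}
\|\gradv\bld e_1\|_K \le C\,h_K^{-1}\|\Piv\bld u - \bld u_h\|_K + C\,h_K^{-1/2}\|\Pim\bld u - \widehat{\bld u}_h\|_{\dK} + C\,\|\gradv(\bld u - \bld P_{V^*}\bld u)\|_K + C\,h_K^{-1}\|\bld e_0\|_K,
\end{align*}
where the last summand comes from bounding the cross term $(\gradv\bld e_0,\gradv\bld e_1)_K$ by the inverse inequality $\|\gradv\bld e_0\|_K\le Ch_K^{-1}\|\bld e_0\|_K$ and Young's inequality, absorbing $\tfrac12\|\gradv\bld e_1\|_K^2$ on the left. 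Multiplying by $h_K$, using $\|\bld e_0\|_K\le\|\Piv\bld u-\bld u_h\|_K$, and assembling $\|\bld u-\bld u_h^*\|_K\le\|\bld u-\bld P_{V^*}\bld u\|_K+\|\bld e_0\|_K+\|\bld e_1\|_K$ then gives the stated bound.

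The main obstacle I anticipate is the bookkeeping of the $h_K$-powers together with the cross term $(\gradv\bld e_0,\gradv\bld e_1)_K$: the seminorm estimate for $\bld e_1$ carries a factor $h_K^{-1}$, so one must check that, after multiplying by the $h_K$ from Poincar\'e, every contribution collapses to exactly one of the three target quantities and that the cross term is genuinely absorbed rather than leaving an uncontrolled $h_K^{-1}\|\bld e_0\|_K$. All other steps are routine scaled trace and inverse inequalities on the (shape-regular) local spaces; the only genuinely structural input is the triple of inclusions in the hypotheses, which is precisely what furnishes the orthogonalities against $\Wtilde$ and the $\bld M(\dK)$-conformity of the flux $\gradv\bld e_1\,\n$.
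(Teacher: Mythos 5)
Your proposal is correct and follows essentially the same route as the paper's own proof (Appendix A, Step 5): the same splitting of $\bld u-\bld u_h^*$ into the $\bld P_{V^*}$-projection error, a $\widetilde{\bld V}^*$-component controlled via \eqref{u2*-2} together with \eqref{proj-2} (using $\widetilde{\bld V}^*\subset\divv\VV\subset\Wtilde$), and a $\widetilde{\bld V}^{*\perp}$-component handled by the elementwise Green identity, where the inclusions $\triangle\W^*\subset\divv\VV\subset\Wtilde$ and $(\gradv\W^*)\n|_{\dK}\subset\bld M(\dK)$ allow the insertion of $\Piv$ and $\Pim$, followed by Poincar\'e and scaled inverse/trace inequalities. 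The only cosmetic difference is that you absorb the cross term $(\gradv\bld e_0,\gradv\bld e_1)_K$ via Young's inequality, whereas the paper bounds it directly by Cauchy--Schwarz with $\|\gradv\bld e_0\|_K\le C h_K^{-1}\|\bld e_0\|_K\le C h_K^{-1}\|\Piv\bld u-\bld u_h\|_K$; both yield the same final estimate.
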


Note again that after summing the estimates over all the elements $K\in\Oh$, we get 
\begin{align*}
\|\bld u-\bld u^{*}_h\|_{\Oh} \leq&C  \big(\|\Piv\bld u -\bld u_h\|_{\Oh}+h^{1/2}\,\|\Pim\bld u -\widehat{\bld u}_h\|_{\dK}
+h
\|\gradv(\bld u-\bld P_{V^*}\bld u)\|_\Oh\big).
% \!\!\!\inf_{\bld w\in{\bld V}^*_h} 
% \|\gradv(\bld u-\bld  w)\|_{\Oh}\big).
\end{align*}
% The proof of the above result is quite simple, see, for example \cite{CockburnGopalakrishnanSayas10}.

\subsubsection{A practical example}
\red{To end this section}, 
we apply the error estimates in Theorem \ref{actual-proj}, 
and the error estimates in Theorems \ref{L2error-q}--\ref{L2error-u*} to 
obtain convergence rates for $L^2$-error of $\und\sigma_h$, 
$\bld u_h$, and $\bld u_h^*$ for a special case with 
the following conditions on the spaces (on each element $K$) and the 
stabilization operator:
\begin{itemize}
\item [(C.1)] $\bld M = \ppol_k(\dK)$, and $\VV\times \W$ admits an \mm-decomposition with 
 $\ppol_k(K,\mathbb{S})\times \ppol_k(K)\subset \VV \times \W$.
\item [(C.2)] $\W^* = \ppol_{k+1}(K)$.
\item [(C.3)] $\alpha = Id$.
\end{itemize}
In this case, we get that
\begin{subequations}
\label{error-practical} 
\begin{alignat}{2}
 \|\und\sigma-\und\sigma_h\|_\Oh\le &\;C\, h^{k+1}(\|\und\sigma\|_{k+1}+\|\bld u\|_{k+1}),\\
%  &&\quad \text{ by Theorem \ref{actual-proj} and Theorem \ref{L2error-q} },\\
 \|\bld u-\bld u_h\|_\Oh\le &\;C\, h^{k+1}(\|\und\sigma\|_{k+1}+\|\bld u\|_{k+1}),\\
%  &&\quad \text{ by Theorem \ref{actual-proj} and Theorem \ref{L2error-u} },\\
 \|\bld u-\bld u_h^*\|_\Oh\le &\;C\, h^{k+2}(\|\und\sigma\|_{k+1}+\|\bld u\|_{k+2}),
%  &&\quad \text{ by Theorem \ref{actual-proj} and Theorem \ref{L2error-u*} }.
 \end{alignat}
\end{subequations}
where the last estimate require the regularity estimate \eqref{regularity} holds.

We remark that, as we will make clear in the next two sections, the natural choice 
$\VV\times \W\times \bld M:=\ppol_k(K,\mathbb{S})\times \ppol_k(K)\times \ppol_k(\dK)$ \red{proposed} in 
\cite{SoonCockburnStolarski09} and 
analyzed in \cite{FuCockburnStolarski14} does not satisfy condition (C.1) due to the lack of an 
\mm-decomposition for  $\VV\times \W$. Actually, for this choice of spaces, with $\alpha = Id$, 
it was  proven in \cite{FuCockburnStolarski14} that
\begin{alignat*}{2}
 \|\und\sigma-\und\sigma_h\|_\Oh\le &\;C\, h^{k+1/2}(\|\und\sigma\|_{k+1}+\|\bld u\|_{k+1}),\\
 \|\bld u-\bld u_h\|_\Oh\le &\;C\, h^{k+1}(\|\und\sigma\|_{k+1}+\|\bld u\|_{k+1}),\\
 \|\bld u-\bld u_h^*\|_\Oh\le &\;C\, h^{k+1}(\|\und\sigma\|_{k+1}+\|\bld u\|_{k+1}),
 \end{alignat*}
where numerical results suggested that the orders are actually sharp for $k=1$.
We will see in Section \ref{sec:main} that on triangular meshes, we only need to add {\em two} 
(rational) basis functions to $\VV$ for $k=1$, and {\em three} for $k\ge 2$ to obtain an
\mm-decomposition. Then, the desired (superconvergence) error estimates \eqref{error-practical}
follow.

\section{The \mm-decompositions}
\label{sec:m1}
\red{In this section, we obtain a characterization 
of \mm-decompositions}.
We then show how to use it to 
construct HDG and (hybridized) mixed methods 
that superconverge on unstructured meshes.

\subsection{A characterization of \mm-decompositions}
We first give a characterization of \mm-decompositions
expressed solely in terms of the spaces $\VV\times \W$. Roughly speaking, it
states that $\VV\times \W$ admits an \mm-decomposition if and only if the space $\bld M$ is the orthogonal sum 
of the traces of the kernels of $\divv$ in $\VV$ and of $\und{\epsilon}$ in $\W$. It is expressed in terms of a 
special integer we define next.

\begin{definition}[The \mm-index] The \mm-index of the space $\VV\times \W$ is the number
\begin{alignat*}{2}
I_{\bld M}(\VV\times \W):=&\;\dim \bld M &&-\dim\{\vv \nn|_{\partial K}:\, \vv\in \VV,\, \divv\vv=0\}
                                     \\&&&-\dim \{\w|_{\partial K}:\, \w\in \W,  \, \und{\epsilon} (\w)=0\}.
\end{alignat*}
\end{definition}

\begin{theorem}[A characterization of \mm-decompositions] 
\label{thm:1.5}
For a given space of traces $\bld M$, the space $\VV\times \W$ admits an \mm-decomposition if and only if 
\begin{itemize}
\item[{\rm (a)}] $\tr (\VV\times \W)\subset \bld M$,
\item[{\rm (b)}] $\und{\epsilon} (\W)\subset \VV$, $\divv \VV\subset \W$,
\item[{\rm (c)}] $I_{\bld M}(\VV\times \W)=0$.
\end{itemize}
In this case, we have 
\[
{\bld M}=\{\vv\,\nn|_{\partial K}:\, \vv\in \VV, \divv\vv=0\}{\oplus}\{\w|_{\partial K}:\, \w\in \W, \und{\epsilon}( \w)=0\},
\]
where the sum is orthogonal. 
\end{theorem}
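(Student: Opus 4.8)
The plan is to prove the two directions of the equivalence and then extract the orthogonal decomposition as a byproduct. First I would dispose of the easy implication: if $\VV\times\W$ admits an \mm-decomposition in the sense of Definition \ref{def:m}, then condition (a) is immediate, and condition (b) follows because $\und\epsilon(\W)\subset\Vtilde\subset\VV$ and $\divv\VV\subset\Wtilde\subset\W$. The content is to show condition (c), $\im(\VV\times\W)=0$. The natural way is to identify the kernels appearing in the \mm-index with $\Vtilde$ and $\Wtilde$ (up to taking traces). Indeed $\divv\vv=0$ for $\vv\in\VV$ should correspond to $\vv\in\Vtilde^\perp$-complement considerations, and I expect that the isomorphism in (c) of the definition forces $\dim\bld M=\dim\Vtilde^\perp+\dim\Wtilde^\perp$ via the injectivity/surjectivity of $\tr$ on $\Vtilde^\perp\times\Wtilde^\perp$.

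The reverse implication is the heart of the matter. Assuming (a), (b), (c) of the theorem, I must exhibit subspaces $\Vtilde\subset\VV$ and $\Wtilde\subset\W$ satisfying (b) and (c) of Definition \ref{def:m}. The natural candidates are
\begin{align*}
\Vtilde&:=\{\vv\in\VV:\ \vv\,\nn|_{\partial K}\in\text{(trace part already accounted for)}\},\\
\Wtilde&:=\divv\VV\ \text{(or a slight enlargement)}.
\end{align*}
More precisely I would set $\Wtilde:=\divv\VV$, which lies in $\W$ by (b), and choose $\Vtilde$ so that its orthogonal complement $\Vperp$ maps under the normal trace exactly onto a complement of the kernel traces. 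The key structural observation to exploit is that $\tr$ restricted to the two kernels $\{\vv:\divv\vv=0\}$ and $\{\w:\und\epsilon(\w)=0\}$ has image inside $\bld M$ by (a), and a dimension count using $\im=0$ will show these two trace-images together span $\bld M$. The main obstacle, and the step I would spend the most care on, is verifying that $\tr:\Vperp\times\Wperp\to\bld M$ is an isomorphism rather than merely a dimension-matching surjection or injection: I must show the map is injective (no nonzero $(\vv,\w)\in\Vperp\times\Wperp$ has $\vv\nn+\w=0$ on $\partial K$) and then invoke the dimension equality from $\im=0$ to upgrade injectivity to bijectivity. Injectivity is where the interplay between the stress trace and the displacement trace genuinely enters, and it must use that $\Vtilde$ contains $\und\epsilon(\W)$ so that the relevant integration-by-parts / orthogonality relations close up.

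For the injectivity argument I would integrate by parts: for $\vv\in\VV$ and $\w\in\W$,
\[
\bintK{\vv\nn}{\w}=(\divv\vv,\w)_K+(\vv,\und\epsilon(\w))_K,
\]
using symmetry of $\vv$ so that $\vv:\gradv\w=\vv:\und\epsilon(\w)$. If $(\vv,\w)\in\Vperp\times\Wperp$ lies in the kernel of $\tr$, then $\vv\nn=-\w$ on $\partial K$, and testing the identity against suitable elements of $\Wtilde=\divv\VV$ and $\Vtilde\supset\und\epsilon(\W)$ should force both $\divv\vv$ and $\und\epsilon(\w)$ to vanish, placing $\vv$ and $\w$ in the two kernels; the orthogonality ($\vv\in\Vperp$, $\w\in\Wperp$) together with the fact that the kernels project into $\Vtilde,\Wtilde$ respectively then yields $\vv=0$, $\w=0$. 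This also simultaneously proves the final orthogonal decomposition: the kernel-trace spaces are $L^2(\partial K)$-orthogonal because one consists of normal traces of divergence-free symmetric tensors and the other of traces of strain-free displacements (rigid motions), and the same integration-by-parts identity shows $\bintK{\vv\nn}{\w}=0$ whenever $\divv\vv=0$ and $\und\epsilon(\w)=0$. Combining orthogonality with the dimension identity $\im=0$ gives $\bld M$ as the claimed orthogonal direct sum, completing the proof.
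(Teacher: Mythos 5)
Your plan follows the same broad route as the paper's Appendix C --- take $\Wtilde=\divv\VV$, an enlarged $\Vtilde\supset\und\epsilon(\W)$, prove injectivity of $\tr$ by integration by parts, and close with a dimension count --- and your integration-by-parts identity and the orthogonality of the two kernel-trace spaces are correct. But there are two genuine gaps. The first is the forward direction, which you dispose of in one sentence and which is in fact the bulk of the work. The isomorphism in Definition \ref{def:m}(c) gives $\dim\bld M=\dim\Vtilde^\perp+\dim\Wtilde^\perp$ for \emph{whatever} pair $(\Vtilde,\Wtilde)$ happens to come with the decomposition, whereas the index $\im(\VV\times\W)$ is defined through the kernels of $\divv$ in $\VV$ and of $\und\epsilon$ in $\W$; nothing in the definition ties an arbitrary admissible $(\Vtilde,\Wtilde)$ to those kernels, and your phrase ``identify the kernels with $\Vtilde$ and $\Wtilde$ up to traces'' is precisely what must be proved --- it is false for general admissible choices. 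The paper bridges this with Lemma \ref{th:1.1} ($\Wtilde$ is forced to equal $\divv\VV$) and above all Lemma \ref{th:1.3}: whenever any \mm-decomposition exists, the \emph{canonical} one with $\Vtilde=\und\epsilon(\W)\oplus\VV_{\mathrm{sbb}}$, $\Wtilde=\divv\VV$ also exists, where $\VV_{\mathrm{sbb}}:=\{\vv\in\VV:\divv\vv=\bld 0,\ \vv\nn=\bld 0\}$; only for this canonical choice does the dimension identity translate into $\im(\VV\times\W)=0$. Some argument of this kind (the paper's, or a direct estimate showing $\dim\Vtilde\ge\dim\und\epsilon(\W)+\dim\VV_{\mathrm{sbb}}$ and $\dim\Wtilde\ge\dim\divv\VV$ for every admissible pair, combined with $\im\ge 0$ from the orthogonal inclusion of the kernel traces in $\bld M$) is indispensable and entirely missing from your proposal.

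The second gap is in your injectivity step, where the bubble space $\VV_{\mathrm{sbb}}$ plays the decisive role. Your closing claim that ``the kernels project into $\Vtilde,\Wtilde$ respectively'' is wrong on both counts: rigid motions (the kernel of $\und\epsilon$ in $\W$) are in general \emph{not} contained in $\Wtilde=\divv\VV$ --- indeed $\bld{RM}(K)\subset\divv\VVV$ must be imposed as an extra hypothesis in Proposition \ref{prop:reducingW} --- and divergence-free elements of $\VV$ are not contained in $\Vtilde$. The argument that actually closes is sequential: from $\vv\nn=-\w$ on $\partial K$ one gets $\|\w\|_{\partial K}^2=-(\divv\vv,\w)_K-(\vv,\und\epsilon(\w))_K=0$, both terms vanishing by the orthogonalities $\divv\vv\in\Wtilde$, $\w\in\Wperp$ and $\und\epsilon(\w)\in\Vtilde$, $\vv\in\Vperp$; hence $\w=0$ on $\partial K$; then testing $\langle\w,\vv'\nn\rangle_{\partial K}=0$ over $\vv'\in\VV$ and choosing $\vv'=\und\epsilon(\w)$ gives $\und\epsilon(\w)=\und 0$, and a rigid motion vanishing on $\partial K$ is zero; then $\vv\nn=\bld 0$ forces $\divv\vv=\bld 0$ by testing against $\w'=\divv\vv\in\W$, so $\vv\in\VV_{\mathrm{sbb}}$, and one concludes $\vv=\und 0$ \emph{only because} $\VV_{\mathrm{sbb}}\subset\Vtilde$ while $\vv\in\Vperp$. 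Your candidate $\Vtilde$ (``trace part already accounted for'') never secures the inclusion $\VV_{\mathrm{sbb}}\subset\Vtilde$; without it the normal trace on $\Vperp$ need not be injective, and the bookkeeping $\dim\gamma\Vperp=\dim\Vperp=\dim\{\vv\nn|_{\partial K}:\vv\in\VV,\ \divv\vv=\bld 0\}$, $\dim\gamma\Wperp=\dim\Wperp=\dim\{\w|_{\partial K}:\w\in\W,\ \und\epsilon(\w)=\und 0\}$ --- the step that converts $\im=0$ into surjectivity of $\tr$ --- breaks down.
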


\gfu{
The proof of the above result, which is very similar to the diffusion case considered in 
\cite[Section 2.4]{CockburnFuSayas16}, is given in Appendix C for \red{the sake of} completeness.
}

The importance of this 
result resides in that it allows us to know if any given space
$\VV\times \W$ admits an \mm-decomposition by just verifying some inclusion properties and by computing a 
single number, namely, $I_{\bld M}(\VV\times \W)$ -- a natural number, by property (a). Moreover, this result shows explicitly how 
$\bld M$ can be expressed in terms of traces of the kernels of the divergence in
$\VV$ and the trace of the kernel of the symmetric gradient in $\W$; we call the identity {\em the kernels' trace decomposition}. This identity is
going to be
the guiding principle for the systematic construction of \mm-decompositions we develop in the next subsection.

\subsection{The construction of \mm-decompositions}

% We also need  the following number to tell whether 
% a space admitting an \mm-decompositions can actually define a (hybridized) mixed method, 
% that is, whether we can take the stabilization function $\alpha=0$. 
% 
% \begin{definition}[The $S$-index] The $S$-index of the space $\VV\times \W$ is the number
% \begin{alignat*}{2}
% I_{S}(\VV\times \W):=&\dim\W-\dim\divv\VV.
% \end{alignat*}
% \end{definition}
% 
% Note that if the inclusion property (b) of the definition of an
% \mm-decomposition holds, the $S$-index of the space $\VV\times
% \W$ is a non-negative integer.
% 
% 

% \subsection{Three ways of obtaining  \mm-decompositions}
Now,  we propose three ways of obtaining \mm-decompositions; we follow \cite[Section 5]{CockburnFuSayas16}.
We show how to modify a given space $\VVV\times\WWW$, which is assumed to 
\red{satisfy the first} two inclusion properties of an \mm-decomposition, 
to \red{obtain} a new space $\VV\times \W$ admitting an \mm-decomposition. 
% Since $\VVV\times\WWW$ 
% is assumed to satisfy the first two inclusion properties of an \mm-decomposition, 
By the assumption on the given space $\VVV\times\WWW$, 
the indexes $I_{\bld M}(\VVV\times\WWW)$ and 
\begin{alignat*}{2}
I_{S}(\VV\times \W):=&\dim\W-\dim\divv\VV,
\end{alignat*}
\noindent are non-negative. 
We propose three different ways of doing this according whether the indexes are zero or not.  

\

{\it The case $I_{\bld M}(\VVV\times\WWW)>0$.} In this case, the space $\VVV\times\WWW$ does not admit 
an \mm-decomposition.
%If the index  $I_{\bld{\mathrm{M}}}(\VVV\times \WWW)$ is not zero, we know 
%by Theorem \ref{thm:1.5}, that the space 
%$\VVV\times \WWW$  does not admit an \mm-decomposition. 
%We are going
%to increase the space $\VVV$ so that the resulting space does admit an
%\mm-decomposition. we proceed as follows:
By Theorem \ref{thm:1.5}, we have that 
\[
\{\vv\cdot\nn|_{\partial K}:\, \vv\in \VVV, \divv\vv=\bld 0\}{\oplus}
\{\w|_{\partial K}:\, \w\in \WWW, \und{\epsilon} (\w)=\und 0\}\underset{\neq}{\subset} \bld{M}.
\]
{To simplify the notation, we set $\VVV_{s}:=\{\vv\in \VVV: \divv\vv=\bld 0\}$
to be the divergence-free subspace of $\VVV$ ($s$ stands for solenoidal), and
$\WWW_{\!rm}:=\{\w\in \WWW: \und{\epsilon} (\w)=\und 0\}$
to be the $\und{\epsilon}$-free subspace of $\WWW$ ($rm$ stands for rigid motions).}
We see that, in order to achieve equality, we have to, roughly speaking,  {\em fill} the remaining part of 
$\bld M$ by adding a space 
of symmetric-tensorial, solenoidal functions $\delta\VV_{\mathrm{fillM}}$ of dimension $I_{\bld M}(\VVV\times \WWW)$. 
The precise description of this subspace is in the following result. 

\begin{proposition}[Filling the space of traces $\bld M$]
\label{prop:fillingM}
Let $\VVV\times \WWW$ satisfy the inclusion properties {\rm (a)} and {\rm (b)} of Theorem \ref{thm:1.5}. Assume that 
$\delta\VV_{\mathrm{fillM}}$ satisfies:
\begin{itemize}
\item[{\rm (a)}] $\gamma \delta\VV_{\mathrm{fillM}}\subset \bld M$,
\item[{\rm (b)}] $\divv \delta\VV_{\mathrm{fillM}}=\{\bld 0\}$,
\item[{\rm (c)}] $\gamma\VVV_{s} \cap \gamma \delta\VV_{\mathrm{fillM}}=\{\boldsymbol{0}\}$,
\item[{\rm (d)}] $\mathrm{dim}\, \delta\VV_{\mathrm{fillM}}=\mathrm{dim}\,\gamma \delta\VV_{\mathrm{fillM}}=I_{\bld M}(\VVV\times \WWW)$.
\end{itemize}
Then $(\VVV\oplus \delta\VV_{\mathrm{fillM}})\times \WWW$ admits an \mm-decomposition.
Moreover, at least one space $\delta\VV_{\mathrm{fillM}}$ can be constructed
when $\WWW_{\!rm} =  \bld {RM}(K)$ where
$
\bld{RM}(K):= \{\w\in \bld H^1(K):\quad \und\epsilon(\w) = \und 0\}
%  = \bpol{0}{K} \oplus 
% \mathrm{span}\left\{ \left[\begin{tabular}{c}
%   $-y$\\
%   $x$
%  \end{tabular}
% \right]\right\}
 %\bpol{0}{K} \times (\bld x-\bld x_B)
 $
is the space for {\it rigid motions}.
% , where $\bld x_B$ is the barycenter of $K$.

%\[
%\WWW_{\!rm} = \{\und \tau|_{\dK}: \und \tau \in \bld H^1(K):\quad \und\epsilon(\und \tau) = \und 0\}.
%\]
% when $\tr(\VVV\times\WWW)$ contains the trace of rigid motions %$\gamma\bld{RM}(K)$.
% \[
% \{\und \tau|_{\dK}: \und \tau \in \bld H^1(K):\quad \und\epsilon(\und \tau) = \und 0\}.
% \]
\end{proposition}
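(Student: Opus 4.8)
The plan is to split the argument into two parts: first, assuming a space $\delta\VV_{\mathrm{fillM}}$ with properties (a)--(d) is given, show that the enlarged space $(\VVV\oplus\delta\VV_{\mathrm{fillM}})\times\WWW$ admits an \mm-decomposition; second, exhibit at least one such space under the hypothesis $\WWW_{\!rm}=\bld{RM}(K)$. For the first part I would set $\VV:=\VVV\oplus\delta\VV_{\mathrm{fillM}}$, $\W:=\WWW$, and simply verify the three conditions of the characterization Theorem~\ref{thm:1.5}. Condition (a) follows from linearity of $\bld M$: writing $\vv=\vv_0+\delta\vv$ with $\vv_0\in\VVV$ and $\delta\vv\in\delta\VV_{\mathrm{fillM}}$, the trace $\vv\nn+\w=(\vv_0\nn+\w)+\delta\vv\nn$ lies in $\bld M$ by property (a) of $\VVV\times\WWW$ and property (a) of $\delta\VV_{\mathrm{fillM}}$. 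Condition (b) is immediate: $\und\epsilon(\W)=\und\epsilon(\WWW)\subset\VVV\subset\VV$, while $\divv\VV=\divv\VVV\subset\WWW=\W$ because $\delta\VV_{\mathrm{fillM}}$ is solenoidal.

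The heart of the first part is condition (c), namely $I_{\bld M}(\VV\times\W)=0$, which I would prove by a direct dimension count. Since $\delta\VV_{\mathrm{fillM}}$ is divergence-free, a field $\vv=\vv_0+\delta\vv\in\VV$ satisfies $\divv\vv=\bld 0$ iff $\divv\vv_0=\bld 0$; hence the divergence-free subspace of $\VV$ is exactly $\VVV_{s}\oplus\delta\VV_{\mathrm{fillM}}$. Taking normal traces and invoking property (c), $\gamma\VVV_{s}\cap\gamma\delta\VV_{\mathrm{fillM}}=\{\bld 0\}$, the sum of the traces is direct, so the first trace term in the index equals $\dim\gamma\VVV_{s}+\dim\gamma\delta\VV_{\mathrm{fillM}}$. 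Replacing $\dim\gamma\delta\VV_{\mathrm{fillM}}$ by $I_{\bld M}(\VVV\times\WWW)$ via property (d), and noting the rigid-motion term is unchanged because $\W=\WWW$, I get $I_{\bld M}(\VV\times\W)=I_{\bld M}(\VVV\times\WWW)-I_{\bld M}(\VVV\times\WWW)=0$, and Theorem~\ref{thm:1.5} delivers the \mm-decomposition.

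For the second part the guiding observation is an orthogonality coming from integration by parts: for any symmetric $\und\tau$ with $\divv\und\tau=\bld 0$ and any $\w\in\bld{RM}(K)$ one has $\langle\und\tau\nn,\w\rangle_{\partial K}=(\und\tau,\und\epsilon(\w))_K=0$, using $\divv\und\tau=\bld 0$ for the first equality and $\und\epsilon(\w)=\und 0$ for the second. Thus every normal trace of a solenoidal symmetric field is $\bld L^2(\partial K)$-orthogonal to $\gamma\bld{RM}(K)$. Under $\WWW_{\!rm}=\bld{RM}(K)$ the subspace to be filled is therefore $\bld M\cap(\gamma\bld{RM}(K))^{\perp}$, of dimension $\dim\bld M-\dim\gamma\bld{RM}(K)=I_{\bld M}(\VVV\times\WWW)+\dim\gamma\VVV_{s}$, and it already contains $\gamma\VVV_{s}$. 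I would then fix a basis $\bld m_{1},\dots,\bld m_{r}$, with $r=I_{\bld M}(\VVV\times\WWW)$, of a complement of $\gamma\VVV_{s}$ inside $\bld M\cap(\gamma\bld{RM}(K))^{\perp}$, realize each $\bld m_{i}$ as $\und\tau_{i}\nn$ for some solenoidal symmetric $\und\tau_{i}$, and set $\delta\VV_{\mathrm{fillM}}:=\mathrm{span}\{\und\tau_{1},\dots,\und\tau_{r}\}$. Properties (a)--(c) hold by the choice of the $\bld m_{i}$, and since their traces are linearly independent the $\und\tau_{i}$ are as well, giving $\dim\delta\VV_{\mathrm{fillM}}=\dim\gamma\delta\VV_{\mathrm{fillM}}=r$, which is property (d); the freedom in choosing the $\bld m_{i}$ and their liftings accounts for the phrase ``at least one'' in the statement.

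The main obstacle is the realization step: producing, for each prescribed compatible trace $\bld m_{i}\in\bld M\cap(\gamma\bld{RM}(K))^{\perp}$, a divergence-free symmetric tensor $\und\tau_i$ on $K$ with $\und\tau_i\nn=\bld m_i$. I would obtain this from the solvability of the pure-traction linear elasticity problem on $K$: given a traction $\bld g$ orthogonal to all rigid motions, the Neumann problem $-\divv(\mathcal{C}\,\und\epsilon(\bld w))=\bld 0$ in $K$, $(\mathcal{C}\,\und\epsilon(\bld w))\nn=\bld g$ on $\partial K$ (with $\mathcal{C}$ any fixed positive-definite elasticity tensor) is solvable up to rigid motions, by Lax--Milgram on $\bld H^{1}(K)/\bld{RM}(K)$ together with Korn's inequality; then $\und\tau:=\mathcal{C}\,\und\epsilon(\bld w)$ is symmetric, solenoidal, and has the prescribed normal trace, with ample regularity since each $\bld m_{i}$ is piecewise polynomial. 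The orthogonality to rigid motions is precisely the compatibility condition of this problem, which also explains why $\WWW_{\!rm}=\bld{RM}(K)$ cannot be weakened: any component of $\bld M$ along $\gamma\bld{RM}(K)$ can never be produced by a solenoidal symmetric field.
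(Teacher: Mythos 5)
Your proposal is correct and follows essentially the same route as the paper: the filling space is obtained by lifting a basis of a complement of $\tr(\VVV_{s}\times\WWW_{\!rm})$ in $\bld M$ through the pure-traction elasticity problem on $K$, whose compatibility condition (orthogonality of the traction data to $\gamma\bld{RM}(K)$) is exactly where the hypothesis $\WWW_{\!rm}=\bld{RM}(K)$ enters, just as in the paper's construction of the fields $\und{\epsilon}(\bld\phi_{\bld\mu})$. The only difference is that you also spell out the verification that properties (a)--(d) yield $I_{\bld M}\big((\VVV\oplus\delta\VV_{\mathrm{fillM}})\times\WWW\big)=0$ via Theorem \ref{thm:1.5} --- a step the paper leaves implicit --- and your dimension count there (divergence-free subspace equal to $\VVV_{s}\oplus\delta\VV_{\mathrm{fillM}}$, directness of the trace sum from (c), injectivity of the trace from (d)) is correct.
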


\begin{proof}
Let us just show how to construct one space $\delta\VV_{\mathrm{fillM}}$.
Let $\bld{\mathcal{B}}$ be a basis for $(\tr(\VVV_{s}\times \WWW_{\!rm}))^\perp$. 
Then we can take $\delta\VV_{\mathrm{fillM}}$
as the span of  $\{\und{\epsilon}(\bld\phi_{\bld \mu})\}_{\bld{\mu}\in\bld {\mathcal{B}}}$ where
\[
\divv(\und{\epsilon} (\bld\phi_{\bld{\mu}}))=\bld 0 \mbox{ in }K, \qquad
 \und{\epsilon}(\bld\phi_{\bld{\mu}})\,\boldsymbol{n}=\bld \mu\mbox{ on }\partial K,
\]
where $\bld \mu\in\bld {\mathcal{B}}$. 
Since $\WWW_{\!rm} = \bld {RM}(K)$,
%$\tr(\VV\times\W)$ contains $\gamma\bld{RM}(K)$, 
the $\bld L^2(\partial K)$-projection of $\bld \mu$
onto $\gamma\bld{RM}(K)$ is zero and so, $\und{\epsilon}(\bld\phi_{\bld{\mu}})$
is well defined.   The boundary condition ensures the satisfaction of conditions (a) and (c), and condition (b) holds by construction.
Finally, condition (d) is also satisfied given that 
the set $\{\und{\epsilon}(\bld\phi_{\bld \mu})\}_{\bld \mu\in\bld{\mathcal{B}}}$ is linearly independent, and 
\[
\dim \delta\VV_{\mathrm{fillM}} =\dim \bld{\mathcal{B}} =  \dim \bld M - \dim \tr(\VVV_{s}\times \WWW_{\!rm}) = I_{\bld M}(\VVV\times \WWW).
\]
This completes the proof.
\end{proof}

%\subsubsection*{Step 3: Finding a smaller space associated to a mixed method}

\

{\it The case $I_{\bld M}(\VVV\times\WWW)=0$ but $I_S(\VVV\times\WWW)>0$.}
 In this case, the space $\VVV\times\WWW$ admits an \mm-decomposition
but  $\divv\VVV$ is a proper subspace of $\WWW$.
%If the index  $I_{\bld{\mathrm{M}}}(\VV\times \W)$ is not zero, we know that the
%space constructed in the previous step cannot define a mixed method since
%\[
%\divv(\VV+\delta\VV_{\mathrm{fillM}})=
%\divv\VV\neq \W.
%\]
%In this case, we find a smaller space that admits an
%\mm-decomposition as follows.
By the kernels' trace decomposition of Theorem \ref{thm:1.5}, \red{we have}
\[
%\{\vv\nn|_{\partial K}:\, \vv\in \VV\oplus\delta\VV_{\mathrm{fillM}}, \divv\vv=0\}{\oplus}\{\w|_{\partial K}:\, \w\in \W, \und{\epsilon}( \w)=0\}= \bld{M},
\{\vv\nn|_{\partial K}:\, \vv\in \VVV, \divv\vv=\bld 0\}{\oplus}\{\w|_{\partial K}:\, \w\in \WWW, \und{\epsilon}( \w)=\und 0\}= \bld{M},
\]
and we then see that, if we seek a modification of %$\VV\oplus\delta\VV_{\mathrm{fillM}}\times \W$ 
$\VVV\times \WWW$ of the form $\VVV\times \W$, it must be such that
\[
\{\w|_{\partial K}:\, \w\in \W, \und{\epsilon}( \w)=\und 0\}=
\{\w|_{\partial K}:\, \w\in \WWW, \und{\epsilon}(\w)=\und 0\}.
\]
%Thus, we can simply take $\W_{\mathrm{new}}:=\divv\VV$ provided this set contains the space
%$\{\w|_{\partial K}:\, \w\in \W, \und{\epsilon}( \w)=0\}$. Indeed, we have the following result.
The following result gives a hypothesis under which we are allowed to reduce $\WWW$ to 
$\W:=\divv\VVV$. 
\begin{proposition}[Reducing the space $\W$]
\label{prop:reducingW}
Assume that $\VVV\times \WWW$ admits an \mm-decomposition.
Then $\VVV \times \divv\VVV$ admits an \mm-decomposition provided that $\divv\VVV$ contains 
the space of rigid motions $\bld {RM}(K)$.
\end{proposition}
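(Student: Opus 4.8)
The plan is to verify the three conditions of the characterization in Theorem \ref{thm:1.5} for the space $\VVV \times \divv\VVV$, using the fact that $\VVV \times \WWW$ already admits an \mm-decomposition. Since we are only \emph{shrinking} the second factor from $\WWW$ to $\W := \divv\VVV$, the first two inclusion conditions should be inherited almost for free, and the real content will lie in showing that the \mm-index of the reduced space is still zero.

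First I would check condition (b) of Theorem \ref{thm:1.5}, namely $\und\epsilon(\W) \subset \VVV$ and $\divv\VVV \subset \W$. The second inclusion is trivially an equality by the definition $\W = \divv\VVV$. For the first, note that $\W = \divv\VVV \subset \WWW$ because $\VVV \times \WWW$ satisfies condition (b), so $\und\epsilon(\W) \subset \und\epsilon(\WWW) \subset \VVV$, again using that the original space satisfies (b). Next, condition (a), $\tr(\VVV \times \W) \subset \bld M$: since $\W \subset \WWW$, we have $\tr(\VVV \times \W) \subset \tr(\VVV \times \WWW) \subset \bld M$, so (a) is inherited directly.

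The crux is condition (c), that $I_{\bld M}(\VVV \times \divv\VVV) = 0$. Recall that the index is $\dim\bld M$ minus the dimension of the divergence-free normal traces from the first factor, minus the dimension of the $\und\epsilon$-free traces from the second factor. The first two terms are identical to those in $I_{\bld M}(\VVV \times \WWW) = 0$ (the first factor is unchanged). So it suffices to show that the rigid-motion traces do not shrink when we pass from $\WWW$ to $\divv\VVV$, i.e.
\[
\{\w|_{\partial K}:\, \w\in \divv\VVV,\ \und\epsilon(\w)=\und 0\}
=
\{\w|_{\partial K}:\, \w\in \WWW,\ \und\epsilon(\w)=\und 0\}.
\]
This is exactly where the hypothesis $\bld{RM}(K) \subset \divv\VVV$ enters: the right-hand set is the trace of $\WWW \cap \bld{RM}(K)$, and the hypothesis guarantees $\bld{RM}(K) \subset \divv\VVV \subset \WWW$, so $\WWW \cap \bld{RM}(K) = \bld{RM}(K) = \divv\VVV \cap \bld{RM}(K)$, forcing the two trace sets to coincide. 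Hence the third term of the index is unchanged, and therefore $I_{\bld M}(\VVV \times \divv\VVV) = I_{\bld M}(\VVV \times \WWW) = 0$.

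The step I expect to require the most care is the equality of the rigid-motion trace spaces above: one must argue that the $\und\epsilon$-kernel of $\divv\VVV$ is genuinely all of $\bld{RM}(K)$ and not merely contained in it. The inclusion $\supseteq$ is immediate since any rigid motion has vanishing symmetric gradient and lies in $\divv\VVV$ by hypothesis; the inclusion $\subseteq$ holds because any $\w \in \divv\VVV$ with $\und\epsilon(\w) = \und 0$ is by definition a rigid motion. With both trace sets established as equal, all three conditions of Theorem \ref{thm:1.5} hold for $\VVV \times \divv\VVV$, and the characterization then yields the desired \mm-decomposition, completing the proof.
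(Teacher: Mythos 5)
Your proof is correct and follows essentially the same route as the paper: both reduce the claim, via the characterization in Theorem \ref{thm:1.5}, to showing that the $\und\epsilon$-kernel traces of the second factor are unchanged, which follows from $\bld{RM}(K)\subset \divv\VVV\subset \WWW$ forcing both kernel-trace sets to equal $\gamma\bld{RM}(K)$. Your version simply makes explicit the inherited inclusions (a) and (b) that the paper leaves implicit.
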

\begin{proof}
Since $\bld {RM}(K)\subset \divv\VVV\subset \WWW$, we have 
\[
\{\w|_{\partial K}:\, \w\in \divv\VVV, \und{\epsilon}( \w)=\und 0\}=
\{\w|_{\partial K}:\, \w\in \WWW, \und{\epsilon}(\w)=\und 0\}
=\gamma \bld{RM}(K). 
\]
This completes the proof.
\end{proof}

%\subsubsection*{Step 4: Finding a bigger space associated to a mixed method}
%If the index  $I_{\bld{\mathrm{M}}}(\VV\times \W)$ is not zero, we find a bigger space that admits an
%\mm-decomposition as follows.

%Thus, if we seek a modification of $\VV\oplus\delta\VV_{\mathrm{fillM}}\times \W$ of the form $\VV_{\mathrm{new}}\times \W$,
%we must request that
Now, let us seek a modification of $\VVV\times \WWW$ of the form $\VV\times \WWW$, where $\VVV\subset \VV$.
Since
\[
\divv \VVV \underset{\neq}{\subset} \WWW,
\]
we see that in order to achieve the equality, 
we have to, roughly speaking,  {\em fill} the remaining part of $\WWW$ 
by adding a space of symmetric-tensorial, {\em non}-solenoidal functions
$\delta\VV_{\mathrm{fillV}}$ of dimension $I_S(\VVV\times\WWW)$. In this
case, we would immediately have that 
\[
\{\vv\nn|_{\partial K}:\, \vv\in \VV, \divv\vv=\bld 0\}=
\{\vv\nn|_{\partial K}:\, \vv\in \VVV, \divv\vv=\bld 0\},
\]
and, by Theorem \ref{thm:1.5}, 
the resulting space would admit an \mm-decomposition.
The precise way of choosing $\delta\VV_{\mathrm{fillV}}$
is described in the following result.

\begin{proposition}[Increasing the space $\VVV$]
\label{prop:increasingV}
Let the space $\VVV \times \WWW$ admits an 
\mm-decomposition and assume that $\divv\VVV$ is a proper subspace of $\WWW$. 
Let $\delta\VV_{\mathrm{fillV}}$ {satisfy the following hypotheses:} 
\begin{itemize}
\item[{\rm (a)}] $\gamma\delta\VV_{\mathrm{fillV}} \subset \bld M$,
{
\item[{\rm (b)}] $\divv \delta\VV_{\mathrm{fillV}}\subset \W$,
\item[{\rm (c)}] $\divv\VV \cap \divv \delta\VV_{\mathrm{fillV}}=\{{\bld 0}\}$,
\item[{\rm (d)}] $\mathrm{dim}\,\delta\VV_{\mathrm{fillV}}=
\mathrm{dim}\,\divv \delta\VV_{\mathrm{fillV}}=I_S(\VV\times\W)$.}
\end{itemize}
Then $(\VVV\oplus\delta\VV_{\mathrm{fillV}})\times \WWW$ admits an \mm-decomposition 
with $\WWW=\divv(\VVV\oplus\delta\VV_{\mathrm{fillV}})$. Moreover, at least one
space $\delta\VV_{\mathrm{fillV}}$ can be constructed {to satisfy all the 
hypotheses} when $\bld M$ contains
the space of traces of rigid motions $\gamma\bld{RM}(K)$. 
\end{proposition}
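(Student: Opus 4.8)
The plan is to reduce everything to the characterization of Theorem \ref{thm:1.5}. Writing $\VV := \VVV \oplus \delta\VV_{\mathrm{fillV}}$, I would verify that $\VV\times\WWW$ satisfies the three conditions (a)--(c) of Theorem \ref{thm:1.5}, and then separately establish the identity $\WWW=\divv\VV$ and the existence of at least one admissible $\delta\VV_{\mathrm{fillV}}$. Throughout I may use that, since $\VVV\times\WWW$ already admits an \mm-decomposition, Theorem \ref{thm:1.5} supplies $\tr(\VVV\times\WWW)\subset\bld M$, the inclusions $\und\epsilon(\WWW)\subset\VVV$ and $\divv\VVV\subset\WWW$, and the vanishing index $\im(\VVV\times\WWW)=0$.

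The trace inclusion and the two differential inclusions for $\VV\times\WWW$ are immediate. Taking the displacement (resp.\ stress) component equal to zero in $\tr(\VVV\times\WWW)\subset\bld M$ gives $\gamma\VVV\subset\bld M$ and $\gamma\WWW\subset\bld M$; combined with hypothesis (a), $\gamma\delta\VV_{\mathrm{fillV}}\subset\bld M$, and the linearity of $\tr$ and of $\bld M$, this yields $\tr(\VV\times\WWW)\subset\bld M$. Next, $\und\epsilon(\WWW)\subset\VVV\subset\VV$, while $\divv\VV=\divv\VVV+\divv\delta\VV_{\mathrm{fillV}}\subset\WWW$ by hypothesis (b).

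The crux is the index condition $\im(\VV\times\WWW)=0$. Since $\WWW$ is unchanged, the rigid-motion term in the \mm-index is the same for $\VV\times\WWW$ as for $\VVV\times\WWW$, so it suffices to show that the two divergence-free normal-trace spaces coincide, i.e.\ $\{\und\tau\,\nn : \und\tau\in\VV,\,\divv\und\tau=\bld 0\}=\{\und\tau\,\nn : \und\tau\in\VVV,\,\divv\und\tau=\bld 0\}$; then $\im(\VV\times\WWW)=\im(\VVV\times\WWW)=0$. The inclusion $\supseteq$ is trivial. For $\subseteq$, I would take $\und\tau\in\VV$ with $\divv\und\tau=\bld 0$ and split $\und\tau=\und\tau_0+\und\rho$, $\und\tau_0\in\VVV$, $\und\rho\in\delta\VV_{\mathrm{fillV}}$, so that $\divv\und\rho=-\divv\und\tau_0\in\divv\VVV\cap\divv\delta\VV_{\mathrm{fillV}}=\{\bld 0\}$ by hypothesis (c); hypothesis (d) makes $\divv$ injective on $\delta\VV_{\mathrm{fillV}}$, whence $\und\rho=\bld 0$ and $\und\tau=\und\tau_0\in\VVV$. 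The same bookkeeping delivers $\WWW=\divv\VV$: hypotheses (c) and (d) make the sum $\divv\VV=\divv\VVV\oplus\divv\delta\VV_{\mathrm{fillV}}$ direct with $\dim\divv\delta\VV_{\mathrm{fillV}}=I_S(\VVV\times\WWW)=\dim\WWW-\dim\divv\VVV$, so $\dim\divv\VV=\dim\WWW$, and since $\divv\VV\subset\WWW$ this forces equality.

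The most delicate task is the explicit construction of one admissible $\delta\VV_{\mathrm{fillV}}$ under the assumption $\gamma\bld{RM}(K)\subset\bld M$. I would fix a basis $\{\bld w_j\}_{j=1}^{I_S}$ of a complement of $\divv\VVV$ in $\WWW$ and build symmetric stresses $\und\tau_j$ with $\divv\und\tau_j=\bld w_j$ and $\und\tau_j\,\nn\in\bld M$; then $\delta\VV_{\mathrm{fillV}}:=\mathrm{span}\{\und\tau_j\}_{j=1}^{I_S}$ satisfies hypotheses (a)--(d) essentially by construction, the linear independence of the $\bld w_j$ giving (c) and (d). To produce each $\und\tau_j$ I would set $\und\tau_j:=\und\epsilon(\bld\phi_j)$, where $\bld\phi_j$ solves the pure-traction elasticity problem $\divv\und\epsilon(\bld\phi_j)=\bld w_j$ in $K$ and $\und\epsilon(\bld\phi_j)\,\nn=\bld g_j$ on $\partial K$, with data $\bld g_j\in\gamma\bld{RM}(K)$ chosen so that $\bintK{\bld g_j}{\bld m}=(\bld w_j,\bld m)_K$ for every rigid motion $\bld m$ --- such a $\bld g_j$ exists uniquely in $\gamma\bld{RM}(K)$ because the trace map is an isomorphism on $\bld{RM}(K)$ and the $\bld L^2(\partial K)$ inner product is positive definite there. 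The hard part will be exactly this step: the compatibility condition of the Neumann elasticity problem is precisely the orthogonality of its data to rigid motions, so its well-posedness (via Korn's inequality modulo $\bld{RM}(K)$) is what forces $\bld g_j$ into $\gamma\bld{RM}(K)$, and this is where the hypothesis $\gamma\bld{RM}(K)\subset\bld M$ is indispensable. A secondary technical point I would need to check is that $\und\epsilon(\bld\phi_j)$ indeed possesses an $\bld L^2(\partial K)$ normal trace equal to $\bld g_j$, so that it genuinely belongs to the admissible space of approximate stresses.
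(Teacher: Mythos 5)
Your proposal is correct and follows essentially the same route as the paper: the first part reproduces the argument given in the text preceding the proposition (reduce to Theorem \ref{thm:1.5} by showing the solenoidal normal-trace space is unchanged, hypotheses (c)--(d) giving injectivity of $\divv$ on $\delta\VV_{\mathrm{fillV}}$ and the dimension count $\WWW=\divv\VV$), and your construction via the pure-traction problem $\divv(\und\epsilon(\bld\phi_{\w}))=\w$ with boundary datum in $\gamma\bld{RM}(K)$ fixed by the rigid-motion compatibility condition is exactly the paper's choice of $c(\w)$, with your basis of a complement of $\divv\VVV$ in $\WWW$ playing the role of the paper's basis of $\Wtilde^\perp$. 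Your added remarks on well-posedness via Korn's inequality and on the $\bld L^2(\partial K)$ normal trace only make explicit what the paper leaves tacit.
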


%\begin{proof} Set  $\VV':=\VV+\delta\VV_{\mathrm{fillM}}$ and $\VV_{\mathrm{fillV}}:=\VV'+\delta\VV_{\mathrm{fillV}}$. 
%We are going to check the conditions of Theorem \ref{thm:1.5}. Condition (a) of Theorem \ref{thm:1.5}
%follows from the fact that $\VV' \times W$ admits an \mm-decomposition
%and because of the hypothesis (a).  Condition (b) of Theorem \ref{thm:1.5}
%follows from the fact that, by hypothesis (b),
%\[
%\divv\VV_{\mathrm{fillV}}=\divv\VV'+\divv\delta\VV_{\mathrm{fillV}}\subset \W, 
%\]
%and by construction, $
%\und{\epsilon}(\W)\subset \VV' \subset \VV_{\mathrm{fillV}}.$
%It remains to verify condition (c) of Theorem \ref{thm:1.5}. 

%To do that, we simply have to show that
%\[
%\{\vv\cdot\nn|_{\partial K}:\, \vv\in \VV_{\mathrm{fillV}}, \divv\vv=0\}
%=\{\vv\cdot\nn|_{\partial K}:\, \vv\in \VV', \divv\vv=0\}.
%\]
%Let us now take $\vv\in \VV'$ and $\delta \vv\in \delta\VV_{\mathrm{fillV}}$. Note that $\divv(\vv+\delta \vv)=0$ implies that $\divv\vv=-\divv \delta \vv\in (\divv\VV') \cap (\divv\VV')^\bot$. Therefore $\divv\delta\vv=0$ and, by the hypothesis (b), it follows that $\delta\vv=0$. We have thus proved that
%\[
%\{ \vv\in \VV_{\mathrm{fillV}}\,:\, \divv \vv=0\}=\{ \vv\in \VV'\,:\, \divv\vv=0\}
%\]
%and the identity we wanted to prove follows.

%Note now that by hypothesis (b) (we use surjectivity here)
%\[
%\divv\VV_{\mathrm{fillV}}=\divv \VV' + \divv\delta\VV_{\mathrm{fillV}}=\divv \VV'+(\divv\VV')^\perp=\W.
%\]

\begin{proof}
Let us just show how to construct one space $\delta\VV_{\mathrm{fillV}}$.
Let $\bld{\mathcal{B}}$ be a basis of $\Wtilde^\perp$. Then, we can take
$\delta\VV_{\mathrm{fillV}}$ as the span of
$\{\und{\epsilon}(\bld\phi_{{\w}})\}_{{\w}\in\bld{\mathcal{B}}}$ where
$\bld\phi_{{\w}}$ solves
\[
\divv(\und{\epsilon}(\bld\phi_{{\w}}))={\w} \mbox{ in }K \mbox{ and }
\und{\epsilon}(\bld\phi_{{\w}})\,\boldsymbol{n}=c({\w})\mbox{ on }\partial K,
\]
where $c({\w})$ is the element of $\gamma \bld{RM}(K)$ such that
\[
\langle c({\w}),\bld{\varphi}\rangle_{\partial K}=
({\w},\bld{\varphi})_K
 \quad\forall \bld\varphi\in \bld{RM}( K).
 \]  
  Note that since $\bld M$ contains the space $\gamma\bld{RM}(K)$, hypothesis (a) is
actually satisfied. Finally, it is not difficult to see that hypotheses (b), (c), and (d) are also satisfied
by the choice of $\bld{\mathcal{B}}$.
This completes the proof.
\end{proof}

\begin{table}[!ht]
 \caption{Three ways of constructing  spaces $\VV\times \W$ admitting an \mm-decomposition. The spaces are obtained by modifying 
 the space $\VVV\times\WWW$ according to whether it already admits an \mm-decomposition or not, and according to
 whether the space $\divv \VVV$ is a proper subspace of $\WWW$ or not.
 The space $\VVV\times\WWW$ is assumed to satisfy the first two inclusion properties of an \mm-decomposition, namely, 
 $\tr(\VVV\times\WWW)\subset \bld M$ and $\und\epsilon( \WWW)\times \divv \VVV\subset \VVV\times\WWW$. }
\centering
{
\begin{tabular}{c c c c c}
\hline
\noalign{\smallskip} 
way $\#$&$\begin{matrix}
\mbox{properties}\\
\mbox{of } \VVV\times\WWW
\end{matrix}$  &         $\VV$         &     $\W$                  &     
$\begin{matrix}
\mbox{properties of}\\
\VV\times \W
\end{matrix}$
          \\
\noalign{\smallskip}
\hline\hline
\noalign{\smallskip} 
$\underset {{\rm(Prop. \ref{prop:fillingM})}}{{\rm I}}$&
$\begin{matrix}
I_{\bld M}(\VVV\times\WWW)>0
\end{matrix}$
&
$\VVV\oplus\delta\VV_{\mathrm{fillM}}$& $\WWW$&
$\begin{matrix}
I_{\bld M}(\VV\times \W)=0\\
I_S(\VV\times \W)=I_S(\VVV\times\WWW)\\
\!\!\!\text{ if } \bld{RM}(K)\subset \WWW.
\end{matrix}$
\\
\noalign{\smallskip} 
\hline 
\noalign{\smallskip} 
$\underset {{\rm(Prop. \ref{prop:reducingW})}}{{\rm II}}$&
$\begin{matrix}
I_{\bld M}(\VVV\times\WWW)=0\\
I_S(\VVV\times\WWW)>0
\end{matrix}$
&$\begin{matrix}
 \VVV
\end{matrix}$&
$\begin{matrix} 
\divv\VVV\end{matrix}$&
$\begin{matrix}
I_{\bld M}(\VV\times \W)=0\\
I_S(\VV\times \W)=0\\
\!\!\! \text{ if } \bld{RM}(K)\subset \divv\VVV.
\end{matrix}
$
\\
\noalign{\smallskip} 
\hline 
\noalign{\smallskip} 
$\underset {{\rm(Prop. \ref{prop:increasingV})}}{{\rm III}}$&
$\begin{matrix}
I_{\bld M}(\VVV\times\WWW)=0\\
I_S(\VVV\times\WWW)>0
\end{matrix}$
&$\begin{matrix}
\VVV\oplus\delta\VV_{\mathrm{fillV}}
\end{matrix}$&
$\WWW$&
$\begin{matrix}
I_{\bld M}(\VV\times \W)=0\\
I_S(\VV\times \W)=0\\
\!\!\! \text{ if } \gamma\bld{RM}(K)\subset \bld M.
\end{matrix}$
\\
\noalign{\smallskip} 
\hline 
\end{tabular}
}
\label{table:ways}
\end{table}

\begin{table}[!ht]
 \caption{Spaces $\VV\times \W$
   admitting an \mm-decomposition. They are constructed from the single space
   $\VVV\times\WWW$ which is assumed to satisfy the first two inclusion properties
   of an \mm-decomposition, namely,  
   $\tr(\VVV\times\WWW)\subset \bld M$ and $\und\epsilon( \WWW)\times \divv \VVV\subset \VVV\times\WWW$. }
\centering
{
\begin{tabular}{c l l l}
\hline
\noalign{\smallskip} 
 way $\#$&         $\phantom{ooooooo}\VV$         &     $\phantom{ooo}\W$                  &     
          \\
\noalign{\smallskip}
\hline\hline
\noalign{\smallskip} 
III&\red{$\VV^{\mathrm{mix}}\!\!:=\VV^{\mathrm{hdg}}\oplus\delta\VV_{\mathrm {fillV}}$}& $\W^{\mathrm{mix}}
\!\!:=\WWW$& \\
I&\red{$\VV^{\mathrm{hdg}}\!\!:=\VVV\;\;\,\,\oplus\delta\VV_{\mathrm{fillM}}$}& $\W^{\mathrm{hdg}}\!\!:=\WWW$
\\
II&\red{$ \VV_{\mathrm{mix}}\!\!:=\VVV\,\;\,\;\oplus\delta\VV_{\mathrm{fillM}}$}& $\W_{\mathrm{mix}}\!:=\divv \VVV$&
\\
\noalign{\smallskip} 
\hline 
\end{tabular}
}
\label{table:systematic}
\end{table}

\begin{table}[!ht]
 \caption{The main properties of the spaces $\delta\VV$.}
\centering
{
\begin{tabular}{c c c c l}
\hline
\noalign{\smallskip} 
  $\delta\VV$ &   $\divv\delta\VV$    &
 $\gamma\delta\VV$&$\dim\dvm$               \\
\noalign{\smallskip}
\hline\hline
\noalign{\smallskip} 
$\delta\VV_{\mathrm{fillM}}$& $\{\boldsymbol{0}\}$&\hspace{-0.5cm}$\subset \bld M$, $\cap
\gamma\VVV_{s}=\{\bld 0\}$
&$I_{\bld M}(\VVV\times\WWW)$ & \hspace{-0.35cm}$=\dim \gamma\,\dvm$\\
$\delta\VV_{\mathrm{fillV}}$& $\subset \WWW$, $\cap
\divv\VVV=\{\bld 0\}$ & $\subset
\bld M$ & $I_S(\VVV\times\WWW)$& \hspace{-0.4cm}$=\dim \divv\dvm$\\
\noalign{\smallskip} 
\hline 
\\
\end{tabular}
}
\label{table:deltaspaces}
\end{table}

\subsection{A systematic procedure for obtaining \mm-decompositions} 
We can now use these three ways of obtaining \mm-decompositions, summarized in Table \ref{table:ways},  to propose a systematic way for constructing spaces admitting \mm-decompositions starting from a single, 
given space $\VVV\times\WWW$. Let us recall that the space
$\VVV\times\WWW$  is assumed to satisfy the first two inclusion properties of an \mm-decomposition, 
and so the indexes $I_{\bld M}(\VVV\times\WWW)$ and $I_S(\VVV\times\WWW)$ are non-negative. 

The systematic construction is described in Tables 
\ref{table:systematic} and \ref{table:deltaspaces}.
% , whereas the spaces defining their corresponding canonical decompositions are displayed in 
% Table \ref{table:canonical}. 
Note that the construction provides three different spaces admitting \mm-decompositions. 
The first is associated to an HDG method.
The other two are associated to (hybridized) mixed methods which can 
be though of as {\em sandwiching} the HDG method.

It is now clear that we are left to construct the filling spaces $\delta\VV_{\mathrm{fillM}}$ and 
$\delta\VV_{\mathrm{fillV}}$ that satisfy the properties
in Table \ref{table:deltaspaces} for a given space $\VVV\times\WWW$ satisfying the first two inclusions of an \mm-decomposition. 
In the next section, we present such spaces defined on general polygonal elements in two-space dimensions.

\section{An explicit construction of 
the spaces $\delta\VV_{\mathrm{fillM}}$ and $\delta\VV_{\mathrm{fillV}}$ in two-space dimensions}
\label{sec:main}
This section contains the explicit construction of  the spaces $\delta\VV_{\mathrm{fillM}}$ and $\delta\VV_{\mathrm{fillV}}$
satisfying the properties in Table \ref{table:deltaspaces} in two-space dimensions. 
% We postpone the proofs to the next section.

Here we consider a polygonal element $K$, 
\red{fix the trace space}
$\boldsymbol{M}(\partial K):=\bpol{k}{\partial K}$, and
\red{study} two choices of the initial guess spaces 
$\VVV\times \WWW$, namely, 
\[
 \VVV\times \WWW:=
 %\bppol_k(K)
 \bppol_k\times \ppol_{k}\quad
 \text{ and }\quad  \VVV\times \WWW:=
 %\bqqol_k(K)
 \bqqol_k\times \qqol_{k},
\]
where $
 \bppol_k:=\pcol_k(K;\mathbb{S})$ and $ 
  \bqqol_k:=Q_k(K;\mathbb{S})$ are the symmetric-tensorial polynomial fields.

\subsection{Notation, the Airy stress operator and the 
lifting functions}To state our results, we need to introduce some notation. 
Let $\{\vt_i\}_{i=1}^{ne}$ be the set of vertices of the polygonal element 
$K$ which we take to be counter-clockwise ordered. 
Let $\{\eg_i\}_{i=1}^{ne}$ be the set of edges 
of $K$ where the edge $\eg_i$ connects the vertices $\vt_i$ and $\vt_{i+1}$.
Here the subindexes are integers module  $ne$, for example, $\vt_{ne+1}=\vt_1$.
An illustration for a quadrilateral element $K$ is presented in Fig.~\ref{Fig:element}. 
\begin{figure}[ht!]
\centering
\includegraphics[scale=.4]{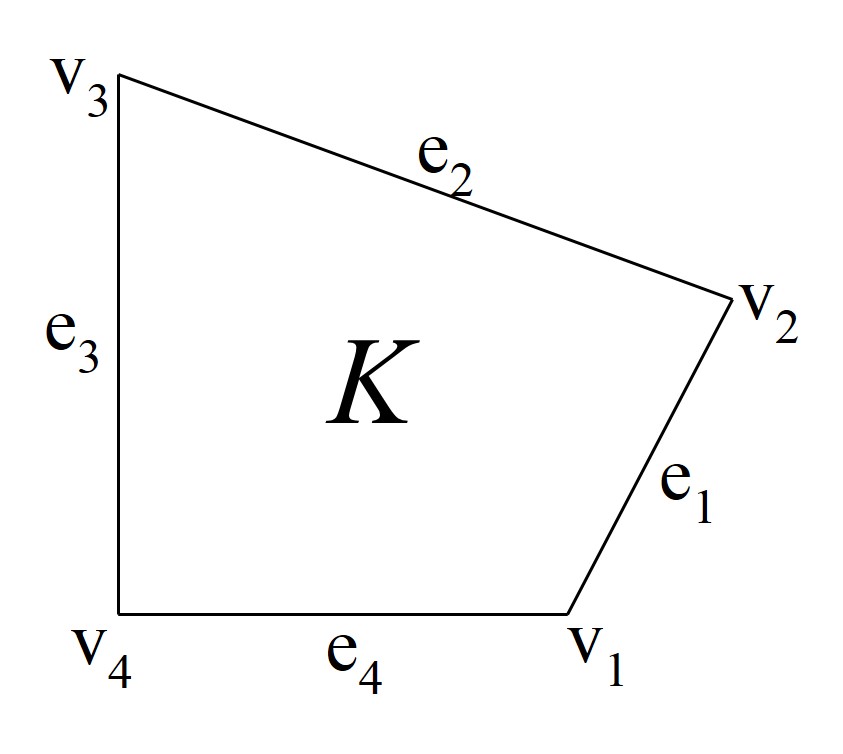}
\caption{A quadrilateral element $K$.}
\label{Fig:element}
\end{figure}
We also define, for $1\le i\le ne$,  $\lambda_i$ to be the  linear function that 
vanishes on edge $\eg_i$ and reaches maximum value $1$ in the closure of the  element $K$.

% Since we just have that $\divv \delta\VV_{\mathrm{fillM}}=\{\bld 0\}$, we need to use the Airy stress operator
Since $\delta\VV_{\mathrm{fillM}}$
is a divergence-free symmetric tensor field, it can be 
characterized, see\red{, for example,} \cite{ArnoldWinther02}, as the Airy stress operator of some 
$H^2$-conforming scalar field,  
where the Airy stress operator
is defined as follows:
\begin{align*}
 J : = \left(\begin{tabular}{c c}
        $\frac{\partial^2 }{\partial y^2}$ & 
$- \frac{\partial^2 }{\partial x\partial y}$ \\
$         -\frac{\partial^2 }{\partial x\partial y} $&  
$\frac{\partial^2 }{\partial x^2}$
        \end{tabular}\right).
\end{align*}
% The rational of this characterization is due to the exactness
% of the following sequence:
% \[
%  \pol{1}{K}\overset{i}{\longrightarrow}
%  H^2(K)\overset{J}{\longrightarrow}
%  H(\mathrm{div}, K; \mathbb{S})
%  \overset{\divv}{\longrightarrow}
%  \bld L^2(K)
%  \overset{o}{\longrightarrow}
%  0.
% \]
% 

Now, we introduce two functions which we are going to use as tools to define {\it lifting} 
of traces on $\dK$ into the inside of the element $K$. 
The first is associated to a vertex.
To each vertex $\vt_i$, we associate a   
 function $\xi_i$ satisfying the following conditions: 
 
 \
 
 \begin{tabular}{l }
 %\underline{Conditions for $\xi_i$:}
 %\vspace{.1cm}
 %& 
 %\underline{Conditions for $B_i$:}
%  \vspace{.1cm}
%\\
% \vspace{.1cm} 
(L.1) $\xi_i|_{\eg_j}\in \pcol_1(\eg_j),\;j=1,\dots,ne.$ \\
 \vspace{.1cm}
%(H.1) $B_i|_{\eg_j}= 0,\;j=1,\dots,ne,$\\
% \vspace{.1cm} 
(L.2) $\xi_i(\vt_j)=\delta_{ij},\;j=1,\dots,ne.$ \\
% \vspace{.1cm}
%(H.2) $\frac{\partial B_i}{\partial \n}|_{\eg_j}= 0,
%\;j=1,\dots,ne, j\not= i.$\\
  \vspace{.1cm}
 (L.3) $\frac{\partial\xi_i}{\partial \n_j}|_{\eg_j}\in \pcol_0(\eg_j),\;j=1,\dots,ne.$ 
%  \vspace{.1cm}
 %(H.3) $\frac{\partial B_i}{\partial \n}|_{\eg_i}= \lambda_{i-1}\lambda_{i+1},
%\;j = i.$
 \end{tabular}

\

 Here, $\n_j$ denotes the outward normal to the edge $\eg_j$.
 The second is associate to an edge. To each edge $\eg_i$, we associate a function $B_i$ satisfying the following conditions:
 
 \
 
 \begin{tabular}{l }
 %\underline{Conditions for $\xi_i$:}
 %\vspace{.1cm}
 %& 
 %\underline{Conditions for $B_i$:}
%  \vspace{.1cm}
%\\
% \vspace{.1cm} 
%(L.1) $\xi_i|_{\eg_j}\in \pcol_1(\eg_j),\;j=1,\dots,ne.$ &
% \vspace{.1cm}
(H.1) $B_i|_{\eg_j}= 0,\;j=1,\dots,ne,$\\
 \vspace{.1cm} 
%(L.2) $\xi_i(\vt_j)=\delta_{ij},\;j=1,\dots,ne.$ & 
% \vspace{.1cm}
(H.2) $\frac{\partial B_i}{\partial \n_j}|_{\eg_j}= 0,
\;j=1,\dots,ne, j\not= i.$\\
  \vspace{.1cm}
% (L.3) $\frac{\partial\xi_i}{\partial \n}|_{\eg_j}\in \pcol_0(\eg_j),\;j=1,\dots,ne,$ &
%  \vspace{.1cm}
 (H.3) $\frac{\partial B_i}{\partial \n_i}|_{\eg_i}= \lambda_{i-1}\lambda_{i+1},
\;j = i.$
 \end{tabular}

\

% These conditions, 
% derived from our analysis in Section \ref{sec:construction},
% look not that intuitive.
Next, we give some examples of these functions.
If $K$ is a triangle, we simply take $\xi_i := \lambda_{i+1}$ and 
$B_i = (\prod_{k=1}^3\lambda_k) \; \prod_{j\not= i}\frac{\lambda_j}{\lambda_j+\lambda_i}$. 
Note that $B_i$ is nothing but the rational bubble related to the edge $\eg_i$ defined in \cite{GuzmanNeilan14}. 
If $K$ is a star-shaped polygon with respect to an interior node $\vt_o$, we subdivide the element $K$ into 
$ne$ triangles $\{T_i\}_{i=1}^{ne}$, with $T_i$ begin the triangle with vertices  $\vt_o,\vt_{i-1},\vt_{i}$.
We then take
$\xi_i$ to be the piecewise linear function on $\{T_i\}_{i=1}^{ne}$ satisfying condition (L.2) and $\xi_i(\vt_o) = 0$. We
take $B_i$ to be the (composite) function that vanishes on $T_j$ for $j\not = i+1$ and equals to the rational bubble associated to $\eg_i$ on $T_{i+1}$. This 
choice is similar to the composite lifting introduced in \cite{CockburnFu17a}.  

Let us remark that the conditions (H) on the function $B_i$, 
derived from our analysis in the next section
(see Lemma \ref{lemma:Bi}),
 \red{ensure} that 
% \begin{align*}
% \label{discontinuity}
$
 \lim_{x\rightarrow \vt_{i+1}}\left. (J\,B_i) \n_{i}\cdot \n_{i+1}\right |_{\eg_i}\not = 
 \lim_{x\rightarrow \vt_{i+1}}\left. (J\,B_i) \n_{i+1}\cdot \n_{i}\right |_{\eg_{i+1}} = 0.
$ % \end{align*} 
The importance of this 
nodal discontinuity, which is not \red{made evident} in our  construction of \mm-decompositions,
is well established in the literature; see, for example, the discussion 
at the last two paragraphs of Section 3 in \cite{ArnoldWinther02}. 
Indeed, in \cite{ArnoldWinther02}, it is 
argued that there exist no hybridizable mixed \red{method (which does not 
use vertex degrees of freedom) that only uses} polynomial shape functions. 
Hence, it comes at no surprise that our space 
$\dvm_{\mathrm{fillM}}$ consists of non-polynomial functions.

\subsection{The case $\VVV\times \WWW:=
 \bqqol_k\times \qqol_{k}$}
 We start by considering $K$ to be a unit square with edges parallel to the axes and $\vt_1 = (0,1)$. 
 (This implies $\lambda_1 = x, \lambda_2 = y, \lambda_3 = 1-x, \lambda_4 = 1-y$ )
 We omit the proof due to its similarity with the proof of 
 the more difficult result, Theorem \ref{thm:polygon}, in Section \ref{sec:proof}.

\gfu{
The proof of the next two theorems in this subsection is sketched 
in Appendix D. We remark that a more detailed proof for a similar, but much more difficult result, mainly
Theorem \ref{thm:polygon} in the next subsection, is given in Section \ref{sec:proof}.}
\begin{theorem}
\label{thm:square}
Let $K$ be a unit square with edges parallel to the axes. Then, for 
$\bld M=\ppol_k(\dK)$ and $\VVV\times \WWW = \bqqol_k(K)\times \qqol_k(K)$, where $k\ge 1$, we have that 
\[
 I_{\bld M}(\VVV\times \WWW) =\left\{
 \begin{tabular}{l l}
  $6$ & if $k = 1$,\\
  $9$ & if $k = 2$,\\
  $10$ & if $k \ge 3$,
 \end{tabular}
\right.
 \text{ and }\quad
  I_S(\VVV\times \WWW) = 3.
\]
Moreover, the spaces
\begin{align*}
 \delta\VV_{\mathrm{fillM}}&\!\!:= \!\!
 \begin{cases}
  J\,\mathrm{span}\{B_2, B_3,B_4, \xi_4^2,\\
  \hspace{1.2cm}\xi_4^2(1-x),\xi_4^2(1-y)\} & \mbox{ if } k = 1,
 \\
 J\,\mathrm{span}\{B_2, B_3,B_4,B_4 x,\\
  \hspace{1.2cm}\xi_4^2(1-x),\xi_4^2(1-y),\\
  \hspace{1.2cm}\xi_4^2(1-x)(1-y),\\
  \hspace{1.2cm}\xi_4^2(1-x)^2,\xi_4^2(1-y)^2\} & \mbox{ if } k = 2,
\\
  J\,\mathrm{span}\{B_2, B_3,B_4,B_4 x,\\
  \hspace{1.2cm}\xi_4^2(1-x)^{k-1},\xi_4^2(1-y)^{k-1},\\
 \hspace{1.2cm}\xi_4^2(1-x)^{k-1}(1-y),\xi_4^2(1-x)(1-y)^{k-1},\\  
 \hspace{1.2cm}\xi_4^2(1-x)^{k},\xi_4^2(1-y)^{k}\} & \mbox{ if } k \ge 3,
   \end{cases}
\\
           \delta\VV_{\mathrm{fillV}}&:=  
           \mathrm{span} \left\{\left[\begin{tabular}{c c}
                                                    $x^{k+1}y^{k-1}$ &$0$\\
                                                    $0$ &$0$
                                                   \end{tabular}\right],
                                                   \left[\begin{tabular}{c c}
                                                    $x^{k+1}y^{k}$ &$0$\\
                                                    $0$ &$0$
                                                   \end{tabular}\right],
                                                   \left[\begin{tabular}{c c}
                                                    $0$ &$0$\\
                                                    $0$ &$x^ky^{k+1}$
                                                   \end{tabular}\right]\right\},
\end{align*}
satisfy the properties in Table \ref{table:deltaspaces}. 
Here
 $\xi_4$ satisfies conditions (L) and $B_2, B_3, B_4$ satisfy conditions (H).
\end{theorem}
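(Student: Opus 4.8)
The plan is to check the hypotheses of Theorem~\ref{thm:1.5} and of the filling Propositions~\ref{prop:fillingM} and \ref{prop:increasingV}; this reduces to computing the two indices $I_{\bld M}(\VVV\times\WWW)$ and $I_S(\VVV\times\WWW)$ and then verifying the explicit spaces against Table~\ref{table:deltaspaces}. Inclusions (a),(b) of an \mm-decomposition hold for $\bqqol_k(K)\times\qqol_k(K)$, since normal traces of $Q_k$ tensors and traces of $Q_k$ vector fields restrict to $\ppol_k(\dK)$ edge by edge, while $\und\epsilon(\qqol_k)\subset\bqqol_k$ and $\divv\bqqol_k\subset\qqol_k$ by differentiation. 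As the four edges carry independent traces, $\dim\bld M=8(k+1)$; and since $\bqqol_k\times\qqol_k$ contains $\bld{RM}(K)$ and a nonzero rigid motion cannot vanish on $\dK$, the kernel of $\und\epsilon$ contributes $\dim\gamma\WWW_{\!rm}=3$. Hence $I_{\bld M}=8(k+1)-3-\dim\gamma\VVV_s$, and everything hinges on the dimension $\dim\gamma\VVV_s$ of the normal traces of the divergence-free symmetric $Q_k$ tensors.

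To compute $\dim\gamma\VVV_s$ I would use the Airy representation $J$: on the simply connected square every divergence-free symmetric tensor is $J\phi$, and since $\divv\circ J=0$ we have $\VVV_s=J\Phi$ with $\Phi=\{\phi:\,J\phi\in\bqqol_k\}$ and $\ker J$ the affine functions. Writing $\phi=\sum c_{ij}x^iy^j$ and demanding that the entries $\partial_{yy}\phi$, $-\partial_{xy}\phi$, $\partial_{xx}\phi$ each lie in $Q_k$ gives, with no cancellation (distinct input monomials produce distinct output monomials), a combinatorial count of the admissible pairs $(i,j)$; this yields $\dim\Phi=k^2+2k+7$ and hence $\dim\VVV_s=k^2+2k+4$. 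For the trace map, $J\phi\,\n=0$ on $\dK$ is equivalent to the vanishing of the tangential second derivative $\partial_{tt}\phi$ and of the mixed derivative $\partial_t\partial_{\n}\phi$ on every edge, which together with $J\phi\in\bqqol_k$ forces $\phi$, modulo affine functions, into $x^2(1-x)^2y^2(1-y)^2\,Q_{k-4}(K)$; this subspace has dimension $\dim Q_{k-4}(K)$, equal to $(k-3)^2$ for $k\ge4$ and $0$ otherwise. Subtracting gives $\dim\gamma\VVV_s=7,\,12,\,8k-5$ for $k=1,\,2,\,k\ge3$, whence $I_{\bld M}=6,\,9,\,10$ as claimed. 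The identity $I_S=3$ then follows from rank--nullity, using $\dim\divv\bqqol_k=3(k+1)^2-\dim\VVV_s$ and $\dim\qqol_k=2(k+1)^2$.

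For $\delta\VV_{\mathrm{fillV}}$ the verification is direct: the three monomial generators have divergences $((k+1)x^ky^{k-1},0)$, $((k+1)x^ky^k,0)$, $(0,(k+1)x^ky^k)$, which lie in $\qqol_k$ and are linearly independent, and a short degree inspection shows none lies in $\divv\bqqol_k$ (the monomial $x^ky^k$ cannot arise from $\partial_xa+\partial_yb$ with $a,b\in Q_k$, and the first generator is blocked through the second divergence equation); evaluating the normal traces edge by edge gives $\gamma\delta\VV_{\mathrm{fillV}}\subset\ppol_k(\dK)$, so properties (a)--(d) of Table~\ref{table:deltaspaces} hold with $\dim\delta\VV_{\mathrm{fillV}}=\dim\divv\delta\VV_{\mathrm{fillV}}=I_S=3$. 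For $\delta\VV_{\mathrm{fillM}}$, the property $\divv\delta\VV_{\mathrm{fillM}}=\{\bld 0\}$ is automatic because each generator is of the form $J(\cdot)$, and the number of generators ($6$, $9$, $10$) matches $I_{\bld M}$; it remains to show that the normal traces lie in $\ppol_k(\dK)$ and that $\gamma\delta\VV_{\mathrm{fillM}}$, $\gamma\VVV_s$ and $\gamma\WWW_{\!rm}$ form a direct sum exhausting $\bld M$, which by the dimension count is equivalent to $\gamma$ being injective on $\delta\VV_{\mathrm{fillM}}$ with image meeting $\gamma\VVV_s$ only at $\bld 0$.

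The hard part will be this last step: computing, edge by edge, the normal traces of the Airy tensors $J\,B_i$ and $J(\xi_4^2 q)$ appearing in the statement (with $q$ the various polynomial factors), showing that they are polynomials of degree $\le k$ and that they are linearly independent of each other and of $\gamma\VVV_s$. This is exactly where conditions (L) on $\xi_i$ and (H) on $B_i$ enter, together with the nodal-discontinuity identity for $(J\,B_i)\,\n$ anticipated in Lemma~\ref{lemma:Bi}; the second-order nature of $J$ makes these boundary evaluations delicate near the vertices. I would organize this bookkeeping exactly as, but more easily than, the detailed argument carried out for the harder polygonal case in Section~\ref{sec:proof} (Theorem~\ref{thm:polygon}), which the statement already signals as the model to follow.
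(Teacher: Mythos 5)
Your proposal is correct, and it reaches the indices by a genuinely different route than the paper's own proof (Appendix D). You compute $I_{\bld M}(\VVV\times\WWW)$ globally: a monomial-by-monomial count of the Airy potentials $\Phi=\{\phi:\,J\phi\in\bqqol_k\}$ gives $\dim\Phi=(k+1)^2+6$, hence $\dim\VVV_s=(k+1)^2+3$, and the bubble potentials $x^2(1-x)^2y^2(1-y)^2\,Q_{k-4}(K)$ give $\dim\VV_{\mathrm{sbb}}=\max(k-3,0)^2$; your resulting values $\dim\gamma\VVV_s=7,12,8k-5$ and $I_{\bld M}=6,9,10$, as well as the rank--nullity argument for $I_S=3$ and the monomial-blocking verification that $\divv\delta\VV_{\mathrm{fillV}}\cap\divv\bqqol_k=\{\bld 0\}$, all check out and agree with the paper's data (your $\Phi$ and bubble space are exactly $\Phi_1$ and $\Phi_5$ of Lemma \ref{lemma-characterization-sQ}, and your global index equals the telescoped sum of the per-edge indices $0,\,1,\,\{4,5,4\},\,\{1,3,5\}$ of Corollary \ref{corollary-dim-count-Q}). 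The paper instead runs Algorithm \ref{algorithm1} with the filtration $\VVV_{s,i}$ of divergence-free tensors whose normal traces vanish on the first $i-1$ edges; that filtration computes the index \emph{and} simultaneously produces the per-edge complements $C_{\bld M,i}$ together with a triangular structure (each block of generators has vanishing traces on all earlier edges, by Lemmas \ref{lemma:xi} and \ref{lemma:Bi}) that makes the independence of $\gamma\,\delta\VV_{\mathrm{fillM}}$ from $\gamma\VVV_s$ nearly automatic. Your global count is quicker and more elementary for the index, but it defers the entire independence verification to a single monolithic step; since your own plan is to organize that step ``edge by edge,'' you would in effect reconstruct the intermediate spaces $\Phi_2,\Phi_3,\Phi_4$ and the per-edge trace characterizations anyway. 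In short: your approach buys a transparent dimension count up front, while the paper's pays the filtration cost once and then obtains the verification of properties (3.1)--(3.3) of Algorithm \ref{algorithm1} essentially for free; both correctly reduce the final claim to the trace computations governed by conditions (L), (H) and Lemma \ref{lemma:gJ}, which is also all the paper itself invokes at that stage.
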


Let us remark that in practical implementation, we can take
$\xi_4$ to be the composite lifting function presented in the previous subsection, and  
$\{B_i\}$ 
to be the following rational functions:
\begin{align}
\label{bubble-square}
B_i = \prod_{k=1}^4\lambda_k\; \prod_{j\not = i} \frac{\lambda_j}{\lambda_j+\lambda_i}. 
\end{align}

% we have the composite functions introduced in 
% the previous subsection as the explicit expressions of 
% $\xi_i$ and $B_i$.
% This is in high contrast with the diffusion case considered in 
% \cite{CockburnFu17a}, where 
% composite functions are not at all necessary.
% we do not need functions $B_i$,
% and $\xi_i$ does not need to satisfy the condition ($L.3$) and 
% can be simply 
% taken to be a quadratic function. 
% We could not find  explicit formula with 
% non-composite functions that satisfy conditions (L).

When the polynomial degree $k\ge 2$, 
we can bypass the use of the composite function $\xi_4$
in the definition 
of $\delta\VV_{\mathrm{fillM}}$ by using an exponential function, as we see in the next result.

% Again, the proof is omitted for the same reason as for Theorem \ref{thm:square}.

\begin{theorem}
\label{thm:square2}
Let $K$ be a unit square with edges parallel to the axes. Then, for 
$\bld M=\ppol_k(\dK)$ and $\VVV\times \WWW = \bqqol_k(K)\times \qqol_k(K)$, where $k\ge 2$, we have
the spaces
\begin{align*}
 \delta\VV_{\mathrm{fillM}}&\!\!:= \!\!
 \begin{cases}
 J\,\mathrm{span}\{B_2, B_3,B_4,B_4 x,\\
  \hspace{1.2cm}(x y)^2(1-x),(x y)^2(1-y),\\
  \hspace{1.2cm}(x y)^2(1-x)(1-y),\\
  \hspace{1.2cm}(x e^{1-y} y)^2(1-x)^2,(e^{1-x}x  y)^2(1-y)^2\} & \mbox{ if } k = 2,
\\
  J\,\mathrm{span}\{B_2, B_3,B_4,B_4 x,\\
  \hspace{1.2cm}(x y)^2(1-x)^{k-1},(x y)^2(1-y)^{k-1},\\
 \hspace{1.2cm}(x y)^2(1-x)^{k-1}(1-y),(x y)^2(1-x)(1-y)^{k-1},\\  
 \hspace{1.2cm}(x e^{1-y}y)^2(1-x)^{k},(e^{1-x}x y)^2(1-y)^{k}\} & \mbox{ if } k \ge 3.
   \end{cases}
\end{align*}
satisfy the properties in Table \ref{table:deltaspaces}. 
Here
$B_i$ are defined in \eqref{bubble-square}.
\end{theorem}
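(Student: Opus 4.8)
The plan is to follow the proof of Theorem~\ref{thm:square} almost verbatim, isolating the single new ingredient, namely the two exponential generators of $\delta\VV_{\mathrm{fillM}}$. First I would observe that, since $\delta\VV_{\mathrm{fillV}}$ and the indices $I_{\bld M}(\VVV\times \WWW)$ and $I_S(\VVV\times \WWW)$ are already supplied by Theorem~\ref{thm:square}, and since every generator of $\delta\VV_{\mathrm{fillM}}$ is of the form $J\phi$ for a scalar field $\phi$ (so that $\divv(J\phi)=\bld 0$ holds automatically), the only properties in Table~\ref{table:deltaspaces} still requiring proof are (a) $\gamma\delta\VV_{\mathrm{fillM}}\subset \bld M=\pol{k}{\dK}$, together with the dimension and complementarity statements $\dim\gamma\delta\VV_{\mathrm{fillM}}=I_{\bld M}(\VVV\times \WWW)$ and $\gamma\VVV_{s}\cap\gamma\delta\VV_{\mathrm{fillM}}=\{\bld 0\}$.

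I would then record the boundary trace identity for the Airy operator: for a smooth scalar $\phi$ and an edge $\eg_j$ with outward normal $\n_j$ and unit tangent $\bld t_j$,
\[
\n_j\cdot (J\phi)\,\n_j = \partial_{tt}\phi\big|_{\eg_j},
\qquad
\bld t_j\cdot (J\phi)\,\n_j = -\,\partial_t\partial_n\phi\big|_{\eg_j}.
\]
Hence $(J\phi)\,\n_j\in\bpol{k}{\eg_j}$ exactly when both $\partial_{tt}\phi|_{\eg_j}$ and $\partial_t\partial_n\phi|_{\eg_j}$ have tangential degree at most $k$. As every generator is either a product $p(x)q(y)$ or a $B_i$-type function, on the axis-parallel edges of the unit square these two quantities collapse into products of one-variable derivatives that can be read off directly.

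The hard part will be the two exponential generators, which is the only place the argument departs from Theorem~\ref{thm:square}. Consider $\phi=(e^{1-x}xy)^2(1-y)^k=p(x)q(y)$ with $p(x)=e^{2(1-x)}x^2$ and $q(y)=y^2(1-y)^k$; on $\eg_3=\{x=1\}$ its trace is $\bigl(p(1)q''(y),\,-p'(1)q'(y)\bigr)$. The naive polynomial substitute $(xy)^2(1-y)^k$ has $p(x)=x^2$, hence $p'(1)=2\neq0$, so its normal--tangential component $p'(1)q'(y)$ is of degree $k+1$ and violates~(a). The exponential factor is engineered precisely so that $p(1)=1$ while $p'(1)=0$: this annihilates the offending component and leaves only the normal--normal part $q''(y)$, of degree $k$. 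Using the double zero of $p$ at $x=0$ and the zeros of $q$ at $y=0,1$, I would then check that this generator vanishes on $\eg_1,\eg_2,\eg_4$, so that its trace is supported on $\eg_3$ and lies in $\bld M$; the symmetric generator $(xe^{1-y}y)^2(1-x)^k$ is treated identically with $x$ and $y$ (and $\eg_3$, $\eg_4$) interchanged. The genuinely delicate point is that the correction must simultaneously cancel the degree-$(k+1)$ normal--tangential term on the target edge, preserve the full degree-$k$ normal--normal term there, and leave the vanishing traces on the other three edges undisturbed.

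Finally, I would dispatch the remaining generators by the same one-variable calculation: the $B_i$, vanishing together with their normal derivatives off $\eg_i$, contribute normal--tangential traces, while the low-power $(xy)^2$ products contribute normal--normal traces of degree below $k$. These reproduce exactly the edge support and degrees of their composite-function counterparts in Theorem~\ref{thm:square}, so $\gamma\delta\VV_{\mathrm{fillM}}$ is a subspace of $\bld M$ of the same dimension $I_{\bld M}(\VVV\times \WWW)$ and in the same way complementary to $\gamma\VVV_{s}$. Properties (c) and (d) then follow from the identical counting and linear-independence argument, which completes the verification.
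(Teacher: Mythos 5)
Your proposal is correct and takes essentially the same route as the paper: Appendix D proves Theorems \ref{thm:square} and \ref{thm:square2} together via the edge-by-edge verification of Algorithm \ref{algorithm1} (using Lemma \ref{lemma-characterization-sQ} and Corollary \ref{corollary-dim-count-Q}, which are unchanged from Theorem \ref{thm:square}), and the only genuinely new content is exactly the trace computation you make explicit for the exponential generators --- your component identity for $(J\phi)\,\n_j$ is equivalent to the paper's Lemma \ref{lemma:gJ}, and your observation that $p(1)=1$, $p'(1)=0$ for $p(x)=e^{2(1-x)}x^2$ kills the degree-$(k+1)$ normal--tangential term while preserving the degree-$k$ normal--normal term is precisely the point the paper leaves as ``easy to verify.'' One harmless slip in your final paragraph: the generators such as $(xy)^2(1-x)^{k-1}$ do not contribute only normal--normal traces of degree below $k$, since on the supporting edge they also carry a nonzero degree-$k$ normal--tangential component $p'(x)q'(1)$, but every component still lies in $\bpol{k}{\eg_j}$, so the membership, independence, and counting arguments you invoke go through unchanged.
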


% Here the introduction of exponential functions makes sure that 
% $\gamma\delta\VV_{\mathrm{fillM}}\subset \bld M$. For example,
% \[\gamma_4\bigg(J\Big( (x \exp^{1-y}y)^2(1-x)^{k}\Big)\bigg) = 
% \gamma_4\bigg(J \Big(x ^2(1-x)^{k}\Big)\bigg)\in
% \bpol{k}{\eg_4}.\]

\subsection{The case $\VVV\times \WWW:=
 \bppol_k\times \ppol_{k}$} 
Now, we consider $K$ to be a general polygon without hanging nodes.
 
 \begin{theorem}
\label{thm:polygon}
Let $K$ be a  polygon of $ne$ edges without hanging nodes.
Then, for $\bld M:=\ppol_k(\dK)$ and $\VVV\times \WWW := \bppol_k\times \ppol_k$ with $k\ge 1$, we have that 
\[
 I_{\bld M}(\VVV\times \WWW) = 2(\theta+1)ne-\frac{1}{2}(\theta+3)(\theta+4),\;\;
 \text{ and }\quad
  I_S(\VVV\times \WWW) = 2(k+1),
\]
where $\theta:=\min\{k,2 ne-4\}$.

Moreover, the spaces
\begin{align*}
 \delta\VV_{\mathrm{fillM}}&\; :=\oplus_{i=1}^{ne}J \,\Psi_i,\\
  \delta\VV_{\mathrm{fillV}}&\; :=\{\und\phi^1_{k+1,a}, \und\phi^2_{k+1,a}\}_{a=0}^k
  \quad\quad\text{(for $k\ge 2$)}
\end{align*}
satisfy the properties in Table 3. Here
 \[
\Psi_i =\left\{\begin{tabular}{l l}
            $\{0\}$ & if $i = 1$,           
            \vspace{.3cm}
\\
            $\mathrm{span}
\left  \{ (\xi_{i+1})^2\lambda_{i+1}^b\right\}_{b=\max\{k+5-2i,0\}}^{k}$\\
$ \hspace{.2cm}\oplus \mathrm{span}
\left  \{ (\xi_{i+1})^2\lambda_i\lambda_{i+1}^b\right\}_{b=
\max\{k+4-2i,0\}}^{k-1}$\\
$ \hspace{.4cm}\oplus \mathrm{span}
\left  \{B_i\right\}
%\left  \{\eta_i\lambda_i\lambda_{i+1}\right\}
$
            & if $2 \le i\le ne-1$,
            \vspace{.3cm}
            \\
                        $ \mathrm{span}
            \left  \{B_i\right\}
%\left  \{ \eta_i\lambda_{i}\lambda_{i+1}\right\}
$
            & if $i = ne$ and $k= 1$,\vspace{.3cm}\\
            $\mathrm{span}
\left  \{ (\xi_{i+1})^2\lambda_{i+1}^{2+b}\right\}_{b=\max\{k+5-2i,0\}}^{k-2}$\\
$ \hspace{.2cm}\oplus \mathrm{span}
\left  \{(\xi_{i+1})^2\lambda_{i}\lambda_{i+1}^{2+b}\right\}_{b=
\max\{k+4-2i,0\}}^{k-3}$\\
$ \hspace{.4cm}\oplus \mathrm{span}
\left  \{B_i, B_i\lambda_{i+1}\right\}
%\left  \{ \eta_i\lambda_{i}\lambda_{i+1},
%\eta_i\lambda_{i}\lambda_{i+1}^2\right\}
$
            & if $i = ne$ and $k\ge 2$,
            \end{tabular}\right.
 \] 
 where $\xi_{i+1}$ satisfies conditions (L) and $B_i$ satisfies conditions (H),
and 
\begin{align*}
\und \phi^1_{k+1,a}= \left[ \begin{tabular}{l l}
                  $x^{k+1-a}y^a$ & $0$\\
                  $0$ & $0$
                 \end{tabular}\right] +\sum_{i=1}^{ne}J\,(C^1_{ai}\, \xi_{i+1}^2\lambda_{i+1}^{k+1}+D^1_{ai}\,
                 \xi_{i+1}^2\lambda_i\lambda_{i+1}^{k}),\\
 \und\phi^2_{k+1,a}= \left[ \begin{tabular}{l l}
                  $0$ & $0$\\
                  $0$ & $x^{k+1-a}y^a$
                 \end{tabular}\right] +\sum_{i=1}^{ne}J\,(C^2_{ai}\, \xi_{i+1}^2\lambda_{i+1}^{k+1}+D^2_{ai}\,
                 \xi_{i+1}^2\lambda_i\lambda_{i+1}^{k}),                 
\end{align*}
where the constants $\{C^1_{ai},D^1_{ai},C^2_{ai},D^2_{ai}\}$ are chosen such that 
$\gamma(\und\phi^1_{k+1,a}), \gamma(\und\phi^2_{k+1,a})\in \ppol_k(\dK)$.
\end{theorem}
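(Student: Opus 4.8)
The plan is to establish the two index identities and then verify that the listed spaces $\delta\VV_{\mathrm{fillM}}$ and $\delta\VV_{\mathrm{fillV}}$ satisfy the four properties recorded in Table~\ref{table:deltaspaces}; by Proposition~\ref{prop:fillingM} and Proposition~\ref{prop:increasingV} (equivalently, by the kernels' trace decomposition of Theorem~\ref{thm:1.5}) this yields the asserted $\bld M$-decomposition. Throughout I would use the classical Airy-stress characterization \cite{ArnoldWinther02}: the divergence-free fields in $\VVV=\pcol_k(K;\mathbb{S})$ are exactly $J(\pol{k+2}{K})$, with $\ker J=\pol{1}{K}$, together with the elementary normal-trace identity $(J\phi)\,\nn=\frac{d}{ds}(\partial_y\phi,\,-\partial_x\phi)$ along each edge ($s$ = arclength). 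This identity reduces every boundary computation for Airy fields to the behaviour of $\gradv\phi$ on $\dK$, and it is the workhorse of the whole argument.

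I would dispatch $I_S$ and $\delta\VV_{\mathrm{fillV}}$ first, as the easier half. A dimension count using $\dim J(\pol{k+2}{K})=\dim\pol{k+2}{K}-3$ gives $\dim\divv\VVV=\dim\pcol_k(K;\mathbb{S})-(\dim\pol{k+2}{K}-3)=k(k+1)=\dim\bpol{k-1}{K}$, so $\divv\VVV=\bpol{k-1}{K}$ and $I_S=\dim\WWW-\dim\divv\VVV=2(k+1)$. For the filling space, the divergences of the leading monomial tensors in $\und\phi^1_{k+1,a},\und\phi^2_{k+1,a}$ span precisely the degree-$k$ homogeneous vectors, which are complementary to $\divv\VVV=\bpol{k-1}{K}$; the Airy corrections $J(\cdots)$ are divergence-free and leave this untouched, giving properties $\divv\,\delta\VV_{\mathrm{fillV}}\subset\WWW$, $\divv\,\delta\VV_{\mathrm{fillV}}\cap\divv\VVV=\{\bld 0\}$ and the dimension match $2(k+1)=I_S$. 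The only point needing work is the choice of the constants $C^{j}_{ai},D^{j}_{ai}$ so that $\gamma(\und\phi^{j}_{k+1,a})\in\bld M$: the degree-$(k+1)$ part of the polynomial normal trace must be cancelled by the traces of $J(\xi_{i+1}^2\lambda_{i+1}^{k+1})$ and $J(\xi_{i+1}^2\lambda_i\lambda_{i+1}^{k})$, which I would show is always solvable via the trace identity above.

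For $I_{\bld M}$ I would write $I_{\bld M}=\dim\bld M-\dim\gamma\VVV_s-\dim\gamma\WWW_{\!rm}$, with $\dim\bld M=2(k+1)ne$ and $\dim\gamma\WWW_{\!rm}=3$ (the trace of a nonzero rigid motion cannot vanish on $\dK$). The crux is $\dim\gamma\VVV_s$. Using the trace identity, $(J\phi)\,\nn=\bld 0$ on $\dK$ iff $\gradv\phi$ is constant along each edge, which—after matching the limits at the vertices—forces $\gradv\phi$ to be one global constant; subtracting the corresponding affine function reduces the kernel count to counting $\psi\in\pol{k+2}{K}$ vanishing to second order on $\dK$ (value and normal derivative zero on every edge). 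Hence $\dim\gamma\VVV_s=\dim\pol{k+2}{K}-N_k$, where $N_k$ is the dimension of this double-vanishing space augmented by $\pol{1}{K}$. The evaluation of $N_k$ is exactly where the cap $\theta=\min\{k,2ne-4\}$ enters: when $k\le 2ne-4$ no nontrivial polynomial of degree $k+2$ vanishes to second order on all $ne$ edges, so $N_k=3$ and $\theta=k$; once $k>2ne-4$ such functions appear (roughly, multiples of $\prod_i\lambda_i^2$ and their vertex-corrected relatives), the count saturates, and substituting $N_k$ yields $I_{\bld M}=2(\theta+1)ne-\tfrac12(\theta+3)(\theta+4)$.

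Finally I would verify $\delta\VV_{\mathrm{fillM}}=\oplus_i J\,\Psi_i$. Since every summand is an Airy field, $\divv\,\delta\VV_{\mathrm{fillM}}=\{\bld 0\}$ automatically; the substantive tasks are (i) $\gamma\,\delta\VV_{\mathrm{fillM}}\subset\bld M$, (ii) $\gamma\,\delta\VV_{\mathrm{fillM}}\cap\gamma\VVV_s=\{\bld 0\}$, and (iii) $\dim\delta\VV_{\mathrm{fillM}}=\dim\gamma\,\delta\VV_{\mathrm{fillM}}=I_{\bld M}$. For (i) I would prove edge-by-edge trace lemmas for $J(\xi_{i+1}^2\lambda_{i+1}^b)$, $J(\xi_{i+1}^2\lambda_i\lambda_{i+1}^b)$ and $JB_i$—this is where conditions (L) and (H), and in particular Lemma~\ref{lemma:Bi} on the nodal jump of $JB_i$, guarantee polynomial traces of degree $\le k$. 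Because $\xi_{i+1}$ vanishes at every vertex except $\vt_{i+1}$, each such trace is supported only on the two edges meeting $\vt_{i+1}$, and within those edges the exponent $b$ controls the degree; this localization produces a triangular (staircase) structure from which (ii) and the independence needed for (iii) follow, the ranges of $b$ (the $\max\{k+5-2i,0\}$ cutoffs) being tuned precisely so that the surviving traces are those not already in $\gamma\VVV_s$. I expect the main obstacle to be exactly this combined step: carrying out the double-vanishing count $N_k$ while simultaneously showing the explicit $\Psi_i$-traces are independent and span the $I_{\bld M}$-dimensional complement of $\gamma\VVV_s$ in $\bld M$—these are the same bookkeeping viewed from the kernel and image sides, and reconciling them, with the vertex interactions that create the $2ne-4$ threshold, is the delicate part, which is why the full argument is deferred to Section~\ref{sec:proof}.
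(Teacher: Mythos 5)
Your outline is essentially the paper's own argument, with one organizational difference in the index computation. For the construction of $\delta\VV_{\mathrm{fillM}}$ you identify exactly the paper's key tools: the Airy characterization $\VVV_s=J(\pcol_{k+2}(K))$ with $\ker J=\pcol_1(K)$, the trace identity of Lemma \ref{lemma:gJ} (your arclength formula is the same statement), the divisibility-by-$\lambda_i^2$ mechanism, and the trace lemmas for $\xi_{i+1}$ and $B_i$ (Lemmas \ref{lemma:xi} and \ref{lemma:Bi}). Where you differ is bookkeeping: you compute $I_{\bld M}$ globally, via $\dim\gamma\VVV_s=\dim\pcol_{k+2}(K)-3-\dim\pcol_{k+2-2ne}(K)$, whereas the paper sweeps the boundary edge by edge, introducing the nested spaces $\VVV_{s,i}$ (divergence-free fields with vanishing normal trace on the first $i-1$ edges), per-edge indices $I_{\bld M,i}$, and Algorithm PC, with a telescoping identity recovering the global count. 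The two are equivalent, but the sequential framework is precisely what turns your ``staircase'' heuristic into a proof: it reduces the simultaneous independence-and-spanning problem you flag as the delicate step to $ne$ decoupled one-edge problems, each settled by evaluating at $\vt_{i+1}$ and dividing by $\lambda_{i+1}$ repeatedly. Your plan gestures at this triangular structure but supplies no substitute scaffolding, so the crux of the theorem is left as an acknowledged obstacle rather than resolved; that is the main gap relative to the paper's Section \ref{sec:proof}.

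Two local points need correction. First, your threshold mechanism is misattributed: the double-vanishing space $J(\lambda_1^2\cdots\lambda_{ne}^2\,\pcol_{k+2-2ne}(K))$ is nontrivial only for $k\ge 2ne-2$, not for $k>2ne-4$. The formula nevertheless saturates at $\theta=2ne-4$ because the uncapped quantity $2(k+1)ne-\tfrac12(k+3)(k+4)$ happens to equal $(2ne-5)ne$ at both $k=2ne-4$ and $k=2ne-3$, after which the kernel correction keeps the value constant; your final formula is right, but ``once $k>2ne-4$ such functions appear'' is not why. Second, your claim that the divergences of the leading monomial tensors ``span precisely the degree-$k$ homogeneous vectors'' fails for the family as printed: $\divv\und\phi^2_{k+1,a}=(0,\,a\,x^{k+1-a}y^{a-1})$, so the $a=0$ member is divergence-free and the second components miss $y^k$, leaving only a $(2k+1)$-dimensional image and violating property (d) of Table \ref{table:deltaspaces}. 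The statement presumably intends the exponents in the second family read as $x^a y^{k+1-a}$, in which case your argument goes through; the paper's proof of this half is itself only the remark that the verification is ``elementary,'' so your level of detail here matches the paper's, but carrying out your own check would have exposed this defect.
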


% For $k\ge 2$, since
%\begin{align*}
% \left. J(\xi_{i+1}^2\lambda_{i+1}^{k+1})\n\right|_{\eg_i}\in \ppol_{k+1}(\eg_i) &
%\text{ and } \left. J(\xi_{i+1}^2\lambda_{i+1}^{k+1})\n\right|_{\eg_j}= 0 \text{ if $j\not=i$},\\
% \left. J(\xi_{i+1}^2\lambda_i\lambda_{i+1}^{k})\n\right|_{\eg_i}\in \ppol_{k+1}(\eg_i) &
%\text{ and } \left. J(\xi_{i+1}^2\lambda_i\lambda_{i+1}^{k})\n\right|_{\eg_j}= 0 \text{ if $j\not=i$},
%\end{align*}
%the $\ppol_{k+1}$ part of the normal trace of 
%$\left[ \begin{tabular}{l l}
%                  $x^{k+1-a}y^a$ & $0$\\
%                  $0$ & $0$
%                 \end{tabular}\right]$ and 
%                 $\left[ \begin{tabular}{l l}
%                 $0$ & $0$\\
%                 $0$& $x^{k+1-a}y^a$
%                 \end{tabular}\right]$ 
%                 on an edge can be readily canceled by 
%                 the normal trace of $J(\xi_{i+1}^2\lambda_{i+1}^{k+1})$ and $J(\xi_{i+1}^2\lambda_{i}\lambda_{i+1}^{k})$, hence the 
%                 constants in the definition of $\delta\VV_{\mathrm{fillV}}$ can be easily (and uniquely) determined. 

 Let us give a more compact presentation of the space $\delta\VV_{\mathrm{fillM}}$ in Theorem \ref{thm:polygon}
 for two special cases, namely, when $K$ is a triangle and when $K$  a quadrilateral.
 \vspace{.1cm}

\

%  {\it The space $\delta\VV_{\mathrm{fillM}}$ for a triangle $K$.}
 {\underline{\textbf{ $K$ is a triangle.}}}
 We have
\begin{align*}
 \delta\VV_{\mathrm{fillM}} =&\; \left\{\begin{tabular}{l l}
                                       $J\,\mathrm{span}\{
 B_2, B_3\}
$& if $k = 1$,\\
                                       $J\,\mathrm{span}\{
 B_2, B_3, B_3\lambda_1\}
$& if $k \ge 2$.
                                      \end{tabular}\right.
\end{align*}
Here $B_i = \Pi_{i=1}^3\lambda_i \cdot\Pi_{j\not= i}\frac{\lambda_j}{\lambda_j+\lambda_i}$
is the rational bubble defined in \cite{GuzmanNeilan14}. 
Notice that the filling space  $\delta\VV_{\mathrm{fillM}}$ on a triangle in
\cite{GuzmanNeilan14} (defined for $k\ge 2$), in our notation, is 
\[
 \delta\VV_{\mathrm{fillM}} =J\,\mathrm{span}\{
 B_1\lambda_2, B_2\lambda_3, B_3\lambda_1\},
\]
which can be easily verified to satisfy the properties in Table \ref{table:deltaspaces}.

\

% {\it The space $\delta\VV_{\mathrm{fillM}}$ for a quadrilateral $K$.}
 {\underline{\textbf{ $K$ is a quadrilateral.}}}
We have
\begin{align*}
 \delta\VV_{\mathrm{fillM}} = \left\{
 \begin{tabular}{l l}
  $J\,\mathrm{span}\{B_2, B_3,B_4, \xi_4^2,\xi_4^2\lambda_3,\xi_4^2\lambda_4\}$ & if $k = 1$,  \vspace{.1cm}
\\
  $J\,\mathrm{span}\{B_2, B_3,B_4,B_4 \lambda_1,\xi_4^2\lambda_3,\xi_4^2\lambda_4,$\\
  $\hspace{1.2cm}\xi_4^2\lambda_3\lambda_4,\xi_4^2\lambda_4^2,\xi_1^2\lambda_1^2\}$ & if $k = 2$,\vspace{.1cm}\\
    $J\,\mathrm{span}\{B_2, B_3,B_4,B_4 \lambda_1,\xi_4^2\lambda_3\lambda_4,\xi_4^2\lambda_4^2,$\\
  $\hspace{1.2cm}\xi_4^2\lambda_3\lambda_4^2,\xi_4^2\lambda_4^3,
  \xi_1^2\lambda_1^2,\xi_1^2\lambda_1^2\lambda_4,\xi_1^2\lambda_1^3\}$ & if $k = 3$,\vspace{.1cm}\\
    $J\,\mathrm{span}\{B_2, B_3,B_4,B_4 \lambda_1,\xi_4^{2}\lambda_3\lambda_4^{k-2},\xi_4^{2}\lambda_4^{k-1},$\\
  $\hspace{1.2cm}\xi_4^2\lambda_3\lambda_4^{k-1},\xi_4^2\lambda_4^k,
  \xi_1^2\lambda_1^{k-1},\xi_1^2\lambda_1^{k-2}\lambda_4,\xi_1^2\lambda_1^k,\xi_1^2\lambda_1^{k-1}\lambda_4\}$ & if $k \ge 4$.
 \end{tabular}
\right.
\end{align*}
% Note that, just as in the discussion in \cite[Section 5.2]{CockburnFu17a}, 
% we can slightly modify the above space to 
% replace $\xi_1$ by $\xi_4$ (and, accordingly, $\lambda_1$ by $\lambda_3$ ) 
% to save some computation.

Now, when $K$ is a square, we can use similar 
spaces in Theorem \ref{thm:square2} to bypass the use of
composite functions $\xi_4$ and $\xi_1$. 

\

% {\it The space $\delta\VV_{\mathrm{fillM}}$ for a unit square $K$.}
 {\underline{\textbf{ $K$ is a unit square.}}}
We can choose \red{$\delta\VV_{\mathrm{fillM}}$
as in Theorem \ref{thm:square2}}:
\begin{align*}
 \delta\VV_{\mathrm{fillM}}&\!\!:= \!\!
 \begin{cases}
 J\,\mathrm{span}\{B_2, B_3,B_4,B_4 x,\\
  \hspace{1.2cm}(x y)^2(1-x),(x y)^2(1-y),\\
  \hspace{1.2cm}(x y)^2(1-x)(1-y),\\
  \hspace{1.2cm}(x \exp^{1-y} y)^2(1-x)^2,(\exp^{1-x}x  y)^2(1-y)^2\} & \mbox{ if } k = 2,
\\
  J\,\mathrm{span}\{B_2, B_3,B_4,B_4 x,\\
  \hspace{1.2cm}(x y)^2(1-x)^{2},(x y)^2(1-x)(1-y), (x y)^2(1-y)^{2},\\
 \hspace{1.2cm}(x y)^2(1-x)^{2}(1-y),(x y)^2(1-x)(1-y)^{2},\\  
 \hspace{1.2cm}(x \exp^{1-y}y)^2(1-x)^{3},(\exp^{1-x}x y)^2(1-y)^{3}\} & \mbox{ if } k = 3,\\
   J\,\mathrm{span}\{B_2, B_3,B_4,B_4 x,\\
  \hspace{1.2cm}(x y)^2(1-x)^{k-1},(x y)^2(1-x)^{k-2}(1-y),\\
\hspace{1.2cm}  (x y)^2(1-y)^{k-1},(x y)^2(1-x)(1-y)^{k-2},\\
 \hspace{1.2cm}(x y)^2(1-x)^{k-1}(1-y),(x y)^2(1-x)(1-y)^{k-1},\\  
 \hspace{1.2cm}(x \exp^{1-y}y)^2(1-x)^{k},(\exp^{1-x}x y)^2(1-y)^{k}\} & \mbox{ if } k \ge 4.
   \end{cases}
\end{align*}
 
\section{Proof of Theorem \ref{thm:polygon}}
\label{sec:proof}
In this section, we prove Theorem \ref{thm:polygon}, which is the main result of  Section \ref{sec:main}.
we proceed by carrying out a systematic construction of the spaces $\delta\VV_{\mathrm{fillM}}$
for the trace space $\bld M=\ppol_k(\partial K)$ on a general polygon $K$.
We begin by developing an algorithm that, given %the number of edges of the polygonal element $K$, $ne$, 
a counter-clockwise ordering of the $ne$ edges of $K$, 
$\{\eg_i\}_{i=1}^{ne}$,
and an initial space $\VVV\times \WWW$ satisfying the inclusion properties (a) and (b), and 
$\ppol_1\subset \WWW$, 
provides a space $\delta\VV_{\mathrm{fillM}}$ 
satisfying the properties in Table \ref{table:deltaspaces}. 
We then apply it to show that the space $\dvm_{\mathrm{fillM}}$ in Theorem \ref{thm:polygon} satisfies 
the properties in Table \ref{table:deltaspaces}.
We end the proof by showing that the space $\dvm_{\mathrm{fillV}}$ in Theorem \ref{thm:polygon}
also satisfies the related properties in Table \ref{table:deltaspaces}.

% We first consider the 
% initial guess  
% $\VVV\times \WWW = \bppol_k(K)\times\ppol_k(K)$ 
% where $K$ is a general polygon without hanging nodes.
% We then consider the initial guess space $\VVV\times \WWW = \bqqol_k(K)\times\qqol_k(K)$ %and  
% where $K$ is a unit square with its edges parallel to the axes. 
% These would prove the first results of 
% Theorem \ref{thm:polygon} and Theorem \ref{thm:square},
% and the result in Theorem \ref{thm:square2}, namely,
% the spaces $\delta\VV_{\mathrm{fillM}}$ satisfying the properties in Table \ref{table:deltaspaces}. 

\subsection{An algorithm to construct the space $\delta\VV_{\mathrm{fillM}}$}
We use the notation introduced in the previous section. 
For $i=1, \dots, ne+1$, we define 
$\VVV_{s,i}$ to be the divergence-free subspace of $\VVV$ with vanishing normal
traces on the first $i-1$ edges. In other words, we set
\begin{alignat*}{2}
 \VVV_{s,i} :=&\;\{\und \tau\in \VVV : \;\divv \und \tau =  \bld 0, \;\und \tau\,\n|_{\eg_j}=\bld 0
 , \; 1\le j\le i-1\}, &&\text{ for }1 \le i\le ne+1.
\end{alignat*}
The subspace of $\WWW$ given by 
$\WWW_{\!rm} =\{\w\in \WWW : \;\und\epsilon(\w) = \und { 0}\}$
also plays an important role in the theory of \mm-decompositions; 
see the kernels' trace decomposition  in Theorem \ref{thm:1.5}. 
Since $\ppol_1\subset \WWW$, we have that $\WWW_{\!rm}=\bld{RM}(K)$ 
is just the space of rigid motions on $K$, which has dimension $3$.

For  $i=1, \dots, ne$, we define $\gamma_i(\VV):=\{\und \tau \n|_{\eg_i}:\;\und \tau\in \VV\}$
to be the normal trace of  $\VV$ on $\eg_i$, and 
$\gamma_i(\W):=\{\w|_{\eg_i}:\;\w\in \W\}$ to be the trace of  $\W$ on $\eg_i$. 
We have 
\[
 \dim \gamma_i(\WWW_{\! rm}) =  \dim  \gamma_{i}(\bld{RM}(K)) = 3.
\]
% Note that $\gamma_{i}(\WWW_{\!rm}) = \gamma_{i}(\bld{RM}(K))$ 
% is just the three-dimensional space of rigid motions restricted on $\eg_{i}$.

Now, we define the \mm-index  for each edge.
\begin{definition} [The \mm-index for each edge] 
The \mm-index of the space $\VVV\times \WWW$ 
for the $i$-th edge $\eg_i$ is the number
\[
 I_{\bld M,i}(\VVV\times \WWW):= \dim \bld M(\eg_i) -  \dim \gamma_i(\VVV_{s,i}) - 
\delta_{i,ne} \dim \gamma_{ne}(\WWW_{\!rm}),
\]
where $\delta_{i,ne}$ is the Kronecker delta.
\end{definition}

Since $\VVV\times \WWW$ satisfies the inclusion properties of an \mm-decomposition, 
we have 
\begin{align*}
  \gamma_i(\VVV_{s,i})\subset&\; \bld M(\eg_i)\quad\quad \text{ for all } 1\le i\le ne-1,\\
  \gamma_{ne}(\VVV_{s,ne}) +  \gamma_{ne}(\WWW_{\!rm})
  \subset&\; \bld M(\eg_{ne}).
\end{align*}
Actually, the sum in the last inclusion is an ($L^2(\eg_{ne})$-orthogonal) direct sum because, given any 
$(\und \tau,\w)\in \VVV_{s,ne}\times \WWW_{\!rm}$,
we have
\begin{align*}
\langle\gamma_{ne}\und \tau, \gamma_{ne}\w\rangle_{\eg_{ne}} =
%&\;
\langle\und \tau\n, \w\rangle_{\eg_{ne}}%\\
=%&\;
\langle\und \tau\n, \w\rangle_{\dK}%\\
=%&\;
(\und \tau, \und\epsilon( \w))_K+(\divv\und \tau, \w)_K %\\
= %&\; 
0.
\end{align*}
%Here, the second equality is due to the fact that $\gamma_i(\und \tau) = 0$ for $i\le ne-1$ since $\und \tau\in \VVV_{s,ne}$.
Using these facts, we immediately get that $I_{\bld M,i}(\VVV\times \WWW)$  is a natural number for any $1\le i\le ne$.

We are now ready to state our first result.
\begin{theorem}
\label{thm:algorithm}
Set $\dvm_{\mathrm{fillM}} := \oplus_{i=1}^{ne}\delta\VV_{\mathrm{fillM}}^i$ where
\begin{itemize}
\item[{\rm($\alpha$)}]  $\gamma (\delta\VV_{\mathrm{fillM}}^i)\subset \bld M$,
%\vspace{.03cm}
\item[{\rm($\beta$) }] $\divv \delta\VV_{\mathrm{fillM}}^i=\{{0}\}$,
%\vspace{.03cm}
\item[{\rm($\gamma.1$)}]    $\gamma_j (\delta\VV_{\mathrm{fillM}}^i) = \{ 0\}$, for 
     $1\le j\le i-1$,
%\vspace{.03cm}
\item[{\rm($\gamma.2$)}]  $\gamma_i(\VVV_{s,i}) \cap 
     \gamma_i(\delta\VV_{\mathrm{fillM}}^i)
     =\{\bld 0\}$,
%\vspace{.03cm}
\item[{\rm($\delta$)}] 
$\dim\delta\VV_{\mathrm{fillM}}^i=\dim \gamma_i(\delta\VV_{\mathrm{fillM}}^i)
     =  I_{\bld M,i}(\VVV\times \WWW)$.
\end{itemize}
Then
$\delta\VV_{\mathrm{fillM}}$ satisfies the properties in Table \ref{table:deltaspaces},
% Proposition \ref{prop:fillingM},
that is,
\begin{itemize}
\item[{\rm (a)}] $\gamma \delta\VV_{\mathrm{fillM}}\subset\bld M$,
\item[{\rm (b)}] $\divv \delta \VV_{\mathrm{fillM}}=\{ 0\}$,
\item[{\rm (c)}] $\gamma\VVV_{s,1} \cap \gamma \delta \VV_{\mathrm{fillM}}=\{ {0}\}$,
\item[{\rm (d)}] $\mathrm{dim}\, \delta \VV_{\mathrm{fillM}}=\mathrm{dim}
\,\gamma \delta \VV_{\mathrm{fillM}}=I_{\bld M}(\VVV\times\WWW)$.
\end{itemize}

\end{theorem}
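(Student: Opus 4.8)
The plan is to deduce the four global properties (a)--(d) from the edge-local properties ($\alpha$)--($\delta$) by exploiting the \emph{lower-triangular} cascade encoded in ($\gamma.1$) together with the decreasing flag of solenoidal subspaces $\VVV_{s,1}\supseteq \VVV_{s,2}\supseteq\cdots\supseteq\VVV_{s,ne+1}$. Properties (a) and (b) are immediate, since $\gamma\delta\VV_{\mathrm{fillM}}=\sum_i\gamma(\delta\VV_{\mathrm{fillM}}^i)\subset\bld M$ by ($\alpha$), and $\divv$ is linear so $\divv\delta\VV_{\mathrm{fillM}}=\{0\}$ by ($\beta$). The real work is in (c) and (d).

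To treat the direct sum and the trace map at once, I would show that the linear map $\Phi:\prod_{i=1}^{ne}\delta\VV_{\mathrm{fillM}}^i\to\bld M$, $(\und\tau_1,\dots,\und\tau_{ne})\mapsto\gamma(\sum_i\und\tau_i)$, is injective; since $\Phi=\gamma\circ A$ with $A$ the addition map, this gives \emph{both} that the sum $\oplus_i\delta\VV_{\mathrm{fillM}}^i$ is direct and that $\gamma$ is injective on $\delta\VV_{\mathrm{fillM}}$. The argument is a cascade over edges: if $\gamma(\sum_i\und\tau_i)=\bld 0$, then on $\eg_1$ all terms with $i\ge 2$ vanish by ($\gamma.1$), so $\gamma_1(\und\tau_1)=\bld 0$ and hence $\und\tau_1=\bld 0$ by the injectivity of $\gamma_1$ on $\delta\VV_{\mathrm{fillM}}^1$ furnished by ($\delta$); repeating on $\eg_2,\dots,\eg_{ne}$ forces every $\und\tau_i=\bld 0$. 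This yields $\dim\delta\VV_{\mathrm{fillM}}=\dim\gamma\delta\VV_{\mathrm{fillM}}=\sum_{i=1}^{ne}I_{\bld M,i}(\VVV\times\WWW)$.

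It then remains to verify the telescoping identity $\sum_{i=1}^{ne}I_{\bld M,i}(\VVV\times\WWW)=I_{\bld M}(\VVV\times\WWW)$, which completes (d). Summing the definition of $I_{\bld M,i}$, and using that $\bld M=\ppol_k(\dK)$ carries no inter-edge continuity so that $\dim\bld M=\sum_i\dim\bld M(\eg_i)$, reduces the claim to $\sum_i\dim\gamma_i(\VVV_{s,i})=\dim\gamma(\VVV_{s,1})$ together with $\dim\gamma_{ne}(\WWW_{\!rm})=\dim\gamma(\WWW_{\!rm})=3$. The first is a telescoping consequence of the flag: the kernel of $\gamma_i$ restricted to $\VVV_{s,i}$ is exactly $\VVV_{s,i+1}$, so $\dim\gamma_i(\VVV_{s,i})=\dim\VVV_{s,i}-\dim\VVV_{s,i+1}$, and the sum collapses to $\dim\VVV_{s,1}-\dim\VVV_{s,ne+1}=\dim\gamma(\VVV_{s,1})$. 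The second holds because a nonzero rigid motion, being affine, cannot vanish identically on a single edge, so its trace on $\eg_{ne}$ is already injective on $\WWW_{\!rm}=\bld{RM}(K)$.

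Finally, (c) is the most delicate point, and I would prove it by running the \emph{same} cascade while descending the flag. Suppose $\bld\mu\in\gamma\VVV_{s,1}\cap\gamma\delta\VV_{\mathrm{fillM}}$, say $\bld\mu=\gamma(\und\eta)$ with $\und\eta\in\VVV_{s,1}$ and $\bld\mu=\gamma(\sum_i\und\tau_i)$. On $\eg_1$, ($\gamma.1$) kills the terms $i\ge 2$, so $\gamma_1(\und\tau_1)=\gamma_1(\und\eta)\in\gamma_1(\VVV_{s,1})\cap\gamma_1(\delta\VV_{\mathrm{fillM}}^1)=\{\bld 0\}$ by ($\gamma.2$); hence $\und\tau_1=\bld 0$ by ($\delta$), and $\gamma_1(\und\eta)=\bld 0$ forces $\und\eta\in\VVV_{s,2}$. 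Iterating on $\eg_2,\dots,\eg_{ne}$ shows $\und\tau_i=\bld 0$ for all $i$ and $\und\eta\in\VVV_{s,ne+1}$, i.e.\ $\und\eta$ has vanishing normal trace on every edge, so $\bld\mu=\gamma(\und\eta)=\bld 0$. The main obstacle I anticipate is exactly this coupling in (c): ($\gamma.2$) must be applied edge-by-edge against the \emph{current} member $\VVV_{s,i}$ of the flag rather than against $\VVV_{s,1}$, and the telescoping dimension count in (d) must be kept consistent with the same flag; once the flag bookkeeping is in place, each individual step is routine.
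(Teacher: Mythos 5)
Your proof is correct and follows essentially the same route as the paper: properties (a) and (b) are immediate, (d) comes from summing ($\delta$) and the telescoping identity $\dim\gamma_i(\VVV_{s,i})=\dim\VVV_{s,i}-\dim\VVV_{s,i+1}$ together with the edge-wise splitting of $\bld M$ and the injectivity of $\gamma_{ne}$ on $\WWW_{\!rm}=\bld{RM}(K)$. The only difference is one of explicitness: the paper asserts that (c) follows ``directly'' from ($\gamma$) and silently uses directness of the sum in writing $\dim\delta\VV_{\mathrm{fillM}}=\sum_i\dim\delta\VV^i_{\mathrm{fillM}}$, whereas your edge-by-edge cascade (with ($\gamma.2$) applied against the current flag member $\VVV_{s,i}$) supplies the justification the paper leaves implicit.
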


%This result implies that $(\VVV\oplus \delta\VV_{\mathrm{fillM}})\times \WWW$ admits an 
%\mm-decomposition, see
%\cite[Proposition 5.1]{CockburnFuSayas16}.

\begin{proof}
Properties (a), (b) and (c) follow directly form properties ($\alpha$), ($\beta$) and ($\gamma$), respectively. It remains to prove property (d). But, we have 
\begin{align*}
 \dim \delta\VV_{\mathrm{fillM}}
=&\;
\sum_{i=1}^{ne} 
\dim \delta\VV_{\mathrm{fillM}}^i
=\sum_{i=1}^{ne} I_{\bld M,i}(\VVV\times \WWW)\\
=&\;
\sum_{i=1}^{ne} 
\dim \gamma_i\delta\VV_{\mathrm{fillM}}^i
=
\dim \gamma\delta\VV_{\mathrm{fillM}}.
\end{align*}
Now, by the definition of $I_{\bld M,i}(\VVV\times \WWW)$, we get
\begin{alignat*}{1}
 \dim \delta\VV_{\mathrm{fillM}}
=&
\sum_{i=1}^{ne}\big(\dim \bld M(\eg_i) -  \dim \gamma_i(\VVV_{s,i}) - 
\delta_{i,ne} \dim \gamma_{ne}(\WWW_{\!rm})\big)
\\
 =&\;\dim \bld M-
\sum_{i=1}^{ne} \dim \gamma_i(\VVV_{s,i}) -\dim \gamma_{ne}(\WWW_{\!rm})
\\
=&\;\dim \bld M-
\sum_{i=1}^{ne} 
(\dim \VVV_{s,i}- \dim \VVV_{s,i+1})
-\dim \gamma_{ne}(\WWW_{\!rm})
\\
=&\;\dim \bld M-
(\dim \VVV_{s,1}- \dim \VVV_{s,ne+1})
-\dim \gamma_{ne}(\WWW_{\!rm}).
\end{alignat*}
Finally, since 
\begin{alignat*}{1}
\VVV_{s,1}&:=\{\und \tau\in \VVV : \;\divv \und \tau =  \bld 0\},
\\
\VVV_{s,ne+1}&:=\{\und \tau\in \VVV : \;\divv \und \tau =  \bld 0, \und \tau\bld n|_{\partial K}=\bld 0\},
\\
\WWW_{\!rm}&=\{\w\in \WWW:\; \und\epsilon(\w) = \und 0\},
\end{alignat*}
we get
\begin{alignat*}{1}
 \dim \delta\VV_{\mathrm{fillM}}  
 =&\;\dim \bld M
-
\dim \{\und \tau\bld n|_{\partial K}: \und \tau\in \VVV, \divv \und \tau =  \und 0\}\\
&\; -
\dim\{\w|_{\dK}:  \w\in \WWW, \und\epsilon(\w) = \und 0\}
\\
=&\;I_{\bld M}(\VVV\times \WWW).
\end{alignat*}
This completes the proof.
\end{proof}

Based on this result, we can see that the following algorithm provides a practical construction of 
the filling space  $\delta\VV_{\mathrm{fillM}}$.

%\clearpage 

\newenvironment{varalgorithm}[1]
    {\algorithm\renewcommand{\thealgorithm}{#1}}
    {\endalgorithm}
\renewcommand{\algorithmicrequire}{\textbf{Input:}}
\renewcommand{\algorithmicensure}{\textbf{Output:}}

\begin{varalgorithm}{PC}
 \caption{Construction of $\delta\VV_{\mathrm{fillM}}$ 
 satisfying properties $(\alpha)$--$(\delta)$ of Theorem \ref{thm:algorithm}.}
\label{algorithm1}
 \begin{algorithmic}
 \Require %The number of edges $ne$ of the polygon $K$.
 A counter-clockwise ordering of the $ne$ edges of the polygon $K$, $\{\eg_i\}_{i=1}^{ne}$.
\Require The space of traces $\bld M$.
 \Require A space  $\VVV\times \WWW$ satisfying the inclusion properties
 of an \mm-decomposition.
 \Ensure The space $\delta\VV_{\mathrm{fillM}}$.
 \vskip.2truecm
% \hline
 \vskip.2truecm
\State For each $i = 1,\cdots, ne$,
\State (1) Find the auxiliary spaces $\VVV_{s,i}$. % , $i=1,\dots,ne$.
\State (2) Find an $I_{\bld M,i}(\VVV\times \WWW)$-dimensional complement space 
$C_{\bld M,i}$ on edge $\eg_i$:
\begin{alignat*}{2}
 \gamma_i(\VVV_{s,i}) \oplus C_{\bld M,i}=&\;\widetilde{\bld M}(\eg_i),
\end{alignat*}
\hspace{.6cm}here $\widetilde{\bld M}(\eg_i)={\bld M}(\eg_i)$ if $i< ne$, and 
$\widetilde{\bld M}(\eg_{ne})=\gamma_{ne}(\WWW_{\!rm})^\perp$ is the subspace of \\
\hspace{.6cm}$\bld M(\eg_{ne})$ that is $L^2(\eg_{ne})$-orthogonal to
$\gamma_{ne}(\WWW_{\!rm})$.
\State (3) Find an $I_{\bld M,i}(\VVV\times \WWW)$-dimensional, divergence-free filling space $\delta\VV_{\mathrm{fillM}}^i$ on $K$:
\begin{align*}
&(3.1)\;\;\gamma_j(\delta\VV_{\mathrm{fillM}}^i)= \{\bld 0\},\;\;\;\;\;\text{ for } 1\le j\le i-1,\\
&(3.2)\;\;\gamma_i(\delta\VV_{\mathrm{fillM}}^i)= C_{\bld M,i},\hspace{7cm}\\
&(3.3)\;\;\gamma_j(\delta\VV_{\mathrm{fillM}}^i)\subset \bld M(\eg_j),\,\text{ for } i+1\le j\le ne.
\end{align*}
\\
\Return $\delta\VV_{\mathrm{fillM}} := \oplus_{i=1}^{ne} 
\delta\VV_{\mathrm{fillM}}^i$.
%\\
%\\
%\hspace{.6cm}(The space $\delta\VV_{\mathrm{fillM}}^i$ satisfies .)
\end{algorithmic}
\end{varalgorithm}

% \subsection{The case $\VVV\times \WWW = \bppol_k\times\ppol_k$}
Now, we apply Algorithm \ref{algorithm1} to prove the first part of 
Theorem \ref{thm:polygon}, that is,
the space $\delta\VV_{\mathrm{fillM}}$ satisfies the 
properties in Table \ref{table:deltaspaces}.
Note that in this case, we have 
$\bld M=\ppol_k(\partial K)$ and $\VVV\times \WWW = \bppol_k\times\ppol_k$
with $k\ge 1$. We proceed in three steps as follows.

\

{\bf (1). Finding the spaces  $\VVV_{s,i}$.}
We begin by characterizing the spaces $\VVV_{s,i}$.
\begin{proposition}
\label{lemma:characterization-s}
We have that 
\[
 \VVV_{s,i} = J\,\Phi_i,\qquad {\color{red}1\le i \le ne+1},
\] 
where 
$
\Phi_i:= \{b_{i-1}^2\phi_i:\quad \phi_i\in \pol{k+4-2i}{K}\}.
$
Here $b_{0} = 1$, and $b_\ell : = \Pi_{j=1}^{\ell}\lambda_j$ for $\ell\ge 1$.
\end{proposition}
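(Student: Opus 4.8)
The plan is to pull everything back through the Airy stress operator $J$, turning the vanishing-normal-trace conditions into divisibility conditions on the scalar Airy potential. I will use two standard facts about $J$ on the simply connected element $K$: its kernel is the space of affine functions $\pol{1}{K}$, and it maps $\pol{k+2}{K}$ onto the space of all symmetric, divergence-free tensors of degree at most $k$; see \cite{ArnoldWinther02}. In particular $\VVV_{s,1}=J\,\pol{k+2}{K}=J\,\Phi_1$ (recall $b_0=1$), so the case $i=1$ already holds, and every $\und\tau\in\VVV_{s,i}\subset\VVV_{s,1}$ can be written as $\und\tau=J\phi$ with $\phi\in\pol{k+2}{K}$.

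The key ingredient is the boundary identity
\begin{equation*}
(J\phi)\,\n\big|_{\eg_j}=\frac{d}{ds}\begin{pmatrix}\phi_y\\ -\phi_x\end{pmatrix}\bigg|_{\eg_j},
\end{equation*}
where $d/ds=\partial_s$ denotes the arclength derivative along $\eg_j$ in the direction $\bld t=\n^{\perp}$. This follows from a direct computation using $J\phi=\left(\begin{smallmatrix}\phi_{yy}&-\phi_{xy}\\-\phi_{xy}&\phi_{xx}\end{smallmatrix}\right)$ and $\partial_s=\bld t\cdot\gradv$. Its consequence is the crucial translation: $(J\phi)\,\n|_{\eg_j}=\bld 0$ if and only if $\gradv\phi$ is a constant vector along $\eg_j$. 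With this translation the forward inclusion $J\,\Phi_i\subset\VVV_{s,i}$ is immediate: if $\phi=b_{i-1}^2\psi$ then $\lambda_j^2$ divides $\phi$ for every $1\le j\le i-1$, so $\gradv\phi$ vanishes identically on $\eg_j$, its normal trace vanishes there, and $J\phi$ is divergence-free by construction.

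For the reverse inclusion, let $\und\tau=J\phi\in\VVV_{s,i}$ with $\phi\in\pol{k+2}{K}$. By the identity, $\gradv\phi$ equals a constant vector $\bld c_j$ on each $\eg_j$, $1\le j\le i-1$. Since these edges form a connected chain, consecutive ones sharing the vertex $\vt_{j+1}$ at which $\gradv\phi$ is single-valued, all the $\bld c_j$ coincide with a common $\bld c$; subtracting the affine function with gradient $\bld c$ leaves $J\phi$ unchanged, so we may assume $\gradv\phi=\bld 0$ on the first $i-1$ edges. A one-dimensional Taylor expansion in the direction normal to $\eg_j$ then gives $\phi=c_j+\lambda_j^2 g_j$, and the constants $c_j$ again agree across shared vertices to a common value $c$. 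Hence $\phi-c$ is divisible by $\lambda_j^2$ for every $j\le i-1$; because the edge-lines are distinct, the $\lambda_j$ are pairwise coprime, so $\phi-c$ is divisible by $\prod_{j=1}^{i-1}\lambda_j^2=b_{i-1}^2$. Writing $\phi=c+b_{i-1}^2\psi$ and counting degrees, $\deg\psi\le(k+2)-2(i-1)=k+4-2i$, so $\psi\in\pol{k+4-2i}{K}$ and $\und\tau=J(b_{i-1}^2\psi)\in J\,\Phi_i$.

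The step I expect to be the main obstacle is the passage from divisibility by each $\lambda_j^2$ separately to divisibility by the single factor $b_{i-1}^2$. This rests on two points that must be handled with care: reconciling the per-edge gradient and value constants through the shared vertices of the connected edge chain $\eg_1,\dots,\eg_{i-1}$, and the pairwise coprimality of the linear functions $\lambda_1,\dots,\lambda_{i-1}$. The latter is precisely where the hypothesis that $K$ is a genuine polygon without hanging nodes, so that no two of these edges are collinear, enters the argument.
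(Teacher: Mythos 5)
Your proof is correct and follows essentially the same route as the paper: your boundary identity $(J\phi)\,\n|_{\eg_j}=\partial_s(\phi_y,-\phi_x)^{T}$ is exactly the paper's Lemma on $\gamma_i(J\phi)=\partial_{\bld t_i}(\bld{\mathrm{curl}}\,\phi)$, and the reverse inclusion is proved there, as here, by normalizing modulo the affine kernel of $J$, chaining the constant gradients and values through the shared vertices, concluding $b_{i-1}^2\mid\phi$, and counting degrees. You merely spell out the vertex-reconciliation and the pairwise coprimality of the $\lambda_j$ that the paper's terser ``this immediately implies \dots and so $b_{i-1}^2$ divides $\phi$'' leaves implicit.
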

To prove this result,
% , and others in this section, 
we need to characterize the kernel of the operator $\gamma_i J$.

\begin{lemma}
\label{lemma:gJ}
We have that
$
 \gamma_i(J\,\phi) =\bld  0$ if and only if $\grads\phi|_{\eg_i}\in \ppol_0(\eg_i),$
 for any $\phi\in H^2(K)$ and any edge $\eg_i$ of the element $K$.
\end{lemma}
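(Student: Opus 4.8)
The plan is to reduce the lemma to a single pointwise identity expressing the normal trace of $J\phi$ on an edge as an arclength derivative of a rotated gradient of $\phi$, from which the claimed equivalence is immediate. Fix the edge $\eg_i$ with unit outward normal $\n_i=(n_1,n_2)$ and unit tangent $\boldsymbol{t}=(-n_2,n_1)$, and let $s$ be arclength along $\eg_i$ in the direction of $\boldsymbol{t}$, so that $\partial_s=\boldsymbol{t}\cdot\grads=-n_2\,\partial_x+n_1\,\partial_y$. For a smooth scalar field $\phi$, I would first compute, straight from the definition of $J$,
\[
(J\phi)\,\n_i=\begin{pmatrix}\phi_{yy}\,n_1-\phi_{xy}\,n_2\\[2pt]-\phi_{xy}\,n_1+\phi_{xx}\,n_2\end{pmatrix}.
\]
Writing $(\grads\phi)^\perp:=(\phi_y,-\phi_x)$ for the $90^\circ$ rotation of $\grads\phi$, a one-line computation gives
\[
\partial_s\big((\grads\phi)^\perp\big)=\begin{pmatrix}-n_2\,\phi_{xy}+n_1\,\phi_{yy}\\[2pt]n_2\,\phi_{xx}-n_1\,\phi_{xy}\end{pmatrix}.
\]
Comparing the two displays yields the key identity
\[
(J\phi)\,\n_i=\partial_s\big((\grads\phi)^\perp\big)\qquad\text{on }\eg_i.
\]

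With this identity established, the conclusion follows at once: the left-hand side vanishes on $\eg_i$ if and only if $(\grads\phi)^\perp$ is constant along $\eg_i$, which, since the rotation $(\cdot)^\perp$ is a linear isomorphism, is equivalent to $\grads\phi|_{\eg_i}$ being a constant vector, i.e. $\grads\phi|_{\eg_i}\in\ppol_0(\eg_i)$. Thus no further work beyond the identity is needed for the algebraic content of the lemma.

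The one point requiring care—and the main (though minor) obstacle—is the meaning of the two traces when $\phi$ is only in $H^2(K)$ rather than smooth. Here I would argue as follows: since $\grads\phi\in\bHone{K}$, its trace lies in $H^{1/2}(\eg_i)$, so $(\grads\phi)^\perp|_{\eg_i}$ and its tangential derivative $\partial_s((\grads\phi)^\perp)\in H^{-1/2}(\eg_i)$ are well defined; and because $J\phi$ is divergence-free by construction, we have $J\phi\in\Hdiv{K}$, so its normal trace $(J\phi)\,\n_i$ is well defined in the same dual sense. Both sides of the identity depend continuously on $\phi\in H^2(K)$, so the identity, verified for smooth $\phi$, extends by density, and the equivalence $(J\phi)\,\n_i=\bld 0\iff\partial_s((\grads\phi)^\perp)|_{\eg_i}=\bld 0\iff\grads\phi|_{\eg_i}\in\ppol_0(\eg_i)$ persists. (In the applications the relevant $\phi$ are products of polynomials with the smooth lifting functions, so the traces are genuinely in $\bld L^2(\dK)$ and this bookkeeping is automatic.)
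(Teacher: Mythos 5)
Your proof is correct and is essentially the paper's own argument: your key identity $(J\phi)\,\n_i=\partial_s\big((\grads\phi)^\perp\big)$ is exactly the paper's one-line proof that $\gamma_i(J\,\phi)=\frac{\partial\,\bld{\mathrm{curl}}\,\phi}{\partial \bld t_i}$, since $(\grads\phi)^\perp=\bld{\mathrm{curl}}\,\phi$ in two dimensions. You merely write out the computation the paper leaves implicit and add a (correct, and slightly more careful than the paper's) remark on the meaning of the traces for $\phi\in H^2(K)$.
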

\begin{proof}
The result follows form the fact that 
$\gamma_i(J\,\phi) = \frac{\partial {\bld{\mathrm{curl}}}\,\phi}{\partial \bld t_i}$,
where $\frac{\partial }{\partial \bld t_i}$ is the tangential derivative on 
the edge $\eg_i$.
% This completes the proof.
\end{proof}

We are now ready to prove Proposition \ref{lemma:characterization-s}.

\begin{proof}[Proof of Proposition \ref{lemma:characterization-s}]
Since $\VVV = \bppol_k$, it is easy to show that
\[
J\,\Phi_i\subset\VVV_{s,i} \subset J\,\pcol_{k+2}.
\]
Since $\Phi_1 = \pcol_{k+2}$, the reverse inclusion,  
$ \VVV_{s,i}\subset J\,\Phi_i$, is true for $i = 1$.
Let us prove that the reverse inclusion also holds for $i\ge 2$.
Let $\und \tau= J\,\phi \in \VVV_{s,i}$  with $\phi\in \pcol_{k+2}$. We have 
$\gamma_j(J\,\phi) = \bld 0$ for $1\le j \le i-1$. By Lemma \ref{lemma:gJ}, 
%$\phi|_{\eg_j} \in \pcol_1(\eg_j)$ and 
$\grads\phi|_{\eg_i}\in \ppol_0(\eg_i)$ for $1\le j \le i-1$.
Since $\phi$ is defined up to a linear function, 
we can assume $\grads\phi|_{\eg_1} = \bld 0$, hence $\lambda_1^2$ {\color{red} divides} $\phi$. 
This immediately implies  
$\grads\phi|_{\eg_j} = \bld 0$ for $1\le j \le i-1$, 
and so $b_{i-1}^2$ { divides} $\phi$.
This completes the proof.
\end{proof}

\

{\bf (2). Finding the complement spaces $C_{\bld M,i}$}.

By definition, see Algorithm \ref{algorithm1},
the space $C_{\bld M,i}$ is any subspace of $\widetilde{\bld M}(\eg_i)$ such that $
\gamma_i( \VVV_{s,i} )\oplus C_{\bld M,i}=\widetilde {\bld M}(\eg_i).$  Thus, to find
a choice of $C_{\bld M,i}$, which is not necessarily unique, we first need to
%Since $\bld M(\eg_i)=\ppol_k(\eg_i)$, we need first 
to characterize $\gamma_i( \VVV_{s,i})$.
% then to find a choice of $C_{\bld M,i}$, which is not necessarily unique. 
%The  characterization of  $\gamma_i( \VVV_{s,i})$ is contained 
We do that in the following corollary of the previous proposition.
\begin{corollary}
 \label{corollary-dim-count}  We have, for $1\le i\le ne$,
 \begin{align*} 
  \gamma_i( \VVV_{s,i} )=&\;\mathrm{span}
\left  \{ \gamma_i\left(J(b_{i-1}^2\lambda_{i+1}^a)\right)\right\}_{a=2\delta_{1,i}}^{k+4-2i}
  \oplus \mathrm{span}
\left  \{ \gamma_i\left(J(b_{i-1}^2\lambda_i\lambda_{i+1}^a)\right)\right\}_{a=\delta_{1,i}}^{k+3-2i},
\\
  \dim\gamma_i( \VVV_{s,i} ) =&\; \dim \pcol_{k+4-2i}(\eg_i)
  +\dim \pcol_{k+3-2i}(\eg_i)-3\,\delta_{1,i},
 \\
  I_{\bld M,i}(\VVV\times \WWW) =
  &\;
  \min\left(k+1, 2i-4\right) +
    \min\left(k+1, 2i-3\right)+3\,\delta_{1,i} -3\,\delta_{ne,i}.
 \end{align*}
\end{corollary}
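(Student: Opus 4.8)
The plan is to base everything on Proposition \ref{lemma:characterization-s}, which gives $\VVV_{s,i}=J\,\Phi_i$ with $\Phi_i=b_{i-1}^2\,\pol{k+4-2i}{K}$, so that $\gamma_i(\VVV_{s,i})=\gamma_i(J\,\Phi_i)$. Since the adjacent edges $\eg_i$ and $\eg_{i+1}$ share the vertex $\vt_{i+1}$ and are distinct, $\lambda_i$ and $\lambda_{i+1}$ have linearly independent gradients and serve as affine coordinates; hence $\{b_{i-1}^2\lambda_i^b\lambda_{i+1}^a:\ a+b\le k+4-2i\}$ is a basis of $\Phi_i$. First I would apply Lemma \ref{lemma:gJ} to discard most of these: because $b_{i-1}^2$ carries no factor of $\lambda_i$, every monomial with $b\ge 2$ has $\grads(b_{i-1}^2\lambda_i^b\lambda_{i+1}^a)|_{\eg_i}=\bld 0$, so its image under $\gamma_i J$ vanishes. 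This leaves exactly the two families $\{\gamma_i(J(b_{i-1}^2\lambda_{i+1}^a))\}$ (the case $b=0$, $0\le a\le k+4-2i$) and $\{\gamma_i(J(b_{i-1}^2\lambda_i\lambda_{i+1}^a))\}$ (the case $b=1$, $0\le a\le k+3-2i$) as a spanning set for $\gamma_i(\VVV_{s,i})$.

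The heart of the proof is to show that this combined set is linearly independent, which simultaneously establishes the direct sum and the dimension. Here I would refine the computation behind Lemma \ref{lemma:gJ} into the two scalar components of the vector trace: the normal--normal component $\n_i^{\!\top}(J\phi)\n_i|_{\eg_i}=\partial_{\bld t_i}^2(\phi|_{\eg_i})$ depends only on the edge restriction of $\phi$, whereas the normal--tangential component $\bld t_i^{\!\top}(J\phi)\n_i|_{\eg_i}=-\partial_{\bld t_i}\partial_{\bld n_i}\phi|_{\eg_i}$ depends on the normal derivative along $\eg_i$. Since every $b=1$ function vanishes on $\eg_i$, its normal--normal component is zero; thus in the normal--normal part of any vanishing combination only the $b=0$ functions survive, contributing $\partial_{\bld t_i}^2$ of the edge polynomials $b_{i-1}^2\lambda_{i+1}^a|_{\eg_i}$. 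Because $\lambda_{i+1}$ restricts to a nonconstant affine function on $\eg_i$, these have strictly increasing degree in $a$, so their second tangential derivatives are independent and the $b=0$ coefficients must vanish. Feeding this back, the normal--tangential part reduces to the $b=1$ functions, whose normal derivatives $b_{i-1}^2\lambda_{i+1}^a\,\partial_{\bld n_i}\lambda_i|_{\eg_i}$ again have strictly increasing degree, forcing the remaining coefficients to vanish.

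The main obstacle is bookkeeping the low-degree degeneracies, which occur only at $i=1$, where $b_{i-1}=b_0=1$ and the edge restriction $b_{i-1}^2|_{\eg_i}$ is constant; for $i\ge 2$ the factor $\lambda_{i-1}^2$ vanishes at $\vt_i$ and is nonconstant on $\eg_i$, so the restriction already has degree $\ge 2$. When $i=1$ the second tangential derivative annihilates $a=0,1$ in the $b=0$ family and the tangential derivative annihilates $a=0$ in the $b=1$ family — exactly the three functions $\{1,\lambda_2,\lambda_1\}$. This is what raises the summation limits to $2\delta_{1,i}$ and $\delta_{1,i}$ and produces the correction $-3\,\delta_{1,i}$. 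Counting the surviving independent traces then yields
\[
\dim\gamma_i(\VVV_{s,i})=\dim\pcol_{k+4-2i}(\eg_i)+\dim\pcol_{k+3-2i}(\eg_i)-3\,\delta_{1,i},
\]
with the convention $\dim\pcol_m(\eg_i)=\max\{0,m+1\}$.

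Finally I would substitute this into the definition of the edge index. Using that the traces are $\bR^2$-valued, so $\dim\bld M(\eg_i)=2(k+1)$, together with the already-noted value $\dim\gamma_{ne}(\WWW_{\!rm})=3$, and the elementary identity $\min(A,B)=A-\max\{0,A-B\}$ with $A=k+1$ applied to each summand, I obtain
\[
I_{\bld M,i}(\VVV\times \WWW)=2(k+1)-\dim\gamma_i(\VVV_{s,i})-3\,\delta_{ne,i}=\min(k+1,2i-4)+\min(k+1,2i-3)+3\,\delta_{1,i}-3\,\delta_{ne,i},
\]
which is the asserted formula.
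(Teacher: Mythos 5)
Your proof is correct, but it reaches the crucial second identity by a genuinely different route than the paper. The paper never verifies linear independence of the listed generators: it computes the dimension by a rank--nullity (filtration) argument, observing that the kernel of $\gamma_i$ restricted to $\VVV_{s,i}$ is precisely $\VVV_{s,i+1}$, so that $\dim\gamma_i(\VVV_{s,i})=\dim\VVV_{s,i}-\dim\VVV_{s,i+1}$, and then substitutes $\dim\VVV_{s,i}=\dim\pcol_{k+4-2i}(K)-3\,\delta_{1,i}$ from Proposition \ref{lemma:characterization-s}; the directness of the sum in the first identity then comes for free from the matching count, and the third identity is the same algebra you perform. You instead prove independence of the explicit spanning set head-on by splitting the trace into its normal--normal component $\n_i^{\top}(J\phi)\n_i|_{\eg_i}=\partial_{\bld t_i}^2(\phi|_{\eg_i})$ and normal--tangential component $\bld t_i^{\top}(J\phi)\n_i|_{\eg_i}=-\partial_{\bld t_i}\partial_{\n_i}\phi|_{\eg_i}$ --- both correct consequences of the identity $\gamma_i(J\phi)=\partial_{\bld t_i}({\bld{\mathrm{curl}}}\,\phi)$ underlying Lemma \ref{lemma:gJ} --- and then running a strictly-increasing-degree argument, first killing the $b=0$ coefficients via the normal--normal part and then the $b=1$ coefficients via the normal--tangential part. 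What your route buys: it is local to the single edge $\eg_i$ (no appeal to $\VVV_{s,i+1}$), it establishes the basis/direct-sum statement constructively rather than by dimension matching, and your componentwise refinement of Lemma \ref{lemma:gJ} is essentially the same mechanism the paper deploys later when proving independence for $C_{\bld M,i}$ (its successive evaluation at $\vt_{i+1}$ and division by $\lambda_{i+1}$ is the evaluate-and-divide version of your degree count). What the paper's route buys: brevity and immunity to degree bookkeeping, whereas you must track the exact degree of $b_{i-1}^2|_{\eg_i}$ --- and you do handle the essential points correctly: $\lambda_{i-1}|_{\eg_i}$ nonconstant gives degree at least $2$ for $i\ge2$ even when earlier edges are parallel to $\eg_i$, and the three affine functions $1,\lambda_2,\lambda_1$ lying in $\ker J$ account exactly for the $i=1$ shifts and the $-3\,\delta_{1,i}$. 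One shared caveat, not a gap attributable to you: both arguments implicitly require that no earlier edge line coincide with the line of $\eg_i$ (so $b_{i-1}|_{\eg_i}\not\equiv 0$); if it did, Proposition \ref{lemma:characterization-s} itself would fail, so this is inherited from the common starting point rather than introduced by your proof.
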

Here  we use the convention that,  for any negative integer $m$, 
$\dim \pcol_{m} = 0$.
% $ \pcol_{m} =\{ 0\}$, 
% and hence, that $\dim \pcol_{m} = 0$.
\begin{proof}
{\color{red}The first identity follows from the definition of the auxiliary space $\VVV_{s,i}$ and from the fact that
$\gamma_i\left(J(b_{i-1}^2\lambda_i^b\lambda_{i+1}^a)\right)=0$ when $b\ge2$.

Let us now prove the second identity.} By construction,
\[
\dim\gamma_i( \VVV_{s,i})
=\dim  \VVV_{s,i}-\dim \VVV_{s,i+1},
\]
and since, by Proposition \ref{lemma:characterization-s},
$\dim \VVV_{s,i} =\;   \dim  \pcol_{k+4-2i}{(K)}-3\,\delta_{1,i},$ we get that
\begin{alignat*}{1}
\dim\gamma_i( \VVV_{s,i})&
= (\dim \pcol_{k+4-2i}{(K)}-3\,\delta_{1,i})- (\dim \pcol_{k+2-2i}{(K)}-3\,\delta_{1,i+1})
\\
&
= \dim \pcol_{k+4-2i}(\eg_i)
  +\dim \pcol_{k+3-2i}(\eg_i)-3\,\delta_{1,i}.
\end{alignat*}

{\color{red}It remains to prove the last identity. By the definition of $I_{\bld M,i}(\VVV\times \WWW)$, we have
\begin{alignat*}{1}
I_{\bld M,i}(\VVV\times \WWW)
= &\;\dim \bld M(\eg_i) -  \dim \gamma_i(\VVV_{s,i}) - 
\delta_{i,ne} \dim \gamma_{ne}(\WWW_{\!rm})
\\
=&\; 2 \dim \pcol_{k}{(\eg_i)} -  \Big(\dim \pcol_{k+4-2i}(\eg_i)
\\
&\;
  +\dim \pcol_{k+3-2i}(\eg_i)-3\,\delta_{1,i}\Big) - 3\,
\delta_{i,ne}
\\
=&\; (\dim \pcol_{k}{(\eg_i)} -  \dim \pcol_{k+4-2i}(\eg_i))
 \\&+(\dim \pcol_{k}{(\eg_i)} -\dim \pcol_{k+3-2i}(\eg_i)+3\,\delta_{1,i} - 3\,
\delta_{i,ne}
\end{alignat*}
and the result follows.} This completes the proof.
\end{proof}

We now give a particular choice of the trace space $C_{\bld M,i}$.
\begin{proposition}
 Set, for $i=1,\dots,ne$,
  \[
  C_{\bld M,i} =\left\{\begin{tabular}{l l}
            $\{0\}$ & if $i = 1$,           
            \vspace{.3cm}
\\
            $\mathrm{span}
\left  \{ \gamma_i\left(J(\eta_i^2\lambda_{i+1}^b)\right)\right\}_{b=\max\{k+5-2i,0\}}^{k}$\\
$ \hspace{.2cm}\oplus \mathrm{span}
\left  \{ \gamma_i\left(J(\eta_i^2\lambda_i\lambda_{i+1}^b)\right)\right\}_{b=
\max\{k+4-2i,0\}}^{k-1}$\\
$ \hspace{.4cm}\oplus \mathrm{span}
\left  \{ \gamma_i\left(J(\eta_i\lambda_i\lambda_{i+1})\right)\right\}
$
            & if $2 \le i\le ne-1$,
            \vspace{.3cm}
            \\
                        $\mathrm{span}
\left  \{ \gamma_i\left(J(\eta_i\lambda_{i}\lambda_{i+1})\right)\right\}
$
&if $i = ne$ and $k = 1$,\vspace{.3cm}\\
            $\mathrm{span}
\left  \{ \gamma_i\left(J(\eta_i^2\lambda_{i+1}^{2+b})\right)\right\}_{b=\max\{k+5-2i,0\}}^{k-2}$\\
$ \hspace{.2cm}\oplus \mathrm{span}
\left  \{ \gamma_i\left(J(\eta_i^2\lambda_{i}\lambda_{i+1}^{2+b})\right)\right\}_{b=
\max\{k+4-2i,0\}}^{k-3}$\\
$ \hspace{.4cm}\oplus \mathrm{span}
\left  \{ \gamma_i\left(J(\eta_i\lambda_{i}\lambda_{i+1})\right),
\gamma_i\left(J(\eta_i\lambda_{i}\lambda_{i+1}^2)\right)\right\}
$
            & if $i = ne$ and $k \ge 2$,
            \end{tabular}\right.
 \]
where $\eta_i$ is any linear function on $\mathbb{R}^2$ 
 such that  $\eta_i(\vt_{i}) = 0$ and $\eta_i(\vt_{i+1}) \not = 0$. 
 
 Then,  for $i=1,\dots,ne$,
 the space $C_{\bld M,i}$ of 
 functions defined on the edge $\eg_i$ has dimension $I_{\bld M,i}(\VVV\times \WWW)$
 and satisfies the identity 
 \[
  \gamma_i( \VVV_{s,i} )\oplus C_{\bld M,i}=\widetilde {\bld M}(\eg_i).
  \]
\end{proposition}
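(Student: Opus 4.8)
The plan is to reduce both claims to the scalar edge traces of $J\psi$. Fix an affine coordinate $s$ on $\eg_i$ with $s=0$ at $\vt_i$ and $s=1$ at $\vt_{i+1}$, and let $\n_i,\bld t_i$ denote the (constant) outward normal and tangent. From the definition of $J$ and Lemma~\ref{lemma:gJ} one obtains the componentwise identities
\[
\n_i\cdot(J\psi)\n_i\big|_{\eg_i}=\frac{\partial^2\psi}{\partial\bld t_i^2}\Big|_{\eg_i},
\qquad
\bld t_i\cdot(J\psi)\n_i\big|_{\eg_i}=-\frac{\partial}{\partial\bld t_i}\Big(\frac{\partial\psi}{\partial\n_i}\Big)\Big|_{\eg_i}.
\]
The normal--normal trace depends only on the Dirichlet datum $\psi|_{\eg_i}$ and the normal--tangential trace only on the Neumann datum $\partial\psi/\partial\n_i|_{\eg_i}$; since these two data are independent, I will regard $\bld M(\eg_i)=\pcol_k(\eg_i)\,\n_i\oplus\pcol_k(\eg_i)\,\bld t_i$ and work in each slot separately.

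First I would settle the dimension. Counting the listed generators of $C_{\bld M,i}$ and using $\#\{b:\max(m,0)\le b\le k\}=\min(k+1,\,k-m+1)$ together with $\min(a,b)+1=\min(a+1,b+1)$, the three blocks contribute $\min(k+1,2i-4)+\min(k,2i-4)+1=\min(k+1,2i-4)+\min(k+1,2i-3)$, which is exactly $I_{\bld M,i}(\VVV\times\WWW)$ by Corollary~\ref{corollary-dim-count} (the endpoints $i=1$ and $i=ne$ being checked the same way with the $\pm3\delta$ terms). Because the definition of the \mm-index for an edge gives $\dim\gamma_i(\VVV_{s,i})+I_{\bld M,i}(\VVV\times\WWW)=\dim\widetilde{\bld M}(\eg_i)$, it then suffices to show that the generators of $\gamma_i(\VVV_{s,i})$ from Corollary~\ref{corollary-dim-count} together with those of $C_{\bld M,i}$ span $\widetilde{\bld M}(\eg_i)$: a spanning family of the right cardinality is automatically a basis, which at once yields the linear independence (hence $\dim C_{\bld M,i}=I_{\bld M,i}(\VVV\times\WWW)$) and the directness of the sum.

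For the spanning I split the generators according to whether the scalar argument carries the factor $\lambda_i$ (which vanishes on $\eg_i$). The $\lambda_i$-free arguments $b_{i-1}^2\lambda_{i+1}^a$ (from $\VVV_{s,i}$) and $\eta_i^2\lambda_{i+1}^b$ (from $C_{\bld M,i}$) carry the whole normal--normal slot. By the ranges in Corollary~\ref{corollary-dim-count} and in the statement, their restrictions to $\eg_i$ vanish to order exactly $2$ at $\vt_i$ (through $b_{i-1}^2$ or $\eta_i^2$) and to the consecutive orders $0,1,\dots,k$ at $\vt_{i+1}$ (through the powers of $\lambda_{i+1}$); distinct vanishing orders make them linearly independent, and since $\partial^2/\partial\bld t_i^2$ is injective on polynomials vanishing to order $\ge2$ at $\vt_i$, their $k+1$ normal--normal traces form a basis of $\pcol_k(\eg_i)\,\n_i$. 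Dually, each $\lambda_i$-bearing argument has zero normal--normal trace, and because $\partial\lambda_i/\partial\n_i$ is a nonzero constant, its Neumann datum reduces to $b_{i-1}^2\lambda_{i+1}^a$, $\eta_i^2\lambda_{i+1}^b$, or $\eta_i\lambda_{i+1}$ on $\eg_i$. Grading these now by vanishing order at \emph{both} endpoints, the single term coming from $\eta_i\lambda_i\lambda_{i+1}$ is the unique one of order $1$ at $\vt_i$, while the remaining $k$ have order $2$ at $\vt_i$ and the distinct orders $0,\dots,k-1$ at $\vt_{i+1}$; this makes the $k+1$ Neumann data independent, and as they all vanish at $\vt_i$ no nonzero combination is constant, so $\partial/\partial\bld t_i$ is injective on their span and their normal--tangential traces form a basis of $\pcol_k(\eg_i)\,\bld t_i$. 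Together the two slots give $\bld M(\eg_i)=\widetilde{\bld M}(\eg_i)$ for $1\le i\le ne-1$.

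It remains to treat the two ends of the ordering. For $i=1$ one has $C_{\bld M,1}=\{0\}$ and $\VVV_{s,1}=J\,\pcol_{k+2}$ by Proposition~\ref{lemma:characterization-s}; since the Cauchy-data map $\psi\mapsto(\psi|_{\eg_1},\partial\psi/\partial\n_1|_{\eg_1})$ is onto $\pcol_{k+2}(\eg_1)\times\pcol_{k+1}(\eg_1)$ and $(\partial^2/\partial\bld t_1^2,\,-\partial/\partial\bld t_1)$ maps this onto $\pcol_k\times\pcol_k$, we get $\gamma_1(\VVV_{s,1})=\bld M(\eg_1)=\widetilde{\bld M}(\eg_1)$. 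For $i=ne$ the target is the codimension-$3$ space $\gamma_{ne}(\WWW_{\!rm})^\perp$, and membership $C_{\bld M,ne}\subset\gamma_{ne}(\WWW_{\!rm})^\perp$ is automatic: each generator is the normal trace on $\eg_{ne}$ of a divergence-free $J\psi$ whose normal trace vanishes on $\eg_1,\dots,\eg_{ne-1}$, so the integration-by-parts identity already used for the kernels' trace decomposition in Theorem~\ref{thm:1.5} gives $\langle\gamma_{ne}(J\psi),\gamma_{ne}\w\rangle_{\eg_{ne}}=0$ for every rigid motion $\w$; the shifted exponents in the definition of $C_{\bld M,ne}$ (the extra powers of $\lambda_{i+1}$ and the term $\eta_i\lambda_i\lambda_{i+1}^2$) are precisely what delete the three rigid-motion directions, and the same two-endpoint grading then produces the required $2k-1$ independent traces. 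I expect the normal--tangential slot to be the crux: there all three generator blocks contribute at once, the independence must be read off a grading by vanishing order at both vertices rather than a single degree count, and it is exactly this analysis that forces the seemingly ad hoc exponent ranges in the definition of $C_{\bld M,i}$; the $i=ne$ case compounds this with the rigid-motion orthogonality and is the most delicate part.
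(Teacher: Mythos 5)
Your slot decomposition of the trace --- $\n_i\cdot(J\psi)\n_i=\partial^2_{\bld t_i}\psi|_{\eg_i}$ carried by the Dirichlet datum and $\bld t_i\cdot(J\psi)\n_i=-\partial_{\bld t_i}(\partial\psi/\partial\n_i)|_{\eg_i}$ by the Neumann datum --- is correct, and for $1\le i\le ne-1$ your spanning argument is essentially the paper's own proof read in reverse: the paper takes a vanishing linear combination of the same $2(k+1)$ generators and kills its coefficients by successive evaluation at $\vt_{i+1}$ and division by $\lambda_{i+1}$ (the Dirichlet slot entering through $\phi|_{\eg_i}\in\pcol_1(\eg_i)$, the Neumann slot through the factor $\varphi$ being constant and zero at $\vt_i$), which is exactly your two-endpoint vanishing-order grading phrased as linear independence rather than spanning; since $\dim\gamma_i(\VVV_{s,i})+I_{\bld M,i}(\VVV\times\WWW)=\dim\widetilde{\bld M}(\eg_i)$ by the definition of the edge index, the two formulations are interchangeable. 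Your treatments of the dimension count and of the case $i=1$ are also fine.

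The genuine gap is your membership argument $C_{\bld M,ne}\subset\widetilde{\bld M}(\eg_{ne})$. You claim each generator is the trace of a divergence-free field ``whose normal trace vanishes on $\eg_1,\dots,\eg_{ne-1}$'' and then invoke the element-wide integration by parts. That premise is false here: $\eta_{ne}$ is an \emph{arbitrary} linear function, so $\psi=\eta_{ne}^2\lambda_1^{2+b}$ does satisfy $\gamma_1(J\psi)=\bld 0$ (this is what the exponent shift by $2$ buys, via Lemma \ref{lemma:gJ}), but $\gamma_j(J\psi)\neq\bld 0$ in general for $2\le j\le ne-1$, since nothing forces $\nabla\psi$ to vanish on those edges. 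Vanishing on all earlier edges is a property of the later filling space $\Psi_{ne}$, where $\eta_{ne}$ is replaced by the lifting $\xi_1$ with properties (L) (Lemma \ref{lemma:xi}); the element-wide orthogonality computation in the paper applies to $\gamma_{ne}(\VVV_{s,ne})$, whose members vanish on the earlier edges \emph{by definition}, and does not transfer to $C_{\bld M,ne}$. The conclusion is nevertheless true and is repaired by a one-edge computation: by Lemma \ref{lemma:gJ}, $\gamma_{ne}(J\psi)=\partial_{\bld t}(\bld{\mathrm{curl}}\,\psi)$ on $\eg_{ne}$, and every generator $\psi$ of $C_{\bld M,ne}$ vanishes to order at least two at \emph{both} endpoints (at $\vt_{ne}$ through $\eta_{ne}^2$ or $\eta_{ne}\lambda_{ne}$, at $\vt_1$ through $\lambda_1^{2+b}$ or $\lambda_{ne}\lambda_1$), so $\psi$ and $\nabla\psi$ vanish at $\vt_{ne}$ and $\vt_1$. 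Hence, for a rigid motion $\w=\bld a+c\,(-y,x)$, integrating by parts along the single edge $\eg_{ne}$ and using $\bld{\mathrm{curl}}\,\psi\cdot\partial_{\bld t}\w=-c\,\partial_{\bld t}\psi$ gives
\[
\langle \gamma_{ne}(J\psi),\w\rangle_{\eg_{ne}}
=\big[\bld{\mathrm{curl}}\,\psi\cdot\w\big]_{\vt_{ne}}^{\vt_1}
+c\int_{\eg_{ne}}\partial_{\bld t}\psi\,ds
=0+c\,\big[\psi\big]_{\vt_{ne}}^{\vt_1}=0.
\]
With this inserted, your plan for $i=ne$ goes through; note, however, that you assert rather than execute the final grading there, and it is precisely at $i=ne$ that equal vanishing orders at $\vt_1$ occur (e.g.\ $\eta_{ne}\lambda_{ne}\lambda_1^2$ against the $b_{ne-1}^2$-terms), so one must genuinely alternate between the two vertices as in the paper's evaluation--division scheme.
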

\begin{proof}
Since $\eta_i$ is a linear function, 
it is easy to check that $\dim C_{\bld M,i} = I_{\bld M,i}(\VVV\times \WWW)$ and 
$C_{\bld M,i}\subset \widetilde{M}(\eg_i)$. We are left to show that
$\gamma_{i}(\VVV_{s,i})\cap C_{\bld M,i} = \{\bld 0\}$. We prove this result for the case
$2\le i\le ne-1$
and 
$k\ge 2i-3$. The other cases are similar and simpler.

To show $\gamma_{i}(\VVV_{s,i})\cap C_{\bld M,i} = \{\bld 0\}$, we only need to 
prove the linear independence of the following five sets
\begin{alignat*}{2}
&\mathrm{span}
\left  \{ \gamma_i\left(J(b_{i-1}^2\lambda_{i+1}^a)\right)\right\}_{a=0}^{k+4-2i},
&&\quad\mathrm{span}
\left  \{ \gamma_i\left(J(b_{i-1}^2\lambda_i\lambda_{i+1}^a)\right)\right\}_{a=0}^{k+3-2i},\\
&\mathrm{span}
\left  \{ \gamma_i\left(J(\eta_i^2\lambda_{i+1}^b)\right)\right\}_{b=k+5-2i}^{k},
&&\quad \mathrm{span}
\left  \{ \gamma_i\left(J(\eta_i^2\lambda_i\lambda_{i+1}^b)\right)\right\}_{b=
k+4-2i}^{k-1},\\
&\mathrm{span}
\left  \{ \gamma_i\left(J(\eta_i\lambda_i\lambda_{i+1})\right)\right\}.
\end{alignat*}
Note that the first two sets span a set of bases for $\gamma_{i}(\VVV_{s,i})$
and the last three sets span a set of bases for $C_{\bld M,i}$.
Let us assume that there exists constants $\{C_a\}_{a=0}^{k+4-2i}$,
$\{D_a\}_{a=0}^{k+3-2i}$, $ \{E_b\}_{a=k+5-2i}^{k}$,
$\{F_b\}_{b=k+4-2i}^{k-1}$, and $G$ such that 
\[
\gamma_{i}( J \phi)=0,
\]
where
\begin{align*}
\phi
:=
&\sum_{a=0}^{k+4-2i}C_a\,b_{i-1}^2\lambda_{i+1}^a
 +\sum_{a=0}^{k+3-2i}D_a\,b_{i-1}^2\lambda_{i}\lambda_{i+1}^a
 \\
 &+\sum_{b=k+5-2i}^{k}E_b\,\eta_{i}^2\lambda_{i+1}^b
 +\sum_{b=k+4-2i}^{k-1}F_b\,\eta_{i}^2\lambda_i\lambda_{i+1}^b
 +G\,\eta_{i}\lambda_i\lambda_{i+1}.
\end{align*}
By Lemma \ref{lemma:gJ}, this implies that 
$\grads\phi |_{\eg_i}\in \ppol_0(\eg_i)$ and so that $\phi |_{\eg_i}\in \pcol_1(\eg_i)$. As a consequence,
\[
\left.\left(\sum_{a=0}^{k+4-2i}C_a\,b_{i-1}^2\lambda_{i+1}^a 
 +\sum_{b=k+5-2i}^{k}{E_b}\,\eta_i^2\lambda_{i+1}^{b}\right)\right |_{\eg_i} \in \pcol_1(\eg_i),
\]
because $\lambda_i=0$ on $\eg_i$.
Since $b_{i-1}(\vt_i)=0$ (because $i\ge2$) and since $\eta_i(\vt_i)=0$, we have $\eta_i|_{\eg_i}$ is proportional to 
$\lambda_{i-1}|_{\eg_i}$ and we get that
\[
\left.\left(\sum_{a=0}^{k+4-2i}C_a\,b_{i-1}^2\lambda_{i+1}^a 
 +\sum_{b=k+5-2i}^{k}{E_b}\,\eta_i^2\lambda_{i+1}^{b}\right)\right |_{\eg_i} =0.
\]
Now, evaluating the expression at the node $\vt_{i+1} = \eg_i\cap \eg_{i+1}$, we get
$C_0 = 0$ since $b_{i-1}(\vt_{i+1})\not = 0, \eta_i(\vt_{i+1})\not=0$ and $\lambda_{i+1} (\vt_{i+1}) = 0$.
Then, dividing it by $\lambda_{i+1}$ and evaluating the resulting expression
again at $\vt_{i+1} = \eg_i\cap \eg_{i+1}$, we get $C_1=0$. Similarly, we get 
$C_a = 0$ for $a=2,\cdots, k+4-2i$, and $E_b = 0$ for $b = k+5-2i,\cdots, k$. 
This implies that
\[
\phi=\sum_{a=0}^{k+3-2i}D_a\,b_{i-1}^2\lambda_{i}\lambda_{i+1}^a
 +\sum_{b=k+4-2i}^{k-1}F_b\,\eta_{i}^2\lambda_i\lambda_{i+1}^b
 +G\,\eta_{i}\lambda_i\lambda_{i+1},
\]
and so, that $\grads\phi |_{\eg_i}=\varphi\,\grads\lambda_i$,
where
\[
\varphi:=\sum_{a=0}^{k+3-2i}D_a\,b_{i-1}^2\lambda_{i+1}^a
 +\sum_{b=k+4-2i}^{k-1}F_b\,\eta_{i}^2\lambda_{i+1}^b
 +G\,\eta_{i}\lambda_{i+1}.
\]
Since $\varphi |_{\eg_i}\in \pol{0}{\eg_i}$ and $\varphi(\vt_i)=0$, because
$b_{i-1}(\vt_{i})= 0$ and $\eta_i(\vt_{i+1})=0$, we conclude that
$\varphi  |_{\eg_i}=0$, that is, that
\begin{align*}
\left.\left(
   \sum_{a=0}^{k+3-2i}D_a\,b_{i-1}^2\lambda_{i+1}^a
 +\sum_{b=k+4-2i}^{k-1}F_b\,\eta_{i}^2\lambda_{i+1}^b
 +G\,\eta_{i}\lambda_{i+1}
 \right)\right|_{\eg_i}=0.
\end{align*}
Since $\eta_i(\vt_{i+1})=0$, $\eta_i=\alpha\lambda_{i-1}|_{\eg_i}$ for some number $\alpha$. Then, dividing the above expression $\lambda_{i-1}$ and evaluating the resulting 
 expression at $\vt_i$, we obtain that $G=0$.  Finally, we can get  that $D_a=0$ and $F_b=0$ by 
 consecutively evaluating the expression at $\vt_{i+1}$ and dividing it by $\lambda_{i+1}$.
This completes the proof.
\end{proof}

\

{\color{red}
{\bf (3). Finding the filling spaces $\delta\VV_{\mathrm{fillM}}^i$}.
Note that the definition of $\Psi_i$ in Theorem \ref{thm:polygon}
is obtained from our choice of the space $C_{\bld M,i}$ by formally replacing, 
in the definition of the basis of $C_{\bld M,i}$, 
$\eta^2_i$ by $\xi^2_{i+1}$ and then $\eta_i\lambda_i\lambda_{i+1}$ by $B_i$. 
The fact that the choice $\delta\VV_{\mathrm{fillM}}^i:=J\,\Psi_i$ does satisfy the conditions (3.1), (3.2) and (3.3) of Algorithm PC follows immediately from the following results.

\begin{lemma}
\label{lemma:xi}
Let $\psi$ be any function in 
$\pol{k}{K}$. Then we have that
\begin{itemize}
\item[{\rm (i)}] 
$\gamma_j\big(J (\xi^2_{i+1}\psi)\big)=\bld 0$ for $j=1,\dots,i-1$ provided 
$2\le i<ne$, or $i=ne$ and $\psi$ 
is divisible by $\lambda^2_1$.
\item[{\rm (ii)}] $\gamma_i\big(J (\xi^2_{i+1}\psi)\big)=
\gamma_j\big(J( \eta^2_i\psi)\big)$ for some linear 
function $\eta_i$ such that $\eta_i(\vt_i)=0$.
\item[{\rm (iii)}] $\gamma_j\big(J( \xi^2_{i+1}\psi)\big)
\in\ppol_{k}(\eg_j)$ for $j=i+1,\dots,ne$.
\end{itemize}
\end{lemma}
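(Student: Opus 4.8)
The engine of the proof is the identity established in the proof of Lemma~\ref{lemma:gJ}, namely $\gamma_j(J\phi)=\partial_{\bld t_j}\bld{\mathrm{curl}}\,\phi$, where $\partial_{\bld t_j}$ denotes the tangential derivative along $\eg_j$. Writing $\grads\phi=(\partial_{\bld t_j}\phi)\,\bld t_j+(\partial_{\n_j}\phi)\,\n_j$, each Cartesian component of $\bld{\mathrm{curl}}\,\phi$ is a fixed constant-coefficient combination of $\partial_{\bld t_j}\phi$ and $\partial_{\n_j}\phi$; applying $\partial_{\bld t_j}$ and restricting to $\eg_j$ shows that $\gamma_j(J\phi)|_{\eg_j}$ is a fixed constant-coefficient combination of $\partial_{\bld t_j}^2(\phi|_{\eg_j})$ and $\partial_{\bld t_j}\big((\partial_{\n_j}\phi)|_{\eg_j}\big)$. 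In particular, $\gamma_j(J\phi)|_{\eg_j}$ depends \emph{only} on the two edge data $\phi|_{\eg_j}$ and $(\partial_{\n_j}\phi)|_{\eg_j}$. The plan is to record this reduction first, since all three parts follow from it with $\phi=\xi_{i+1}^2\psi$.

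For (i) I would show that $\grads(\xi_{i+1}^2\psi)$ vanishes on $\eg_j$ for $1\le j\le i-1$, so that $\gamma_j(J(\xi_{i+1}^2\psi))=\bld 0$ by Lemma~\ref{lemma:gJ}. When $2\le i<ne$, the vertex $\vt_{i+1}$ is not an endpoint of $\eg_j$ for such $j$; since $\xi_{i+1}|_{\eg_j}$ is linear by (L.1) and vanishes at both endpoints $\vt_j,\vt_{j+1}$ by (L.2), it is identically zero on $\eg_j$, whence $\grads(\xi_{i+1}^2\psi)=2\xi_{i+1}\psi\grads\xi_{i+1}+\xi_{i+1}^2\grads\psi$ vanishes on $\eg_j$. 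When $i=ne$ one has $\xi_{i+1}=\xi_1$, which vanishes on $\eg_j$ for $2\le j\le ne-1$ by the same argument but not on $\eg_1$; here the hypothesis that $\lambda_1^2$ divides $\psi$ supplies the missing double zero, since $\lambda_1$ vanishes on $\eg_1$ and hence $\grads(\lambda_1^2 g)=2\lambda_1 g\grads\lambda_1+\lambda_1^2\grads g$ vanishes there for any smooth $g$.

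For (iii) I would combine the reduction with a degree count on each edge $\eg_j$ with $j\ge i+1$. By (L.1), $\xi_{i+1}|_{\eg_j}$ has degree $\le 1$, so $\phi|_{\eg_j}=(\xi_{i+1}^2\psi)|_{\eg_j}$ has degree $\le k+2$ and $\partial_{\bld t_j}^2(\phi|_{\eg_j})$ has degree $\le k$. For the normal derivative, $\partial_{\n_j}(\xi_{i+1}^2\psi)=2\xi_{i+1}(\partial_{\n_j}\xi_{i+1})\psi+\xi_{i+1}^2\partial_{\n_j}\psi$; on $\eg_j$ the factor $(\partial_{\n_j}\xi_{i+1})|_{\eg_j}$ is constant by (L.3), so the first term has degree $\le 1+0+k=k+1$ and the second has degree $\le 2+(k-1)=k+1$. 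Hence $(\partial_{\n_j}\phi)|_{\eg_j}$ has degree $\le k+1$ and $\partial_{\bld t_j}\big((\partial_{\n_j}\phi)|_{\eg_j}\big)$ has degree $\le k$, so by the reduction $\gamma_j(J(\xi_{i+1}^2\psi))\in\ppol_k(\eg_j)$.

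For (ii) I would produce a single global linear function $\eta_i$ reproducing, on $\eg_i$, the two edge data of $\xi_{i+1}^2\psi$, so that by the reduction the $\eg_i$-traces of $J(\eta_i^2\psi)$ and $J(\xi_{i+1}^2\psi)$ coincide. Since that trace depends only on $\phi|_{\eg_i}$ and $(\partial_{\n_i}\phi)|_{\eg_i}$ and $\psi$ is common, it suffices to match $\eta_i^2$ and $\xi_{i+1}^2$ both in value and in normal derivative on $\eg_i$. I would take $\eta_i$ to be the affine function whose restriction to the line through $\eg_i$ equals $\xi_{i+1}|_{\eg_i}$ — linear by (L.1), giving $\eta_i(\vt_i)=\xi_{i+1}(\vt_i)=0$ by (L.2) — and whose constant normal slope across $\eg_i$ equals the constant $(\partial_{\n_i}\xi_{i+1})|_{\eg_i}$ furnished by (L.3); the single free coefficient of an affine function beyond its trace on a line is exactly this normal slope, so $\eta_i$ exists and is unique. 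Then $\eta_i^2|_{\eg_i}=\xi_{i+1}^2|_{\eg_i}$ and, using the constancy of the normal derivatives, $\partial_{\n_i}(\eta_i^2\psi)|_{\eg_i}=\partial_{\n_i}(\xi_{i+1}^2\psi)|_{\eg_i}$, which is what is needed. The main obstacle, and the step demanding care, is precisely this reduction: verifying that the edge-trace of $J\phi$ is governed only by $\phi|_{\eg_j}$ and $(\partial_{\n_j}\phi)|_{\eg_j}$, and then tracking the polynomial degrees of these two traces under multiplication and normal differentiation. Conditions (L.1) and (L.3) enter exactly here, keeping the linear edge trace and the constant normal derivative of $\xi_{i+1}$ low enough for the count in (iii) and the affine matching in (ii) to close.
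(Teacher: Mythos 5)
Your proof is correct and follows essentially the same route as the paper's: both rest on the identity $\gamma_j(J\phi)=\frac{\partial\,{\bld{\mathrm{curl}}}\,\phi}{\partial \bld t_j}$ underlying Lemma \ref{lemma:gJ}, with (i) handled by the double zero of $\xi_{i+1}^2$ on $\eg_j$ (and the divisibility by $\lambda_1^2$ when $i=ne$) and (ii) by an affine $\eta_i$ matching $\xi_{i+1}$ in trace and normal derivative on $\eg_i$, exactly as in the paper. Your only departure is one of explicitness: you spell out the reduction of $\gamma_j(J\phi)|_{\eg_j}$ to the two edge data $\phi|_{\eg_j}$ and $(\partial_{\n_j}\phi)|_{\eg_j}$ and carry out the degree count for (iii), which the paper dispatches as ``simple manipulations'' using (L.1) and (L.3).
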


\begin{lemma}
\label{lemma:Bi}
Let $\psi$ be any function in $H^2(K)$. Then we have that
\begin{itemize}
\item[{\rm (i)}] $\gamma_i\big(J(B_{i}\psi)\big)
=\gamma_j\big(J(\alpha\eta_i\lambda_i\lambda_{i+1}\psi)\big)
$ for some 
constant $\alpha$ and some linear function $\eta_i$ such that $\eta_i(\vt_i)=0$.
\item[{\rm (ii)}] $\gamma_j\big(J(B_{i}\psi)\big) = \bld 0$ for $1\le j\le ne$ and $j\neq i$.
\end{itemize}
\end{lemma}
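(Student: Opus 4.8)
The plan is to reduce both items to the scalar characterization of the operator $\gamma_i(J\,\cdot)$ provided by Lemma \ref{lemma:gJ}, and then to read off the edge behaviour of the relevant gradients directly from the defining conditions (H.1)--(H.3). Recall that Lemma \ref{lemma:gJ} says $\gamma_j(J\,\phi)=\bld 0$ exactly when $\grads\phi|_{\eg_j}\in\ppol_0(\eg_j)$; applied to a difference of two scalar potentials it gives the working criterion that $\gamma_j(J\,\phi_1)=\gamma_j(J\,\phi_2)$ if and only if $\grads(\phi_1-\phi_2)|_{\eg_j}$ is a constant vector along $\eg_j$. Hence everything reduces to computing $\grads(B_i\psi)$ on the edges, for which the only tools needed are the tangential--normal splitting $\grads w=\frac{\partial w}{\partial\bld t_j}\,\bld t_j+\frac{\partial w}{\partial\n_j}\,\n_j$ valid on $\eg_j$ and the product rule $\grads(B_i\psi)=\psi\,\grads B_i+B_i\,\grads\psi$.

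For item (ii), I would fix $j\neq i$. Because $B_i|_{\eg_j}=0$ by (H.1), the term $B_i\,\grads\psi$ vanishes on $\eg_j$; moreover the tangential derivative $\frac{\partial B_i}{\partial\bld t_j}=\frac{\partial}{\partial\bld t_j}\big(B_i|_{\eg_j}\big)=0$, while the normal derivative $\frac{\partial B_i}{\partial\n_j}|_{\eg_j}=0$ by (H.2) since $j\neq i$. Thus $\grads B_i|_{\eg_j}=\bld 0$, so $\grads(B_i\psi)|_{\eg_j}=\bld 0\in\ppol_0(\eg_j)$, and Lemma \ref{lemma:gJ} immediately yields $\gamma_j(J(B_i\psi))=\bld 0$.

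For item (i) (with the trace on the right also taken over $\eg_i$), I would compare the two gradients on $\eg_i$. On $\eg_i$ the identity $B_i|_{\eg_i}=0$ again kills both the $B_i\,\grads\psi$ term and the tangential part of $\grads B_i$, so (H.3) gives $\grads(B_i\psi)|_{\eg_i}=\psi\,\lambda_{i-1}\lambda_{i+1}\,\n_i$. On the other hand the factor $\lambda_i$ vanishes on $\eg_i$, so the product rule collapses to $\grads(\eta_i\lambda_i\lambda_{i+1}\psi)|_{\eg_i}=\psi\,\eta_i\lambda_{i+1}\,\grads\lambda_i$; since $\lambda_i$ is linear, nonnegative on $K$ and zero on $\eg_i$, its (constant) gradient is $\grads\lambda_i=-|\grads\lambda_i|\,\n_i$. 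Taking $\eta_i:=\lambda_{i-1}$, which is linear and has $\eta_i(\vt_i)=\lambda_{i-1}(\vt_i)=0$ because $\vt_i$ is a common endpoint of $\eg_{i-1}$ and $\eg_i$, and $\alpha:=-1/|\grads\lambda_i|$, the two restricted gradients coincide on $\eg_i$. Their difference is the zero vector, a fortiori in $\ppol_0(\eg_i)$, so Lemma \ref{lemma:gJ} delivers $\gamma_i(J(B_i\psi))=\gamma_i(J(\alpha\,\eta_i\lambda_i\lambda_{i+1}\psi))$, as claimed.

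The calculations are elementary; the step I expect to require the most care is the bookkeeping of the tangential--normal decomposition together with the exact sign and magnitude of $\grads\lambda_i$, since it is precisely the vanishing of $B_i$ (respectively $\lambda_i$) along the edge in question that collapses the product rule to a single normal-derivative term. The genuinely important move, which I would isolate at the very start, is the translation via Lemma \ref{lemma:gJ} of the matrix-valued normal-trace identities into scalar statements about restrictions of $\grads(B_i\psi)$ to the edges; once that reduction is made, both items follow from (H.1)--(H.3) without further difficulty.
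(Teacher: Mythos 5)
Your proof is correct and follows essentially the same route as the paper's: both reduce the matrix-valued trace identities to the scalar criterion of Lemma \ref{lemma:gJ} and then read off $\grads(B_i\psi)|_{\eg_j}$ from properties (H.1)--(H.3), using that the vanishing of $B_i$ (respectively $\lambda_i$) on the edge collapses the product rule to a single normal-derivative term. The only cosmetic difference is that you exhibit the explicit choices $\eta_i=\lambda_{i-1}$ and $\alpha=-1/|\grads\lambda_i|$, whereas the paper keeps $\eta_i$ generic and determines $\alpha$ from the equation $\alpha\,\eta_i(\vt_{i+1})\,\frac{\partial}{\partial\n_i}\lambda_i=1$; both establish the same existential claim.
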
 

\

% Let us prove Lemma \ref{lemma:xi}.
\begin{proof}[Proof of Lemma \ref{lemma:xi}] Let us begin by proving (i). By properties (L.1) and (L.2) in Section 4.1, $\xi_{i+1}=0$ on $\eg_j$ for $j=1,\dots,i-1$ if $i<ne$. Since this implies that $\nabla(\xi_{i+1}^2\,S)|_{\eg_j}=0$, property (i) follows from Lemma \ref{lemma:gJ} for $i<ne$. If $i=ne$, we have, by properties (L.1) and (L.2), that
$\xi_{i+1}=0$ on $\eg_j$ for $j=2,\dots,i-1$ and property (i) follows  
in the same manner. It remains to consider the case $i=ne$ and $j=1$. In this case, on $\eg_1$, 
$\xi_{ne+1}$ is different from zero. As a consequence, property (i) holds if $S$ is divisible by $\lambda_1^2$. This proves property (i).

Let us now prove property (ii).
We can take $\eta_i$ such that 
$\eta_i|_{\eg_i}=\xi_{i+1}|_{\eg_i}$ and 
$\frac{\partial}{\partial\n_i}\eta_i =\frac{\partial}{\partial \n_i} \xi_{i+1}$. 
This is possible by properties (L).
% and 
% Then, by 
% definition of $\eta_i$ and 
% properties (L.1) and (L.2), 
Then,
we have
\begin{alignat*}{1}
\left.\left(\nabla((\xi^2_{i+1}-\eta^2_i)\psi)\right)
\right|_{\eg_i}
& = 
\left.\big(\psi(\xi_{i+1}+\eta_i)\nabla(\xi_{i+1}-\eta_i)
\big)
\right|_{\eg_i} = 0.
% &
% =2\n_i\left.\left(\psi\,(\xi_{i+1}\frac{\partial}
% {\partial \n_i} \xi_{i+1}-\eta_i\frac{\partial}
% {\partial\n_i}\eta_i)\right)\right|_{\eg_i}
% \\
% &
% =2\alpha\n_i\left.\left(S\,\eta_i(\frac{\partial}{\partial \n_i} \xi_{i+1}-\alpha\frac{\partial}{\partial\n_i}\eta_i)\right)\right|_{\eg_i}
% \\
% &=0.
\end{alignat*}
% if we take the linear function $\eta_i$ in 
%such a way that $\frac{\partial}{\partial \n_i} \xi_{i+1}=\alpha\frac{\partial}{\partial\n_i}\eta_i$. This is possible by property (L.3) and because the only constraint not he linear function $\eta_i$ is that it be zero at the vertex $\vt_i$. This proves property (ii).

Finally, property (iii) follows by simple manipulations and using properties (L.1) and (L.3).  
This completes the proof of Lemma \ref{lemma:xi}
\end{proof}

% Let us now prove Lemma \ref{lemma:Bi}.
\begin{proof}[Proof of Lemma \ref{lemma:Bi}]
We first prove prove property (i). Since $\lambda_i=0$ on $\eg_i$ and $B_i=0$ on $\eg_i$, 
by property (H.1) in Section 4.1, we have that, 
\begin{alignat*}{2}
\nabla (B_i \psi-\alpha\eta_i\lambda_i\lambda_{i+1})|_{\eg_i}
&= \n_i\,\left.\left(\psi\,(\frac{\partial}{\partial \n_i} B_i 
- \alpha\,\eta_i\lambda_{i+1}\frac{\partial}{\partial \n_i}\lambda_i)\right)\right|_{\eg_i}
\\
&= \n_i\,\left.\left(\psi\,(\lambda_{i-1}\lambda_{i+1}
- \alpha\,\eta_i\lambda_{i+1}\frac{\partial}{\partial \n_i}\lambda_i)\right)\right|_{\eg_i}
&&\;\mbox{ by (H.3)},
\\
&= \n_i\,\left.\left(\psi\,\lambda_{i-1}\lambda_{i+1} (1
- \alpha\,\eta_i(\vt_{i+1})\frac{\partial}{\partial \n_i}\lambda_i)\right)\right|_{\eg_i}
\\
&=0,
\end{alignat*}
if we take $\alpha$ as the unique solution of the equation  
$\alpha \eta_i(\vt_{i+1})\frac{\partial}{\partial \n_i}\lambda_i=1$ on the edge $\eg_i$. Property (i) now follows from Lemma \ref{lemma:gJ}.

It remains to prove property (ii). We have, by property (H.1) of $B_i$, that
\begin{alignat*}{2}
\nabla (B_i \psi)|_{\eg_j}= \n_j\,\left.\left(\psi\,\frac{\partial}{\partial \n_i} B_i\right)\right|_{\eg_j}=0,
\end{alignat*}
by property (H.2). Property (i) now follows from Lemma \ref{lemma:gJ}.This
completes the proof of Lemma \ref{lemma:Bi}
\end{proof}

With these results, we conclude that the choice $\dvm_{\mathrm{fillM}}$ indeed satisfies 
the related
properties in Table \ref{table:deltaspaces}.

{\bf The computation of the dimension of $\delta\VV_{\mathrm{fillM}}$}.
Now, we compute the dimension of $\delta\VV_{\mathrm{fillM}}$.
We have
\begin{alignat*}{1}
\dim \delta\VV_{\mathrm{fillM}}
&
=
\sum_{i=1}^{ne} \dim \delta\VV_{\mathrm{fillM}}^i
=\sum_{i=1}^{ne} I_{\bld M,i}(\VVV\times\WWW)
\\
&=\sum_{i=1}^{ne} (\min\{k+1,2i-4\}+\min\{k+1,2i-3\}+
3\,\delta_{1,i}-3\,\delta_{ne,i})
\\
&=\sum_{i=1}^{ne} (\min\{k+1,2i-4\}+\min\{k+1,2i-3\}+
3\,\delta_{1,i}-3\,\delta_{ne,i}),
\end{alignat*}
by Corollary \ref{corollary-dim-count}. Finally, simple algebraic manipulations give that
\begin{alignat*}{1}
\dim \delta\VV_{\mathrm{fillM}}
&
=\begin{cases}
  2(k+1)\,ne-\frac{(k+3)(k+4)}{2}& \mbox{ if }k\le 2ne -5,
  \vspace{.2cm}\\
  (2ne-5)\,ne& \mbox{ if }  k\ge 2ne -4,
 \end{cases}\\
 &
= 2(\theta+1)ne-\frac{1}{2}(\theta+3)(\theta+4),
\end{alignat*}
where $\theta:=\min\{k,2 ne-4\}$.

\

% {\bf (4). Conclusion.} We can now say that  $\delta\VV_{\mathrm{fillM}}$ of Theorem \ref{thm:polygon} satisfies properties in Table  \ref{table:deltaspaces}.
% }
% 
% \

Finally, we conclude the proof of Theorem \ref{thm:polygon} 
by proving that the choices of $\delta\VV_{\mathrm{fillV}}$  
also satisfy the related properties in Table \ref{table:deltaspaces}.
Since  $\VVV \times \WWW = \bppol_k\times \ppol_k$, 
\red{we have $\divv \VVV = \ppol_{k-1}$.}
Hence we have \[I_S(\VVV\times \WWW) = 2(k+1).\]
It is then elementary to prove that the choice of $\delta\VV_{\mathrm{fillV}}$ satisfies
the properties in Table \ref{table:deltaspaces}.
 This completes the proof of Theorem \ref{thm:polygon}.

\section{Numerical results}
\label{sec:n}
In this section, we present numerical results validating 
{the theory in the case of triangular elements}. 
For simplicity, the material is chosen to be isotropic \eqref{isotropic}.
% , that is,
% \begin{align*}
% %\label{tensor-e}
%  \mathcal{A}\und\sigma = \frac{1}{2\mu}\left( \und\sigma-
%  \frac{\lambda}{2\mu+2\lambda}\mathrm{tr}(\und\sigma)\und I \right),
% \end{align*}
% where $\und I$ is the second--order identity tensor. 
Recall that  the Lam\'e modules $\lambda$ and $\mu$ have the following form in terms of Young's modules 
$E$ and Possion's ratio $\nu$:
\begin{align*}
 \lambda = \frac{E\nu}{(1+\nu)(1-2\nu)}, \quad\mu = \frac{E}{2(1+\nu)}.
\end{align*}

%We present the results of two HDG methods. The first, proposed in \cite{SoonCockburnStolarski09},
%and recently analyzed in \cite{FuCockburnStolarski14}, uses the local spaces
%\begin{align*}
 % \und\Sigma(K) =  \upol{k}{K},&\;  \bld V(K) =  \bpol{k}{K},\\
%\bld M(\dK) =\bpol{k}{\dK},&\; \bld V^*(K) =  \bpol{k+1}{K}.
%\end{align*}

%The second one, obtained here, uses the local spaces 
%\begin{align*}
 % \und\Sigma(K) =  \upol{k,rb}{K},&\;  \bld V(K) =  \bpol{k}{K},\\
%\bld M(\dK) =\bpol{k}{\dK},&\; \bld V^*(K) =  \bpol{k+1}{K}.
%\end{align*}
%We denoted it by HDG$_{new}$. 

For comparison, we also present the numerical results with 
the HDG method in \cite{SoonCockburnStolarski09,FuCockburnStolarski14}. The method in 
\cite{SoonCockburnStolarski09}, see also \cite{FuCockburnStolarski14},
uses the following local spaces:
\[
  \VV(K) \times \W(K)\times \bld M(\dK)= 
 \pcol_k(K;\mathbb{S})\times \ppol_k(K)\times \ppol_k(\dK).
\]
This space $\VV(K) \times \W(K)$ does {\em not} admit an $\bld M(\dK)$-decomposition.
We denote this method by HDG$_k$.

Our method on triangles {enriches} the local stress space on each element  with 
a rational function space $\delta\und\Sigma_{\mathrm{fillM}}$  
that has dimension $2$ if $k=1$,
and dimension $3$ if $k\ge 2${\color{red}; see 
the discussion following Theorem \ref{thm:polygon}.}
We denote this method by HDG$_k$--M.

For the postprocessing $\bld u_h^*$,
we take 
$\bld V^*(K) :=\bpol{k+1}{K}$ and $\widetilde{\bld V}^*(K) :=\bpol{0}{K}$:
\begin{alignat*}{2}
\left(\gradv \bld u_h^{*},\gradv \bld  w\right)_K=& 
-\left(\bld u_h,\triangle \,\bld  w \right)_K
+\bintK{\widehat{\bld u}_h}{\gradv\bld w\,\n}
&&\quad\forall\;\bld w %\in{\bld V}^*(K),
\in{{\bld V}}^{*}(K),
\\
(\bld u_h^{*},\bld r)_K=&\;(\bld u_h,\bld r)_K && \quad \forall \; 
\bld r\in \bpol{0}{K}.
\end{alignat*}

We present the same two test problems considered in \cite{FuCockburnStolarski14}. 
The first test problem is obtained by taking  $E = 1$, $\nu = 0.3$, and choosing  data so 
that the exact solution for the displacement is  $u_1(x,y) = 10\,(y-y^2)sin(\pi \, x)(1-x)(1-\tfrac{y}{2})$ and $u_2(x,y) = 0$ on the domain $\Omega$. 
The second test problem is  obtained by taking $E = 3$, and choosing data so 
that the exact solution is $u_1(x,y) = -x^2 (x-1)^2 y (y-1)(2y-1)$ and $u_2(x,y) = -u_1(y,x)$. 
For the second problem, we also vary the Poisson ratio $\nu$ from $0.3$ to $0.499999$ to show 
that the methods are free of 
volumetric locking.

We  carry out our experiments on uniform triangular meshes obtained
by discretizing the domain $\Omega = (0,1) \times (0,1)$ with triangles of side
$2^{-l}$ as depicted in Fig. \ref{fig:meshes}. And we fix the polynomial degree to be either $k=1$ or $k=2$.

\begin{figure}[ht!]
\centering
\includegraphics[scale=.30]{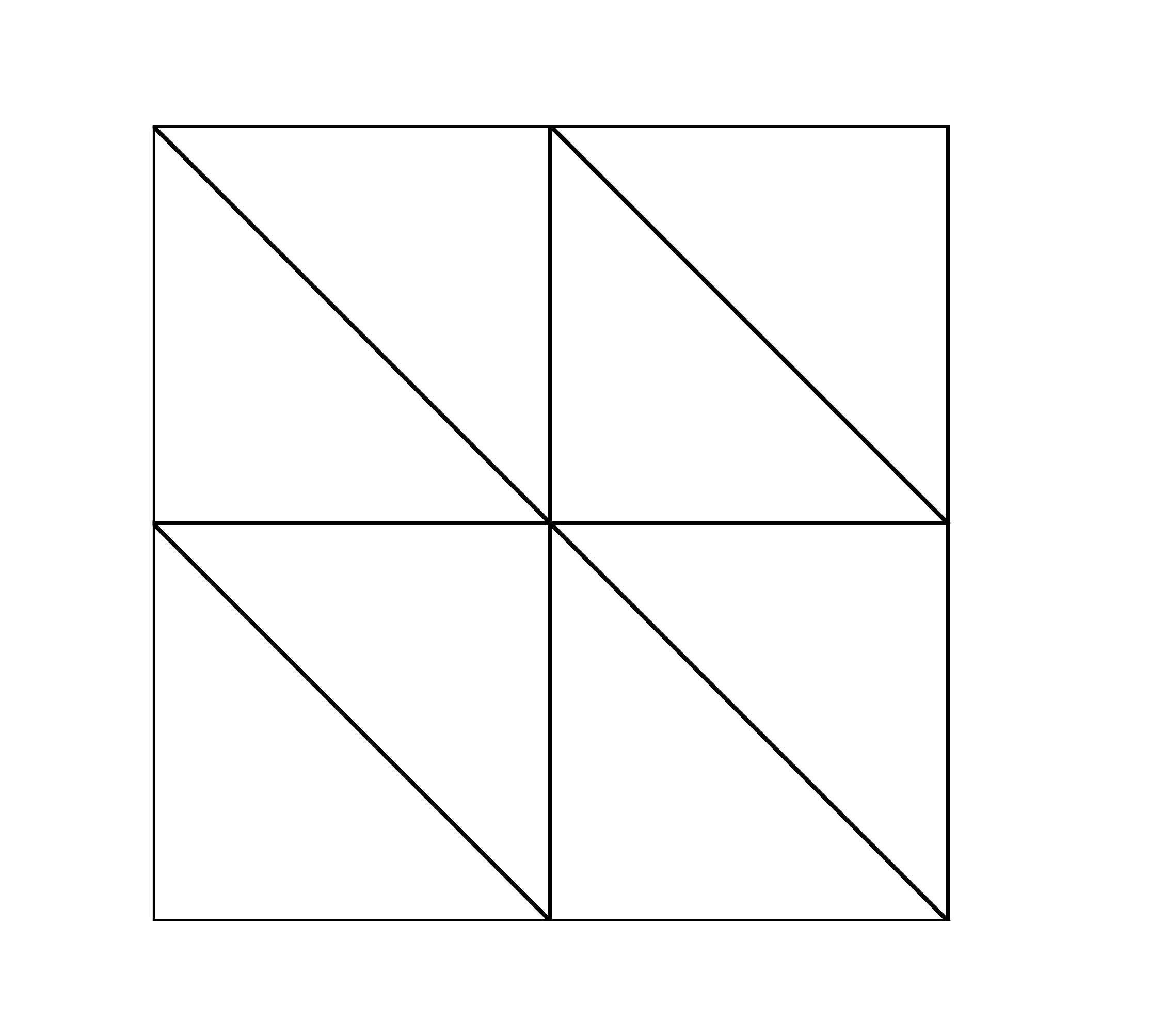}
\caption{Example of meshes with $h$ = $2^{-1}$.}\label{Fi:mesh}
\label{fig:meshes}
\end{figure}

For both methods, we choose the stabilization function $\alpha = Id$.

The history of convergence for the first test is displayed in Table \ref{table:pstress1}, 
and the one for the second test in Table \ref{table:pstress2}. 
The orders of convergence of the HDG$_k$--M method match the
theory developed in Section \ref{sec:error} very well. 
In particular, we get the optimal orders of convergence in the $L^2$-error for 
$\bld u_h, \und\sigma_h$ and $\bld u^*_h$, that is,  $k+1, k+1$ and  $k+2$, respectively.
We also see clearly the superior performance of
HDG$_k$--M over HDG$_k$ for the stress error $\|\und\sigma-\und\sigma_h\|_\Oh$ as well as for 
the postprocessed displacement error $\|\bld u-\bld u_h^*\|_\Oh$. 
Finally, note that, since the global equation for 
both methods have {\em exactly} the same dimension and sparsity structure, 
the HDG methods whose spaces admit \mm-decompositions perform significantly better.

% ------- set spacing ------------
 \def\baselinestretch{1}

\begin{table}[ht]
\caption{History of convergence for the first test.} % title of Table
\centering % used for centering table
\resizebox{\textwidth}{!}
{%
\begin{tabular}{|c|c|c c|c c|c c||c c|c c|c c|}
\hline 
          &  mesh 
          & \multicolumn{2}{c|}{$\Arrowvert \bld{u} - \bld{u}_h \Arrowvert_{\Oh}$} 
	  & \multicolumn{2}{c|}{$\Arrowvert \und{{\sigma}} - \und{{\sigma}}_h \Arrowvert_{\Oh}$}
          & \multicolumn{2}{c||}{$\Arrowvert \bld{u} - \bld{u}_{h}^* \Arrowvert_{\Oh}$}
           & \multicolumn{2}{c|}{$\Arrowvert \bld{u} - \bld{u}_h \Arrowvert_{\Oh}$} 
	  & \multicolumn{2}{c|}{$\Arrowvert \und{{\sigma}} - \und{{\sigma}}_h \Arrowvert_{\Oh}$}
          & \multicolumn{2}{c|}{$\Arrowvert \bld{u} - \bld{u}_{h}^* \Arrowvert_{\Oh}$}\tabularnewline
$k$   & $l$  & error & order & error & order & error & order& error & order & error & order & error & order\tabularnewline
\hline
\multicolumn{2}{|c|}{}&
\multicolumn{6}{c||}{HDG$_k$}&\multicolumn{6}{c|}{HDG$_k$--M} \\
\hline 
   & 3 & 2.10E-2 & -        & 6.00E-2 & -       & 4.25E-3 & -      & 2.06E-2 & -        & 5.35E-2 & -       & 1.89E-3 & -     \tabularnewline 
   & 4 & 5.30E-3 & 1.99  & 1.59E-2 & 1.91 & 1.20E-3 & 1.83& 5.21E-3 & 1.98  & 1.40E-2 & 1.94 & 3.48E-4 & 2.44\tabularnewline 
1 & 5 & 1.33E-3 & 2.00  & 4.22E-3 & 1.91 & 3.27E-4 & 1.88& 1.31E-3 & 1.99  & 3.61E-3 & 1.95 & 5.49E-5 & 2.67\tabularnewline 
   & 6 & 3.32E-4 & 2.00  & 1.13E-3 & 1.90 & 8.68E-5 & 1.91& 3.29E-4 & 1.99  & 9.20E-4 & 1.97 & 7.73E-6 & 2.83\tabularnewline 
   & 7 & 8.31E-5 & 2.00  & 3.07E-4 & 1.88 & 2.26E-5 & 1.94& 8.26E-5 & 2.00  & 2.32E-4 & 1.98 & 1.03E-6 & 2.91\tabularnewline 
 \hline 
   & 3 & 1.25E-3 & -     & 3.65E-3 & -    & 1.59E-4 & -   & 1.25E-3 & -     & 3.30E-3 & -    & 6.22E-5 & -      \tabularnewline 
   & 4 & 1.57E-4 & 2.99  & 4.71E-4 & 2.95 & 2.30E-5 & 2.79& 1.58E-4 & 2.98  & 4.17E-4 & 2.99 & 4.52E-6 & 3.78\tabularnewline 
2 & 5 & 1.97E-5 & 3.00   & 6.06E-5 & 2.96 & 3.19E-6 & 2.85& 1.99E-5 & 2.99  & 5.24E-5 & 2.99 & 3.07E-7 & 3.88\tabularnewline 
   & 6 & 2.46E-6 & 3.00  & 7.82E-6 & 2.95 & 4.25E-7 & 2.91& 2.49E-6 & 3.00  & 6.57E-6 & 3.00 & 2.01E-8 & 3.94\tabularnewline 
   & 7 & 3.08E-7 & 3.00  & 1.02E-6 & 2.94 & 5.53E-8 & 2.94& 3.12E-7 & 3.00  & 8.22E-7 & 3.00 & 1.28E-9 & 3.97\tabularnewline 
\hline
\end{tabular}
}
\label{table:pstress1} % is used to refer this table in the text
\end{table}

% ------- set spacing ------------
 \def\baselinestretch{1}

\begin{table}[ht]
\caption{History of convergence for the second test.} % title of Table
\centering % used for centering table
\resizebox{\textwidth}{!}
{
\begin{tabular}{|c|c|c c|c c|c c||c c|c c|c c|}
\hline 
          &  mesh 
          & \multicolumn{2}{c|}{$\Arrowvert \bld{u} - \bld{u}_h \Arrowvert_{\Oh}$} 
	  & \multicolumn{2}{c|}{$\Arrowvert \und{{\sigma}} - \und{{\sigma}}_h \Arrowvert_{\Oh}$}
          & \multicolumn{2}{c||}{$\Arrowvert \bld{u} - \bld{u}_{h}^* \Arrowvert_{\Oh}$}
           & \multicolumn{2}{c|}{$\Arrowvert \bld{u} - \bld{u}_h \Arrowvert_{\Oh}$} 
	  & \multicolumn{2}{c|}{$\Arrowvert \und{{\sigma}} - \und{{\sigma}}_h \Arrowvert_{\Oh}$}
          & \multicolumn{2}{c|}{$\Arrowvert \bld{u} - \bld{u}_{h}^* \Arrowvert_{\Oh}$}\tabularnewline
$k$   & $l$  & error & order & error & order & error & order& error & order & error & order & error & order\tabularnewline
\hline
\multicolumn{2}{|c|}{}&
\multicolumn{6}{c||}{HDG$_k$, $\nu = 0.3$}&\multicolumn{6}{c|}{HDG$_k$--M, $\nu = 0.3$} \\
\hline 
   & 3 & 5.23E-4 & -        & 3.62E-3 & -       & 1.32E-4 & -      & 4.81E-4 & -        & 1.98E-3 & -       & 2.53E-5 & -     \tabularnewline 
   & 4 & 1.39E-4 & 1.91  & 1.28E-3 & 1.51 & 4.46E-5 & 1.57& 1.22E-4 & 1.98  & 5.31E-4 & 1.90 & 4.66E-6 & 2.44\tabularnewline 
1 & 5 & 3.66E-5 & 1.93  & 4.49E-4 & 1.51 & 1.47E-5 & 1.60& 3.06E-5 & 2.00  & 1.39E-4 & 1.94 & 7.82E-7 & 2.57\tabularnewline 
   & 6 & 9.58E-6 & 1.93  & 1.56E-4 & 1.53 & 4.62E-6 & 1.67& 7.66E-6 & 2.00  & 3.57E-5 & 1.96 & 1.16E-7 & 2.75\tabularnewline 
   & 7 & 2.49E-6 & 1.94  & 5.27E-5 & 1.57 & 1.39E-6 & 1.73& 1.91E-6 & 2.00  & 9.06E-6 & 1.98 & 1.59E-8 & 2.87\tabularnewline 
 \hline 
   & 3 & 3.46E-5 & -     & 3.33E-4 & -    & 8.84E-6 & -   & 3.38E-5 & -     & 2.02E-4 & -    & 1.77E-6  & -      \tabularnewline 
   & 4 & 4.58E-6 & 2.92  & 5.01E-5 & 2.72 & 1.47E-6 & 2.59& 4.37E-6 & 2.95  & 2.68E-5 & 2.91 & 1.43E-7  & 3.63\tabularnewline 
2 & 5 & 5.94E-7 & 2.95   & 7.63E-6 & 2.72 & 2.36E-7 & 2.64& 5.51E-7 & 2.99  & 3.42E-6 & 2.97 & 1.02E-8  & 3.81\tabularnewline 
   & 6 & 7.64E-8 & 2.96  & 1.17E-6 & 2.70 & 3.54E-8 & 2.74& 6.92E-8 & 2.99  & 4.31E-7 & 2.99 & 6.82E-10 & 3.90\tabularnewline 
   & 7 & 9.76E-9 & 2.97  & 1.80E-7 & 2.71 & 5.02E-9 & 2.82& 8.66E-9 & 3.00  & 5.40E-8 & 3.00 & 4.41E-11 & 3.95\tabularnewline 
\hline
\multicolumn{2}{|c|}{}&
\multicolumn{6}{c||}{HDG$_k$, $\nu = 0.499$}&\multicolumn{6}{c|}{HDG$_k$--M, $\nu = 0.499$} \\
\hline 
   & 3 & 4.49E-4 & -        & 3.30E-3 & -       & 1.01E-4 & -      & 4.14E-4 & -        & 2.75E-3 & -        & 2.73E-5 & -     \tabularnewline 
   & 4 & 1.20E-4 & 1.91  & 1.18E-3 & 1.49 & 3.51E-5 & 1.52& 1.06E-4 & 1.97  & 5.53E-4 & 2.31 & 2.59E-6 & 3.40\tabularnewline 
1 & 5 & 3.15E-5 & 1.92  & 4.21E-4 & 1.48 & 1.18E-5 & 1.57& 2.67E-5 & 1.99  & 1.15E-4 & 2.27 & 2.64E-7 & 3.29\tabularnewline 
   & 6 & 8.25E-6 & 1.93  & 1.48E-4 & 1.50 & 3.77E-6 & 1.64& 6.69E-6 & 2.00  & 2.79E-5 & 2.04 & 3.31E-7 & 3.00\tabularnewline 
   & 7 & 2.15E-6 & 1.94  & 5.08E-5 & 1.55 & 1.15E-6 & 1.71& 1.68E-6 & 2.00  & 7.30E-6 & 1.93 & 4.66E-9 & 2.83\tabularnewline 
 \hline 
   & 3 & 3.02E-5 & -        & 2.98E-4 & - & 7.03E-6 & -   & 2.96E-5 & -     & 3.39E-4 & -    & 1.63E-6  & -      \tabularnewline 
   & 4 & 3.99E-6 & 2.92  & 4.54E-5 & 2.71 & 1.19E-6 & 2.57& 3.83E-6 & 2.95  & 3.70E-5 & 3.20 & 9.52E-8  & 4.10\tabularnewline 
2 & 5 & 5.18E-7 & 2.95  & 7.08E-6 & 2.68 & 1.94E-7 & 2.62 & 4.85E-7 & 2.98  & 4.02E-6 & 3.20 & 5.77E-9  & 4.04\tabularnewline 
   & 6 & 6.66E-8 & 2.96  & 1.11E-6 & 2.67 & 2.95E-8 & 2.72& 6.08E-8 & 2.99  & 4.53E-7 & 3.15 & 3.56E-10 & 4.02\tabularnewline 
   & 7 & 8.52E-9 & 2.97  & 1.74E-7 & 2.68 & 4.23E-9 & 2.80& 7.62E-9 & 3.00  & 5.31E-8 & 3.09 & 2.21E-11 & 4.01\tabularnewline 
\hline
\multicolumn{2}{|c|}{}&
\multicolumn{6}{c||}{HDG$_k$, $\nu = 0.49999$}&\multicolumn{6}{c|}{HDG$_k$--M, $\nu = 0.49999$} \\
\hline 
   & 3 & 4.49E-4 & -     & 3.30E-3 & -    & 1.01E-4 & -      & 4.13E-4 & -        & 3.62E-3 & -       & 3.44E-5 & -     \tabularnewline 
   & 4 & 1.19E-4 & 1.91  & 1.18E-3 & 1.49 & 3.50E-5 & 1.52& 1.06E-4 & 1.97  & 9.32E-4 & 1.96 & 4.07E-6 & 3.08\tabularnewline 
1 & 5 & 3.15E-5 & 1.92  & 4.21E-4 & 1.48 & 1.18E-5 & 1.57& 2.66E-5 & 1.99  & 2.28E-4 & 2.03 & 4.81E-7 & 3.08\tabularnewline 
   & 6 & 8.25E-6 & 1.93  & 1.48E-4 & 1.50 & 3.77E-6 & 1.64& 6.68E-6 & 2.00  & 5.14E-5 & 2.15 & 5.37E-8 & 3.16\tabularnewline 
   & 7 & 2.15E-6 & 1.94  & 5.08E-5 & 1.55 & 1.15E-6 & 1.71& 1.67E-6 & 2.00  & 1.01E-5 & 2.34 & 5.35E-9 & 3.33\tabularnewline 
 \hline 
   & 3 & 3.02E-5 & -     & 2.98E-4 & -    & 7.02E-6 & -   & 2.95E-5 & -     & 3.65E-4 & -    & 1.72E-6  & -      \tabularnewline 
   & 4 & 3.99E-6 & 2.92  & 4.54E-5 & 2.71 & 1.19E-6 & 2.57& 3.83E-6 & 2.95  & 3.90E-5 & 3.22 & 9.80E-8  & 4.13\tabularnewline 
2 & 5 & 5.17E-7 & 2.95  & 7.08E-6 & 2.68  & 1.93E-7 & 2.62& 4.84E-7 & 2.98  & 4.18E-6 & 3.22 & 5.85E-9  & 4.07\tabularnewline 
   & 6 & 6.66E-8 & 2.96  & 1.11E-6 & 2.67 & 2.95E-8 & 2.71& 6.08E-8 & 2.99  & 4.65E-7 & 3.17 & 3.57E-10 & 4.03\tabularnewline 
   & 7 & 8.51E-9 & 2.97  & 1.74E-7 & 2.68 & 4.23E-9 & 2.80& 7.61E-9 & 3.00  & 5.39E-8 & 3.11 & 2.32E-11 & 3.95\tabularnewline 
\hline
\end{tabular}
}
\label{table:pstress2} % is used to refer this table in the text
\end{table}

% \gfu{Let us end by briefly commenting on the fact that when the solution is smooth but has a singularity. Let us consider the better known case of the Laplacian. For the well known L-shaped domain test case, the solution is a harmonic function which belongs to $H^{5/3-\epsilon}$ for all $\epsilon>0$ and whose gradient, the equivalent of the stress, belongs to $H^{2/3-\epsilon}$ for all $\epsilon>0$. Numerical experiments show that, if uniform meshes are used, the pollution effect makes the error in the approximation $u^*_h$ to be very close to that in the original approximation $u_h$, as expected. However, when the pollution effect is eliminated by using properly refined meshes, the order of convergence for $u^*_h$ is $N^{-(k+2)/2}$ and for $u_h$ of order $N^{-(k+1)/2}$, whenever $k>0$, where $N$ denotes the number of elements. We expect similar results to hold in our case.}
% 

\section{Concluding remarks}
\label{sec:c}

We extended the use of $M$-decomposition for the devising of new superconvergent methods 
for the pure diffusion problems \cite{CockburnFuSayas16} to 
the linear elasticity \red{with symmetric approximate stresses}. 
It provides a {\em simple a priori error analysis} of HDG methods 
for linear elasticity with strong symmetry
and gives us guidelines for the devising of new superconvergent methods. 

We applied the concept of an \mm-decomposition to construct new HDG and (hybridized) 
mixed methods 
with symmetric approximate stresses for linear elasticity in two-space dimensions. 
Numerical results on triangular meshes confirm the theoretical 
convergence properties.

Let us emphasize the fact that it is not necessary to use the compliance matrix
for formulate the methods. The same results obtained here do hold for methods
formulated in terms of the standard constitutive tensor $\mathcal{A}^{-1}$, like the one in 
\cite{SoonThesis08,SoonCockburnStolarski09,FuCockburnStolarski14}, for example.

The practical construction of \mm-decompositions in the three-dimensional 
case constitutes the subject of ongoing work.

\section*{Appendix \gfu{A}: Proofs of the error estimates in Section \ref{sec:error}}
% \section*{Appendix \gfu{A}: Proofs of error estimates in Section \ref{sec:error}}
In this Appendix, we provide proofs of our a priori error estimates in Section \ref{sec:error}, \gfu{namely, Theorem \ref{L2error-q}--\ref{L2error-u*}}. 
The main idea is to work with the following projection of the errors:
\begin{alignat*}{2}
\es :=&\; \Pis \und{\sigma} - \und{\sigma}_h,
&&\hspace{.45cm}
\eu :=\; \Piv \bld u -\bld u_h,
\\
\euhat :=&\; \Pim \bld u - \uhat,
&&\hspace{.2cm}
\eshat \n :=\; \es\n - \alpha(\eu-\euhat).
\end{alignat*}
We also  use $\eos:= \und\sigma-\und\sigma_h$ to simplify notation.
%We also introduce the following notation to shorter the presentation,
%\begin{alignat*}{1}
%\ds :=&\; \Pis \und{\sigma} - \und{\sigma},
%\\
%\du :=&\; \Piv \bld u -\bld u,
%\\
%\duhat :=&\; \Pim \bld u - \bld u,
%\\
%\dshat\bld  \n :=&\; \ds\bld n - S(\du-\duhat).
%\end{alignat*}
%Here, $\Pim$ is the $L^2$-projection from $\bld L^2(\Eh)$ into $\bld M_h$. We {abuse the
%notation for the sake of simplicity and} denote
%with the {\em same} symbol the  $L^2$-projection from $\bld L^2(\partial\Oh)$ into 
%the space 
%$$\{\bld w\in \bld L^2(\partial\Oh): (\bld w|_{\partial K})|_F\in \bld M(F)\;\mbox{ for all edges $F$ of $K$ and all $K\in\Oh$}\}.$$

We begin by obtaining the equations satisfied by these projections.
{We then }use an energy argument to obtain an estimate of $\es$;
this would prove first part of Theorem \ref{L2error-q}. 
We prove the second part of Theorem \ref{L2error-q} following the idea in 
\cite[Appendix A.1]{FuCockburnStolarski14} for treating the incompressible limit.
Then, we prove the local error estimates in Theorem \ref{thm:stability} 
for the piecewise divergence $\divv\es$,
the piecewise symmetric gradient $\und\epsilon (\eu)$, and the jump term $\eu-\euhat$, 
using an adjoint HDG-projection similar to \cite{CockburnFuSayas16}.
{Next, we obtain an estimate of $\eu$ 
with an elliptic duality.}  After that, we 
obtain the estimate  for the displacement postprocessing in
Theorem \ref{L2error-u*}. 

\subsection*{Step 1: The equations for  the projection of the errors} 
We begin our error analysis with the following auxiliary result.
% , whose proof follows from the consistency 
% of the HDG methods and the definition of the projection $\Pi_h$.
% We omit the proof for simplicity.
\begin{lemma}\label{error-equations}
Suppose that for every $K\in \Oh$, the space $\VV(K)\times \W(K)$
admits an $\bld M(\dK)$-decomposition and that the stabilization function $\alpha$ satisfies 
$a_{\Wtilde^\perp}>0$.
Then, we have
\begin{subequations}
 \begin{alignat}{3}
   \label{error-equation-1}
(\mathcal{A}\es,\und\tau)_\Oh + (\eu, \divv\und\tau)_\Oh-\bint{\euhat}{\und\tau\n}
 &\;=- \left(\mathcal{A}(\und\sigma -\Pis\und\sigma),\und\tau\right)_\Oh\\
%\left(\mathcal{A}(\Pis \und\sigma-\und\sigma),\und\tau_h\right)_\Oh, \\
   \label{error-equation-2}
(\es,\gradv \w)_\Oh - \bint{\eshat\bld n}{\w} &\;=0, \\
   \label{error-equation-3}
\langle{\eshat\bld n},{\bld \mu}\rangle_{\partial\Oh\setminus\partial\Omega} &\;= 0,\\
   \label{error-equation-4}
 \langle \euhat,{\bld \mu}\rangle_{\partial\Omega} &\;=  0,
 \end{alignat}
\end{subequations}
for all $(\und \tau, \w,\bld \mu) \in \und \Sigma_h \times \bld V_h \times \bld M_h$.
%Moreover, 
%\begin{equation}
%\label{ehatq}
%\ehatq\cdot\boldsymbol{n}=\boldsymbol{e}_q\cdot\boldsymbol{n}
%+ P_M (\red{\alpha(e_u-\ehatu)})
%\quad \mbox{ on } \quad \partial\Oh.
%\end{equation}
\end{lemma}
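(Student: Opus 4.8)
The plan is to obtain the four error equations by combining the \emph{consistency} of the scheme with the defining orthogonality relations \eqref{proj} of the HDG-projection, which is well-defined under the stated hypotheses by Theorem \ref{actual-proj}. First I would establish consistency: the exact solution $(\und\sigma,\bld u,\bld u|_{\Eh})$ satisfies \eqref{weak formulation-1}--\eqref{weak formulation-4} verbatim once $\uhat$ is replaced by $\bld u|_{\dOh}$ and $\shatn$ by $\und\sigma\,\n$. This follows by testing \eqref{original equation-1}--\eqref{boundary condition} against $(\und\tau,\w,\bld\mu)\in\Sh\times\Vh\times\Mh$ and integrating by parts element by element, using the symmetry of $\und\tau$ to turn $\gradv\bld u$ into $\und\epsilon(\bld u)$, the single-valuedness of the normal trace $\und\sigma\,\n$ across interior faces (since $\und\sigma\in\Hdiv{\Omega}$), and $\bld u=\bld g$ on $\dO$. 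Subtracting each discrete equation from its consistency counterpart gives equations for the genuine errors $\und\sigma-\und\sigma_h$, $\bld u-\bld u_h$ and $\bld u-\uhat$; the remaining task is to split each error as, e.g., $\und\sigma-\und\sigma_h=(\und\sigma-\Pis\und\sigma)+\es$ and to verify that the projection-error contributions either cancel or collapse into the single right-hand side of \eqref{error-equation-1}.

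I would then treat the equations one at a time. In \eqref{error-equation-1}, the term $(\bld u-\Piv\bld u,\divv\und\tau)_\Oh$ vanishes because $\divv\und\tau\in\divv\VV\subset\Wtilde$ and $\Piv$ reproduces $\bld u$ against $\Wtilde$ by \eqref{proj-2}, while $\bint{\bld u-\Pim\bld u}{\und\tau\n}$ vanishes because $\und\tau\n|_{\dK}\in\gamma(\VV)\subset\bld M$ and $\Pim$ is the $L^2$-projection onto $\bld M$; what survives is precisely $-(\mathcal{A}(\und\sigma-\Pis\und\sigma),\und\tau)_\Oh$. In \eqref{error-equation-2}, I would rewrite the volume term $(\und\sigma-\Pis\und\sigma,\gradv\w)_\Oh$ as $(\und\sigma-\Pis\und\sigma,\und\epsilon(\w))_\Oh$ using the symmetry of $\und\sigma-\Pis\und\sigma$; this vanishes since $\und\epsilon(\w)\in\und\epsilon(\W)\subset\Vtilde$ and $\Pis$ reproduces $\und\sigma$ against $\Vtilde$ by \eqref{proj-1}. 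Equation \eqref{error-equation-4} follows at once from the $L^2$-orthogonality of $\bld u-\Pim\bld u$ to $\bld M$ on $\dO$.

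The main obstacle is the careful bookkeeping of the numerical-trace terms in \eqref{error-equation-2} and \eqref{error-equation-3}. The key identity is
\[
(\und\sigma\,\n-\shatn)-\eshat\n=(\und\sigma-\Pis\und\sigma)\n+\alpha(\Piv\bld u-\Pim\bld u)\quad\text{on }\dOh,
\]
which I would verify by substituting $\shatn=\und\sigma_h\n-\alpha(\bld u_h-\uhat)$, $\eshat\n=\es\n-\alpha(\eu-\euhat)$, and the definitions of $\es,\eu,\euhat$, and then collecting terms. By property \eqref{proj-3} the right-hand side is $L^2(\dK)$-orthogonal to $\bld M$, so testing against $\w|_{\dK}\in\gamma(\W)\subset\bld M$ replaces $\und\sigma\,\n-\shatn$ by $\eshat\n$ in \eqref{error-equation-2}, and testing against $\bld\mu\in\bld M_h$ does the same on the interior faces for \eqref{error-equation-3}. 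The delicate points are applying \eqref{proj-3} face by face so that the restriction to $\dOh\setminus\dO$ in \eqref{error-equation-3} is legitimate, and preserving the symmetry argument needed to invoke \eqref{proj-1}; both rely on the inclusions in Definition \ref{def:m} that $\VV\times\W$ is assumed to satisfy.
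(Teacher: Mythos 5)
Your proposal is correct and takes essentially the same route as the paper's own proof: both derive the error equations from the consistency of the exact solution with the scheme \eqref{weak formulation} together with the defining relations \eqref{proj-1}--\eqref{proj-3} of the HDG-projection, your trace identity being precisely the rearrangement of \eqref{proj-3} that the paper uses when it replaces $\bint{\Pis\und\sigma\,\n - \alpha(\Piv\bld u - \Pim\bld u)}{\w}$ by $\bint{\und\sigma\,\n}{\w}$ in its verification of \eqref{error-equation-2}. The only cosmetic difference is that the paper substitutes the projection definitions directly into the left-hand sides and then invokes the discrete equations, whereas you first subtract the consistency equations and then split the errors --- an equivalent bookkeeping.
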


\gfu{
\begin{proof}
The proof follows directly from the consistency of the HDG method \eqref{weak formulation} and the definition of the 
HDG-projection \eqref{proj}. For example, to prove the second equation
\eqref{error-equation-2}, we proceed as follows. We have that
\begin{alignat*}{2}
 (\es,\gradv \w)_\Oh - \bint{\eshat\bld n}{\w} = &\; (\Pis\und\sigma, \gradv \w)_\Oh - \bint{\Pis\und\sigma\,\bld n
 - \alpha(\Piv \bld u-\Pim \bld u)}{\w}\\
 &\;-  (\und\sigma_h,\gradv \w)_\Oh + \bint{\widehat{\und\sigma}_h \n}{\w}\\
 =&  (\und\sigma, \gradv \w)_\Oh - \bint{\und\sigma\,\bld n}{\w} \;-  (\und\sigma_h,\gradv \w)_\Oh + \bint{\widehat{\und\sigma}_h \n}{\w},
 \end{alignat*}
by equations \eqref{proj-1} and \eqref{proj-3}. Finally, by equation \eqref{weak formulation-2},
\begin{alignat*}{2}
 (\es,\gradv \w)_\Oh - \bint{\eshat\bld n}{\w} =&  (\und\sigma, \gradv \w)_\Oh - \bint{\und\sigma\,\bld n}{\w} 
 - (\bld f, \bld v)_\Oh \\
 = &\; (-\divv \und\sigma -\bld f,\bld v)_\Oh = 0.
\end{alignat*}
The other equations can be proven in a similar way.
\end{proof}
}

\subsection*{Step 2: The proof of Theorem \ref{L2error-q}}
We begin with an energy argument to prove the first result \eqref{sigma-e1} in Theorem \ref{L2error-q}.
% We are now ready to obtain the upper bound of the $L^2$-norm of $\es$.
We proceed as follows. Taking $\und{\tau}:= \es$  in the error equation 
\eqref{error-equation-1}, 
$\w:=\eu$ in the error equation
\eqref{error-equation-2},
$\bld \mu:=\euhat$ 
in the error equation
\eqref{error-equation-3},
and 
$\bld \mu:=\eshat \bld n$
in the error equation \eqref{error-equation-4},
and adding the resulting equations up, we obtain
\[
\left(\mathcal{A}\es,{\es}\right)_\Oh + \Theta_h
= - \left(\mathcal{A}(\und\sigma- \Pis\und\sigma),{\es}\right)_\Oh 
\]
where
\begin{alignat*}{1}
\Theta_h:=&\; (\eu, \divv\es)_\Oh-\bint{\euhat}{\es\n}
+(\es,\gradv \eu) - \bint{\eshat\bld n}{\eu}
+\langle{\eshat\bld n},{\euhat}\rangle_{\partial\Oh}\\
=&\; \langle{\es\n-\eshat\bld n},{\eu-\euhat}\rangle_{\partial\Oh}\\
=&\; \langle{\alpha(\eu-\euhat)},{\eu-\euhat}\rangle_{\partial\Oh}.
\end{alignat*}
%Since $\Theta_h\ge0$, by the semi-positivity property  of the local stabilization
%{operator} $S$, 
So we have that
\[
\|{\es}\|^2_{\mathcal{A},\Oh}+\|\eu-\euhat\|_{\alpha,\dOh}^2 \le
-\left(\mathcal{A}(\und\sigma-\Pis\und\sigma),{\es}\right)_\Oh
\le\|\und\sigma-\Pis\und\sigma\|_{\mathcal{A},\Oh} \|\es\|_{\mathcal{A},\Oh},
\]
and the result follows. This completes the proof of the first result 
\eqref{sigma-e1} of Theorem \ref{L2error-q}. 

Now, let us prove the second result \eqref{sigma-e2}. 
We define the deviatoric part of a tensor by 
$\und\tau^D := \und\tau-\frac{1}{n}\mathrm{tr}(\und\tau)\und I$. Hence, we have 
\[
 (\mathcal{A}\und\sigma,\und\tau)_\Oh = \frac{1}{2\mu}\left(\und\sigma^D,\und\tau^D\right)_\Oh+\frac{1}{n(2\mu+n\lambda)}\left(
\mathrm{tr}(\und\sigma),\mathrm{tr}(\und\tau)\right)_\Oh.
\]
Then, the first result \eqref{sigma-e1}
implies that
\begin{align}
\label{e-div}
\|\esd\|_\Oh \le 2\mu\|\und\sigma-\Pis\und\sigma\|_{\mathcal{A},\Oh}\le \|\und\sigma-\Pis\und\sigma\|_\Oh.                      
\end{align}
Let $\ep := \frac{1}{n}\mathrm{tr}(\es)$, then $\es = \esd+\ep\und I$.
In order to prove \eqref{sigma-e2}, we are left to bound the $L^2$-norm of $\ep$.
Now, taking $\und\tau$ to be the identity tensor in \eqref{error-equation-1} 
and by the fact that $\euhat = \bld 0$ on $\dO$, we obtain
\[n(\ep, 1)_\Oh = ( \mathrm{tr}(\es), 1)_\Oh = -(\mathrm{tr}(\und\sigma-\Pis\und\sigma),1)_\Oh = -(\und\sigma-\Pis\und\sigma,\und I)_\Oh = 0.\] 
% Here we assume $\upol{0}{K}\subset \widetilde\VV$ in the definition
% of \mm-decompositions.  
Hence $(\ep, 1)_\Oh = 0$.
It is well known \cite{Temam79} that for any function $q\in L^2(\Omega)$ such that $(q,1)_\Omega = 0$ we have 
\[
 \|q\|_\Omega \le \theta\sup_{\bld 0\not=\bld w\in \bld H_0^1(\Omega)}\frac{(q,\divs \bld w)}{\|\bld w\|_{1,\Omega}},
\]
for some constant $\theta$ independent of $q$. Now, we take $q := \ep$ and work with the numerator in the above expression.
We have
\begin{alignat*}{2}
 (\ep,\divs\bld  w)_\Omega =&\; -(\grads\ep,\bld  w)_\Oh + \bint{\ep\n}{\bld  w} = T_1+T_2,
\end{alignat*}
where 
\begin{align*}
 T_1=&\; -(\grads \ep,\bld w-\bld P_V \bld  w)_\Oh,\\
 T_2=&\; -(\grads \ep,\bld P_V \bld  w)_\Oh + \bint{\ep\n}{\bld  w}.
\end{align*}
%and $\bld P_V$ is the $L^2$-projection onto $\bld V_h$.
Let us bound the above terms individually. We have
\begin{alignat*}{2}
% =&\; -(\grads\ep,\bld P_V\bld  w)_\Oh + \bint{\ep\n}{\bld P_M\bld  w}- \bint{\ep\n}{\bld P_M \bld  w -\bld  w}\\
T_1 =&\; -(\grads \ep,\bld w-\bld P_V \bld  w)_\Oh\\
=&\; -(\divv \es-\divv\esd,\bld w-\bld P_V\bld  w)_\Oh\\
= &\;(\divv\esd,\bld w-\bld P_V\bld  w)_\Oh && \; \text{ since $\divv\und{\Sigma}(K)\subset \bld V(K)$,}\\
\le &\; C h\|\divv\esd\|_\Oh |\bld  w|_{1,\Oh},
\end{alignat*}
and 
\begin{alignat*}{2}
T_2 =&\;  (\ep,\divs\bld P_V\bld  w)_\Oh - \bint{\ep\n}{\bld P_V\bld  w-\bld  w}\\
= &\; -(\esd,\gradv\bld P_V\bld  w)_\Oh + \bint{\eshat\n}{\bld P_V\bld  w}
- 
\bint{\ep\n}{\bld P_V\bld  w-\bld  w},
\end{alignat*}
by the error equation \eqref{error-equation-2} with $\w:=\bld P_V\bld  w$. Now, since
$\eshat\n$ is single-valued and $\bld  w\in \bld H_0^1(\Omega)$, we have that 
$\bint{\eshat\n}{\bld  w}=0$, and so
\begin{alignat*}{2}
T_2
=&\; -(\esd,\gradv\bld P_V\bld  w)_\Oh + \bint{\eshat\n-\ep\n}{\bld P_V\bld  w-\bld  w}\\
=&\; -(\esd,\gradv\bld P_V\bld  w)_\Oh \\
&\;+ \bint{\esd\n - \alpha(\eu-\euhat)}{\bld P_V\bld  w-\bld  w}
&&\; \text{ by the definition of $\eshat\n$,}\\
=&\; -(\esd,\gradv\bld  w)_\Oh - \bint{ \alpha(\eu-\euhat)}{\bld P_V\bld  w-\bld  w} && \;{\text{ by integration by parts,}}\\
\le &\; \left(\|\esd\|_\Oh+Ch^{1/2}\|\alpha\|\, \|\eu-\euhat\|_{\alpha,\dOh}\right)|\bld  w|_{1,\Omega},
\end{alignat*}
by the Cauchy-Schwarz inequality. Hence.
\[
 \|\ep\|_\Oh\le \; \theta  \left(\|\esd\|_\Oh+C\,h\|\divv\esd\|_\Oh+C\,h^{1/2}\|\alpha\|\,
 \|\eu-\euhat\|_{\alpha,\dOh}\right).
\]
Combining this result with \eqref{e-div} and the estimate of $\eu-\euhat$ in Theorem \ref{thm:stability}, 
we obtain 
\[
 \|\es\|_\Oh \le \|\esd\|_\Oh + n\|\ep\|_\Oh\le C(1+h^{1/2}\|\alpha\|)\|\und\sigma-\Pis\und\sigma\|_\Oh,
\]
with $C$ independent of $h$, $\alpha$, and $\mathcal{A}$. 
This completes the proof of Theorem \ref{L2error-q}.
% Hence we get a quasioptimal estimate for $\|\es\|_\Oh$
% with a constant {\em independent} of $\lambda$.
$\square$ 

\subsection*{Step 3: The proof of Theorem \ref{thm:stability}}

\newcommand{\Pivs}{\und{\Pi}_\Sigma^*}
\newcommand{\Piws }{\bld \Pi_V^* }

Following \cite{CockburnFuSayas16},  we first introduce an auxiliary adjoint HDG-projection onto $\VV\times \W$ 
for functions in the finite element space 
$ \VV\times \W \times \bld M$, 
%$ \nabla W\times\divv \VV\times M$, 
and 
then choose test functions in the local equations
to be the adjoint HDG-projection of certain finite element data to prove Theorem \ref{thm:stability}.
The adjoint HDG-projection is defined as follows.
\begin{definition}[The auxiliary adjoint HDG-projection]
\label{adjoint-HDG-proj}
Let $\VV\times \W$ admit an \mm-decomposition. 
Let $d := (\underline{\bld{\mathrm{d}}}_{\tau},\bld{\mathrm{d}}_v,\bld{\mathrm{d}}_\mu)\in \VV \times \W \times \bld M$. 
Then,  $\Pi_h^* d:=(\Pivs d, \,\Piws d)\in \VV\times \W$ defined by the equations
\begin{alignat*}{6}
(\Piws d, \und \tau)_K &= (\bld{\mathrm{d}}_v, \und \tau)_K& & \forall \und \tau\in \Wtilde,\\
(\Pivs d,\w)_K &=(\underline{\boldsymbol{\mathrm{d}}}_\tau,\w)_K&\qquad & \forall \w\in\Vtilde,\\
\langle \Pivs d\,\nn + \alpha( \Piws d),\bld\mu \rangle_{\partial K}
&=\langle \bld{\mathrm{d}}_\mu ,\bld\mu \rangle_{\partial K} & &\forall\bld \mu \in\bld M,
\end{alignat*}
is the auxiliary adjoint HDG-projection associated to the \mm-decomposition. % for data $d$. 
\end{definition}

%Similar to the HDG-projection, this adjoint 
%projection is well-defined as long as the stabilization operator $\alpha$ satisfies \eqref{hypoalpha}.
{It is easy to see that this adjoint is well-defined whenever the HDG projection is. 
In fact, a glance to the definition of the auxiliary HDG projection in Theorem \ref{actual-proj}, allows us to see that
$(\Pis\und\sigma,\Piv \bld u)=(-\Pivs d, \,\Piws d)$ for 
$d=(-\und{\sigma},\bld  u, -\und\sigma\,\nn-\alpha(\bld P_M \bld u))$.}

% We have the following bounds for the adjoint projection whose proof is omitted since it 
% is very similar to the one for the case of $M$-decompositions of HDG methods for diffusion; see
%  \cite[Appendix]{CockburnFuSayas16}.

\gfu{We have the following bounds for the adjoint projection whose proof is 
very similar to the one for the case of $M$-decompositions for diffusion; see
 \cite[Appendix]{CockburnFuSayas16}. For completeness, we sketch its proof in Appendix B.}

\newcommand{\dtau}{\underline{\boldsymbol{\mathrm{d}}}_\tau}
\newcommand{\dv}{\bld{\mathrm{d}}_v}
\newcommand{\dmu}{\bld{\mathrm{d}}_\mu}
\begin{lemma} [Stability of the adjoint HDG-projection]
\label{lemma:adj-proj} 
Let $\VV\times \W$ admit an \mm-decomposition and let 
the stabilization function $\alpha$ satisfy Property (S).
Then, we have, for data $d := 
(\underline{\boldsymbol{\mathrm{d}}}_\tau,\bld{\mathrm{d}}_v,\bld{\mathrm{d}}_\mu)\in 
\und\epsilon( \W) \times \divv \VV\times \bld M $,
\begin{alignat*}{1}
\| \Pi^*_\VV d\|_K\le&\;
    C_1\| \bld{\mathrm{d}}_v\|_K 
    +
C_3
\|\underline{\boldsymbol{\mathrm{d}}}_\tau \|_K
+C_5\,h^{1/2}_K\,\|\bld{\mathrm{d}}_\mu\|_{ \partial K},\\
    \| \Pi^*_{\W} d\|_K\le&\;
    C_2\| \bld{\mathrm{d}}_v\|_K 
    +
C_4
\|\underline{\boldsymbol{\mathrm{d}}}_\tau \|_K
+C_6\,h^{1/2}_K\,\|\bld{\mathrm{d}}_\mu\|_{ \partial K},
  \end{alignat*}
  where $\{C_i\}_{i=1}^6$ are the constants defined in Theorem \ref{thm:stability}.
\end{lemma}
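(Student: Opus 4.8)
The plan is to follow the analysis of the direct HDG-projection in Theorem \ref{actual-proj}, but with the roles of $\Vtilde$ and $\Wtilde$ interchanged, taking full advantage of the hypothesis that the data already lies in $\und\epsilon(\W)\times\divv\VV$. First I would split each component of the adjoint projection along the M-decomposition, writing $\Pi^*_\VV d=\und\sigma_2+\und\sigma_1$ with $\und\sigma_1\in\Vtilde$, $\und\sigma_2\in\Vtilde^\perp$, and $\Pi^*_\W d=\bld v_1+\bld v_2$ with $\bld v_1\in\Wtilde$, $\bld v_2\in\Wtilde^\perp$. The first two equations of Definition \ref{adjoint-HDG-proj} identify $\und\sigma_1$ and $\bld v_1$ as the $L^2(K)$-projections of $\dtau$ onto $\Vtilde$ and of $\dv$ onto $\Wtilde$. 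Since, by the present hypothesis, $\dtau\in\und\epsilon(\W)\subset\Vtilde$ and $\dv\in\divv\VV\subset\Wtilde$ (property (b) of an M-decomposition), these projections act as the identity, so that $\und\sigma_1=\dtau$ and $\bld v_1=\dv$ exactly. This reduces the whole lemma to estimating the complementary pieces $\und\sigma_2$ and $\bld v_2$.

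Substituting the splitting into the third equation of Definition \ref{adjoint-HDG-proj} gives, for all $\bld\mu\in\bld M$,
\[
\langle\und\sigma_2\,\n+\alpha(\bld v_2),\bld\mu\rangle_{\partial K}
=\langle\dmu-\dtau\,\n-\alpha(\dv),\bld\mu\rangle_{\partial K}.
\]
The structural key is the orthogonality $\langle\und\sigma_2\,\n,\bld v_2\rangle_{\partial K}=0$: integrating by parts yields $(\divv\und\sigma_2,\bld v_2)_K+(\und\sigma_2,\und\epsilon(\bld v_2))_K$, and both terms vanish because $\divv\und\sigma_2\in\divv\VV\subset\Wtilde$ is $L^2(K)$-orthogonal to $\bld v_2\in\Wtilde^\perp$, while $\und\epsilon(\bld v_2)\in\und\epsilon(\W)\subset\Vtilde$ is orthogonal to $\und\sigma_2\in\Vtilde^\perp$. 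Choosing $\bld\mu=\bld v_2|_{\partial K}$ (which lies in $\bld M$ because $\tr(\bld 0,\bld v_2)=\bld v_2|_{\partial K}$ and $\tr(\VV\times\W)\subset\bld M$) therefore isolates $\langle\alpha(\bld v_2),\bld v_2\rangle_{\partial K}$ on the left, and property (S3), $a_{\Wtilde^\perp}>0$, lets me bound $\|\bld v_2\|_{\partial K}$ by the right-hand side. Estimating that side termwise — with $\|\alpha\|$ and the inverse-trace inequalities $\|\dtau\,\n\|_{\partial K}\le C_{\und\epsilon(\W)}h_K^{-1/2}\|\dtau\|_K$ and $\|\dv\|_{\partial K}\le C_{\divv\VV}h_K^{-1/2}\|\dv\|_K$ — and then applying $\|\bld v_2\|_K\le C_{\Wtilde^\perp}h_K^{1/2}\|\bld v_2\|_{\partial K}$, I obtain the bound on $\|\Pi^*_\W d\|_K=\|\dv+\bld v_2\|_K$ with precisely the constants $C_2$, $C_4$, $C_6$.

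For $\und\sigma_2$ I would instead take $\bld\mu=\und\sigma_2\,\n$ (again admissible, since $\tr(\und\sigma_2,\bld 0)=\und\sigma_2\,\n\in\bld M$) in the same identity. This puts $\|\und\sigma_2\,\n\|_{\partial K}^2$ on the left and leaves the term $\langle\alpha(\bld v_2),\und\sigma_2\,\n\rangle_{\partial K}$, which I bound by $\|\alpha\|\,\|\bld v_2\|_{\partial K}\,\|\und\sigma_2\,\n\|_{\partial K}$ and control using the estimate for $\|\bld v_2\|_{\partial K}$ already established; this is exactly what produces the factor $(1+\|\alpha\|/a_{\Wtilde^\perp})$. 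The inverse inequality $\|\und\sigma_2\|_K\le C_{\Vtilde^\perp}h_K^{1/2}\|\und\sigma_2\,\n\|_{\partial K}$ then yields the bound on $\|\Pi^*_\VV d\|_K=\|\dtau+\und\sigma_2\|_K$ with the constants $C_1$, $C_3$, $C_5$. Well-definedness of the adjoint projection need not be addressed separately: it follows from that of the direct projection (Theorem \ref{actual-proj}) through the identification recorded after Definition \ref{adjoint-HDG-proj}.

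I expect the genuine crux — rather than a real obstacle — to be the two bookkeeping facts that make everything collapse to the stated constants: the exact identities $\und\sigma_1=\dtau$ and $\bld v_1=\dv$ (so that no volume projection-error terms survive) and the boundary orthogonality $\langle\und\sigma_2\,\n,\bld v_2\rangle_{\partial K}=0$. Both rely decisively on the hypothesis $d\in\und\epsilon(\W)\times\divv\VV\times\bld M$; without it the estimate would have to carry extra approximation terms, as in the fully general projection of Theorem \ref{actual-proj}. The remainder is the standard two-step energy argument — bound $\bld v_2$ first, then $\und\sigma_2$ — already carried out for the diffusion case in \cite{CockburnFuSayas16}.
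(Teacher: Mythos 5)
Your proposal is correct and follows essentially the same route as the paper's own proof in Appendix B: your $\und\sigma_2=\Pis^*d-\underline{\boldsymbol{\mathrm{d}}}_\tau\in\Vtilde^\perp$ and $\bld v_2=\Piv^* d-\bld{\mathrm{d}}_v\in\Wtilde^\perp$ are exactly the paper's quantities $\boldsymbol{\delta}^*_{\und\tau}d$ and $\boldsymbol{\delta}^*_{\bld v}d$, and you perform the same two-step energy argument (test the boundary equation with $\bld v_2$, exploiting the boundary orthogonality $\langle\und\sigma_2\,\n,\bld v_2\rangle_{\partial K}=0$ and property (S3); then with $\und\sigma_2\,\n$), followed by the same inverse-trace inequalities and triangle inequality yielding the constants $C_1,\dots,C_6$. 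Your write-up is in fact somewhat more explicit than the paper's sketch about why the volume pieces collapse ($\underline{\boldsymbol{\mathrm{d}}}_\tau\in\und\epsilon(\W)\subset\Vtilde$, $\bld{\mathrm{d}}_v\in\divv\VV\subset\Wtilde$) and why the test functions lie in $\bld M$, but there is no substantive difference in method.
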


%We will use the following local equations 
%from the HDG scheme to proof Theorem \ref{thm:stability}.

%\begin{lemma}
%Let assumptions of Theorem \ref{thm:stability} holds.
%Then, for each element $K$, 
%the solution $(\boldsymbol{q}_h,u_h,\widehat{u}_h)\in \VV_h,W_h\M_h$ for the 
%the HDG scheme satisfy the following equations:
%the following set of local equations holds:
%\begin{subequations}
%\begin{align}
%\label{local-1}
%(\mathcal{A}\,\und{\sigma}_h,\und{\tau})_K - (\und\epsilon(\bld  u_h),\und{\tau})_K  
%- (\divv \und{\sigma}_h,\w)_K&\\
 %+\langle \bld u_h-\widehat{\bld u}_h,\und{\tau}\, 
 %\nn+\alpha(\w)\rangle_{\partial K}
%& 
% = \;(\bld f,\w)_K,\nonumber
% \end{align}
 % for all $(\boldsymbol{v},w)\in \VV(K)\times W(K)$.
%Also, we have
%\begin{align}
%\label{local-2}
%(\mathcal{A}\,\eos,\und{\tau})_K - (\und\epsilon(\eu),\und{\tau})_K  
%- (\divv \es,\w)_K&\\
% +\langle \eu-\euhat,\und{\tau}\, 
% \nn+\alpha(\w)\rangle_{\partial K}
%& 
 %= \;0,\nonumber
% \end{align} 
%\end{subequations}
% for all $(\boldsymbol{v},w)\in \VV(K)\times W(K)$.
%\end{lemma}
%\begin{proof}
%The first equation is just a rewriting of the local equation on $K$ for the HDG methods 
%taking into account the 
%self-adjointness of the local stabilization operator $\alpha$ (property (ii) in Theorem \ref{thm:stability}).
%The second equation is obtained in a similar fashion using the error equations \eqref{error-equations}. This completes the proof. 
%\end{proof}

Now, we are ready to prove the stability estimates in
Theorem \ref{thm:stability}.
% \begin{proof}
% Let us prove the stability estimates. 
To do that, we begin by noting 
that we can rewrite the first two equations defining the HDG methods \eqref{weak formulation} on each element $K$
as 
%\gfu{xx}
% \begin{align*}
% %\label{local-1}
% (\mathcal{A}\,\und{\sigma}_h,\und{\tau})_K - (\und\epsilon(\bld  u_h),\und{\tau})_K  
% +\langle \bld u_h-\widehat{\bld u}_h,\und{\tau}\, 
%  \nn\rangle_{\partial K}
% & 
% \\
% - (\divv \und\sigma_h, \w)_K + 
%  \langle \bld u_h-\widehat{\bld u}_h,\alpha(\w)\rangle_{\partial K} &=\; (\bld f,\w)_K,
%  \end{align*}
%  for all $(\und\tau,\w)\in \VV(K)\times \W(K)$.
% % Note that in the last term of the left-hand side of the second equation, we
% % have used the fact that
% % the stabilization operator $\alpha$ is self-adjoint, see 
% % {hypothesis (ii) in Theorem \ref{thm:stability}}. 
% Adding the above equations, we get
\begin{align}
\label{local-1}
(\mathcal{A}\,\und{\sigma}_h,\und{\tau})_K - (\und\epsilon(\bld  u_h),\und{\tau})_K  
- (\divv \und{\sigma}_h,\w)_K&\\
+\langle \bld u_h-\widehat{\bld u}_h,\und{\tau}\, 
 \nn+\alpha(\w)\rangle_{\partial K}
& 
 = \;(\bld f,\w)_K,\nonumber
 \end{align}
 for all $(\und\tau,\w)\in \VV(K)\times \W(K)$.
 {Therefore, testing with $(\und\tau,\w):=(\Pivs{d},\Piws{d})$ and using the equations that define the adjoint HDG projection, 
it follows that
\begin{equation}\label{eq:newTheorem4.3}
(\und\epsilon (\bld u_h), \underline{\bld{\mathrm{d}}}_\tau)_K+(\divv\und \sigma_h,\bld{\mathrm{d}}_v)_K
-\langle \bld u_h-\widehat {\bld u}_h,\bld{\mathrm d}_\mu\rangle_{\partial K}
=-(\bld f,\Piws{d})_K+(\mathcal{A}\und \sigma_h,\Pivs{d})_K
\end{equation}
for an arbitrary $d=(\underline{\bld{\mathrm{d}}}_\tau,\bld {\mathrm d}_v,\bld {\mathrm d}_\mu)$.}

{To prove the first stability estimate, we take $d := (\und{0},\divv\und \sigma_h,\bld 0)$ in \eqref{eq:newTheorem4.3} 
and use that $\Piws d\in \W$, so that
\begin{alignat*}{1}
 \|\divv \und\sigma_h\|_K^2
  = &\;-(\bld P_{V}\bld f,\Piws d)_K+(\mathcal{A}\und \sigma_h,\Pivs{d})_K
 \\
 \le &\; \|\mathcal{A}\|_{L^\infty(K)}^{1/2}\|\und \sigma_h\|_{\mathcal{A},K}\|\Pivs d\|_K+\|\bld P_{V}\bld f\|_K\|\Piws d\|_K\\
 \le &\; \left(C_1\, \|\mathcal{A}\|_{L^\infty(K)}^{1/2}\|\und \sigma_h\|_{\mathcal{A},K}+ C_2\,\|\bld P_{V}\bld f\|_K\right)\,
 \|\divv \und \sigma_h \|_K,
\end{alignat*}
by the stability properties of the adjoint projection in Proposition \ref{lemma:adj-proj}.
To prove the second estimate, we take  $d := (\und\epsilon(\bld u_h),\bld 0,\bld 0)$ in \eqref{eq:newTheorem4.3} and
note that $\Piws d\in \Wtilde^\perp$ because $\bld{\mathrm{d}}_v=\bld 0$. Then
\begin{align*}
 \|\und\epsilon(\bld u_h)\|_K^2  
 = &\;-(\bld P_{\widetilde{V}^\perp}\bld f,\Piws d)_K+(\mathcal{A}\und \sigma_h,\Pivs{d})_K\\
 \le &\; \|\mathcal{A}\|_{L^\infty(K)}^{1/2}\|\und \sigma_h\|_{\mathcal{A},K}\|\Pivs d\|_K+\|\bld P_{\widetilde{V}^\perp}\bld f\|_K\|\Piws d\|_K\\
 \le &\; \left(C_3\, \|\mathcal{A}\|_{L^\infty(K)}^{1/2}\|\und \sigma_h\|_{\mathcal{A},K}+ C_4\,\|\bld P_{\widetilde{V}^\perp}\bld f\|_K\right)\,
 \|\und\epsilon( \bld u_h) \|_K,
\end{align*}
by  Proposition \ref{lemma:adj-proj}. To prove the third estimate, we take $d := -(\und 0,\bld 0,\bld u_h-\widehat{\bld u}_h)$  
in \eqref{eq:newTheorem4.3}
\begin{align*}
 \|\bld u_h-\widehat{\bld u}_h\|_{\dK}^2  = &\;-(\bld P_{\widetilde{V}^\perp}\bld f,\Piws d)_K+(\mathcal{A}\und \sigma_h,\Pivs{d})_K\\
 \le &\; \|\mathcal{A}\|_{L^\infty(K)}^{1/2}\|\und \sigma_h\|_{\mathcal{A},K}\|\Pivs d\|_K+\|\bld P_{\widetilde{V}^\perp}\bld f\|_K\|\Piws d\|_K\\
 \le &\; \left(C_5\, \|\mathcal{A}\|_{L^\infty(K)}^{1/2}\|\und \sigma_h\|_{\mathcal{A},K}+ C_6\,\|\bld P_{\widetilde{V}^\perp}\bld f\|_K\right)
 h_K^{1/2}\|\bld u_h-\widehat{\bld u}_h\|_{\dK},
\end{align*}
by Proposition \ref{lemma:adj-proj}.
}

{The remaining estimates can be proven in exactly the same way given that we have
\begin{equation*}
(\und\epsilon (\eu), \underline{\bld{\mathrm{d}}}_\tau)_K+(\divv\es,\bld{\mathrm{d}}_v)_K
-\langle \eu-\euhat,\bld{\mathrm d}_\mu\rangle_{\partial K}
= (\mathcal{A}\bld e_{\und\sigma},\Pivs{d})_K
\end{equation*}
for an arbitrary $d=(\underline{\bld{\mathrm{d}}}_\tau,\bld {\mathrm d}_v,\bld {\mathrm d}_\mu)$.
This completes the proof of Theorem \ref{thm:stability}. \qed
}
% \end{proof}

\subsection*{Step 4: The proof of Theorem \ref{L2error-u}}
% Now, we prove Theorem \ref{L2error-u} with a standard elliptic duality argument.
The estimate of $\eu$ in Theorem \ref{L2error-u}
will follow from the following identity, whose
proof  is a standard duality argument hence is omitted.
We refer to \cite[Lemma 3]{FuCockburnStolarski14} for details of a similar proof.
\begin{lemma}\label{duality}
Suppose that the assumptions of Theorem \ref{L2error-u} are satisfied.
Then, we have
\[({\eu},{\bld\theta})_\Oh = (\mathcal{A}\,\eos,\und\psi-\Pis\und\psi)_\Oh + (\und\sigma-\Pis\und\sigma,
\und\epsilon(\bld \phi-\bld \phi_h))_\Oh\quad \quad \forall \bld \phi_h\in \bld V_h,
\]
where $(\und\psi, \bld \phi)$ is the solution of the dual problem \eqref{adjoint}.
\end{lemma}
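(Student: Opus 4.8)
The plan is to run an Aubin--Nitsche duality argument built on the error equations of Lemma~\ref{error-equations} and the HDG-projection of the dual solution. I would start from $(\eu,\bld\theta)_\Oh=(\eu,\divv\und\psi)_\Oh$, using \eqref{adjoint-2}, and then feed the projections $\Pis\und\psi\in\Sh$, $\Piv\bld\phi\in\Vh$ and the trace $\Pim\bld\phi\in\Mh$ of the dual solution into the error equations. The term $(\eu,\bld\theta)_\Oh$ is generated by choosing $\und\tau:=\Pis\und\psi$ in \eqref{error-equation-1}, while $\w:=\Piv\bld\phi$ in \eqref{error-equation-2}, $\bld\mu:=\Pim\bld\phi$ in \eqref{error-equation-3}, and the defining relation \eqref{proj-3} of the dual projection evaluated at $\bld\mu:=\eu-\euhat$ will supply the terms that cancel all stabilization and interface contributions.

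First I would pass from $(\eu,\divv\Pis\und\psi)_\Oh$ to $(\eu,\divv\und\psi)_\Oh=(\eu,\bld\theta)_\Oh$ by an element-by-element integration by parts; the volume remainder $(\Pis\und\psi-\und\psi,\und\epsilon(\eu))_\Oh$ vanishes because $\und\epsilon(\eu)\in\und\epsilon(\W)\subset\Vtilde$ and $\Pis$ satisfies \eqref{proj-1}, leaving only an interface term. Inserting this into \eqref{error-equation-1}, the two compliance contributions combine into $-(\mathcal{A}\eos,\Pis\und\psi)_\Oh$; using the symmetry of the compliance tensor $\mathcal{A}$ and the dual constitutive law $\mathcal{A}\und\psi=\und\epsilon(\bld\phi)$ of \eqref{adjoint-1}, I would rewrite this in terms of $(\mathcal{A}\eos,\und\psi-\Pis\und\psi)_\Oh$---the first term of the claimed identity---and a volume pairing of $\eos$ against $\und\epsilon(\bld\phi)$, which I split using $\eos=(\und\sigma-\Pis\und\sigma)+\es$. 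The contribution $(\und\sigma-\Pis\und\sigma,\und\epsilon(\bld\phi))_\Oh$ is the second term sought, so it remains to show that the $\es$-pairing together with all surviving interface terms cancels.

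The cancellation of the interface and stabilization terms is the heart of the computation. I would compute $(\es,\und\epsilon(\bld\phi))_\Oh$ by testing \eqref{error-equation-2} with $\Piv\bld\phi$ and integrating by parts against $\bld\phi-\Piv\bld\phi$, dropping the volume remainder via $\divv\es\in\divv\VV\subset\Wtilde$ and \eqref{proj-2}, and then substituting $\eshat\n=\es\n-\alpha(\eu-\euhat)$. Meanwhile, the interface terms left over from \eqref{error-equation-1} collapse, with the help of \eqref{proj-3} applied to the dual data, to an $\alpha$-weighted pairing of the primal jump $\eu-\euhat$ against the dual jump $\Piv\bld\phi-\Pim\bld\phi$; here I would also use that $\und\psi\,\n$ is single-valued and that $\euhat$ vanishes on $\partial\Omega$ by \eqref{error-equation-4}. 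The self-adjointness (S1) of $\alpha$ then lets these two $\alpha$-pairings telescope into $\bint{\alpha(\eu-\euhat)}{\Pim\bld\phi}$, and, since $\es\n\in\bld M$ and $\Pim$ is the $L^2(\bld M)$-projection, $\bint{\es\n}{\bld\phi}=\bint{\es\n}{\Pim\bld\phi}$. The net effect is $-\bint{\eshat\n}{\Pim\bld\phi}$, which is zero: $\Pim\bld\phi=0$ on $\partial\Omega$ because $\bld\phi=0$ there, so \eqref{error-equation-3} applies with $\bld\mu:=\Pim\bld\phi$.

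Finally, the arbitrariness of $\bld\phi_h\in\Vh$ is immediate: $\und\epsilon(\bld\phi_h)\in\und\epsilon(\W)\subset\Vtilde$, so \eqref{proj-1} gives $(\und\sigma-\Pis\und\sigma,\und\epsilon(\bld\phi_h))_\Oh=0$ and $\und\epsilon(\bld\phi)$ may be replaced by $\und\epsilon(\bld\phi-\bld\phi_h)$. I expect the main obstacle to be the bookkeeping of the numerous interface terms rather than any single deep estimate: each cancellation must be charged to exactly one structural ingredient---an inclusion from the \mm-decomposition (Theorem~\ref{thm:1.5}), one of the defining relations \eqref{proj-1}--\eqref{proj-3} of the HDG-projection, or the symmetry (S1) of the stabilization---and the delicate point is that the primal jump $\eu-\euhat$ and the dual jump $\Piv\bld\phi-\Pim\bld\phi$ only telescope because $\alpha$ is self-adjoint and because $\es\n$, $\eu$ and $\euhat$ all have traces lying in $\bld M$.
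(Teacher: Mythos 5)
Your proposal is correct and follows essentially the same route as the paper, which omits the proof precisely because it is this standard duality argument (cf.\ \cite[Lemma 3]{FuCockburnStolarski14}): test the error equations with the HDG projection of the dual pair $(\und\psi,\bld\phi)$, and charge each cancellation to \eqref{proj-1}--\eqref{proj-3}, the inclusions of the \mm-decomposition, property (S1), and \eqref{error-equation-3}--\eqref{error-equation-4}, exactly as you outline. Carrying your bookkeeping through yields the claimed identity up to the sign of the term $(\und\sigma-\Pis\und\sigma,\und\epsilon(\bld\phi-\bld\phi_h))_\Oh$, a convention-dependent sign that is immaterial for the estimate in Theorem \ref{L2error-u}.
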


From Lemma \ref{duality}, we get that
\begin{alignat*}{1}
 \|\eu\|_\Oh \le H(\bld \theta)\left(\|\mathcal{A}\|^{1/2}_{L^\infty(K)}\|\eos\|_{\mathcal{A},\Oh}+\|\und\sigma-\Pis\und\sigma\|_{\Oh}\right),
\end{alignat*}
where 
\begin{align*}
 H(\bld \theta) :=\sup_{\bld 0\not=\bld \theta\in \bld L^2(\Omega)}
\frac{\|\und \psi-\Pis \und\psi\|_\Oh}{\|\bld \theta\|_\Oh}
+ \sup_{\bld 0\not=\bld \theta\in \bld L^2(\Omega)}
\inf_{\bld \phi_h \in \bld V_h}
\frac{\|\und\epsilon(\bld \phi-\bld \phi_h)\|_\dOh}{\|\bld \theta\|_\Oh}.
\end{align*}
If the elliptic regularity estimate \eqref{regularity} holds, we have
\begin{align*}
 H(\bld \theta) \lesssim &\; h\sup_{\bld 0\not=\bld \theta\in \bld L^2(\Omega)}
\frac{\|\und\psi\|_{H^1(\Oh)}+\|\bld \phi\|_{H^{2}(\Oh)}}{\|\bld \theta\|_\Oh}
\lesssim\; h.
\end{align*}
This completes the proof of Theorem \ref{L2error-u}. $\square$
% \subsection*{Step 6: The estimate for $\gradDG(\bld u_h,\widehat{\bld u}_h)$} Now, let us prove Theorem \ref{cor:DG-derivative}. 
% We have, for all $\und g\in \und G_h$,
% \begin{alignat*}{2}
%  (\gradv \bld u - \gradDG(\bld u_h,\widehat{\bld u}_h),\und g)_\Oh
%  =&\; -(\bld u - {\bld u}_h,\divv\und g)_\Oh+
%  \bint{\bld u-\widehat{\bld u}_h}{\und g\,\n} \\%&& \quad\text{ By the definition of the $\mathrm{DG}$-derivative.}\\
% = &\; -(\eu, \divv\und g)_\Oh+
%  \bint{\euhat}{\und g\,\n}\\
%  \le &\; C\left(h^{-1}\|\eu\|_\Oh+h^{-1/2}\|\euhat\|_\dOh\right)\|\und g\|_\Oh
%  \end{alignat*}
% Here the first equation is obtained by the definition of the $\mathrm{DG}$-derivative \eqref{DG-derivative} and integration
% by parts, 
% the second equation is obtained by {\it Assumption C}, and the last inequality is obtained by a combination of 
% the inverse inequality, the trace inequality, and the scaling argument.

% Now, we can simply take $\und g = \und P_{G}\,\gradv\bld u - \gradDG(\bld u_h,\widehat{\bld u}_h)$ 
% and use the triangle inequality to conclude the proof of Theorem \ref{cor:DG-derivative}. $\square$

% \subsection*{Step 7: The estimate for $\bld u-\bld u_h^*$}
\subsection*{Step 5: The proof of Theorem \ref{L2error-u*}}
%\gfu{mod}
We conclude this section by sketching the proof of Theorem \ref{L2error-u*} on the postprocessing of 
the displacement. We denote $\bld P_{V^*}$, $\bld P_{\widetilde{V}^{*}}$, and 
$\bld P_{{V}^{*,\perp}}$ as the corresponding $L^2$-projections onto the spaces $\bld V^*(K)$,
$\widetilde{\bld V}^*(K)$, $\bld V^{*}(K)^\perp$, respectively.
% We only consider the case of $\bld u^{2,*}_h$ since the other is similar. 

First, the inclusion $\bpol{0}{K}\subset \widetilde\W^{*}(K)$
ensures the well-posedness of the postprocessing in \eqref{u2*}. Next, using 
$\widetilde{\bld V}^{*}(K)\subset \divv \und\Sigma(K)$,  equation \eqref{u2*-2} and the definition of $\Piv \bld u$, 
we have
\[
 \bld P_{\widetilde V^{*}} (\bld u-\bld u_h^{*}) =  \bld P_{\widetilde V^{*}} (\Piv\bld u-\bld u_h)
= \bld P_{\widetilde V^{*}} \eu.
\]
Hence,
\begin{align}
 \label{es-1}
 \|  \bld P_{\widetilde V^{*}} (\bld u-\bld u_h^{*}) \|_K \le \|\eu\|_K.
\end{align}
% $
% \|\bld P_{V^*}\bld u-\bld u_h^{*}\|_K \le  \|\Piv\bld u-\bld u_h\|_K+
% \|\bld P_{\widetilde{V}^{*}^\perp}(\bld u-\bld u_h^{*}) \|_K.
% $
\gfu{
Then, by the assumptions  $\triangle \W^*\subset \divv\VV$
and $(\gradv\W^*\n)|_{\dK}\subset\bld M(\dK)$, we have 
the following identity
\begin{align*}
 (\gradv  \bld u,\gradv \bld w)_K = &\;
 -(\bld u,\triangle \bld w)_K + \bintK{\bld u}{\gradv\bld w\,\n}\\
 =&\;
 -(\Piv\bld u,\triangle \bld w)_K + \bintK{\Pim\bld u}{\gradv\bld w\,\n}.
\end{align*}
Combining the above equality with equation \eqref{u2*-1}, we get 
\begin{align*}
 (\gradv  (\bld u-\bld u_h^*),\gradv \bld w)_K = &\;
 -(\eu,\triangle \bld w)_K + \bintK{\euhat}{\gradv\bld w\,\n}.
\end{align*}
Now, taking $\bld w = \bld P_{\widetilde{V}^{*,\perp}}(\bld u-\bld u_h^{*})$ in the above equality, 
we get 
\begin{align}
\label{es-2}
 \|\bld P_{\widetilde{V}^{*,\perp}}(\bld u-\bld u_h^{*})\|_K
 \le &\;C\,h_K\|\gradv \bld P_{\widetilde{V}^{*,\perp}}(\bld u-\bld u_h^{*})\|_K \nonumber\\
 \le&\; 
 C\, h_K \Big(\|\gradv (\bld u - \bld P_{{V}^*}\bld u)\|_K+ \|\gradv \bld P_{\widetilde{V}^{*}}(\bld u-\bld u_h^{*})\|_K
 \nonumber\\
 &\;
 + h_K^{-1}\|\eu\|_K + h_K^{-1/2}\|\euhat\|_\dK
 \Big)\nonumber\\
 \le&\; 
 C\, (h_K\|\gradv (\bld u - \bld P_{{V}^*}\bld u)\|_K+
 \|\eu\|_K + h_K^{1/2}\|\euhat\|_\dK).
\end{align}
Combining the estimates in \eqref{es-1} and \eqref{es-2}, and the fact that 
$\|\bld u-\bld P_{V^*}\bld u\|_K\le C\,h_K\|\gradv(\bld u-\bld P_{V^*}\bld u)\|_K$,
we conclude the proof of Theorem \ref{L2error-u*}. \qed
}
\section*{Appendix B: Proofs of Theorem \ref{actual-proj} and Lemma \ref{lemma:adj-proj}}
In this Appendix, we prove the approximation properties of the HDG-projection in Theorem \ref{actual-proj}, and 
the stability properties of the adjoint HDG-projection in Lemma \ref{lemma:adj-proj}.
\subsection*{Proof of Theorem \ref{actual-proj}}
We first  estimate
the quantities
$\ds:=\Pis \und\sigma- \und {P}_{\Sigma}\und{\sigma}$ 
and $\du:=\Piv \bld u -\bld P_V \bld u$, and then
use the triangle inequality to obtain the desired estimates.
We proceed in three steps.

\subsection*{Step 1: The equations for $\ds$ 
and $\du$} 
By the equations \eqref{proj} defining the HDG-projection, we have that
\begin{subequations}
 \begin{alignat}{2}
 (\ds, \vv)_K 
 & = 0  
&&\forall\;{\vv} \in \Vtilde,\\
 (\du, \w)_K & = 0 
&& \forall\;\w \in \Wtilde,\\
\label{bdequation}
\langle \ds \n + {\alpha(\du)} , \bld\mu \rangle_{\dK} & = 
\langle \boldsymbol{I}_{\und\sigma} \n + {\alpha(\bld I_{\bld u})} , \bld \mu \rangle_{\partial K} \quad &&\forall\;\bld \mu \in\bld M,
\end{alignat}
\end{subequations}
Here 
$\boldsymbol{I}_{\und\sigma}:=\und\sigma- \und{P}_{\Sigma}\und\sigma$ 
and $\bld I_{\bld u}:=\bld P_M \bld u -\bld P_V \bld u$. The first equation implies $\ds\in \Vtilde^\perp$, and
the second equation implies $\du\in \Wtilde^\perp$. {Therefore  
\begin{equation}\label{afterbdequation}
(\divv\und P_\Sigma \und\sigma,\du)_K=0,
\qquad\langle \ds\n,\du\rangle_{\partial K}=0.
\end{equation}}
% where the last equality follows by Proposition \ref{prop:triple}.

\newcommand{\Is}{\bld I_{\und\sigma}}
\newcommand{\Iu}{\bld I_{\bld u}}
\subsection*{Step 2: The estimate of $\du$}
Next, we obtain an estimate of $\du$. 
{Taking $\bld \mu = \du$ in \eqref{bdequation}, and using \eqref{afterbdequation}, integration by parts and the fact that $\du\in \Wtilde^\perp$, it follows that 
\begin{align*}
 \langle {\alpha(\du)} , \du \rangle_{\dK} 
&=\; \langle \Is\n + {\alpha(\Iu)} , \du \rangle_{\partial K}\\
&=\; (\Is , \gradv\du)_K+(\divv \Is ,
\du)_K+\langle {\alpha(\Iu)} , \du \rangle_{\partial K}\\
&=\; \left((Id-\bld P_{\Wtilde})\divv{\und\sigma} ,
\du\right)_K+\langle {\alpha(\Iu)} , \du \rangle_{\partial K}\\
&\le \;\|(Id-\bld P_{\Wtilde})\divv{\und\sigma}\|_K\,\|\du\|_K
+ \|\alpha\|\,\|\Iu\|_{\dK} \, \|\du\|_{\partial K}.
\end{align*}}
By the definition of the constants $C_{\Wtilde^\perp}$, $a_{\Wtilde^\perp}$, and $\|\alpha\|$, we get
\[
\|\du\|_{\partial K} 
\le \frac{C_{\Wtilde^\perp}}{a_{\Wtilde^\perp}}\,h^{1/2}_K\,
\|(Id-\bld {P}_{\widetilde{\W}})\divv {\und\sigma}\|_{K}
+ \frac{\|\alpha\|}{a_{\Wtilde^\perp}}\,\|\Iu\|_{\partial K}.
\]
{Since $\du \in \Wtilde^\perp$, we have that $\|\du\|_K
\le\, C_{\Wtilde^\perp}\,h^{1/2}_K\,\|\du\|_{\partial K}$, by the definition of the constant $ C_{\Wtilde^\perp}$.
 As a consequence, we get that}
\[
\|\du\|_K
\le \frac{C_{\Wtilde^\perp}^2}{a_{\Wtilde^\perp}}\,h_K\,
\|(Id-\bld {P}_{\widetilde{\W}})\divv {\und\sigma}\|_{K}
+
\frac{C_{\Wtilde^\perp}}{a_{\Wtilde^\perp}}\|\alpha\|\, h^{1/2}_K\|\Iu\|_{\partial K}.
\]

\subsection*{Step 3: The estimate of $\ds$}
Finally, let us estimate of $\ds$.
{Taking $\bld\mu = \ds \nn$ in the boundary equation \eqref{bdequation} and applying the Cauchy-Schwarz inequality, 
we obtain}
\begin{alignat*}{2}
\|\ds\nn\|_\dK
\le&\; 
\|\Is \cdot \n\|_{ \partial K}
+ \|\alpha\|\,\|\Iu\|_{\partial K}
+ \|\alpha\|\,\|\du\|_{\partial K}.
\end{alignat*}
{\color{blue}Since $\ds \in \Vtilde^\perp$, we have that $\|\ds\|_K
\le\, C_{\Vtilde^\perp}\,h^{1/2}_K\,\|\ds\nn\|_{\partial K}$, by the definition of the constant $ C_{\Vtilde^\perp}$.
 As a consequence, we get that}
\begin{alignat*}{2}
\|\ds\|_K
\le&\; C_{\Vtilde^\perp}\,h^{1/2}_K\,
\|\Is \n\|_{ \partial K}
+ \left(\frac{C_{\Wtilde^\perp}}{a_{\Wtilde^\perp}}\,C_{\Vtilde^\perp}\,\|\alpha\|\right)\,h_K\,
\|(Id-\bld{P}_{\widetilde{\W}})\divv\und\sigma\|_{K}
\\&
+ \left(1+\frac{\|\alpha\|}{a_{\Wtilde^\perp}}\right)\,
C_{\Vtilde^\perp}\,\|\alpha\|\,h^{1/2}_K\,\|\Iu\|_{\partial K}.
\end{alignat*}
This completes the proof. \qed

\subsection*{Proof of Lemma \ref{lemma:adj-proj}}
Here we only sketch the proof of Lemma \ref{lemma:adj-proj}
since it is quite similar to the proof of 
Theorem \ref{actual-proj}.
%The idea is to 
We first estimate
the quantities
$\bld\delta_{\bld v}^*d:=\Piv^* d -\dv$ and $\boldsymbol{\delta}^*_{\vv}d:=
\Pis^*d- \dtau$ , and then
use the triangle inequality to obtain the estimates for $\Piv^*d$ and $\Pis^*d$.
%Again, we proceed in three steps.

\newcommand{\deltau}{\boldsymbol{\delta}_{\und\tau}^*d}
\newcommand{\delv}{\boldsymbol{\delta}_{\bld v}^*d}
\newcommand{\delmu}{\boldsymbol{\delta}_{\bld \mu}^*d}

%\subsection*{Step 1: The equations for $\boldsymbol{\delta}_v^*d$ 
%and $\delta^*_wd$} 
First, by the equations defining the adjoint projection in Definition \ref{adjoint-HDG-proj}, we have that
 \begin{alignat*}{2}
%  \label{projectionA-q}
 (\deltau, {\vv})_K 
 & = 0  
&&\forall\;\vv \in \Vtilde,\\
%\label{projectionA-u}
 (\delv, {\bld v})_K & = 0 
&& \forall\;{\bld v} \in \Wtilde,\\
%\label{projectionA-M}
\langle\deltau \n - 
{\alpha(\delv)} , \bld\mu \rangle_{\dK} & = 
\langle \dmu-\dtau \n + {\alpha(\dv)} , \bld\mu \rangle_{\partial K} \quad &&\forall\;\bld\mu \in \bld M,
\end{alignat*}
%\end{subequations}
The first equation implies $\deltau\in \Vtilde^\perp$, and
the second equation implies $\delv\in \Wtilde^\perp$.

%\subsection*{Step 2: The estimate of $\delta_w^*d$}
Next, we obtain an estimate of $\delv$. 
Take $\bld\mu = \delv$ in the previous boundary equation, use 
the $L^2(\dK)$-orthogonality of $\delv$ and $\deltau\nn$, and
apply the Cauchy-Schwarz inequality, we obtain
\begin{align*}
 \langle {\alpha(\delv)} , \delv \rangle_{\dK} & \le 
\big( \|\dmu\|_\dK+ \|\dtau \nn\|_\dK+\|\alpha\|\,\|\dv\|_\dK\big)\|\delv\|.
\end{align*}
Now, the desired estimate for $\Piv^*d$ comes from {using inverse inequalities} and the triangle inequality.

Finally, taking $\bld\mu = \deltau\nn$ in the boundary equation and applying the Cauchy-Schwarz inequality,
inverse inequalities, and the triangle inequality, we get the estimate for $\Pis^*d$.
This completes the proof. \qed
}

\section*{Appendix C: Proof of Theorem \ref{thm:1.5}}
In this Appendix, we prove Theorem \ref{thm:1.5} on the characterization of \mm-decompositions. 

We first collect several auxiliary results that will be used in the proof of Theorem \ref{thm:1.5}.

\begin{lemma}[Uniqueness of $\Wtilde$]
\label{th:1.1}
If $\VV\times \W$ admits an \mm-decomposition then $\Wtilde=\divv\VV.$
\end{lemma}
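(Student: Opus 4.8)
The plan is to use Definition~\ref{def:m} throughout and nothing downstream, since this lemma feeds the proof of Theorem~\ref{thm:1.5}. Condition (b) already gives $\divv\VV\subseteq\Wtilde$, so the whole content is the reverse inclusion $\Wtilde\subseteq\divv\VV$. Because $\divv\VV\subseteq\Wtilde$, it suffices to show that any $\bld w\in\Wtilde$ which is $L^2(K)$-orthogonal to $\divv\VV$ must vanish. First I would fix such a $\bld w$ and integrate by parts: for the symmetric test tensor $\und\tau\in\VV$ one has $(\divv\und\tau,\bld w)_K=\bintK{\und\tau\nn}{\bld w}-(\und\tau,\und\epsilon(\bld w))_K$. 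Since $\bld w\in\Wtilde\subseteq\W$, condition (b) gives $\und\epsilon(\bld w)\in\und\epsilon(\W)\subseteq\Vtilde$, so testing with $\und\tau\in\Vperp$ kills the volume term and the orthogonality hypothesis kills the left-hand side; hence $\bintK{\und\tau\nn}{\bld w}=0$ for all $\und\tau\in\Vperp$. In words: the trace $\bld w|_{\partial K}$, which lies in $\bld M$ by (a) applied to $\tr(\und 0,\bld w)$, is $L^2(\partial K)$-orthogonal to $\gamma\Vperp:=\{\und\tau\nn|_{\partial K}:\und\tau\in\Vperp\}$.

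The crux is to locate $\bld w|_{\partial K}$ precisely, and here I would run the same computation for an arbitrary $\bld v\in\Wperp$: since $\divv\und\tau\in\divv\VV\subseteq\Wtilde$ is $L^2(K)$-orthogonal to $\bld v$, integration by parts and $\und\epsilon(\bld v)\in\Vtilde$ give $\bintK{\und\tau\nn}{\bld v}=0$ for all $\und\tau\in\Vperp$ as well. Thus both $\bld w|_{\partial K}$ and all of $\gamma\Wperp:=\{\bld v|_{\partial K}:\bld v\in\Wperp\}$ lie in the $L^2(\partial K)$-orthogonal complement of $\gamma\Vperp$ inside $\bld M$. Now a dimension count closes the gap: the isomorphism (c) is in particular injective on each factor, so the trace maps are injective on $\Vperp$ and on $\Wperp$, giving $\dim\gamma\Vperp=\dim\Vperp$ and $\dim\gamma\Wperp=\dim\Wperp$, while surjectivity gives $\dim\Vperp+\dim\Wperp=\dim\bld M$. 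Hence the orthogonal complement of $\gamma\Vperp$ in $\bld M$ has dimension $\dim\Wperp=\dim\gamma\Wperp$, and since it contains $\gamma\Wperp$ it must equal $\gamma\Wperp$. In particular $\bld w|_{\partial K}\in\gamma\Wperp$. I expect this identification of $\gamma\Wperp$ as the full orthogonal complement of $\gamma\Vperp$ to be the main obstacle, as it is the one nonformal step and is exactly where condition (c) is exploited beyond mere inclusions.

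To finish, pick $\bld w^*\in\Wperp$ with $\bld w^*|_{\partial K}=\bld w|_{\partial K}$ and set $\bld g:=\bld w-\bld w^*\in\W$, which has zero trace on $\partial K$ and, since $\Wperp\subseteq(\divv\VV)^\perp$, is still $L^2(K)$-orthogonal to $\divv\VV$. Integration by parts for $\bld g$ now has no boundary term, so $(\und\tau,\und\epsilon(\bld g))_K=0$ for all $\und\tau\in\VV$; taking $\und\tau=\und\epsilon(\bld g)\in\und\epsilon(\W)\subseteq\VV$ forces $\und\epsilon(\bld g)=\und 0$, so $\bld g$ is a rigid motion vanishing on $\partial K$ and therefore $\bld g=\bld 0$. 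Consequently $\bld w=\bld w^*\in\Wtilde\cap\Wperp=\{\bld 0\}$, which proves $\bld w=\bld 0$ and hence $\Wtilde=\divv\VV$.
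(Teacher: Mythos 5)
Your proof is correct and follows essentially the same route as the paper's: integrate by parts to show the trace of $\widetilde{\w}\in\Wtilde\cap(\divv\VV)^\perp$ is $L^2(\partial K)$-orthogonal to $\gamma\Vperp$, identify that trace as an element of $\gamma\Wperp$, subtract the lift, and conclude from $\und\epsilon$-freeness plus zero boundary trace that the difference vanishes, so $\widetilde{\w}\in\Wtilde\cap\Wperp=\{\bld 0\}$. The only difference is that you explicitly justify the step the paper asserts without comment (that orthogonality to $\gamma\Vperp$ places the trace in $\gamma\Wperp$) via the orthogonality $\gamma\Vperp\perp\gamma\Wperp$ and a dimension count from the isomorphism (c), which is a welcome filling-in rather than a departure.
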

\begin{proof}
Since $\divv\VV \subset\Wtilde$, we only need to prove that 
$\Wtilde \cap (\divv\VV)^\perp=\{0\}.$ So, if we take $\widetilde \w\in\Wtilde$ satisfying
\[
(\widetilde \w,\divv\vv)_K=0\qquad\forall \vv\in \VV,
\]
we have to show that $\widetilde \w=0$. To do that, we integrate by parts to
get that
\[
-(\gradv\widetilde \w,\vv)_K+\langle \widetilde \w,\vv\nn\rangle_{\partial K}=-(\und{\epsilon}(\widetilde \w),\vv)_K+\langle \widetilde \w,\vv\nn\rangle_{\partial K}=0\qquad\forall \vv\in \VV,
\]
and, in particular, that
\[
\langle\widetilde \w,\vv^\perp\nn\rangle_{\partial K}=0\;\forall \vv^\perp\in \Vperp
\] since $\und{\epsilon}{\W}\subset\Vtilde$.
This implies that  there exists $\widetilde{\w}^\perp\in \Wperp$ such that $\widetilde \w+\widetilde \w^\perp=0$ on $\partial K$. As a consequence, we have that, for any $\vv\in\VV$,
\begin{alignat*}{6}
(\und{\epsilon}(\widetilde \w),\vv)_K 
	&=-\langle \widetilde \w^\perp,\vv\nn\rangle_{\partial K} 
      \\&=-(\gradv \widetilde \w^\perp,\vv)_K
         -(\widetilde \w^\perp,\divv\vv)_K 
      \\&=-(\gradv \widetilde \w^\perp,\vv)_K
      \\&=-(\und{\epsilon}(\widetilde \w^\perp),\vv)_K, 
\end{alignat*}
since $\divv\VV\subset\Wtilde$. This is equivalent to
$
(\und{\epsilon} (\widetilde \w+ \widetilde \w^\perp),\vv)_K=0\; \forall \vv\in \VV,
$
and therefore to $\und{\epsilon} (\widetilde \w+ \widetilde \w^\perp)=0$, since 
$\und{\epsilon}( \W)\subset \VV$ by hypothesis. Given the fact that 
$\widetilde \w+ \widetilde \w^\perp=0$ on $\partial K$, this implies that $\widetilde \w=- \widetilde \w^\perp\in \Wtilde\cap\Wperp=\{0\}$ and the proof is complete.
\end{proof}

% Unlike the subspace $\Wtilde^\perp$, the subspace $\Vtilde^\perp$ is not necessarily unique. 
% It is defined up to 
% functions in the space 
% \[
% \VV_{\mathrm{sbb}}:=\{\vv \in \VV \,:\, \divv\vv=0, \quad \vv\nn=0\}.
% \]
% We call this the space of {\em solenoidal bubbles} in $\VV$ since the extension
% by zero of $\VV_{\mathrm{sbb}}$ lies in $\und{H}(div,\Omega)$. Next,
% we establish two properties relating the different spaces $\widetilde{\VV}^\perp$.

\begin{lemma}[Relation between the spaces $\widetilde{\VV}^\perp$]
\label{prop:charVtildeperp}
If $\VV\times \W$ admits two \mm-decompositions with associated subspaces $\Vtilde_1$ and $\Vtilde_2$, then
\[
\dim \Vtilde^\perp_1
=\dim \Vtilde_2^\perp
=\dim \bld M-\dim \W+\dim \divv\VV,
\] 
and 
$
\Vperp_1+\VV_{\mathrm{sbb}}=\Vperp_2+\VV_{\mathrm{sbb}}.
$
Here the space 
\[
\VV_{\mathrm{sbb}}:=\{\vv \in \VV \,:\, \divv\vv=0, \quad \vv\nn=0\}.
\]
% We call this the space of {\em solenoidal bubbles} in $\VV$ since the extension
% by zero of $\VV_{\mathrm{sbb}}$ lies in $\und{H}(div,\Omega)$.
\end{lemma}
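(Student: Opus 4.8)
The plan is to exploit two structural facts: that the two $M$-decompositions necessarily share the same $\Wtilde$, and that membership of an element in $\Vperp_i$ rigidly links its divergence to its normal trace.

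First I would invoke Lemma \ref{th:1.1}, which gives $\Wtilde=\divv\VV$ for any $M$-decomposition; hence $\Wtilde_1=\Wtilde_2$ and the $L^2(K)$-orthogonal complements coincide, $\Wperp_1=\Wperp_2=:\Wperp$. The dimension formula is then immediate: since $\tr:\Vperp_i\times\Wperp\to\bld M$ is an isomorphism (property (c)), a dimension count gives $\dim\Vperp_i=\dim\bld M-\dim\Wperp=\dim\bld M-\dim\W+\dim\divv\VV$, the same value for $i=1,2$.

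The identity behind the second assertion is
\[
(\divv\vv,\w)_K=\langle\vv\,\nn,\w\rangle_{\partial K}\qquad\text{for all }\w\in\W,\ \vv\in\Vperp_i,
\]
which I would establish by writing $(\vv,\und\epsilon(\w))_K=0$ (valid since $\und\epsilon(\W)\subset\Vtilde_i$ and $\vv\perp\Vtilde_i$), using the symmetry of $\vv$ to replace $\und\epsilon(\w)$ by $\gradv\w$, and integrating by parts. Its key consequence is that the divergence of any element of $\Vperp_i$ is determined by its normal trace alone. By symmetry it then suffices to show $\Vperp_1\subset\Vperp_2+\VV_{\mathrm{sbb}}$. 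Given $\vv_1\in\Vperp_1$, property (a) gives $\vv_1\nn\in\bld M$, so the trace isomorphism on $\Vperp_2\times\Wperp$ yields a unique pair $(\vv_2,\w_2)$ with $\vv_2\nn+\w_2=\vv_1\nn$ on $\partial K$; the target is $\vv_1-\vv_2\in\VV_{\mathrm{sbb}}$.

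The main obstacle is that the spurious component $\w_2\in\Wperp$ is not obviously zero on $\partial K$, and without this one cannot conclude that $\vv_1-\vv_2$ has vanishing normal trace. I would remove it by subtracting the two instances of the displayed identity for $\vv_1$ and $\vv_2$, which gives $(\divv(\vv_1-\vv_2),\w)_K=\langle\w_2,\w\rangle_{\partial K}$ for all $\w\in\W$; taking $\w=\w_2$ and using that $\divv(\vv_1-\vv_2)\in\Wtilde$ is $L^2(K)$-orthogonal to $\w_2\in\Wperp$ forces $\|\w_2\|_{\partial K}=0$, whence $(\vv_1-\vv_2)\nn=0$. Substituting $\w_2|_{\partial K}=0$ back and testing against $\w=\divv(\vv_1-\vv_2)\in\divv\VV\subset\W$ then gives $\divv(\vv_1-\vv_2)=0$. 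Thus $\vv_1-\vv_2\in\VV_{\mathrm{sbb}}$, so $\vv_1\in\Vperp_2+\VV_{\mathrm{sbb}}$; exchanging the two decompositions yields the opposite inclusion, and the claimed equality follows.
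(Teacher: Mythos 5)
Your proposal is correct and follows essentially the same route as the paper's proof: both establish the dimension count from the trace isomorphism of property (c) together with $\Wtilde=\divv\VV$, then decompose $\vv_1\nn$ via the trace isomorphism of the second decomposition, kill the $\Wperp$-component by testing against itself and using the orthogonalities $\Vperp_i\perp\und\epsilon(\W)$ and $\divv\VV\perp\Wperp$, and finally test against $\divv(\vv_1-\vv_2)\in\divv\VV\subset\W$ to conclude $\vv_1-\vv_2\in\VV_{\mathrm{sbb}}$. Your packaging of the integration by parts into the single identity $(\divv\vv,\w)_K=\langle\vv\,\nn,\w\rangle_{\partial K}$ is a minor cosmetic reorganization of the same argument, not a different method.
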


\begin{proof} 
Let us calculate the dimension of $\Vtilde_i$ for $i=1,2$. By property (c) of an
\mm-decomposition, we have that
\begin{alignat*}{1}
\dim \Vtilde_i^\perp
&=\dim \bld M-\dim \Wtilde_i^\perp
\\
&=\dim \bld M-(\dim \W-\dim \Wtilde_i)
\\&=\dim \bld M-(\dim \W-\dim \divv\VV).
\end{alignat*}
% by Proposition \ref{th:1.1}.

To prove the second equality, 
it is clear that we only need to show that $\Vperp_1\subset \Vperp_2+\VV_{\mathrm{sbb}}$. Let $\vvtilde_1^\perp\in \Vperp_1$. By hypothesis (c) in the definition of an \mm-decomposition, there exists $(\vvtilde_2^\perp,\wtilde_2^\perp)\in \Vperp_2\times \Wperp_2$ such that 
$
\vvtilde_1^\perp\nn=\vvtilde_2^\perp\nn+\wtilde_2^\perp\quad\mbox{on $\partial K$}.
$
However
\begin{alignat*}{6}
\langle \wtilde_2^\perp,\wtilde_2^\perp\rangle_{\partial K}
	&=\langle (\vvtilde_1^\perp-\vvtilde_2^\perp)\nn,\wtilde_2^\perp\rangle_{\partial K}\\
	&=(\vvtilde_1^\perp-\vvtilde_2^\perp,\und\epsilon( \wtilde_2^\bot))_K
		+(\divv(\vvtilde_1^\perp-\vvtilde_2^\perp),\wtilde_2^\bot)_K=0,
\end{alignat*}
since $\vvtilde_1^\perp-\vvtilde_2^\perp$ is orthogonal to $\und\epsilon( \W)$
% $\widetilde{\boldsymbol{V}}_1\cap\widetilde{\boldsymbol{V}}_2\supset \gradv \W$
and $\wtilde_2^\perp$ is orthogonal to $\divv\und\Sigma$
%$\widetilde{\W}_2\supset\divv\und\Sigma$. 
This proves that $\vvtilde_1^\perp\nn=\vvtilde_2^\perp\nn$ on $\partial K$. Finally
\[
(\divv(\vvtilde_1^\perp-\vvtilde_2^\perp),\w)_K=(\vvtilde_1^\perp-\vvtilde_2^\perp,\gradv \w)=0\qquad\forall \w\in \W,
\]
and therefore $\divv(\vvtilde_1^\bot-\vvtilde_2^\bot)=0$, since $\W$ contains all divergences of elements of $\VV$. This proves that $\vvtilde_1^\perp-\vvtilde_2^\perp\in \VV_{\mathrm{sbb}}$.
\end{proof}

% \subsection{The canonical $\boldsymbol{\mathrm{M}}$-decomposition}
% The above two results suggest the introduction of what could be called the canonical $\boldsymbol{\mathrm{M}}$-decomposition.
% 
\begin{lemma}[The canonical \mm-decomposition]
\label{th:1.3}
If the space $\VV\times \W$ admits an \mm-decomposition, then it admits an \mm-decomposition based on the subspaces
\[
\Vtilde = \und\epsilon( \W){\oplus} \VV_{\mathrm{sbb}} \qquad \mbox{(orthogonal sum)}, \qquad \Wtilde=\divv\VV.
\]
\end{lemma}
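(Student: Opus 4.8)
The plan is to verify directly that the canonical pair $(\Vtilde,\Wtilde)$, with $\Wtilde=\divv\VV$ and $\Vtilde=\und{\epsilon}(\W)\oplus\VV_{\mathrm{sbb}}$, satisfies the three conditions (a)--(c) of Definition~\ref{def:m}, relying only on the hypothesis that $\VV\times\W$ admits \emph{some} \mm-decomposition, say with subspaces $(\Vtilde_1,\Wtilde_1)$, together with Lemma~\ref{th:1.1}. Since Lemma~\ref{th:1.1} forces $\Wtilde_1=\divv\VV$, the space $\Wperp=(\divv\VV)^\perp$ coincides with $\Wtilde_1^\perp$, and the isomorphism property of $(\Vtilde_1,\Wtilde_1)$ gives the dimensional reference $\dim\Vtilde_1^\perp+\dim\Wperp=\dim\bld M$. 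Conditions (a) and (b) are then quick: (a) depends only on $\VV\times\W$ and $\bld M$ and so is inherited; for (b) one first checks that the sum $\und{\epsilon}(\W)\oplus\VV_{\mathrm{sbb}}$ is $L^2(K)$-orthogonal, since for $\w\in\W$ and $b\in\VV_{\mathrm{sbb}}$ one has $(\und{\epsilon}(\w),b)_K=-(\w,\divv b)_K+\langle\w,b\,\nn\rangle_{\partial K}=0$, and then $\und{\epsilon}(\W)\subset\Vtilde$ and $\divv\VV\subset\Wtilde$ hold by construction.

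The whole difficulty lies in condition (c): that $\tr:\Vperp\times\Wperp\to\bld M$ is an isomorphism. I would prove injectivity first and match dimensions afterwards. For injectivity, take $(\vv^\perp,\w^\perp)\in\Vperp\times\Wperp$ with $\vv^\perp\nn+\w^\perp=\bld 0$ on $\partial K$. Pairing with $\w^\perp$ and integrating by parts yields $\|\w^\perp\|_{\partial K}^2=-(\vv^\perp,\und{\epsilon}(\w^\perp))_K-(\divv\vv^\perp,\w^\perp)_K$, where the first term vanishes because $\vv^\perp\perp\und{\epsilon}(\W)$ and the second because $\w^\perp\perp\divv\VV$; hence $\w^\perp=\bld 0$ on $\partial K$ and $\vv^\perp\nn=\bld 0$. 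A second integration by parts gives $(\divv\vv^\perp,\w)_K=-(\vv^\perp,\und{\epsilon}(\w))_K=0$ for every $\w\in\W$, so $\divv\vv^\perp=\bld 0$ and $\vv^\perp\in\VV_{\mathrm{sbb}}\subset\Vtilde$; together with $\vv^\perp\in\Vperp$ this forces $\vv^\perp=\bld 0$. Finally $\w^\perp\in\Wperp$ has zero trace, so $(\vv,\und{\epsilon}(\w^\perp))_K=0$ for all $\vv\in\VV$, whence (taking $\vv=\und{\epsilon}(\w^\perp)$) $\und{\epsilon}(\w^\perp)=\bld 0$ and $\w^\perp$ is a rigid motion vanishing on $\partial K$, i.e.\ $\w^\perp=\bld 0$.

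It remains to show $\dim\Vperp+\dim\Wperp=\dim\bld M$. Injectivity already gives ``$\le$'', and comparison with the reference identity reduces the matter to the single inequality $\dim\Vtilde\le\dim\Vtilde_1$. I expect this to be the main obstacle. I would obtain it by proving $\Vtilde\cap\Vtilde_1^\perp=\{\bld 0\}$, which makes the orthogonal projection onto $\Vtilde_1$ injective on $\Vtilde$. Indeed, for $\vv\in\Vtilde\cap\Vtilde_1^\perp$ write $\vv=\und{\epsilon}(\w)+b$ with $b\in\VV_{\mathrm{sbb}}$; since $\und{\epsilon}(\w)\in\Vtilde_1$ while $\vv\in\Vtilde_1^\perp$, and $(b,\und{\epsilon}(\w))_K=0$ by the orthogonality above, the identity $0=(\vv,\und{\epsilon}(\w))_K=\|\und{\epsilon}(\w)\|_K^2$ forces $\und{\epsilon}(\w)=\bld 0$, so $\vv=b\in\VV_{\mathrm{sbb}}\cap\Vtilde_1^\perp$; then $\tr(\vv,\bld 0)=\vv\nn=\bld 0$, and the injectivity of $\tr$ on $\Vtilde_1^\perp\times\Wperp$ (condition (c) of the given decomposition) gives $\vv=\bld 0$. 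Hence $\dim\Vtilde\le\dim\Vtilde_1$, the two inequalities combine to the required dimension identity, and the injective map $\tr$ is therefore an isomorphism, establishing (c).

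The one genuinely delicate point, beyond this dimension bookkeeping, is to keep the argument free of circular reasoning: since Theorem~\ref{thm:1.5} is precisely what the present lemma is meant to help establish, I would use \emph{only} Lemma~\ref{th:1.1}, the defining properties of the given decomposition $(\Vtilde_1,\Wtilde_1)$, and elementary integration by parts, and in particular avoid invoking the kernels' trace decomposition or the comparison Lemma~\ref{prop:charVtildeperp}.
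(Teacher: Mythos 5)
Your proof is correct, and it takes a genuinely different route from the paper's. The paper argues constructively: writing $\Vtilde_1\times\Wtilde_1$ for the subspaces of the given decomposition, it forms the auxiliary space $\underline{\boldsymbol{\mathsf{S}}}:=\{\vvtilde^\perp-\Pi_{\mathrm{sbb}}\vvtilde^\perp:\ \vvtilde^\perp\in\Vperp_1\}$, where $\Pi_{\mathrm{sbb}}$ is the $L^2(K)$-orthogonal projection onto $\VV_{\mathrm{sbb}}$; since $(\Pi_{\mathrm{sbb}}\vv)\,\nn=\bld 0$, the normal-trace isomorphism onto the restricted target $\bld M_{\nn}$ (the reformulated characterization of Lemma \ref{Mn}) transfers from $\Vperp_1$ to $\underline{\boldsymbol{\mathsf{S}}}$, so that $\underline{\boldsymbol{\mathsf{S}}}^\perp\times\divv\VV$ is an \mm-decomposition, and a double-inclusion argument then identifies $\underline{\boldsymbol{\mathsf{S}}}^\perp$ with $\und\epsilon(\W)\oplus\VV_{\mathrm{sbb}}$. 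You instead verify Definition \ref{def:m} head-on: your two integration-by-parts computations establishing injectivity of $\tr$ on $\Vperp\times\Wperp$ are sound (including the identification $\Wperp=(\divv\VV)^\perp$ via Lemma \ref{th:1.1} and the rigid-motion step), and your dimension count closes correctly, its one nontrivial ingredient $\dim\Vtilde\le\dim\Vtilde_1$ following from $\Vtilde\cap\Vperp_1=\{\und 0\}$, which you derive from $\und\epsilon(\W)\subset\Vtilde_1$, the orthogonality of the canonical sum, and condition (c) of the given decomposition applied to the pair $(\vv,\bld 0)$. Your route is more elementary and self-contained: it needs neither Lemma \ref{Mn} nor the projection construction, and your injectivity computations are essentially the arguments the paper postpones to the proof of Theorem \ref{thm:1.5} (where it shows $\dim\gamma\Vtilde^\perp=\dim\Vtilde^\perp$ and $\dim\gamma\Wtilde^\perp=\dim\Wtilde^\perp$ for the canonical choice), so you have merely front-loaded them. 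What the paper's construction buys in exchange is an explicit relation between an arbitrary admissible complement $\Vperp_1$ and the canonical one modulo $\VV_{\mathrm{sbb}}$, which is precisely the substance of Lemma \ref{prop:charVtildeperp}; your comparison argument extracts from that relation only the dimension inequality actually needed here. Your concern about circularity is also correctly resolved: Lemma \ref{th:1.1} is proved independently of Theorem \ref{thm:1.5}, so your argument is a legitimate stand-alone replacement for the paper's proof.
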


To prove this result, we are going to use the following auxiliary result.
 \begin{lemma}
 \label{Mn}
Therefore, $\VV\times \W$ admits a decomposition if and only if
\begin{itemize}
\item[{\rm (a)}] $\tr (\VV\times \W)\subset \bld M$,
\item[{\rm (b)}]$\und{\epsilon} (\W)\subset \Vtilde$, $\divv\VV \subset \W$,
\end{itemize}
and there exists a subspace $\Vtilde\subset \VV$ such that:
\begin{itemize}
\item[{\rm (c)}] the restricted trace map 
\[
\Vperp\ni \vv \longmapsto \vv\nn \in \bld M_{\nn} 
	:=\{ \mu \in \bld M\,:\, \langle\mu,\w\rangle_{\partial K}=0\quad\forall \w\in (\divv\VV)^\perp\},
\]
is an isomorphism.
\end{itemize}
\end{lemma}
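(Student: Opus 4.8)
The plan is to reduce both implications to a single statement about the combined trace operator once the subspace $\Wtilde$ has been pinned down. Indeed, by Lemma~\ref{th:1.1}, whenever $\VV\times\W$ admits an \mm-decomposition the associated subspace $\Wtilde$ is forced to equal $\divv\VV$; conversely, when proving the sufficiency of (a)--(c), I would simply \emph{declare} $\Wtilde:=\divv\VV$ and keep the subspace $\Vtilde$ furnished by hypothesis (b). In either case properties (a) and (b) of Definition~\ref{def:m} are immediate (the inclusion $\divv\VV\subset\W$ gives $\Wtilde=\divv\VV\subset\W$, and trivially $\divv\VV\subset\Wtilde$), so the entire lemma collapses to the claim that, with $\Wtilde=\divv\VV$ and any $\Vtilde$ satisfying $\und{\epsilon}(\W)\subset\Vtilde\subset\VV$,
\[
\tr:\Vtilde^\perp\times\Wtilde^\perp\to\bld M
\ \text{is iso}\ \Longleftrightarrow\
\big(\Vperp\ni\vv\mapsto\vv\nn\in\bld M_\nn\big)\ \text{is iso.}
\]

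The heart of the argument is to show that $\tr$ is \emph{block diagonal} with respect to a natural orthogonal splitting of $\bld M$. By hypothesis (a), $\gamma\Wtilde^\perp=\{\w|_{\partial K}:\w\in\Wtilde^\perp\}$ is a subspace of $\bld M$, and since $(\divv\VV)^\perp=\Wtilde^\perp$, the space $\bld M_\nn$ is by definition its $L^2(\partial K)$-orthogonal complement in $\bld M$; hence $\bld M=\bld M_\nn\oplus\gamma\Wtilde^\perp$ orthogonally. The second summand is visibly $\tr(\{\bld 0\}\times\Wtilde^\perp)$. For the first, the key computation is that for $\vv\in\Vperp$ and $\w\in\Wtilde^\perp$, integration by parts yields
\[
\langle\vv\nn,\w\rangle_{\partial K}=(\vv,\und{\epsilon}(\w))_K+(\divv\vv,\w)_K=0,
\]
because $\und{\epsilon}(\w)\in\und{\epsilon}(\W)\subset\Vtilde$ is orthogonal to $\vv\in\Vperp$, while $\divv\vv\in\divv\VV=\Wtilde$ is orthogonal to $\w\in\Wtilde^\perp$. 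Thus $\vv\nn\in\bld M_\nn$, so $\tr$ maps $\Vperp$ into $\bld M_\nn$ and $\Wtilde^\perp$ into $\gamma\Wtilde^\perp$, i.e. $\tr=\tr_1\oplus\tr_2$ with $\tr_1(\vv)=\vv\nn$ and $\tr_2(\w)=\w|_{\partial K}$.

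Since a block-diagonal map between direct sums is an isomorphism exactly when both diagonal blocks are, it remains only to verify that $\tr_2:\Wtilde^\perp\to\gamma\Wtilde^\perp$ is \emph{always} an isomorphism; the displayed equivalence then follows, $\tr_1$ being precisely the map in condition (c). Surjectivity of $\tr_2$ is its very definition, so the one genuinely substantive point---the main obstacle---is its injectivity. Here I would take $\w\in\Wtilde^\perp$ with $\w|_{\partial K}=\bld 0$ and, for arbitrary $\vv\in\VV$, integrate by parts to get $(\divv\vv,\w)_K=-(\vv,\und{\epsilon}(\w))_K$; as $\w\perp\divv\VV$ the left-hand side vanishes, and choosing $\vv=\und{\epsilon}(\w)\in\und{\epsilon}(\W)\subset\VV$ forces $\und{\epsilon}(\w)=\und 0$. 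Hence $\w$ is a rigid motion vanishing on $\partial K$, which must vanish identically. This establishes injectivity of $\tr_2$ and completes the reduction: reading the equivalence from left to right proves the necessity of (c), and from right to left shows that (a)--(c) produce an \mm-decomposition with $\Wtilde=\divv\VV$.
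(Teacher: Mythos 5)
Your proof is correct and takes essentially the same route as the paper, whose proof is a one-liner: fix $\Wtilde=\divv\VV$ via Lemma \ref{th:1.1} and unwind condition (c) of Definition \ref{def:m}. The details you supply to make that reduction rigorous --- the orthogonal splitting $\bld M=\bld M_{\nn}\oplus\gamma\Wtilde^\perp$ (valid since hypothesis (a) gives $\gamma\Wtilde^\perp\subset\bld M$), the resulting block-diagonal structure of $\tr$, and the injectivity of $\w\mapsto\w|_{\partial K}$ on $(\divv\VV)^\perp$ via the choice $\vv:=\und{\epsilon}(\w)$ and the rigid-motion argument --- are precisely the computations the paper relies on implicitly, the last of which reappears almost verbatim inside its proof of Theorem \ref{thm:1.5}.
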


\begin{proof}
This result follows from the fact that $\Wtilde=\divv \VV$ by Lemma \ref{th:1.1}, and by the definition of an \mm-decomposition.
\end{proof}

We are now read to prove Lemma  \ref{th:1.3}.
\begin{proof}
Let us begin by noting that
\[
(\gradv \w,\vv_{\mathrm{sbb}})_K=(\w,\divv\vv_{\mathrm{sbb}})_K+\langle
\w,\vv_{\mathrm{sbb}}\nn\rangle_{\partial K}=0\quad\forall \w\in \W,\;\forall \vv_{\mathrm{sbb}}\in \VV_{\mathrm{sbb}}.
\]
This shows that $\gradv \W\perp\VV_{\mathrm{sbb}}$ and the sum $\gradv \W\oplus\VV_{\mathrm{sbb}}$ is orthogonal.

Now, let us construct the space $\Vtilde$.
Let $\Vtilde_1$ be the space associated to the existing \mm-decomposition, let $\Pi_{\mathrm{sbb}}:\VV \to \VV_{\mathrm{sbb}}$ be the $\boldsymbol{L}^2(K)$-orthogonal projector onto $\VV_{\mathrm{sbb}}$, and let us set
\[
\underline{\boldsymbol{\mathsf{S}}}:=\{\vvtilde^\perp-\Pi_{\mathrm{sbb}}\vvtilde^\perp:\vvtilde^\perp\in \Vperp_1\}.
 \]
We claim that $\Vtilde=\underline{\boldsymbol{\mathsf{S}}}^\perp$.

Before proving the claim, let us show that $\VV\times \W$ admits an \mm-decomposition with the associated subspace 
$\underline{\boldsymbol{\mathsf{S}}}^\perp\times\Wtilde$.
By definition of $\underline{\boldsymbol{\mathsf{S}}}$, it is clear that $\mathrm{dim}\,\underline{\boldsymbol{\mathsf{S}}}\le \mathrm{dim}\,\Vperp_1$.
Noting that $(\Pi_{\mathrm{sbb}}\vv)\nn =0$ for all $\vv\in\VV$, it follows that the range of the normal trace operators $\vv\mapsto \vv\nn$ from $\underline{\boldsymbol{\mathsf{S}}}$ and from $\Vperp_1$ is the same.
From Proposition \ref{th:1.1},  it follows that $\Vperp_1\ni\vv \mapsto \vv\nn \in \bld M_{\nn}:=\{\mu\in \bld M: \langle\mu,\w\rangle_{\partial K}=0 \quad \forall\; \w\in (\divv \VV)^\perp\}$ is an isomorphism of finite dimensional spaces, and therefore the normal trace $\underline{\boldsymbol{\mathsf{S}}}\to \bld M_{\nn}$ is also an isomorphism. This implies that $\VV\times \W$ admits an \mm-decomposition with the associated subspace 
$\underline{\boldsymbol{\mathsf{S}}}^\perp\times\Wtilde$.

Now, let us prove the claim. First, we show that $\Vtilde\subset\underline{\boldsymbol{\mathsf{S}}}^\perp$, that is, that
$\und{\epsilon}(\W)\subset\underline{\boldsymbol{\mathsf{S}}}^\perp$ and $\VV_{\mathrm{sbb}} \subset\underline{\boldsymbol{\mathsf{S}}}^\perp$. The second inclusion follows by the definition of $\underline{\boldsymbol{\mathsf{S}}}$. Let us prove the first.
Take any $\w\in \W$ and any $\underline{\boldsymbol{\mathsf{s}}}\in\underline{\boldsymbol{\mathsf{S}}}$. By construction, there is $\vvtilde^\perp\in \Vperp_1$
such that $\underline{\boldsymbol{\mathsf{s}}}=\vvtilde^\perp-\Pi_{\mathrm{sbb}}\vvtilde^\perp$ and so,
\[
(\und{\epsilon}( \w), \underline{\boldsymbol{\mathsf{s}}})_K\
=(\und{\epsilon}( \w), \vvtilde^\perp-\Pi_{\mathrm{sbb}}\vvtilde^\perp)_K
=(\und{\epsilon}( \w), \vvtilde^\perp)_K
=0,
\]
because $\Vtilde_1\supset \und{\epsilon}(\W).$ This implies that $\und{\epsilon}(\W)\subset \underline{\boldsymbol{\mathsf{S}}}^\perp$ and hence that $\Vtilde\subset\underline{\boldsymbol{\mathsf{S}}}^\perp$.

It remains to show that $\Vtilde^\perp\cap \underline{\boldsymbol{\mathsf{S}}}^\perp=\{0\}$, which proves the reverse inclusion. Let then 
$\underline{\boldsymbol{\mathsf{s}}}^\perp\in \underline{\boldsymbol{\mathsf{S}}}^\perp$ satisfy
\begin{equation}\label{eq:1.4}
(\underline{\boldsymbol{\mathsf{s}}}^\perp,\und{\epsilon}( \w)+\vv_{\mathrm{sbb}})_K=0\qquad \forall \w\in \W, \;\forall\vv_{\mathrm{sbb}}\in \VV_{\mathrm{sbb}}.
\end{equation}
Then, for all $\w^\perp\in \Wperp=(\divv\VV)^\bot$, we have that
\[
0 =(\underline{\boldsymbol{\mathsf{s}}}^\perp, \und{\epsilon}(\w^\perp))_K=-(\divv\underline{\boldsymbol{\mathsf{s}}}^\perp,\w^\perp)_K+\langle \underline{\boldsymbol{\mathsf{s}}}^\perp\nn,\w^\perp\rangle_{\partial K}
=
\langle \underline{\boldsymbol{\mathsf{s}}}^\perp\nn,\w^\perp\rangle_{\partial K}.
\]
Since $\VV\times \W$ admits an \mm-decomposition with the associated subspace 
$\underline{\boldsymbol{\mathsf{S}}}^\perp\times\Wtilde$, we have that
$\bld M=\gamma\underline{\boldsymbol{\mathsf{S}}}{\oplus} \gamma\Wperp$ with orthogonal sum.
 Hence,  there exists $\underline{\boldsymbol{\mathsf{s}}}\in \underline{\boldsymbol{\mathsf{S}}}$ such that $(\underline{\boldsymbol{\mathsf{s}}}+\underline{\boldsymbol{\mathsf{s}}}^\perp)\nn=0$ on $\partial K$.  Using \eqref{eq:1.4} it follows that, for all $\w\in \W$,
\begin{alignat*}{6}
0	&=(\underline{\boldsymbol{\mathsf{s}}}^\perp,\und{\epsilon}( \w))_K\\
        &=(\underline{\boldsymbol{\mathsf{s}}}^\perp,\gradv \w)_K\\
        &=-(\divv\underline{\boldsymbol{\mathsf{s}}}^\perp,\w)-\langle
        \underline{\boldsymbol{\mathsf{s}}}\nn,\w\rangle_{\partial K}\\
	&=-(\divv(\underline{\boldsymbol{\mathsf{s}}}^\perp+\underline{\boldsymbol{\mathsf{s}}}),\w)_K-(\underline{\boldsymbol{\mathsf{s}}},\gradv \w)_K \\
	&=-(\divv(\underline{\boldsymbol{\mathsf{s}}}^\perp+\underline{\boldsymbol{\mathsf{s}}}),\w)_K,
\end{alignat*}
because $\und{\epsilon}( \W)\subset \underline{\boldsymbol{\mathsf{S}}}^\perp$.
Therefore $\divv(\underline{\boldsymbol{\mathsf{s}}}^\perp+\underline{\boldsymbol{\mathsf{s}}})=0$, and thus $\underline{\boldsymbol{\mathsf{s}}}^\perp+\underline{\boldsymbol{\mathsf{s}}}\in \VV_{\mathrm{sbb}}$. Finally, 
by the identity \eqref{eq:1.4} with $\vv_{\mathrm{sbb}}:=\underline{\boldsymbol{\mathsf{s}}}^\perp+\underline{\boldsymbol{\mathsf{s}}}$ and $\w:=\boldsymbol{0}$, we get that
\[
0=(\underline{\boldsymbol{\mathsf{s}}}^\perp,\underline{\boldsymbol{\mathsf{s}}}^\perp+\underline{\boldsymbol{\mathsf{s}}})_K=(\underline{\boldsymbol{\mathsf{s}}}^\perp,\underline{\boldsymbol{\mathsf{s}}}^\perp)_K,
\]
and therefore $\underline{\boldsymbol{\mathsf{s}}}^\perp=\underline{\boldsymbol 0}$. This proves the claim and completes the proof.
\end{proof}

Now, we are ready to prove Theorem \ref{thm:1.5}.
\begin{proof} Since properties (a) and (b) are part of the definition of an mm-decomposition, we can always assume they hold.
Since, by Lemma \ref{th:1.3}, $\VV\times \W$ admits an \mm-decomposition if and only if it admits the canonical decomposition,
we can always take the choice $\Vtilde:=\gradv \W\oplus \VV_{\mathrm{sbb}}$ and $\Wtilde:=\divv \VV$. Finally, 
we have that the trace operator $\tr: \Vtilde^\perp\times \Wtilde^\perp\rightarrow {\bld M}$ is an isomorphism if an only if 
\[
\dim \bld M = \dim \gamma\Vtilde^\perp + \dim \gamma\Wtilde^\perp,
\quad
\dim \gamma \Vtilde^\perp 
=\dim \Vtilde^\perp,
\quad
\dim \gamma\Wtilde^\perp
=\dim \Wtilde^\perp.
\]
Thus, we have to show that the above equalities hold if and only if 
$I_{\bld M}(\VV\times \W)=0$ assuming properties (a) and (b).

Let us show that the last equality always hold. Indeed, if $\widetilde{\w}^\perp\in \Wtilde^\perp$ and is zero on $\partial K$, then, for any 
$\boldsymbol{\vv}\in \VV$, we have
\[
0=\langle \widetilde{\w}^\perp, \boldsymbol{\vv}\cdot\boldsymbol{n}\rangle_{\partial K}
=(\gradv \widetilde{\w}^\perp, \boldsymbol{\vv})_K+(\widetilde{\w}^\perp,\divv\boldsymbol{\vv})_K
=(\und{\epsilon} \widetilde{\w}^\perp, \boldsymbol{\vv})_K
\]
since $\Wtilde=\divv\VV$. Since $\VV\supset\und{\epsilon}(\W)$, we can now take $\boldsymbol{\vv}:= \und{\epsilon}(\widetilde{\w}^\perp)$ and conclude that
$\widetilde{\w}^\perp$ is a constant on $K$. As a consequence $\widetilde{\w}^\perp=\bld 0$, and the last equality follows. 

Let us now show that the second equality also holds. Indeed, if $\widetilde{\boldsymbol{\vv}}^\perp\in \Vtilde^\perp$ and its normal trace is
zero on $\partial K$, then, for any  $\w\in \W$, we have
\[
0=\langle {\w}, \widetilde{\boldsymbol{\vv}}^\perp\cdot\boldsymbol{n}\rangle_{\partial K}
=(\gradv {\w}, \widetilde{\boldsymbol{\vv}}^\perp)_K+({\w},\divv\widetilde{\boldsymbol{\vv}}^\perp)_K
=(\und{\epsilon} ({\w}), \widetilde{\boldsymbol{\vv}}^\perp)_K+({\w},\divv\widetilde{\boldsymbol{\vv}}^\perp)_K
=({\w},\divv\widetilde{\boldsymbol{\vv}}^\perp)_K
\]
since $\Vtilde\supset\und{\epsilon}(\W)$. Since $\W\supset\divv\VV$, we can now take $\w:=\divv \widetilde{\boldsymbol{v}}^\perp$ and conclude that
$\divv \widetilde{\boldsymbol{\vv}}^\perp$ is zero on $K$. As a consequence, $\widetilde{\boldsymbol{\vv}}^\perp\in\VV_{\mathrm{sbb}}\subset\Vtilde$, we have that $\widetilde{\boldsymbol{\vv}}^\perp=\boldsymbol{0}$, and the second equation follows.

Thus, we only have to show that
$
\dim \bld M = \dim \Vtilde^\perp + \dim \Wtilde^\perp,
$
if and only if $I_{\bld M}(\VV\times \W)=0$.  But, we have that
\begin{alignat*}{6}
I:=&\dim \bld M-\dim \Vtilde^\perp - \dim \Wtilde^\perp
\\=&
\dim \bld M-(\dim\VV- \dim \Vtilde) -(\dim \W-\dim \Wtilde)
\\
=&
\dim \bld M-(\dim\VV- \dim \gradv \W- \dim \VV_{\mathrm{sbb}}) -(\dim \W-\dim \divv \VV)
\end{alignat*}
by the definition of $\Vtilde$ and $\Wtilde$. After rearranging terms, we  get that
\begin{alignat*}{1}
I =&
\dim \bld M-(\dim\VV- \dim \divv \VV- \dim \VV_{\mathrm{sbb}}) -(\dim \W-\dim \gradv \W)
\\
=&
\dim \bld M-(\dim\{\vv\in\VV: \divv \vv=0\}- \dim\{\vv\in\VV: \divv \vv=0, \vv\nn|_{\partial K}=0\} )
\\&- \dim\{ \w\in \W: \gradv w=0\}
\\
=&\dim \bld M-\dim\{\vv\nn|_{\partial K}:\, \vv\in \VV, \gradv \cdot\vv=0\}
-\dim \{\w|_{\partial K}:\, \w\in \W, \und{\epsilon} (\w)=0\}
\\=&I_{\bld M}(\VV\times \W).
\end{alignat*}
and the result follows.

Finally, by the inclusion property {\rm (a)}, we have that
\[
\{\vv\nn|_{\partial K}:\, \vv\in \VV, \gradv \cdot\vv=0\}
\oplus \{\w|_{\partial K}:\, \w\in \W, \und{\epsilon}(\w)=0\}\subset {\bld M},
\]
where the sum is $L^2(\partial K)$-orthogonal since
\[
\langle\vv\nn, \w\rangle_{\partial K}=(\divv \vv, \w)_K+(\vv, \gradv \w)_K
=(\divv \vv, \w)_K+(\vv, \und{\epsilon}(\w))_K=0
\]
if $\divv \vv=0$ and $\und{\epsilon} (\w) =0$. Finally,
since $\VV\times \W$ admits and \mm-decomposition, the equality holds. 
This completes the proof.
\end{proof}

\section*{Appendix D: Proof of Theorem \ref{thm:square} and Theorem \ref{thm:square2}}
In this Appendix, we prove Theorem \ref{thm:square} and Theorem \ref{thm:square2} on the explicit \mm-decomposition construction on a unit square
with initial spaces 
\[
 \VVV\times \WWW\times \bld M:=
 \bqqol_k\times \qqol_{k}\times \ppol_k. (k\ge 1)
\]

For the above initial space, it is quite easy to show that
\[
 I_S(\VVV\times\WWW) = 3,
\]
and $\delta\VV_{\mathrm{fillV}}$ in Theorem \ref{thm:square} satisfy the properties in Table \ref{table:deltaspaces}.

Next, we apply Algorithm \ref{algorithm1} and follow the steps in the proof of Theorem \ref{thm:polygon} in Section 5 to show that 
$\delta\VV_{\mathrm{fillM}}$ in Theorem \ref{thm:square} and Theorem \ref{thm:square2} also satisfy properties in Table \ref{table:deltaspaces}.
Since the proof is similar and quite simpler than that for Theorem \ref{thm:polygon} in Section 5, we only sketch the main steps of the proof.

\

{\bf (1) Finding the spaces $\VVV_{s,i}$}. The following result is an analog to Proposition \ref{lemma:characterization-s}.
\begin{lemma}
 \label{lemma-characterization-sQ}
Let $K$ be the unit square. Then, for  $\VVV = \bqqol_k(K)$, we have 
\[
 \VVV_{s,i} = J\,\Phi_i,
\]
where 
\begin{align*}
\Phi_1:=&\;  \pcol_{k,k}{(K)}\oplus \mathrm{span}
\{x^{k+1},x^{k+1}y,x^{k+2}, y^{k+1}, xy^{k+1}, y^{k+2}\},\\
\Phi_2:= &\;x^2\,\pcol_{k-2,k}{(K)}\oplus \mathrm{span}
\{x^{k+1}, x^{k+1}y, x^{k+2} \},\\
\Phi_3:= &\;x^2y^2\,\pcol_{k-2,k-2}{(K)},\\
\Phi_4:= &\;x^2y^2(1-x)^2\,\pcol_{k-4,k-2}{(K)},\\
\Phi_5:= &\;x^2y^2(1-x)^2(1-y)^2\, \pcol_{k-4,k-4}{(K)}.
\end{align*}
Here $\pcol_{k_1,k_2}(K):=\pcol_{k_1}(x)\otimes\pcol_{k_2}(y)$ is the tensor production space
with variable degree.
\end{lemma}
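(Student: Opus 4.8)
The plan is to mirror the argument used for the polygonal/$\bppol_k$ case in Proposition~\ref{lemma:characterization-s}. Since $K$ is simply connected, every divergence-free symmetric-tensor field is the Airy stress image $J\phi$ of some scalar potential (see \cite{ArnoldWinther02}), and the kernel of $J$ is exactly the linear functions. Hence $\VVV_{s,i}=J\,\Phi_i$, where $\Phi_i$ is the space of potentials $\phi$, taken up to a linear function, for which $J\phi\in\bqqol_k(K)$ and $\gamma_j(J\phi)=\bld 0$ for $1\le j\le i-1$. The whole proof therefore reduces to identifying $\Phi_i$. The feature that distinguishes this from the $\bppol_k$ case is that the potential space is \emph{not} a single total-degree space: the tensor-product (anisotropic) structure of $\bqqol_k(K)$ forces the asymmetric form displayed in the statement.

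I would first settle $i=1$, i.e. characterize all $\phi$ with $J\phi\in\bqqol_k(K)$. Expanding $\phi=\sum_{a,b}c_{ab}x^ay^b$, the three entries $\phi_{yy},-\phi_{xy},\phi_{xx}$ of $J\phi$ equal, respectively, $\sum c_{ab}b(b-1)x^ay^{b-2}$, $-\sum c_{ab}ab\,x^{a-1}y^{b-1}$ and $\sum c_{ab}a(a-1)x^{a-2}y^b$. Distinct monomials $x^ay^b$ produce distinct monomials in each entry, so no cancellation can occur and the requirement $J\phi\in\bqqol_k(K)$ decouples monomial by monomial. A direct count of which $x^ay^b$ keep all three nonzero second-derivative monomials inside the scalar $Q_k$-space then shows that, beyond $\pcol_{k,k}(K)$, the only admissible monomials are $x^{k+1},x^{k+1}y,x^{k+2},y^{k+1},xy^{k+1},y^{k+2}$; this is exactly $\Phi_1$.

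For $i\ge 2$ I would impose the vanishing normal traces. By Lemma~\ref{lemma:gJ}, $\gamma_j(J\phi)=\bld 0$ is equivalent to $\grads\phi|_{\eg_j}\in\ppol_0(\eg_j)$, i.e. $\grads\phi$ is constant on $\eg_j$. Subtracting a suitable linear function I may assume $\grads\phi|_{\eg_1}=\bld 0$, which forces $\lambda_1^2=x^2$ to divide $\phi$. Then, exactly as in Section~\ref{sec:proof}, a vertex-chaining induction (the constant value of $\grads\phi$ on each successive edge is pinned to $\bld 0$ at the vertex shared with the previous edge) yields $\grads\phi|_{\eg_j}=\bld 0$ for all $j\le i-1$; since $\lambda_1,\dots,\lambda_{i-1}$ are pairwise coprime this gives $b_{i-1}^2\mid\phi$. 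Consequently $\Phi_i=\{\phi\in\Phi_1:\ b_{i-1}^2\mid\phi\}$, and a brief divisibility check against the six extra monomials of $\Phi_1$ produces the stated product spaces: for $i=2$ only the $x^2$-divisible extras $x^{k+1},x^{k+1}y,x^{k+2}$ survive, while for $i\ge 3$ the factor $y^2$ annihilates all six, leaving pure product spaces; the residual degree bounds are read off from the $x$- and $y$-degrees of $b_{i-1}^2$.

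I expect the characterization of $\Phi_1$ to be the main obstacle. In the isotropic $\bppol_k$ case a single pair of inclusions $J\Phi_i\subset\VVV_{s,i}\subset J\,\pcol_{k+2}$ closes the argument because the potential lives in a total-degree space; here the independent $x$- and $y$-degree bounds of $\bqqol_k(K)$ must be tracked simultaneously through all three entries of $J\phi$, and it is precisely this anisotropy that generates the six asymmetric extra potentials. Once $\Phi_1$ is correctly pinned down, the reduction to $\Phi_i$ for $i\ge 2$ is routine and parallels the polygonal case.
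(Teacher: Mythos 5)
Your proposal is correct and follows exactly the route the paper intends: Appendix~D states this lemma as the analog of Proposition~\ref{lemma:characterization-s} and omits the details, and your argument is precisely that adaptation --- Airy potentials via \cite{ArnoldWinther02}, the trace criterion of Lemma~\ref{lemma:gJ}, normalization modulo linear functions, and the divisibility chaining $b_{i-1}^2\mid\phi$ across consecutive edges. Your monomial-decoupling computation of $\Phi_1$ (each monomial $x^ay^b$ must individually keep $\phi_{xx},\phi_{xy},\phi_{yy}$ in $Q_k$, since the three second-derivative maps are injective on monomials, which also removes any need for an a priori degree bound on the potential) supplies exactly the step the paper leaves unwritten, and it checks out: it yields the six extra potentials and, after the coprime-divisibility reduction, the stated product spaces $\Phi_2,\dots,\Phi_5$.
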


\

{\bf (2) Finding the complement spaces $C_{M,i}$.}
From the above lemma, we can immediately get a characterization of $\gamma_i(\VVV_{s,i})$ and 
compute the \mm-indexes.
\begin{corollary}
 \label{corollary-dim-count-Q}
 We have
 \begin{alignat*}{2}
  \gamma_1( \VVV_{s,1} )=\;&
\mathrm{span}\{ \gamma_1(J\,y^a)\}_{a=2}^{k+2}
\oplus
  \mathrm{span}\{ \gamma_1(J\,xy^a)\}_{a=1}^{k+1},\\
%  &&\quad I_{M,1}(\VVV\times \WWW) = \; 0,\\
  \gamma_2( \VVV_{s,2} )=\;&
\mathrm{span}\{ \gamma_2(J\,x^2(1-x)^a)\}_{a=0}^{k}
\oplus
  \mathrm{span}\{ \gamma_2(J\,x^2y(1-x)^a)\}_{a=0}^{k-1},\\
%  &&\quad I_{M,2}(\VVV\times \WWW) = \; 1,\\
  \gamma_3( \VVV_{s,3} )=\;&
  \left\{
  \begin{tabular}{l l}
   $\mathrm{span}\{ \gamma_3(J\,x^2y^2(1-y)^a)\}_{a=0}^{k-2}
%\oplus
 % \mathrm{span}\{ \gamma_3(J\,x^2y^2(1-x)(1-y)^a)\}_{a=0}^{k-2}
$ & if $k \le 2$, \vspace{.2cm}\\
   $\mathrm{span}\{ \gamma_3(J\,x^2y^2(1-y)^a)\}_{a=0}^{k-2}$\\
$\hspace{.2cm}\oplus
  \mathrm{span}\{ \gamma_3(J\,x^2y^2(1-x)(1-y)^a)\}_{a=0}^{k-2}
$ & if $k \ge 3$,
  \end{tabular}\right.\\
%  &&\quad I_{M,2}(\VVV\times \WWW) = \; 1,\\
  \gamma_4( \VVV_{s,4} )=\;&
\mathrm{span}\{ \gamma_4(J\,x^2y^2(1-x)^a)\}_{a=2}^{k-2}
\oplus
  \mathrm{span}\{ \gamma_4(J\,x^2y^2(1-x)^a(1-y))\}_{a=2}^{k-2}.
%  \gamma_4( \VVV_{s,4} )=\;&
% \mathrm{span}\{ \gamma_4(J\,x^ay^2(1-x)^2)\}_{a=2}^{k-2}
% \oplus
%   \mathrm{span}\{ \gamma_4(J\,x^ay^2(1-x)^2(1-y))\}_{a=2}^{k-2}.
  \end{alignat*}
  Hence, 
  \begin{align*}
I_{M,1}(\VVV\times \WWW) = &\; 0,\\
I_{M,2}(\VVV\times \WWW) = &\; 1\\
I_{M,3}(\VVV\times \WWW) = &\; \left\{
\begin{tabular}{l l}
 $4$ & if $k = 1$,\\
 $5$ & if $k = 2$,\\
 $4$ & if $k \ge 3$,
\end{tabular}\right.
\\
I_{M,4}(\VVV\times \WWW) = &\; \left\{
\begin{tabular}{l l}
 $1$ & if $k = 1$,\\
 $3$ & if $k = 2$,\\
 $5$ & if $k \ge 3$.
\end{tabular}\right.
\end{align*}
\end{corollary}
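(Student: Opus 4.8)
The plan is to read off the Corollary from the characterization $\VVV_{s,i}=J\,\Phi_i$ of Lemma \ref{lemma-characterization-sQ}, following exactly the route by which Corollary \ref{corollary-dim-count} was deduced from Proposition \ref{lemma:characterization-s} in Section \ref{sec:proof}. The decisive simplification comes from Lemma \ref{lemma:gJ}: the normal trace satisfies $\gamma_i(J\phi)=\frac{\partial\,\bld{\mathrm{curl}}\,\phi}{\partial\bld t_i}\big|_{\eg_i}$, so it depends only on $\nabla\phi|_{\eg_i}$ and vanishes precisely when $\nabla\phi|_{\eg_i}\in\ppol_0(\eg_i)$. Because the edges of the unit square are axis-parallel, $\frac{\partial}{\partial\bld t_i}$ is $\pm\partial_y$ on the vertical edges $\eg_1,\eg_3$ and $\pm\partial_x$ on the horizontal edges $\eg_2,\eg_4$; hence $\gamma_i(J\phi)=\pm(\partial_{yy}\phi,-\partial_{xy}\phi)|_{\eg_i}$ on $\eg_1,\eg_3$ and $\gamma_i(J\phi)=\pm(\partial_{xy}\phi,-\partial_{xx}\phi)|_{\eg_i}$ on $\eg_2,\eg_4$. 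This reduces every trace to two second partials of a potential restricted to a single edge.

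First I would, edge by edge, apply this formula to the generators of $\Phi_i$ listed in Lemma \ref{lemma-characterization-sQ}. Those potentials whose edge-gradient is constant give a vanishing trace; as they are exactly the potentials of $\Phi_{i+1}$ (the kernel of $\gamma_i$ restricted to $\VVV_{s,i}$), discarding them does not affect $\gamma_i(\VVV_{s,i})$ and leaves precisely the generators displayed in the Corollary. For instance on $\eg_1=\{x=0\}$ only the monomials $y^a$ $(2\le a\le k+2)$ and $xy^a$ $(1\le a\le k+1)$ survive, producing traces of the form $(\ast,0)$ and $(0,\ast)$, while all potentials divisible by $x^2$ drop out; on $\eg_3,\eg_4$ the $\lambda_{i+1}^a$-family contributes to both components and the $\lambda_i\lambda_{i+1}^a$-family contributes to only one, which is why the spanning set acquires its two-family form. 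Once the characterization is established, the dimensions follow either by counting the listed generators or from $\dim\gamma_i(\VVV_{s,i})=\dim\VVV_{s,i}-\dim\VVV_{s,i+1}$ together with the dimensions of $\Phi_i$ (recalling that $J$ annihilates $\ppol_1$, costing three dimensions only for $i=1$). The indices then come from $I_{\bld M,i}(\VVV\times\WWW)=\dim\bld M(\eg_i)-\dim\gamma_i(\VVV_{s,i})-\delta_{i,4}\dim\gamma_4(\WWW_{\!rm})$ with $\dim\bld M(\eg_i)=2(k+1)$ and $\dim\gamma_4(\WWW_{\!rm})=3$, splitting into the cases $k=1$, $k=2$, $k\ge3$ according to whether the anisotropic factors $\pcol_{k-2,k-2}$ and $\pcol_{k-4,k-2}$ in $\Phi_3,\Phi_4$ are empty or nonempty.

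The main obstacle is the linear independence of the surviving traces, which is what actually validates the claimed spanning sets and the dimension count. I would prove it by the vertex-peeling argument of Section \ref{sec:proof}: from a vanishing combination $\gamma_i(J\phi)=\bld 0$, Lemma \ref{lemma:gJ} forces $\phi|_{\eg_i}\in\pcol_1(\eg_i)$, and successive evaluation at the endpoints $\vt_i,\vt_{i+1}$ followed by division by the vanishing factor $\lambda_{i+1}$ extracts the coefficients one at a time, the $\lambda_{i+1}^a$-family first and then the $\lambda_i\lambda_{i+1}^a$-family; in several cases the split of the trace into a pure first-component part and a pure second-component part makes even this separation immediate. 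The only genuinely new bookkeeping relative to the triangular and polygonal cases is that on $\eg_3,\eg_4$ the two families are carried by the variable-degree tensor-product spaces $\pcol_{k_1,k_2}$ rather than by a single total-degree space, so the index ranges for $a$ and the degenerate small-$k$ situations (e.g. $\Phi_4=\{\und 0\}$ for $k\le3$) must be tracked carefully. Since the argument is otherwise a simpler instance of the one already carried out in Section \ref{sec:proof}, I would present only this sketch.
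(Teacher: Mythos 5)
Your proposal is correct and follows essentially the same route as the paper: the paper also reads the corollary directly off the characterization $\VVV_{s,i}=J\,\Phi_i$ of Lemma \ref{lemma-characterization-sQ}, using the trace formula of Lemma \ref{lemma:gJ} to discard the potentials in $\Phi_{i+1}$ (the kernel of $\gamma_i$ on $\VVV_{s,i}$), the count $\dim\gamma_i(\VVV_{s,i})=\dim\VVV_{s,i}-\dim\VVV_{s,i+1}$, and the definition of $I_{\bld M,i}$ with $\dim\bld M(\eg_i)=2(k+1)$ and $\dim\gamma_4(\WWW_{\!rm})=3$, exactly as in the template of Corollary \ref{corollary-dim-count}. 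Your edge-by-edge verification, the linear-independence argument, and the case splitting at $k=1,2,\ge3$ driven by the emptiness of $\pcol_{k-2,k-2}$ and $\pcol_{k-4,k-2}$ all agree with the paper's (deliberately brief) reasoning.
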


Now, it is easy to show the following $C_{M,i}$ satisfy 
$\gamma_i(\VVV_{s,i})\oplus C_{M,i}=\widetilde{\bld M}(\eg_i)$ :
 \[
  C_{M,i} =\left\{\begin{tabular}{l l}
            $\{0\}$ & if $i = 1$,           
            \vspace{.3cm}
\\
$\mathrm{span}
\left  \{ \gamma_2\left(J\,\eta_2y(1-x)\right)\right\}
$& if $i =  2$,
            \vspace{.3cm}
            \\
            $\mathrm{span}
\left  \{ \gamma_3\left(J\,\eta_3^2(1-y)^{b}\right)\right\}_{b=k-1}^{k}$\\
$ \hspace{.2cm}\oplus 
\mathrm{span}
\left  \{ \gamma_3\left(J\,\eta_3^2(1-x)(1-y)^{b}\right)\right\}_{b=k-1-\delta_{2,k}}^{k-1}$\\
$ \hspace{.4cm}\oplus \mathrm{span}
\left \{ 
\gamma_3\left(J\,\eta_3(1-x)(1-y)\right)\right\}$
            & if $i = 3$ ,
            \vspace{.3cm}\\
            $\mathrm{span}
%\left  \{ \gamma_4\left(J\,\eta_4(1-y)x\right)\right\}
\left  \{ \gamma_4\left(J\,\eta_1(1-x)(1-y)\right)\right\}
$
&if $i = 4$ and $k = 1$,\vspace{.3cm}\\
%$\mathrm{span}\left\{\gamma_4\left(J\,\eta_4^2x^2\right)\right\}$\\
$\mathrm{span}\left\{\gamma_4\left(J\,\eta_1^2(1-x)^2\right)\right\}$\\
            $\hspace{.2cm}\oplus\mathrm{span}
\left  \{ \gamma_4\left(J\,\eta_1(1-x)(1-y)\right), 
\gamma_4\left(J\,\eta_1(1-x)^2(1-y)\right)\right\}
%\left  \{ \gamma_4\left(J\,\eta_4(1-y)x\right), 
%\gamma_4\left(J\,\eta_4(1-y)x^2\right)\right\}
$
&if $i = 4$ and $k = 2$,\vspace{.3cm}.\\
%$\mathrm{span}\left\{\gamma_4\left(J\,\eta_4^2x^{b}\right)\right\}_{b=k-1}^{k}$\\
$\mathrm{span}\left\{\gamma_4\left(J\,\eta_1^2(1-x)^{b}\right)
\right\}_{b=k-1}^{k}$\\
%$\hspace{.2cm}\oplus\mathrm{span}\left\{\gamma_4\left(J\,\eta_4^2(1-y)x^{k-1}\right)\right\}$\\
$\hspace{.2cm}\oplus\mathrm{span}\left\{\gamma_4\left(J\,\eta_1^2
(1-x)^{k-1}(1-y)\right)\right\}$\\
$\hspace{.4cm}\oplus\mathrm{span}
\left  \{ \gamma_4\left(J\,\eta_1(1-x)(1-y)\right), 
\gamma_4\left(J\,\eta_1(1-x)^2(1-y)\right)\right\}
%\left  \{ \gamma_4\left(J\,\eta_4(1-y)x\right), 
%\gamma_4\left(J\,\eta_4(1-y)x^2\right)\right\}
$
&if $i = 4$ and $k \ge 3$,\vspace{.3cm}.
            \end{tabular}\right.
 \]
 Here $\eta_i$ is any linear function on $\mathbb{R}^2$ 
 such that  $\eta_i(\vt_{i}) = 0$ and $\eta_i(\vt_{i+1}) \not = 0$.

\

{\bf (3) Finding the spaces $\delta\VVV_{\mathrm{fillM}}^i$}.
Finally, it is easy to verify that the divergence-free spaces $\delta\VVV_{\mathrm{fillM}}$
defined in Theorem \ref{thm:square} and Theorem \ref{thm:square2} satisfy 
the trace properties (3.1-3.3) in Algorithm \ref{algorithm1} and has dimension $I_{M,i}(\VVV\times \WWW)$.
This completes the proof.\qed

\section*{Funding}

The first author is supported in part by the National Science Foundation
               (Grant DMS-1522657) and by the University of Minnesota
               Supercomputing Institute.
% \bibliographystyle{IMANUM-BIB}
% \bibliography{/home/guosheng/Dropbox/research/BIB/all}

\begin{thebibliography}{}

\bibitem[Adams \& Cockburn(2005)Adams \& Cockburn]{AdamsCockburn05}
{\sc Adams, S. \& Cockburn, B.} (2005)
\newblock A mixed finite element method for elasticity in three dimensions.
\newblock {\em J. Sci. Comput.}, {\bf 25}, 515--521.

\bibitem[Ainsworth \& Rankin(2011)Ainsworth \& Rankin]{AinsworthRankin11}
{\sc Ainsworth, M. \& Rankin, R.} (2011)
\newblock Realistic computable error bounds for three dimensional finite
  element analyses in linear elasticity.
\newblock {\em Comput. Methods Appl. Mech. Engrg.}, {\bf 200}, 1909--1926.

\bibitem[Arnold {\em et~al.}(1984)Arnold, {Douglas, Jr.}, \&
  Gupta]{ArnoldDouglasGupta84}
{\sc Arnold, D.~N., {Douglas, Jr.}, J. \& Gupta, C.~P.} (1984)
\newblock A family of higher order mixed finite element methods for plane
  elasticity.
\newblock {\em Numer. Math.}, {\bf 45}, 1--22.

\bibitem[Arnold {\em et~al.}(2008)Arnold, Awanou, \&
  Winther]{ArnoldAwanouWinther08}
{\sc Arnold, D.~N., Awanou, G. \& Winther, R.} (2008)
\newblock Finite elements for symmetric tensors in three dimensions.
\newblock {\em Math. Comp.}, {\bf 77}, 1229--1251.

\bibitem[Arnold {\em et~al.}(2014)Arnold, Awanou, \&
  Winther]{ArnoldAwanouWinther14}
{\sc Arnold, D.~N., Awanou, G. \& Winther, R.} (2014)
\newblock Nonconforming tetrahedral mixed finite elements for elasticity.
\newblock {\em Math. Models Methods Appl. Sci.}, {\bf 24}, 783--796.

\bibitem[Arnold \& Awanou(2005)Arnold \& Awanou]{ArnoldAwanou05}
{\sc Arnold, D.~N. \& Awanou, G.} (2005)
\newblock Rectangular mixed finite elements for elasticity.
\newblock {\em Math. Models Methods Appl. Sci.}, {\bf 15}, 1417--1429.

\bibitem[Arnold \& Brezzi(1985)Arnold \& Brezzi]{ArnoldBrezzi85}
{\sc Arnold, D.~N. \& Brezzi, F.} (1985)
\newblock Mixed and nonconforming finite element methods: implementation,
  postprocessing and error estimates.
\newblock {\em RAIRO Mod\'el. Math. Anal. Num\'er.}, {\bf 19}, 7--32.

\bibitem[Arnold \& Winther(2002)Arnold \& Winther]{ArnoldWinther02}
{\sc Arnold, D.~N. \& Winther, R.} (2002)
\newblock Mixed finite elements for elasticity.
\newblock {\em Numer. Math.}, {\bf 92}, 401--419.

\bibitem[Arnold \& Winther(2003)Arnold \& Winther]{ArnoldWinther03}
{\sc Arnold, D.~N. \& Winther, R.} (2003)
\newblock Nonconforming mixed elements for elasticity.
\newblock {\em Math. Models Methods Appl. Sci.}, {\bf 13}, 295--307.
\newblock Dedicated to Jim Douglas, Jr. on the occasion of his 75th birthday.

\bibitem[Awanou(2009)Awanou]{Awanou09}
{\sc Awanou, G.} (2009)
\newblock A rotated nonconforming rectangular mixed element for elasticity.
\newblock {\em Calcolo\/}, {\bf 46}, 49--60.

\bibitem[Boffi {\em et~al.}(2013)Boffi, Brezzi, \& Fortin]{BoffiBrezziFortin13}
{\sc Boffi, D., Brezzi, F. \& Fortin, M.} (2013)
\newblock {\em Mixed finite element methods and applications\/}. Springer
  Series in Computational Mathematics,  vol.~44.
\newblock Springer, Heidelberg, pp. xiv+685.

\bibitem[Brezzi {\em et~al.}(1985)Brezzi, {Douglas,~Jr.}, \&
  Marini]{BrezziDouglasMarini85}
{\sc Brezzi, F., {Douglas,~Jr.}, J. \& Marini, L.~D.} (1985)
\newblock Two families of mixed finite elements for second order elliptic
  problems.
\newblock {\em Numer. Math.}, {\bf 47}, 217--235.

\bibitem[Chabaud \& Cockburn(2012)Chabaud \& Cockburn]{ChabaudCockburn12}
{\sc Chabaud, B. \& Cockburn, B.} (2012)
\newblock Uniform-in-time superconvergence of {HDG} methods for the heat
  equation.
\newblock {\em Math. Comp.}, {\bf 81}, 107--129.

\bibitem[Cockburn {\em et~al.}(2011)Cockburn, Gopalakrishnan, Nguyen, Peraire,
  \& Sayas]{CockburnGopalakrishnanNguyenPeraireSayasHDGStokes11}
{\sc Cockburn, B., Gopalakrishnan, J., Nguyen, N., Peraire, J. \& Sayas, F.-J.}
  (2011)
\newblock Analysis of an {HDG} method for {Stokes} flow.
\newblock {\em Math. Comp.}, {\bf 80}, 723--760.

\bibitem[Cockburn {\em et~al.}(2012a)Cockburn, Qiu, \& Shi]{CockburnQiuShi12}
{\sc Cockburn, B., Qiu, W. \& Shi, K.} (2012a)
\newblock Conditions for superconvergence of {HDG} methods for second-order
  eliptic problems.
\newblock {\em Math. Comp.}, {\bf 81}, 1327--1353.

\bibitem[Cockburn {\em et~al.}(2012b)Cockburn, Qui, \&
  Shi]{CockburnQiuShiCurved12}
{\sc Cockburn, B., Qui, W. \& Shi, K.} (2012b)
\newblock Conditions for superconvergence of {HDG} methods on curvilinear
  elements for second-order eliptic problems.
\newblock {\em SIAM J. Numer. Anal.}, {\bf 50}, 1417--1432.

\bibitem[Cockburn {\em et~al.}(2017a)Cockburn, Fu, \& Qiu]{CockburnFuQiu16}
{\sc Cockburn, B., Fu, G. \& Qiu, W.} (2017a)
\newblock A note on the devising of superconvergent {HDG} methods for {S}tokes
  flow by {M}-decompositions.
\newblock {\em IMA J. Num. Anal.}, {\bf 37}, 730--749.

\bibitem[Cockburn {\em et~al.}(2017b)Cockburn, Fu, \& Sayas]{CockburnFuSayas16}
{\sc Cockburn, B., Fu, G. \& Sayas, F.~J.} (2017b)
\newblock Superconvergence by {$M$}-decompositions. {P}art {I}: {G}eneral
  theory for {HDG} methods for diffusion.
\newblock {\em Math. Comp.}, {\bf 86}, 1609--1641.

\bibitem[Cockburn \& Fu(2017a)Cockburn \& Fu]{CockburnFu17a}
{\sc Cockburn, B. \& Fu, G.} (2017a)
\newblock Superconvergence by {$M$}-decompositions. {P}art {II}: {C}onstruction
  of two-dimensional finite elements.
\newblock {\em ESAIM Math. Model. Numer. Anal.}, {\bf 51}, 165--186.

\bibitem[Cockburn \& Fu(2017b)Cockburn \& Fu]{CockburnFu17b}
{\sc Cockburn, B. \& Fu, G.} (2017b)
\newblock Superconvergence by {$M$}-decompositions. {P}art {III}:
  {C}onstruction of three-dimensional finite elements.
\newblock {\em ESAIM Math. Model. Numer. Anal.}, {\bf 51}, 365--398.

\bibitem[Cockburn \& Quenneville-B{\'e}lair(2014)Cockburn \&
  Quenneville-B{\'e}lair]{CockburnQuennevilleHDGWave14}
{\sc Cockburn, B. \& Quenneville-B{\'e}lair, V.} (2014)
\newblock Uniform-in-time superconvergence of the {HDG} methods for the
  acoustic wave equation.
\newblock {\em Math. Comp.}, {\bf 83}, 65--85.

\bibitem[Cockburn \& Shi(2013a)Cockburn \& Shi]{CockburnShiHDGStokes13}
{\sc Cockburn, B. \& Shi, K.} (2013a)
\newblock Conditions for superconvergence of {HDG} methods for {Stokes} flow.
\newblock {\em Math. Comp.}, {\bf 82}, 651--671.

\bibitem[Cockburn \& Shi(2013b)Cockburn \& Shi]{CockburnShiHDGElas13}
{\sc Cockburn, B. \& Shi, K.} (2013b)
\newblock Superconvergent {HDG} methods for linear elasticity with weakly
  symmetric stresses.
\newblock {\em IMA J. Numer. Anal.}, {\bf 33}, 747--770.

\bibitem[Di~Pietro \& Ern(2015)Di~Pietro \& Ern]{DiPietroErn15b}
{\sc Di~Pietro, D.~A. \& Ern, A.} (2015)
\newblock A hybrid high-order locking-free method for linear elasticity on
  general meshes.
\newblock {\em Comput. Methods Appl. Mech. Engrg.}, {\bf 283}, 1--21.

\bibitem[Fu {\em et~al.}(2015)Fu, Cockburn, \&
  Stolarski]{FuCockburnStolarski14}
{\sc Fu, G., Cockburn, B. \& Stolarski, H.} (2015)
\newblock Analysis of an {HDG} method for linear elasticity.
\newblock {\em Internat. J. Numer. Methods Engrg.}, {\bf 102}, 551--575.

\bibitem[Gastaldi \& Nochetto(1989)Gastaldi \& Nochetto]{GastaldiNochetto89}
{\sc Gastaldi, L. \& Nochetto, R.~H.} (1989)
\newblock Sharp maximum norm error estimates for general mixed finite element
  approximations to second order elliptic equations.
\newblock {\em RAIRO Mod\'el. Math. Anal. Num\'er.}, {\bf 23}, 103--128.

\bibitem[Gopalakrishnan \& Guzm{\'a}n(2011)Gopalakrishnan \&
  Guzm{\'a}n]{GopalakrishnanGuzman11}
{\sc Gopalakrishnan, J. \& Guzm{\'a}n, J.} (2011)
\newblock Symmetric nonconforming mixed finite elements for linear elasticity.
\newblock {\em SIAM J. Numer. Anal.}, {\bf 49}, 1504--1520.

\bibitem[Guzm{\'a}n \& Neilan(2014)Guzm{\'a}n \& Neilan]{GuzmanNeilan14}
{\sc Guzm{\'a}n, J. \& Neilan, M.} (2014)
\newblock Symmetric and conforming mixed finite elements for plane elasticity
  using rational bubble functions.
\newblock {\em Numer. Math.}, {\bf 126}, 153--171.

\bibitem[Hu \& Shi(0708)Hu \& Shi]{HuShi07}
{\sc Hu, J. \& Shi, Z.-C.} (2007/08)
\newblock Lower order rectangular nonconforming mixed finite elements for plane
  elasticity.
\newblock {\em SIAM J. Numer. Anal.}, {\bf 46}, 88--102.

\bibitem[Hu \& Zhang(2013)Hu \& Zhang]{HuZhang15}
{\sc Hu, J. \& Zhang, S.} (2013)
\newblock Nonconforming finite element methods on quadrilateral meshes.
\newblock {\em Sci. China Math.}, {\bf 56}, 2599--2614.

\bibitem[Johnson \& Mercier(1978)Johnson \& Mercier]{JohnsonMercier78}
{\sc Johnson, C. \& Mercier, B.} (1978)
\newblock Some equilibrium finite element methods for two-dimensional
  elasticity problems.
\newblock {\em Numer. Math.}, {\bf 30}, 103--116.

\bibitem[Man {\em et~al.}(2009)Man, Hu, \& Shi]{ManHuShi09}
{\sc Man, H.-Y., Hu, J. \& Shi, Z.-C.} (2009)
\newblock Lower order rectangular nonconforming mixed finite element for the
  three-dimensional elasticity problem.
\newblock {\em Math. Models Methods Appl. Sci.}, {\bf 19}, 51--65.

\bibitem[Pechstein \& Sch{\"o}berl(2011)Pechstein \&
  Sch{\"o}berl]{PechsteinSchoberl11}
{\sc Pechstein, A. \& Sch{\"o}berl, J.} (2011)
\newblock Tangential-displacement and normal-normal-stress continuous mixed
  finite elements for elasticity.
\newblock {\em Math. Models Methods Appl. Sci.}, {\bf 21}, 1761--1782.

\bibitem[Qiu {\em et~al.}(2016)Qiu, Shen, \& Shi]{QiuShenShi16}
{\sc Qiu, W., Shen, J. \& Shi, K.} (2016)
\newblock An {HDG} method for linear elasticity with strong symmetric stresses.
\newblock {\em Math. Comp.}
\newblock To appear.

\bibitem[Soon(2008)Soon]{SoonThesis08}
{\sc Soon, S.-C.} (2008)
\newblock Hybridizable discontinuous {G}alerkin methods for solid mechanics.
\newblock {\em Ph.D. thesis}, University of Minnesota, Minneapolis.

\bibitem[Soon {\em et~al.}(2009)Soon, Cockburn, \&
  Stolarski]{SoonCockburnStolarski09}
{\sc Soon, S.-C., Cockburn, B. \& Stolarski, H.} (2009)
\newblock A hybridizable discontinuous {G}alerkin method for linear elasticity.
\newblock {\em Internat. J. Numer. Methods Engrg.}, {\bf 80}, 1058--1092.

\bibitem[Stenberg(1988)Stenberg]{Stenberg88}
{\sc Stenberg, R.} (1988)
\newblock A family of mixed finite elements for the elasticity problem.
\newblock {\em Numer. Math.}, {\bf 53}, 513--538.

\bibitem[Stenberg(1991)Stenberg]{Stenberg91}
{\sc Stenberg, R.} (1991)
\newblock Postprocessing schemes for some mixed finite elements.
\newblock {\em RAIRO Mod\'el. Math. Anal. Num\'er.}, {\bf 25}, 151--167.

\bibitem[T\'emam(1979)T\'emam]{Temam79}
{\sc T\'emam, R.} (1979)
\newblock {\em Navier-{S}tokes equations\/}. Studies in Mathematics and its
  Applications,  vol.~2, revised edn.
\newblock North-Holland Publishing Co., Amsterdam-New York, pp. x+519.
\newblock Theory and numerical analysis, With an appendix by F. Thomasset.

\bibitem[Yang \& Chen(2010)Yang \& Chen]{ChenYang10}
{\sc Yang, Y. \& Chen, S.} (2010)
\newblock A locking-free nonconforming triangular element for planar elasticity
  with pure traction boundary condition.
\newblock {\em J. Comput. Appl. Math.}, {\bf 233}, 2703--2710.

\bibitem[Yi(2005)Yi]{Yi05}
{\sc Yi, S.-Y.} (2005)
\newblock Nonconforming mixed finite element methods for linear elasticity
  using rectangular elements in two and three dimensions.
\newblock {\em Calcolo\/}, {\bf 42}, 115--133.

\bibitem[Yi(2006)Yi]{Yi06}
{\sc Yi, S.-Y.} (2006)
\newblock A new nonconforming mixed finite element method for linear
  elasticity.
\newblock {\em Math. Models Methods Appl. Sci.}, {\bf 16}, 979--999.

\end{thebibliography}

\end{document}